\providecommand{\href}[2]{#2}
\providecommand*{\backref}{}
\providecommand*{\backrefalt}{}
\renewcommand*{\backref}[1]{}
\renewcommand*{\backrefalt}[4]{%
	\ifcase #1 %
	\or
	  Cited page~#2.
	\else
	  Cited pages~#2.
	\fi
}
\newcommand\MTkillspecial[1]{
  \bgroup
  \catcode`\&=9
  \let\\\relax%
  \scantokens{#1}%
  \egroup
}
\newcommand\DeclarePairedDelimiterMultiline[3]{
  \DeclarePairedDelimiter{#1}{#2}{#3}
  \reDeclarePairedDelimiterInnerWrapper{#1}{star}{
    \mathopen{##1\vphantom{\MTkillspecial{##2}}\kern-\nulldelimiterspace\right.}
    ##2
    \mathclose{\left.\kern-\nulldelimiterspace\vphantom{\MTkillspecial{##2}}##3}}
}
\newcommand{\boH}{\mathcal{H}}
\newcommand{\boC}{\mathcal{C}}
\newcommand{\boI}{\mathcal{I}}
\newcommand{\boB}{\mathcal{B}}
\newcommand{\boD}{\mathcal{D}}
\newcommand{\boT}{\mathcal{T}}
\newcommand{\Z}{\mathbb{Z}}
\newcommand{\N}{\mathbb{N}}
\newcommand{\R}{\mathbb{R}}
\newcommand{\C}{\mathbb{C}}
\newcommand{\Sbb}{\mathbb{S}}
\newcommand{\dd}{\mathop{}\!\mathrm{d}}
\newcommand{\ic}{\mathbf{i}}
\newcommand{\bpi}{\boldsymbol{\pi}}
\DeclarePairedDelimiterMultiline{\abs}{\lvert}{\rvert}
\DeclarePairedDelimiterMultiline{\norm}{\lVert}{\rVert}
\DeclarePairedDelimiterMultiline{\pare}{(}{)}
\newcommand{\st}{\::\:}
\newcommand{\restr}{|}
\DeclareMathOperator{\ind}{ind}
\DeclareMathOperator{\Fix}{Fix}
\DeclareMathOperator{\sgn}{sgn}
\DeclareMathOperator{\Id}{Id}
\DeclareMathOperator{\Card}{Card}
\DeclareMathOperator{\Leb}{Leb}
\DeclareMathOperator{\dLeb}{dLeb}
\newcommand{\coloneqq}{\mathrel{\mathop:}=}
\DeclareMathOperator{\flattr}{\mathrm{tr}^{\flat}}
\DeclareMathOperator{\tr}{tr}
\renewcommand{\epsilon}{\varepsilon}
\renewcommand{\phi}{\varphi}
\renewcommand{\leq}{\leqslant}
\renewcommand{\geq}{\geqslant}
\newtheorem{thm}{Theorem}[section]
\newtheorem*{thm*}{Theorem}
\newtheorem{prop}[thm]{Proposition}
\newtheorem{definition}[thm]{Definition}
\newtheorem{lem}[thm]{Lemma}
\newtheorem{cor}[thm]{Corollary}
\newtheorem*{prop*}{Proposition}
\theoremstyle{definition}
\newtheorem{rmk}[thm]{Remark}
\numberwithin{equation}{section}
\title{Ruelle spectrum of linear pseudo-Anosov maps}
\author{Fr\'ed\'eric Faure, S\'ebastien Gou\"ezel and Erwan Lanneau}
\address{Univ.\ Grenoble Alpes, CNRS UMR 5582, Institut Fourier, F-38000 Grenoble, France}
\email{frederic.faure@univ-grenoble-alpes.fr}
\address{Laboratoire Jean Leray, CNRS UMR 6629,
Universit\'e de Nantes, 2 rue de la
Houssini\`ere,
44322 Nantes, France}
\email{sebastien.gouezel@univ-nantes.fr}
\address{Univ.\ Grenoble Alpes, CNRS UMR 5582, Institut Fourier, F-38000 Grenoble, France}
\email{erwan.lanneau@univ-grenoble-alpes.fr}
\date{\today}
\thanks{We thank Corinna Ulcigrai, Mauro Artigiani and Giovanni Forni for their enlightening comments.}
\begin{document}

\begin{abstract}
The Ruelle resonances of a dynamical system are spectral data describing
the precise asymptotics of correlations. We classify them completely for a
class of chaotic two-dimensional maps, the linear pseudo-Anosov maps, in
terms of the action of the map on cohomology. As applications, we obtain a
full description of the distributions which are invariant under the linear
flow in the stable direction of such a linear pseudo-Anosov map, and we
solve the cohomological equation for this flow.
\end{abstract}

\maketitle

\tableofcontents

\section{Introduction, statements of results}

\subsection*{Ruelle resonances} Consider a map $T$ on a smooth manifold
$X$, preserving a probability measure $\mu$. One feature that encapsulates a
lot of information on its probabilistic behavior is the speed of decay of
correlations. Consider two smooth functions $f$ and $g$. Then one expects
that $\int f \cdot g \circ T^n \dd \mu$ converges to $(\int f \dd\mu) \cdot
(\int g \dd\mu)$ if iterating the dynamics creates more and more independence
-- if this is the case, $T$ is said to be mixing for the measure $\mu$.
Often, one can say more than just the mere convergence to $0$ of the
correlations $\int f \cdot g \circ T^n \dd \mu - (\int f \dd\mu) \cdot (\int
g \dd\mu)$, and this is important for applications. For instance, the central
limit theorem for the Birkhoff sums $S_n f = \sum_{k=0}^{n-1} f \circ T^k$ of
a function $f$ with $0$ average often follows from the summability of the
correlations between $f$ and $f\circ T^n$.

When $T$ is very chaotic, the correlations tend exponentially fast to $0$. It
is sometimes possible to obtain the next few terms in their asymptotic
expansion, in terms of the~\emph{Ruelle spectrum} (or~\emph{Ruelle
resonances}) of the map.

\begin{definition}
\label{defn:Ruelle_spectrum} Let $T$ be a map on a space $X$, preserving a
probability measure $\mu$. Consider a space of bounded functions $\boC$ on
$X$. Let $I$ be a finite or countable set, let $\Lambda = (\lambda_i)_{i \in
I}$ be a set of complex numbers with $\abs{\lambda_i} \in (0,1]$ such that
for any $\epsilon>0$ there are only finitely many $i$ with $\abs{\lambda_i}
\geq \epsilon$, and let $(N_i)_{i\in I}$ be nonnegative integers. We say that
$T$ has the Ruelle spectrum $(\lambda_i)_{i \in I}$ with Jordan blocks
dimension $(N_i)_{i\in I}$ on the space of functions $\boC$ if, for any $f, g
\in \boC$ and for any $\epsilon>0$, there is an asymptotic expansion
\begin{equation*}
  \int f \cdot g \circ T^n \dd\mu = \sum_{\abs{\lambda_i} \geq \epsilon} \sum_{j\leq N_i} \lambda_i^n n^j c_{i,j}(f,g)
  + o(\epsilon^n),
\end{equation*}
where $c_{i,j}(f,g)$ are bilinear functions of  $f$ and $g$, that we suppose finite rank but non zero.
\end{definition}

In other words, there is an asymptotic expansion for the correlations of
functions in $\boC$, up to an arbitrarily small exponential error. With this
definition, it is clear that the Ruelle spectrum is an intrinsic object, only
depending on $T$, $\mu$ and the space of functions $\boC$. In general, one
takes for $\boC$ the space of $C^\infty$ functions on a manifold.

As an example, assume that $T$ is a $C^\infty$ uniformly expanding map on a
manifold and $\mu$ is its unique invariant probability measure in the
Lebesgue measure class. Then the correlations of $C^r$ functions admit an
asymptotic expansion up to an exponential term $\epsilon_r^n$, where
$\epsilon_r$ tends to $0$ when $r$ tends to infinity. Hence,
Definition~\ref{defn:Ruelle_spectrum} is not satisfied for $\boC = C^r$, but
it is satisfied for $\boC = C^\infty(M)$. The same holds for Anosov maps,
when $\mu$ is a Gibbs measure.

\medskip

The first question one may ask is if it makes sense to talk about the Ruelle
spectrum, i.e., if Definition~\ref{defn:Ruelle_spectrum} holds for some
$\Lambda=(\lambda_i)_{i\in I}$. Virtually all proofs of such an abstract
existence result follow from spectral considerations, exhibiting the
$\lambda_i$ as the spectrum of an operator associated to $T$, acting on a
Banach space or a scale of Banach spaces. General spectral theorems taking
advantage of compactness or quasi-compactness properties of this operator
then imply that there is some set $\Lambda$ for which
Definition~\ref{defn:Ruelle_spectrum} holds (and moreover all elements of
$\Lambda$ have finite multiplicity), but without giving any information
whatsoever on $\Lambda$ in addition to the fact that it is discrete and at
most countable -- in particular, it is not guaranteed that $\Lambda$ is not
reduced to the eigenvalue $1$, which is always a Ruelle resonance as one can
see by taking $f=g=1$. Indeed, if $T$ is the doubling map $x \mapsto 2 x \mod
1$ on the circle and $\boC=C^\infty(\Sbb^1)$, then there is no other
resonance. In the same way, there is no other resonance for linear Anosov map
of the torus (these facts are easy to check by computing the correlations
using Fourier series). That Definition~\ref{defn:Ruelle_spectrum} holds is
notably known for uniformly expanding and uniformly hyperbolic smooth maps,
see~\cite{ruelle_fredholm, bt_aniso,GL_Anosov2}.

Once the answer to this first question is positive, there is a whole range of
questions one may ask about $\Lambda$: is it reduced to $\{1\}$? is it
infinite? are there asymptotics for $\Card (\Lambda \cap \{\abs{z} \geq
\epsilon\})$ (possibly counted with multiplicities) when $\epsilon$ tends to
$0$? is it possible to describe explicitly $\Lambda$? The answers to these
questions depend on the map under consideration. Let us only mention the
results of Naud~\cite{naud_lower_bound} (for generic analytic expanding maps,
there is nontrivial Ruelle spectrum, with density at $0$ bounded below
explicitly), Adam~\cite{adam_hyperbolic} (the spectrum is generically
non-empty for hyperbolic maps),
Bandtlow-Jenkinson~\cite{bandtlow_jenkinson_upper} (upper bound for the
density of Ruelle resonances at $0$ in analytic expanding maps, extending
previous results of Fried),
Bandtlow-Just-Slipantschuk~\cite{bandtlow_just_slipantschuk_expanding,
bandtlow_just_slipantschuk_hyperbolic} (construction of expanding or
hyperbolic maps for which the Ruelle spectrum is completely explicit),
Dyatlov-Faure-Guillarmou~\cite{dyatlov_faure_guillarmou} (classification of
the Ruelle resonances for the geodesic flow on compact hyperbolic manifolds
in any dimension).

Our goal in this article is to investigate these questions for a class of
maps of geometric origin, namely linear pseudo-Anosov maps. They are analogues of
linear Anosov maps of the two-dimensional torus, but on higher genus
surfaces. The difference with the torus case is that the expanding and
contracting foliations have singularities. Apart from these singularities,
the local picture is exactly the same as for linear Anosov maps of the torus
(in particular, it is the same everywhere in the manifold). We will obtain a
complete description of the Ruelle spectrum of linear pseudo-Anosov map.
Then, using the philosophy of Giulietti-Liverani~\cite{giulietti_liverani}
that Ruelle resonances contain information on the translation flow along the
stable manifold on the map, we will discuss consequences of these results on
the vertical translation flow in translation surfaces supporting a
pseudo-Anosov map. We will in particular obtain complete results on the set
of distributions which are invariant under the vertical flow, and on smooth
solutions to the cohomological equation, recovering in this case results due
to Forni on generic translation surfaces~\cite{forni_cohomological,
forni_deviation, forni_regularity}.

\subsection*{Linear pseudo-Anosov maps}
There are several equivalent definitions of pseudo-Anosov maps (especially in
terms of foliations carrying a transverse measure). We will use the following
one in which the foliations have already been straightened (i.e., we use
coordinates where the foliations are horizontal and vertical), in terms of
half-translation surfaces (see e.g.~\cite{zorich} for a nice survey on half-translation surfaces).

\begin{definition}
\label{def:half_translation} Let $M$ be a compact connected surface and let
$\Sigma$ be a finite subset of $M$. A half-translation structure on
$(M,\Sigma)$ is an atlas on $M-\Sigma$ for which the coordinate changes have
the form $x \mapsto x+v$ or $x \mapsto -x+v$. Moreover, we require that
around each point of $\Sigma$ the half translation surface is isomorphic to a
finite ramified cover of $\R^2/{\pm \Id}$ around $0$.
\end{definition}
A half-translation surface carries a canonical complex structure: it is just
the canonical complex structure in the charts away from $\Sigma$, which
extends to the singularities. In particular, it also has a $C^\infty$
structure, and it is orientable.

In a half-translation structure, the horizontal and vertical lines in the
charts define two foliations of $M-\Sigma$, called the horizontal and
vertical foliations. Of particular importance to us will be the case where
the coordinate changes are of the form $x \mapsto x + v$. In this case, we
say that $M$ is a translation surface. Singularities are then finite ramified
cover of $\R^2$ around $0$. Moreover, the horizontal and vertical foliations
carry a canonical orientation.

\begin{definition}
Consider a half-translation structure on $(M,\Sigma)$. A homeomorphism
$T:M\to M$ is a linear pseudo-Anosov map for this structure if
$T(\Sigma)=\Sigma$ and there exists $\lambda>1$ such that, for any $x \in
M-\Sigma$, one has in half-translation charts around $x$ and $Tx$ the
equality $T y = \left(\begin{smallmatrix} \pm \lambda & 0 \\ 0 & \pm
\lambda^{-1}
\end{smallmatrix}\right)y$, where the choice of signs depends on the
choice of coordinate charts. We say that $\lambda$ is the~\emph{expansion
factor} of $T$.
\end{definition}
In other words, $T$ sends horizontal segments to horizontal segments and
vertical segments to vertical segments, expanding by $\lambda$ in the
horizontal direction and contracting by $\lambda$ in the vertical direction.
In particular, Lebesgue measure is invariant under $T$.

When $M$ is a translation surface, there are two global signs $\epsilon_h$
and $\epsilon_v$ saying if $T$ preserves or reverses the orientation of the
horizontal and vertical foliations. The simplest case is when $\epsilon_h =
\epsilon_v = 1$. In this case, $T$ preserves the orientation of both
foliations, and can be written in local charts as $\left(\begin{smallmatrix}
\lambda & 0 \\ 0 & \lambda^{-1}
\end{smallmatrix}\right)$.

While we obtain a complete description of the Ruelle spectrum in all
situations (orientable foliations or not, $\epsilon_v$ and $\epsilon_h$ equal
to $1$ or $-1$), it is easier to explain in the simplest case of translation
surfaces with $\epsilon_v = \epsilon_h = 1$. We will refer to this case as
linear pseudo-Anosov maps preserving orientations. We will focus on this case
in this introduction and most of the paper, and refer to
Section~\ref{sec:orientations} for the general situation (that we will deduce from the
case of linear pseudo-Anosov maps preserving orientations).

In the definition of Ruelle resonances, there is a subtlety related to the
choice of the space of functions $\boC$ for which we want asymptotic
expansions of the correlations. While it is clear that we want $C^\infty$
functions away from the singularities, the requirements at the singularities
are less obvious. Denote by $C^\infty_c(M-\Sigma)$ the space of $C^\infty$
functions that vanish on a neighborhood of the singularities. This is the
space we will use for definiteness.

Let $T$ be a linear pseudo-Anosov map, preserving orientations, on a genus
$g$ translation surface $M$. Let $\lambda$ be its expansion factor. As the
local picture for $T$ is the same everywhere, it should not be surprising
that the only data influencing the Ruelle spectrum are of global nature,
related to the action of $T$ on the first cohomology group $H^1(M)$ (a vector
space of dimension $2g$). By Thurston~\cite{thurston_pseudo_anosov},
$\lambda$ and $\lambda^{-1}$ are two simple eigenvalues of $T^* : H^1(M)\to
H^1(M)$ (the corresponding eigenvectors are the cohomology classes of the
horizontal and the vertical foliations). The orthogonal subspace to these two
cohomology classes has dimension $2g-2$, it is invariant under $T^*$, and the
spectrum $\Xi = \{\mu_1,\dotsc,\mu_{2g-2}\}$ of $T^*$ on this subspace is
made of eigenvalues satisfying $\lambda^{-1} < \abs{\mu_i} < \lambda$ for all
$i$.

\medskip

Here is our main theorem when $T$ preserves orientations.

\begin{thm}
\label{thm:main_preserves_orientations} Let $T$ be a linear pseudo-Anosov map
preserving orientations on a genus $g$ compact surface $M$, with expansion
factor $\lambda$ and singularity set $\Sigma$. Then $T$ has a Ruelle spectrum
on $\boC = C^\infty_c(M-\Sigma)$ given as follows. First, there is a simple
eigenvalue at $1$. Denote by $\Xi = \{\mu_1,\dotsc, \mu_{2g-2}\}$ the
spectrum of $T^*$ on the orthogonal subspace to the classes of the horizontal
and vertical foliations in $H^1(M)$. Then, for any $i$ and for any integer
$n\geq 1$, there is a Ruelle resonance at $\lambda^{-n} \mu_i$ of
multiplicity $n$.
\end{thm}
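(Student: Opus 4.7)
The plan is to realize the Ruelle spectrum as the eigenvalue spectrum of the Koopman operator $\mathcal{L}_T : f \mapsto f\circ T$ on a suitable anisotropic Banach space, and to compute it by combining the affine local model of $T$ with the action of $T^*$ on $H^1(M)$. For the functional-analytic framework, I would follow the machinery of anisotropic spaces developed in~\cite{bt_aniso, GL_Anosov2}: one constructs a Banach space $\boB$ of distributions on $M$, of positive Sobolev regularity along the horizontal (unstable) foliation and negative regularity along the vertical (stable) foliation, with appropriate cutoffs near $\Sigma$. On $\boB$, $\mathcal{L}_T$ is quasi-compact, its essential spectral radius tends to $0$ as the regularity orders grow, and its discrete spectrum outside the essential disk coincides with the Ruelle spectrum on $\boC = C^\infty_c(M-\Sigma)$.

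The key structural input is a shift mechanism provided by a global differential operator. Since $T$ preserves orientations, the vertical $1$-form $dy$ is globally defined on $M$ (vanishing at $\Sigma$), so $\partial_y$ is a global vector field on $M-\Sigma$. Using $T^*dy = \lambda^{-1}dy$ one obtains the commutation
\begin{equation*}
  \partial_y \mathcal{L}_T = \lambda^{-1}\, \mathcal{L}_T \, \partial_y.
\end{equation*}
Consequently the increasing filtration $V_n := \ker \partial_y^{\,n+1}$ is $\mathcal{L}_T$-invariant, and via $\partial_y^n$ the induced action on $V_n/V_{n-1}$ is conjugated to $\lambda^{-n}$ times the action on the bottom layer $V_0 = \ker \partial_y$. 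Importantly, because the local coordinate $y$ is only defined up to translation, $V_n$ has the global structure of distributions valued in a flat rank-$(n+1)$ vertical-jet bundle, and this twisting is what allows higher multiplicities.

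Elements of $V_0$ are distributions locally constant in $y$; pairing them with $dx$ produces closed $1$-currents and identifies $V_0$ with a $T^*$-invariant subspace of $H^1(M,\C)$. A detailed analysis shows that, after separating off the trivial eigenline (giving the resonance at $1$) and the extremal classes (which contribute eigenvalues $\lambda^{\pm 1}$, either outside the Ruelle disk or captured separately in the expansion), the remaining spectrum on $V_0$ is exactly $\{\mu_1,\dotsc,\mu_{2g-2}\}$. Combining this with the filtration, the nontrivial Ruelle resonances of $\mathcal{L}_T$ are precisely $\lambda^{-n}\mu_i$ for $n \geq 1$. The claimed multiplicity $n$ at $\lambda^{-n}\mu_i$ should arise from the flat jet-bundle structure of $V_n$: one expects $n$ linearly independent generalized eigenvectors obtained by taking iterated vertical primitives of a $\mu_i$-eigencohomology class.

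The main obstacle will be the precise multiplicity count — showing via an explicit kernel calculation of $(\mathcal{L}_T - \lambda^{-n}\mu_i)^N$ across the filtration that the generalized eigenspace has dimension exactly $n$ and that no further eigenvectors arise. A secondary but essential analytic difficulty is the treatment of the singularities $\Sigma$, both in constructing $\boB$ and in identifying $V_0$ with $H^1(M,\C)$, to ensure that distributions on $M-\Sigma$ cannot exhibit hidden behaviour at $\Sigma$ that escapes the global de Rham bookkeeping.
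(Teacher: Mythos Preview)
Your overall architecture is close to the paper's, but several key steps are either incorrect or contain genuine gaps.

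First, a minor point: the anisotropic regularity directions are swapped. Since $\mathcal{L}_T f = f\circ T$ and $T$ expands horizontally, you need distributions that are \emph{smooth in the vertical direction} (to exploit contraction) and \emph{dual-of-smooth horizontally}; the paper's space $\boB^{-k_h,k_v}$ has precisely this profile.

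Second, your identification of $V_0 = \ker \partial_y$ with a subspace of $H^1(M)$ is imprecise and hides two real difficulties. The map $f \mapsto [f\,\dd x]$ intertwines $\mathcal{L}_T$ on $V_0$ with $\lambda^{-1}T^*$ on cohomology (because $T^*\dd x = \lambda\,\dd x$), so the Ruelle eigenvalue attached to a $\mu$-eigenclass is $\lambda^{-1}\mu$, not $\mu$; your statement that ``the spectrum on $V_0$ is exactly $\{\mu_1,\dotsc,\mu_{2g-2}\}$'' is off by this factor. More seriously, you wave away the eigenvalue $\lambda^{-1}$ of $T^*$ (the class $[\dd y]$) as ``captured separately,'' but in fact one must prove that \emph{no} element of $V_0$ has cohomology class $[\dd y]$; otherwise you would pick up spurious resonances at all $\lambda^{-2n}$. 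This exclusion is a genuine obstruction (Lemma~\ref{lem:Lv_preimage_1} in the paper), proved by constructing a would-be horizontal primitive $g$ with $L_v g = 1$ and deriving the contradiction $\Leb(M) = \langle L_v g,1\rangle = -\langle g, L_v 1\rangle = 0$.

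Third, and most importantly, your multiplicity argument via ``iterated vertical primitives'' is exactly the step that fails naively. Showing $\dim E_{\lambda^{-n}\mu_i}=n$ amounts to showing that $\partial_y : E_{\lambda^{-(n+1)}\mu_i} \to E_{\lambda^{-n}\mu_i}$ is \emph{surjective}. But given $f\in E_\alpha$ there is no direct way to integrate it vertically: $f$ is only a distribution along horizontals, vertical integration carries no cohomological control, and the competition between long vertical orbits and approaches to singularities is genuinely delicate (the paper discusses this explicitly before Theorem~\ref{thm:Lv_integre}). The paper sidesteps any construction by a duality trick: it pairs $\boB^{-k_h,k_v}$ with the mirrored space $\check\boB^{\check k_h,-\check k_v}$ for $T^{-1}$, proves that the generalized eigenspaces $E_\alpha$ and $\check E_\alpha$ are in perfect duality (Lemma~\ref{lem:dualite_Ealpha}), and then observes that the adjoint of $L_v$ on the dual side is $-L_v$ acting on $\check\boB$, where it plays the role of the \emph{horizontal} derivative for $T^{-1}$ and is therefore injective by minimality of the horizontal flow (Lemma~\ref{lem:Lh_0}). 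Your jet-bundle heuristic gives no substitute for this argument.
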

Note that a complex number $z$ may sometimes be written in different ways as
$\lambda^{-n}\mu_i$ (for instance if the spectrum of $T^*$ is not simple,
i.e., if there is $i\neq j$ with $\mu_i = \mu_j$ -- but it can also happen
that there is $i\neq j$ with $\mu_i = \lambda^{-1}\mu_j$, which will lead to
more superpositions). In this case, to get the multiplicity of $z$, one
should add all the multiplicities from the theorem corresponding to the
different possible decompositions.

Let us note that some nonzero functions can be orthogonal to all Ruelle resonances. For instance,
if $T$ lifts a linear Anosov map of the torus to a higher genus surface covering
the torus, then the correlations of any two smooth functions lifted from the torus
tend to $0$ faster than any exponential, as this is the case in the torus.

\subsection*{A quick sketch of the proof}
Before we discuss further results, we should explain briefly the strategy to
prove Theorem~\ref{thm:main_preserves_orientations}. First, we want to show
that Ruelle resonances make sense as in
Definition~\ref{defn:Ruelle_spectrum}. This part is classical. We introduce a
scale of Banach spaces of distributions, denoted by $\boB^{-k_h, k_v}$, which
behaves well under the composition operator $\boT : f \mapsto f \circ T$. The
elements of $\boB^{-k_h, k_v}$ are objects that can be integrated along
horizontal segments against $C^{k_h}$-functions, and moreover have $k_v$
vertical derivatives: this is an anisotropic Banach space, taking advantage
of the contraction of $T$ in the vertical direction and of its expansion in
the horizontal direction, as is customary in the study of hyperbolic
dynamics. On the technical level, the definition of $\boB^{-k_h, k_v}$ is
less involved than in many articles on hyperbolic dynamics (see for
instance~\cite{GL_Anosov2, bt_aniso}), as we may take advantage of the fact
that the stable and unstable directions are smooth -- in this respect, it is
closer to~\cite{baladi_Cinfty, avila_gouezel}. The only additional difficulty
compared to the literature is the singularities, but it turns out that they
do not play any role in this part. Hence, we can prove that the essential
spectral radius of $\boT$ on $\boB^{-k_h, k_v}$ is at most
$\lambda^{-\min(k_h, k_v)}$. The existence of Ruelle resonances in the sense
of Definition~\ref{defn:Ruelle_spectrum} readily follows. One important point
we want to stress here is that, since we are interested in Ruelle resonances
for functions in $C^\infty_c(M-\Sigma)$, we take for $\boB^{-k_h, k_v}$ the
closure of $C^\infty_c(M-\Sigma)$ for an anisotropic norm as described above.
In particular, smooth functions are dense in $\boB^{-k_h, k_v}$.

The second step in the proof is to show that the elements described in
Theorem~\ref{thm:main_preserves_orientations} belong to the set of Ruelle
resonances or, equivalently, to the spectrum of $\boT$ on $\boB^{-k_h, k_v}$
when $k_h$ and $k_v$ are large enough. It is rather easy to show that $1$ and
$\lambda^{-1} \mu_i$ belong to the spectrum, by considering a smooth $1$-form
$\omega = \omega_x \dd x + \omega_y \dd y$ whose cohomology class is an
eigenfunction for the iteration of $T^*$, and looking at the asymptotics of
$\boT^n \omega_x$ to obtain an element $f\in \boB^{-k_h, k_v}$ with $\boT f =
\lambda^{-1} \mu_i f$. Then, one deduces that $\lambda^{-n} \mu_i$ also
belongs to the spectrum, as $L_h^{n-1} f$ is an eigenfunction for this
eigenvalue, where $L_h$ denotes the derivative in the horizontal direction.

The most interesting part of the proof is to show that there is no other
eigenvalue, and that the multiplicities are as stated in the theorem. For
this, start from an eigenfunction $f \in \boB^{-k_h, k_v}$ for an eigenvalue
$\rho$. Denote by $L_v$ the derivative in the vertical direction. Then $L_v^n
f$ is an eigenfunction for the eigenvalue $\lambda^n \rho$. Since all
eigenvalues have modulus at most $1$, we deduce that $L_v^n f = 0$ for large
enough $n$. Consider the last index $n$ where $L_v^n f \neq 0$, and write $g
= L_v^n f$. It is an eigenfunction, and $L_v g = 0$. If we can prove that the
corresponding eigenvalue has the form $\lambda^{-k} \mu_i$ for some $k$ and
$i$, then we get $\rho = \lambda^{-(n+k)} \mu_i$, as desired. To summarize,
it is enough to understand eigenfunctions that, additionally, satisfy $L_v g
= 0$. For this, we introduce a cohomological interpretation of elements of
$\boB^{-k_h, k_v} \cap \ker L_v$. Heuristically, elements of $\boB^{-k_h,
k_v}$ can be integrated along horizontal segments by definition, so what
really matters is not the distribution $g$, but the $1$-current $g\dd x$. (In
the language of Forni~\cite{forni_deviation}, elements $g$ of $\boB^{-k_h,
k_v} \cap \ker L_v$ are the vertically invariant distributions, see his
Definition 6.4, while $g\dd x$ is the corresponding basic current on $M$.)
Formally, its differential is
\begin{equation*}
  \dd(g\dd x) =
  (\partial_x g \dd x + \partial_y g \dd y) \wedge \dd x
  = - L_v g \dd x \wedge \dd y.
\end{equation*}
Hence, elements of $\boB^{-k_h, k_v} \cap \ker L_v$ give rise to closed
currents, and have an associated cohomology class in $H^1(M)$ by de Rham
Theorem (in fact, we do not use de Rham theorem directly, but a custom
version suited for our needs that deals more carefully with the
singularities). From the equality $\boT g = \rho_g g$ one deduces that this
class is an eigenfunction for $T^*$ acting on $H^1(M)$, for the eigenvalue
$\lambda \rho_g$. If the class is nonzero, we get that $\lambda \rho_g$ is
one of the $\mu_i$, and $\rho_g = \lambda^{-1} \mu_i$ as desired. If the
class is zero, this means that $g \dd x$ is itself the differential of a
$0$-current $\tilde g$. It turns out that $\tilde g$ belongs to our scale of
Banach spaces, and is an eigenfunction for the eigenvalue $\lambda \rho_g$.
One can then argue in this way by induction to show that all eigenvalues are
of the form claimed in Theorem~\ref{thm:main_preserves_orientations}. There
are additional difficulties related to the eigenvalue $\lambda^{-1}$ of $T^*
: H^1(M)\to H^1(M)$: it does not show up in the statement of
Theorem~\ref{thm:main_preserves_orientations}, but this does not follow from
the sketch we have just given. Moreover, getting the precise multiplicities
requires further arguments, based on duality arguments and beyond this
introduction.

Here is the precise description we get in the end, illustrated on Figure~\ref{fig:dessin}, assuming to simplify that
$\mu_i$ is simple for $T^* : H^1(M) \to H^1(M)$ and that $\lambda^{-1}\mu_i$
is not an eigenvalue of $T^*$. Then the eigenvalue $\lambda^{-1} \mu_i$ for
$\boT$ is simple, and realized by a distribution $f_i$ which is annihilated
by $L_v$ (i.e., it is invariant under vertical translation) and such that the
cohomology class of $f_i \dd x$ is the eigenfunction in $H^1(M)$ under $T^*$,
for the eigenvalue $\mu_i$. Denoting by $E_\alpha$ the generalized eigenspace
associated to the eigenvalue $\alpha$, then $L_v$ is onto from
$E_{\lambda^{-n-1} \mu_i}$ to $E_{\lambda^{-n} \mu_i}$, and its kernel is
one-dimensional, equal to $L_h^n E_{\lambda^{-1}\mu_i}$. Therefore, there is
a flag decomposition
\begin{equation}
\label{eq:flag_decomposition}
  \{0\} \subset L_h^n E_{\lambda^{-1}\mu_i} \subset L_h^{n-1} E_{\lambda^{-2}\mu_i}
  \subset \dotsb \subset L_h^2 E_{\lambda^{-n+1} \mu_i} \subset L_h E_{\lambda^{-n} \mu_i}
  \subset E_{\lambda^{-n-1}\mu_i},
\end{equation}
in which the $k$-th term $L_h^{n+1-k} E_{\lambda^{-k} \mu_i}$ has dimension
$k$, and is equal to $E_{\lambda^{-n-1}\mu_i} \cap \ker L_v^k$. This
decomposition shows that the elements of $E_{\lambda^{-n-1}\mu_i}$ behave
like polynomials of degree $n$ when one moves along the vertical direction.
Moreover, the decomposition~\eqref{eq:flag_decomposition} is invariant under
the transfer operator $\boT$, which is thus in upper triangular form with
$\lambda^{-n-1}\mu_i$ on the diagonal. We do not know if there are genuine
Jordan blocks, or a choice of basis for which $\boT$ is diagonal. In
particular, we do not identify in
Theorem~\ref{thm:main_preserves_orientations} the Jordan blocks dimension of
the Ruelle resonances, in the sense of Definition~\ref{defn:Ruelle_spectrum}.
The decomposition~\eqref{eq:flag_decomposition} can also be interpreted in
terms of the operator $N = L_h L_v$, which is nilpotent of order $n+1$ on the
$n+1$-dimension space $E_{\lambda^{-n-1} \mu_i}$: the $k$-th term is the
kernel of $N^k$, and also the image of $N^{n+1-k}$.
\begin{figure}[htb]
\centering
\begin{tikzpicture}[scale=0.35]
	\def\xmin{-1} \def\xmax{15} \def\ymin{-1} \def\ymax{12}
	\tikzstyle{axe}=[->,>=stealth']
	\tikzstyle{grille}= [step=3,dashed]
	\draw [grille] (\xmin,\ymin) grid (\xmax+2,\ymax+2);
	\draw [axe] (\xmin,0)--(\xmax+3,0) node[below right]  {$k_x$};
	\draw [axe] (0,\ymin)--(0,\ymax+3) node[above left] {$k_y$};
	\node[below] at (0,-0.5) {$\scriptstyle 0$}; \node[below] at (3,-0.5) {$\scriptstyle 1$};
	\node[left] at (-0.5,0) {$\scriptstyle 0$}; \node[left] at (-0.5,3) {$\scriptstyle 1$};
	\draw[blue,thick] (3,0) -- (16,0);
	\foreach \x in {1,...,4}
	{\draw[red,thick] (3,3*\x) -- (3*\x+3,0);}
	\foreach \x in {1,...,4}
	\foreach \y in {0,...,3}
	{\filldraw (3*\x,3*\y) circle (10pt);}
	\filldraw (3*5,0) circle (10pt); \filldraw (3,3*4) circle (10pt);
	\draw [thick,->] (-2,9) -- (-2,6) node[above left] {$L_v = \partial_y$};
	\draw [thick,->] (6,-2) -- (9,-2) node[below] {$L_h = \partial_x$};
	\draw [->] (9,18) node[above,right] {Eigenspace $\scriptstyle E_{\lambda^{-n}\mu_i}, \ n=k_x+k_y$} .. controls +(left:55mm) and +(up:25mm) .. (4,11.5) ;
	\draw [->] (18,4) node[above,right] {ker $L_v$} .. controls +(left:55mm) and +(up:25mm) .. (13,0.2) ;
	\end{tikzpicture}
\caption{For a given eigenvalue $\mu_i$ of $T^*$ ($\mu_i \in (\lambda^{-1},\lambda)$), each black point of the lattice  $(k_x,k_y)_{k_x \geq 1, k_y\geq 0}$  represents an independent Ruelle distribution $u_{(k_x,k_y)}$. In particular $f_i=u_{(1,0)}$. The eigenvalues of the transfer operator $\mathcal{T}$ are $\lambda^{-n} \mu_i$ with $n\geq 1$  and the associated eigenspace is $E_{\lambda^{-n} \mu_i} = \mathrm{Span}\left\{u_{k_x,k_y}, k_x+k_y=n \right\}$ with dimension $n$ and represented by a diagonal red line. The operator $L_h \equiv \partial_x$ maps $u_{(k_x,k_y)}$ to $u_{(k_x+1,k_y)}$ and $L_v \equiv \partial_y$ maps $u_{(k_x,k_y)}$ to $u_{(k_x,k_y-1)}$. In particular the space $\mathrm{ker} L_v$ is represented by the first horizontal blue line $k_y=0$.}
 \label{fig:dessin}
\end{figure}
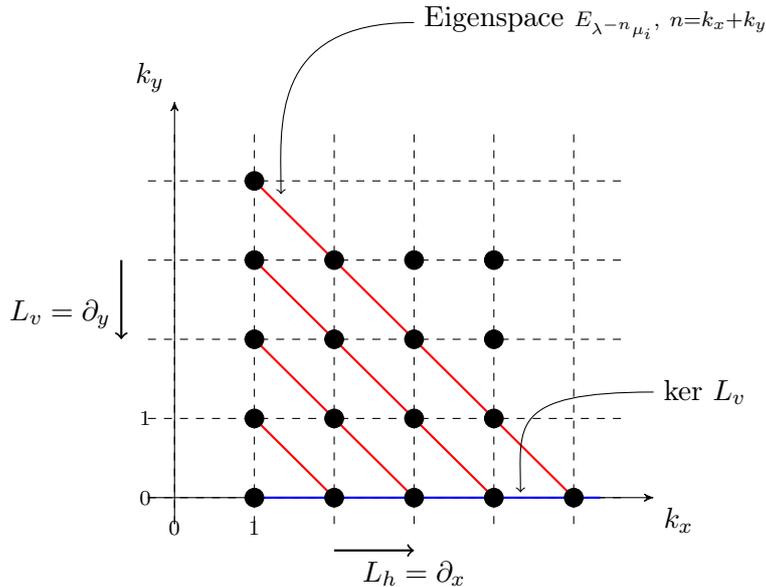

\subsection*{Invariant distributions for the vertical flow}

The above description is a first step into the direction of classifying all
distributions on $M-\Sigma$ which are invariant under the vertical flow. We
will call such distributions vertically invariant, or $L_v$-annihilated, or
sometimes $L_v$-invariant. It turns out that there is another family of such
$L_v$-annihilated distributions, which do not show up in the Ruelle
resonances and correspond to relative homology. They belong to an extended
space $\boB^{-k_h, k_v}_{ext}$ defined like $\boB^{-k_h, k_v}$ above, except
that we do not restrict to the closure of the set of smooth functions. (In
the language of Forni~\cite{forni_deviation}, elements $g$ of
$\boB_{ext}^{-k_h, k_v} \cap \ker L_v$ are the vertically quasi-invariant
distributions, see his Definition 6.4, while $g\dd x$ is the corresponding
basic current on $M-\Sigma$). \label{page_B_ext_not_B} An example of an element of $\boB_{ext}^{-k_h,
k_v}\setminus \boB^{-k_h, k_v}$ is as follows: consider a vertical segment
$\Gamma_\sigma$ ending at a singularity $\sigma$, a function $\rho$ on this
segment which is equal to $1$ on a neighborhood of the singularity and to $0$
on a neighborhood of the other endpoint of the segment, and define a
distribution $\xi_\sigma^{(0)}$ by $\langle \xi_\sigma^{(0)}, f\rangle =
\int_{\Gamma_\sigma} \rho(y) f(y)\dd y$. In other words, the corresponding
distribution on a horizontal segment $I$ is equal to $\rho(x_I) \delta_{x_I}$
if $I$ intersects $\Gamma_\sigma$ at a point $x_I$, and $0$ otherwise. It
turns out that these are essentially the only elements of $\boB_{ext}^{-k_h,
k_v}\setminus \boB^{-k_h, k_v}$: the latter has (almost) finite codimension
in the former (see Proposition~\ref{prop:finite_codim} for a precise
statement). Note that if one chooses another vertical segment
$\Gamma'_\sigma$ ending on the same singularity, then the difference of the
two distributions associated to $\Gamma_\sigma$ and $\Gamma'_\sigma$ belongs
to $\boB^{-k_h, k_v}$ when $k_h \geq 1$. The same happens if one replaces
$\rho$ by another function $\rho'$. Hence, modulo $\boB^{-k_h, k_v}$, the
distribution $\xi_\sigma^{(0)}$ is canonically defined and depends only on
$\sigma$.

\begin{prop}
\label{prop:spurious_distributions} Let $k_h, k_v\geq 3$. For $\sigma \in
\Sigma$, there exists a distribution $\xi_\sigma \in \boB_{ext}^{-k_h, k_v}$
such that $\xi_\sigma - \xi_\sigma^{(0)} \in \boB^{-k_h, k_v}$ and $L_v
\xi_\sigma$ is the constant distribution equal to $1/\Leb(M)$. Therefore, the
distributions $\xi_\sigma- \xi_{\sigma'}$ span a subspace of dimension $\Card
\Sigma- 1$ of $L_v$-annihilated distributions.
\end{prop}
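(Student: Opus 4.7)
The strategy is to write $\xi_\sigma = \xi_\sigma^{(0)} + \eta_\sigma$ and look for $\eta_\sigma \in \boB^{-k_h,k_v}$ solving the cohomological equation
\begin{equation*}
  L_v \eta_\sigma = \frac{1}{\Leb(M)} - L_v \xi_\sigma^{(0)}.
\end{equation*}
An integration by parts, using $\rho(0)=1$, $\rho(\ell)=0$ at the far endpoint, and the fact that $f\in C^\infty_c(M-\Sigma)$ vanishes near $\sigma$, identifies $L_v\xi_\sigma^{(0)}$ with the line distribution $f\mapsto\int_0^\ell\rho'(y)f(x_\sigma,y)\dd y$; since $\rho'$ vanishes near both endpoints of $\Gamma_\sigma$, its singular support lies in the interior of $\Gamma_\sigma$, away from $\Sigma$. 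The right-hand side above is therefore the explicit difference of the smooth constant $1/\Leb(M)$ and an essentially smooth line distribution.

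To produce $\eta_\sigma$, I would construct a vertical primitive of the right-hand side. Fix a horizontal transversal $I$ to the vertical flow (not containing $\sigma$) and, on a fundamental domain above $I$, set $\eta_\sigma$ locally equal to $y/\Leb(M)$ corrected by a step-type term along $\Gamma_\sigma$ of height $\rho(y)-1$ designed to absorb the source $-\rho'(y)\delta_{\Gamma_\sigma}$. Extending the construction across $M-\Sigma$ yields a multi-valued object whose jump across $I$ equals the first-return time $\tau_I(x)/\Leb(M)$; the key point is that this jump is cancelled, modulo $\boB^{-k_h,k_v}$, by the boundary contribution at $\sigma$, whose mass is exactly $1$ because $\rho(0)=1$. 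After appropriate smooth cutoffs near $\Sigma$ and the anisotropic regularization of the earlier sections, the resulting $\eta_\sigma$ lies in $\boB^{-k_h,k_v}$ and satisfies the cohomological equation.

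Once $\eta_\sigma$ has been built, setting $\xi_\sigma := \xi_\sigma^{(0)}+\eta_\sigma$ gives both claimed properties by construction. For the final dimension count, $L_v(\xi_\sigma-\xi_{\sigma'})=0$ is immediate. Linear independence of the differences modulo $\boB^{-k_h,k_v}$ reduces to linear independence of $\{\xi_\sigma^{(0)}\}_{\sigma\in\Sigma}$ modulo $\boB^{-k_h,k_v}$: a nontrivial combination $\sum_\sigma c_\sigma \xi_\sigma^{(0)}$ could lie in the closure of $C^\infty_c(M-\Sigma)$ only if its singular mass near each singularity $\sigma$ with $c_\sigma\neq 0$ disappears in the closure, which is incompatible with the $\rho(0)=1$ density at each distinct singularity. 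Hence $\{\xi_\sigma-\xi_{\sigma'}\}$ spans a $(\Card\Sigma-1)$-dimensional subspace of $\ker L_v \cap \boB_{ext}^{-k_h,k_v}$, as claimed.

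The main obstacle is precisely the construction of $\eta_\sigma$: the naive vertical primitive of $1/\Leb(M)$ is globally multi-valued on $M-\Sigma$ since the vertical flow is recurrent, and showing that after absorbing the singular corrections near $\sigma$ the object genuinely lies in the closure of smooth functions in the anisotropic $\boB^{-k_h,k_v}$ norm requires the anisotropic estimates developed earlier for the transfer operator. The normalization $\rho(0)=1$ is precisely what makes the absorption succeed and delivers exactly the constant $1/\Leb(M)$ on the right-hand side.
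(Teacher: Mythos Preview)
Your approach is genuinely different from the paper's, but as written it contains a real gap rather than a complete alternative argument.

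\textbf{The gap.} You reduce the problem to solving $L_v\eta_\sigma = 1/\Leb(M) - L_v\xi_\sigma^{(0)}$ with $\eta_\sigma\in\boB^{-k_h,k_v}$, and then propose to build $\eta_\sigma$ by direct vertical integration against a transversal. But this is exactly the kind of construction the paper explicitly flags as problematic (see the discussion preceding Theorem~\ref{thm:Lv_integre}): vertical primitives in $\boB^{-k_h,k_v}$ are not obtained constructively anywhere in the paper, because checking that the putative primitive lies in the \emph{closure of $C^\infty_c(M-\Sigma)$} is delicate. Your sentence ``the key point is that this jump is cancelled, modulo $\boB^{-k_h,k_v}$, by the boundary contribution at $\sigma$'' is the entire content of the proof, and it is asserted rather than argued. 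Recall also Lemma~\ref{lem:Lv_preimage_1}: there is no $g\in\boB^{-k_h,k_v}$ with $L_v g=1$, so the linear growth $y/\Leb(M)$ you write down locally cannot be globalized in $\boB^{-k_h,k_v}$ without a precise mechanism absorbing the monodromy, and you have not supplied one. Finally, your construction never invokes the pseudo-Anosov map $T$; on a translation surface whose vertical flow is not uniquely ergodic there are further obstructions (other invariant measures) to solving $L_v\eta=g$, so some dynamical input is necessary.

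\textbf{What the paper does instead.} The paper's proof is spectral and uses $T$ essentially. Passing to a power so that $T\sigma=\sigma$, one has $\boT\xi_\sigma^{(0)}=\lambda^{-1}\xi_\sigma^{(0)}+\eta$ with $\eta\in\boB^{-k_h-1,k_v}\cap\boB_{ext}^{-k_h,k_v}$ (this is the content of Proposition~\ref{prop:finite_codim} applied to the transformed distribution). Since the essential spectral radius of $\boT$ on that intersection space is below $\lambda^{-1}$, one can split $\eta=\eta_1+\eta_2$ with $\eta_1$ in the generalized $\lambda^{-1}$-eigenspace and $\eta_2=-(\boT-\lambda^{-1})\omega$ for some $\omega$ in the complementary spectral subspace. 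Then $\xi_\sigma:=\xi_\sigma^{(0)}+\omega$ lies in $E_{\lambda^{-1},ext}$, so $L_v\xi_\sigma\in E_{1,ext}$. By Corollary~\ref{cor:spectrum_ext} and Lemma~\ref{lem:E1}, $E_{1,ext}=E_1$ consists of constants; the value $1/\Leb(M)$ follows from the boundary computation of Lemma~\ref{lem:duality_ext}, which gives $\langle L_v\xi_\sigma,1\rangle=c_{\sigma,0,0}(\xi_\sigma)=1$. No vertical primitive is ever written down.
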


The full description of $L_v$-annihilated distributions is given in the next
theorem. It states that all such distributions come from the distributions
associated to Ruelle resonances described in
Theorem~\ref{thm:main_preserves_orientations}, and additional spurious
distributions coming from the singularities as in
Proposition~\ref{prop:spurious_distributions}.

To give a precise statement, we have to deal carefully with the exceptional
situation when there is an eigenvalue $\mu'$ of $T^*$ such that
$\mu=\lambda^{-1}\mu'$ is also an eigenvalue of $T^*$: then $L_h
E_{\lambda^{-1}\mu'}$ is contained in $E_{\lambda^{-1}\mu}$, and there are
some formal difficulties.

For each eigenvalue $\mu \in \Xi = \{\mu_1,\dotsc, \mu_{2g-2}\}$, there is a
map $f \mapsto [f]$ from $E_{\lambda^{-1}\mu} \cap \ker L_v$ to $H^1(M)$,
whose image is the generalized eigenspace associated to the eigenvalue $\mu$
of $T^*$. It is an isomorphism except in the exceptional situation above
where it is onto, with a kernel equal to $L_h E_{\lambda^{-1}\mu'}$. Denote
by $E_{\lambda^{-1}\mu}^H$ a subspace of $E_{\lambda^{-1}\mu}\cap \ker L_v$
which is sent isomorphically to the generalized eigenspace of $T^*$ for the
eigenvalue $\mu$, i.e., $E_{\lambda^{-1}\mu}^H = E_{\lambda^{-1}\mu}$, except
in the exceptional case above where $E_{\lambda^{-1}\mu}^H$ is a vector
complement to $L_h E_{\lambda^{-1}\mu'}$ in $E_{\lambda^{-1}\mu}\cap \ker
L_v$.

\begin{thm}
\label{thm:Lv_invariant_preserves_orientations} Let $T$ be a linear
pseudo-Anosov map preserving orientations on a genus $g$ compact surface $M$,
with expansion factor $\lambda$ and singularity set $\Sigma$. Let $L_v$
denote the differentiation in the vertical direction. Then the space of
distributions in the kernel of $L_v$ is exactly given by the direct sum of
the constant functions, of the spaces $L_h^n E^H_{\lambda^{-1}\mu_i}$ for
$n\geq 0$ and $i=1,\dotsc, 2g-2$, of the multiples of the distributions
$\xi_\sigma - \xi_{\sigma'}$ for $\sigma,\sigma'\in \Sigma$, and of the
multiples of $L_h^n \xi_\sigma$ for $n \geq 1$ and $\sigma \in \Sigma$, where
$\xi_\sigma$ is defined in Proposition~\ref{prop:spurious_distributions}.
\end{thm}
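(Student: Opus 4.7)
The easy inclusion is direct. One has $L_v 1 = 0$; the spaces $L_h^n E^H_{\lambda^{-1}\mu_i}$ lie in $\ker L_v$ because $L_v$ commutes with $L_h$ and annihilates $E^H_{\lambda^{-1}\mu_i}$ by definition; Proposition~\ref{prop:spurious_distributions} yields $L_v \xi_\sigma = 1/\Leb(M)$, so $\xi_\sigma - \xi_{\sigma'} \in \ker L_v$; and $L_v L_h^n \xi_\sigma = L_h^n L_v \xi_\sigma = L_h^n(\mathrm{const}) = 0$ for $n \geq 1$. Linear independence is inherited from Theorem~\ref{thm:main_preserves_orientations}: the pieces $L_h^n E^H_{\lambda^{-1}\mu_i}$ sit in distinct $\boT$-generalized eigenspaces by~\eqref{eq:flag_decomposition}, while the spurious distributions are separated from $\boB^{-k_h,k_v}$ via Proposition~\ref{prop:finite_codim} and from each other by their supports near $\Sigma$.

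For the reverse inclusion, given $g \in \ker L_v \cap \boB_{ext}^{-k_h,k_v}$, the identity $d(g\, dx) = -(L_v g)\, dx\wedge dy = 0$ exhibits $g\, dx$ as a closed $1$-current on $M - \Sigma$. A custom de Rham-type theorem adapted to our distribution spaces and handling the singularities with care attaches to it a class $[g\, dx] \in H^1(M - \Sigma)$, which is $T^*$-equivariant via $[\boT g\, dx] = \lambda^{-1} T^*[g\, dx]$ and vanishes if and only if $g = L_h \tilde g$ for some $\tilde g \in \ker L_v$ in an appropriate Banach space. The target sits in a short exact sequence $0 \to H^1(M) \to H^1(M-\Sigma) \to H^1(M-\Sigma)/H^1(M) \to 0$; the quotient has dimension $\Card\Sigma - 1$ and is spanned by the classes $[(\xi_\sigma - \xi_{\sigma'})\, dx]$, while on $H^1(M)$ the $T^*$-eigenvalues are $\{\lambda, \lambda^{-1}, \mu_1, \ldots, \mu_{2g-2}\}$. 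Only the $\lambda$ eigenclass (realized by constants) and the $\mu_i$ eigenclasses (realized by elements of $E^H_{\lambda^{-1}\mu_i}$, from the construction in the proof of Theorem~\ref{thm:main_preserves_orientations}) can appear as $[g\, dx]$; the $\lambda^{-1}$ eigenclass, dual to the vertical foliation, cannot, as it would require $g\, dx$ to carry a nonzero $dy$-component.

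Subtracting suitable multiples of the claimed generators, we reduce to $[g\, dx] = 0$, whence $g = L_h \tilde g$ with $\tilde g \in \ker L_v$. Iterating on $\tilde g$ and using $\boT L_h = \lambda^{-1} L_h \boT$ --- which multiplies $\boT$-spectral content by $\lambda$ at each step --- yields an expansion $g = g^{(0)} + L_h g^{(1)} + L_h^2 g^{(2)} + \cdots + L_h^k g^{(k)}$, in which each $g^{(j)}$ has nonzero cohomology class and hence lies in the span of the listed generators; in particular the terms $L_h^n \xi_\sigma$ for $n \geq 1$ arise precisely from iterating on the spurious layer $\xi_\sigma - \xi_{\sigma'}$.

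The main obstacle is to ensure that this iteration terminates after finitely many steps for \emph{general} $L_v$-closed distributions and not merely for $\boT$-eigenvectors. For an eigenvector $g$ with eigenvalue $\rho$, termination is automatic: after $k$ iterations one would produce an eigenvalue $\lambda^k \rho$, which must lie in the Ruelle spectrum and therefore have modulus $\leq 1$, forcing $k \leq -\log_\lambda|\rho|$. For a general $L_v$-closed $g$, this reduces to the statement that $\ker L_v \cap \boB^{-k_h,k_v}$ is the (finite) span of the Ruelle generalized eigenvectors it contains --- equivalently, no nonzero $L_v$-closed distribution has $\boT$-spectral radius zero. I expect this to require either a duality argument against smooth test functions exploiting the mixing of $\boT$, or a compactness/regularity gain from repeated application of $L_h^{-1}$ via the de Rham theorem, showing that an infinitely $L_h$-divisible $L_v$-closed distribution must vanish.
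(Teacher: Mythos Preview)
Your outline has the right shape, and your second guess for the termination mechanism is on target, but you have not actually carried it out; and there is a real error in your treatment of the class $[\dd y]$.

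\textbf{Termination.} The point is not spectral. Proposition~\ref{prop:integre_Lu} says that the horizontal primitive of an element of $\boB^{-k_h,k_v}\cap\ker L_v\cap\ker[\cdot]$ lies in $\boB^{-k_h+1,k_v}$: each step gains exactly one unit of horizontal regularity. Since any $L_v$-annihilated distribution sits in some $\boB^{-k_h,k_v}_{ext}$ with finite $k_h$ (Lemma~\ref{lem:all_Lvinv_in_boBext}), after at most $k_h$ iterations one lands in $\boB^{0,k_v}$, where an $L_v$-invariant element is a continuous function and hence constant by minimality of the vertical flow. The paper packages this as an induction on the order of the distribution (Lemma~\ref{lem:Lv_invariant_boB}); no duality or compactness is needed.

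\textbf{The class $[\dd y]$.} Your claim that $[\dd y]$ cannot be represented because ``$g\,\dd x$ would need a $\dd y$-component'' is not an argument---nothing forbids a horizontal current from representing $[\dd y]$. In fact it \emph{is} realized in the extended space, by $-\Leb(M)\cdot L_h\xi_\sigma$ (Remark~\ref{rmk:cohomological}); this is where the generators $L_h^n\xi_\sigma$ actually come from, not from ``iterating on the spurious layer'' as you wrote. What does fail is realization inside $\boB^{-k_h,k_v}$, and this requires Lemma~\ref{lem:Lv_preimage_1}: if $[f]=[\dd y]$ one can build $g$ with $L_v g=1$, contradicting $\langle L_v g,1\rangle=-\langle g,L_v 1\rangle=0$. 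The paper therefore runs the argument in two stages rather than directly in $H^1(M-\Sigma)$: first classify $\ker L_v\cap\boB^{-k_h,k_v}$ by the induction above, then handle the finite-codimensional complement in $\boB^{-k_h,k_v}_{ext}$ via Proposition~\ref{prop:finite_codim} together with the boundary identity $\langle L_v\omega,1\rangle=\sum_\sigma c_{\sigma,0,0}$ of Lemma~\ref{lem:duality_ext}, which forces $\sum_\sigma c_{\sigma,0,0}=0$ and leaves only the differences $\xi_\sigma-\xi_{\sigma'}$ and their $L_h$-images (Lemma~\ref{lem:Lvinv_description}).
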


In particular, the space of $L_v$-annihilated distributions of order $\geq
-N$ is finite-dimensional for any $N$, and its dimension grows like $(2g-2 +
\Card \Sigma)N$ when $N\to \infty$. This is an analogue
of~\cite[Theorem~7.7(i)]{forni_deviation} in our context (see
Remark~\ref{rmk:cohomological} for a further cohomological description). If
one restricts to $L_v$-annihilated distributions coming from $\boB^{-k_h,
k_v}$, one should remove the distributions $\xi_\sigma - \xi_{\sigma'}$ and
$L_h^n \xi_\sigma$. Their dimension grows like $(2g-2)N$, corresponding
to~\cite[Theorem~7.7(ii)]{forni_deviation}.

Bufetov has also studied vertically invariant distributions of the vertical
foliation of a linear pseudo-Anosov map in~\cite{bufetov_pseudo_anosov}. In
this article, the author is only interested in distributions of small order,
which can be integrated against characteristic functions of intervals. He
obtains a full description of such distributions, by more combinatorial
means, and gets further properties such as their local H\"older behavior. These
distributions correspond exactly to the elements of
$\bigcup_{\abs{\alpha}>\lambda^{-1}} E_\alpha$.

\subsection*{Solving the cohomological equation for the vertical flow}

One of the main motivations to study $L_v$-annihilated distributions is that
they are related to the cohomological equation for the vertical flow. Indeed,
if one wants to write a function $f$ as $L_v F$ for some function $F$ with
some smoothness, then one should have for any distribution $\omega$ in the
kernel of $L_v$ the equality
\begin{equation}
\label{eq:woixcuyvjklmlsw<cmviw}
  \langle \omega, f \rangle = \langle \omega, L_v F \rangle
  = -\langle L_v \omega, F\rangle
  = 0,
\end{equation}
at least if $F$ is more smooth than the order of $\omega$ and if $L_v$ is
antiselfadjoint on the relevant distributions (note that, in general, $F$
will not be supported away from the singularities, so the fact the $\langle
\omega, F\rangle$ or $\langle L_v \omega, F\rangle$ are well defined is not
obvious, and neither is the formal equality $\langle \omega, L_v F \rangle
  = -\langle L_v \omega, F\rangle$). Such necessary conditions to
have a coboundary are also often sufficient. In this direction, we obtain the
following statement. The philosophy that results on the coboundary equation
should follow from results on Ruelle resonances comes from
Giulietti-Liverani~\cite{giulietti_liverani}. Note that the converse is also
true: in a resent work, Forni~\cite{forni2018} studied Ruelle resonances and
obstructions to the existence of solution to the cohomological equation. In
particular his work independently reproves some of the results of our paper
(with very different methods). The cohomological equation was first solved
for a large class of interval exchange maps (including the ones corresponding
to pseudo-Anosov maps) in~\cite{marmi_moussa_yoccoz}. The proof we give of
the next theorem also owes a lot to the techniques
of~\cite{giulietti_liverani} (although the local affine structure makes many
arguments simpler compared to their article, but the presence of
singularities creates new difficulties, as usual).

\begin{thm}
\label{thm:coboundary_Ck_main} In the setting of
Theorem~\ref{thm:Lv_invariant_preserves_orientations}, consider a $C^\infty$
function $f$ with compact support in $M-\Sigma$. Assume that $\langle \omega,
f\rangle = 0$ for all $\omega \in \bigcup_{\abs{\alpha} \geq \lambda^{-k-1}}
E_\alpha \cap \ker L_v$. Then there exists a function $F$ on $M$ which is
$C^k$ whose $k$ derivatives are bounded and continuous on $M$, such that $f =
L_v F$ on $M-\Sigma$.
\end{thm}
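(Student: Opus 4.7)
Following the Giulietti-Liverani philosophy~\cite{giulietti_liverani}, the plan is to use the spectral decomposition of the transfer operator $\boT$ developed for Theorem~\ref{thm:main_preserves_orientations} to split $f$ into a ``resonant'' part that vanishes under the hypothesis and a fast-decaying part on which a formal inverse of $L_v$ converges, and then to upgrade the resulting primitive to a pointwise $C^k$ function on all of $M$. Concretely, I first fix anisotropic parameters $k_h, k_v$ with $\min(k_h, k_v) \geq k+2$ so that the essential spectral radius of $\boT$ on $\boB^{-k_h, k_v}$ is strictly less than $\lambda^{-k-1}$; outside the disk of radius $\lambda^{-k-1}$ the spectrum is the finite set of Ruelle resonances of Theorem~\ref{thm:main_preserves_orientations}, each generalized eigenspace carrying the flag decomposition~\eqref{eq:flag_decomposition}. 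Let $\Pi$ denote the spectral projection onto $E_\geq := \bigoplus_{\abs{\alpha} \geq \lambda^{-k-1}} E_\alpha$ and set $Q = \Id - \Pi$, so $\boT Q$ has spectral radius $< \lambda^{-k-1}$.

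The first key step is to verify that $\Pi f = 0$. Since $\boT L_v = \lambda L_v \boT$, the operator $L_v$ preserves $E_\geq$, and on each generalized eigenspace $L_v$ is nilpotent with one-dimensional ``top'' $E_\alpha \cap \ker L_v$. Suppose for contradiction $\Pi f \neq 0$ and let $n_0 \geq 0$ be maximal with $g := L_v^{n_0}(\Pi f) \neq 0$; then $g$ is a nonzero element of $E_\geq \cap \ker L_v$. Using the canonical non-degenerate pairing between $E_\geq$ and the corresponding dual generalized eigenspaces of $\boT^*$ (described by an analogue of Theorem~\ref{thm:main_preserves_orientations} for $\boT^*$, obtained by swapping the roles of $L_h$ and $L_v$), one produces $\omega \in E_\geq \cap \ker L_v$ pairing nontrivially with $g$. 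If $n_0 \geq 1$, integration by parts (legitimate because $f \in C^\infty_c(M-\Sigma)$, so boundary contributions at $\Sigma$ disappear) gives
\begin{equation*}
\langle \omega, g \rangle = \langle \omega, L_v^{n_0}\Pi f \rangle = (-1)^{n_0} \langle L_v^{n_0} \omega, \Pi f \rangle = 0
\end{equation*}
since $L_v \omega = 0$, a contradiction. Hence $n_0 = 0$ and $g = \Pi f \in \ker L_v$; the spectral orthogonality $\langle \omega, Qf \rangle = 0$ (with $\omega$ supported in the large-eigenvalue generalized eigenspace and $Qf$ in the complementary spectral part) then yields $\langle \omega, f\rangle = \langle \omega, \Pi f\rangle \neq 0$, contradicting the hypothesis. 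Therefore $\Pi f = 0$.

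With $\Pi f = 0$ one has the decay $\| \boT^n f \|_{\boB^{-k_h, k_v}} \leq C(\lambda^{-k-1}+\epsilon)^n$ for any $\epsilon > 0$. I construct the primitive $F$ as a telescoping series: take $G_0 \in \boB^{-k_h, k_v+1}_{ext}$ with $L_v G_0 = f$ (for instance by integrating $f$ along vertical leaves, using the compact support of $f$ away from $\Sigma$) and set
\begin{equation*}
F := -\sum_{n \geq 0} \lambda^n \boT^n G_0.
\end{equation*}
Using the intertwining $L_v \boT^n = \lambda^{-n} \boT^n L_v$, the partial sums telescope under $L_v$ to $\sum_{n=0}^{N-1} \boT^n f$, and the series $F$ converges in $\boB^{-k_h, k_v+1}$ by the spectral gap; in the limit $L_v F = f$, using once more $\Pi f = 0$ to interpret the telescoped sum. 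To promote $F$ to a $C^k$ function with bounded continuous derivatives on $M$, I apply the same construction to $L_h^j L_v^i f$ for $i+j \leq k$ (each of which inherits a shifted version of the vanishing hypothesis) and appeal to an anisotropic Sobolev-type embedding into $C^k$ away from $\Sigma$. Near $\Sigma$, since $f$ is supported at positive distance from the singularities, the vertical integrals defining $F$ are taken over bounded vertical segments and extend continuously with the needed regularity.

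The main obstacle I anticipate is the verification $\Pi f = 0$ from a hypothesis that only constrains $f$ against the one-dimensional subspaces $E_\alpha \cap \ker L_v$; this requires a careful description of the dual generalized eigenspaces of $\boT^*$ and a delicate bookkeeping of integration by parts. A secondary difficulty is the $C^k$-regularity of $F$ at points of $\Sigma$, where the local affine structure of $M$ is singular and the naive vertical-integral representation of $F$ degenerates; this is the place where the ``spurious'' distributions $\xi_\sigma$ of Proposition~\ref{prop:spurious_distributions} enter the analysis.
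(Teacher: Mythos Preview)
Your architecture---kill the resonant part of $f$, then build $F$ from the fast-decaying remainder---matches the paper's Giulietti--Liverani strategy, but the first step contains a genuine error: the conclusion $\Pi f = 0$ is \emph{false} under the stated hypothesis. The hypothesis only tests $f$ against $E_\alpha\cap\ker L_v$, which by the flag decomposition~\eqref{eq:flag_decomposition} is a proper subspace of $E_\alpha$ whenever $\abs{\alpha}<\lambda^{-1}$; under the perfect duality of Lemma~\ref{lem:dualite_Ealpha} this forces the $\check E_\alpha$-component of $f$ to lie in a hyperplane, not to vanish. Your integration-by-parts argument conflates the two spaces in the duality: both $g = L_v^{n_0}(\Pi f)$ and your $\omega$ live in $\boB^{-k_h,k_v}$, so $\langle\omega,g\rangle$ is not defined, and the spectral orthogonality you invoke for $\langle\omega,Qf\rangle$ suffers the same defect. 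If instead you interpret $\Pi$ as the projection in $\check\boB$ (so that pairings with $\omega\in E_\alpha$ make sense), then on $\check\boB$ under $\check\boT$ it is $L_h$, not $L_v$, that is nilpotent on eigenspaces, and your maximal $n_0$ need not exist.

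The paper's fix (Lemma~\ref{lem:stronger_assumptions}) is not to prove $\Pi f=0$ but to \emph{modify} $f$: one subtracts a smooth coboundary $L_v g_0$ so that $f+L_v g_0$ is orthogonal to all of $\bigcup_\alpha E_\alpha$, exploiting that the part of $f$ not seen by $E_\alpha\cap\ker L_v$ corresponds precisely to the image of $L_v$ in the dual flag. Two further issues: your series $F=-\sum_{n\geq 0}\lambda^n\boT^nG_0$ satisfies $L_vF=-\sum_n\boT^nf$, not $f$, so it does not solve the equation even formally (the paper uses instead the smoothed integrals $F_n(x)=\int_0^{\lambda^n}\chi(t/\lambda^n)f(g_tx)\dd t$ of Lemma~\ref{lem:GL_coboundary}); and your claim that near $\Sigma$ the vertical integrals defining $F$ are over bounded segments is incorrect, since orbits starting near a singularity immediately leave that neighborhood and traverse the support of $f$, so the regularity of $F$ at $\Sigma$ requires the separate analysis of Lemma~\ref{lem:Fh_Lipschitz} and the forward/backward splitting in the proof of Theorem~\ref{thm:continuous_coboundary}.
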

The fact that $f$ is $C^\infty$ and compactly supported in $M-\Sigma$ is for
the simplicity of the statement. Indeed, the theorem also holds if $f$ is
continuous on $M-\Sigma$ and $C^{k+2}$ along horizontal lines, with $L_h^j f$
uniformly bounded for any $j \leq k+2$, see the more precise
Theorem~\ref{thm:coboundary_Ck} below (in this case, the primitive $F$ is
$C^k$ along horizontal lines). Even more, $C^{k+1+\epsilon}$ along horizontal
lines would suffice, for any $\epsilon>0$. So, the loss of derivatives in the
above theorem is really $1+\epsilon$ (which is optimal). Moreover, the $k$-th
derivative of the solution of the coboundary equation is automatically H\"older
continuous. This corresponds in our context respectively to the results
of~\cite{forni_regularity} and~\cite{marmi_yoccoz_Holder}.

It is not surprising that distributions in $E_\alpha \cap \ker L_v$ show up
as conditions to solve the cohomological equation, as explained before the
theorem. The main outcome of Theorem~\ref{thm:coboundary_Ck_main} is that
there are finitely many obstructions to be a $C^k$ coboundary. The number of
such obstructions grows like $(2g-2)k$ when $k\to \infty$, by the
classification of the Ruelle spectrum given in
Theorem~\ref{thm:main_preserves_orientations} and the following discussion.
This answers the problem raised by Forni at the end
of~\cite{forni_cohomological}, where a similar theorem is proved for the
vertical flow on generic translation surfaces, using different methods based
on the Laplacian.

Note that the distributions that appear in
Theorem~\ref{thm:coboundary_Ck_main} only come from the Ruelle spectrum. The
other $L_v$-annihilated distributions from
Theorem~\ref{thm:Lv_invariant_preserves_orientations} do not play a role. The
reason is that the formal computation in~\eqref{eq:woixcuyvjklmlsw<cmviw}
does not work for these distributions, as $F$ is not compactly supported away
from $\Sigma$. These distributions would appear if one were trying to find a
vertical primitive of $f$ which, additionally, vanishes at all singularities.

\subsection*{Trace formula}

In finite dimension, the trace of an operator is the sum of its eigenvalues.
This does not hold in general in infinite dimension (sometimes for lack of a
good notion of trace, or for lack of summability of the eigenvalues), but it
sometimes does for well behaved operators. In the dynamical world, this often
holds for analytic maps (for which the transfer operator can be interpreted
as a nuclear operator on a suitable space), but it fails most of the time
outside of this class, see~\cite{jezequel} and references therein.

In our case, it is easy to investigate this question, as we have a full
description of the Ruelle spectrum. One should also define a suitable trace
of the composition operator $\boT$. On smooth manifolds, one can define the
flat trace of a composition operator as the limit of the integral along the
diagonal of the Schwartz kernel of a smoothed version of $\boT$, when the
smoothing parameter tends to $0$. When $T$ is a diffeomorphism with isolated
fixed points, this reduces to a sum over the fixed points of
$1/{\abs{\det(\Id - DT(x))}}$, as follows from an easy computation involving
the change of variables $y = x - T x$.

In our case, the determinant is $(1-\lambda) (1-\lambda^{-1})$ everywhere,
but one should also deal with the singularities, where the smoothing
procedure is not clear (one can not convolve with a kernel because of the
singularity). We recall the notion of~\emph{Lefschetz index} of an isolated
fixed point $x$ of a homeomorphism $T$ in two dimensions (see for
instance~\cite[Section 8.4]{katok}): it is the number
\begin{equation*}
  \ind_T (x) = \deg (p \mapsto (p-T p)/\norm{p-T p}),
\end{equation*}
where the degree is computed on a small curve around $x$, identified with
$\Sbb^1$. If one could make sense of a smoothing at the singularity $\sigma$,
then its contribution to the flat trace would be $\ind_T \sigma/((1-\lambda)
(1-\lambda^{-1}))$, as follows from the same formal computation with the
change of variables $y = x- Tx$ (the index comes from the number of branches
of this map, giving a multiplicity when one computes the integral). Thus, to
have a sound definition independent of an unclear smoothing procedure, we
\emph{define} the flat trace of $\boT^n$ as
\begin{equation*}
  \flattr(\boT^n) = \sum_{T^n x = x} \frac{ \ind_{T^n} x}{(1-\lambda^n) (1-\lambda^{-n})}.
\end{equation*}
If $T^n$ is smooth at a fixed point $x$, then its index is $-1$ and we
recover the usual contribution of $x$ to the flat trace. More generally, if
$T$ is such that $\det(I-DT)$ has a limit at all fixed points of $T$ (regular
or singular) then one defines its flat trace as the sum over all fixed points
$x$ of $\ind_T x/(\lim_x \det(I-DT))$.

\begin{thm}
\label{thm:trace_formula} Let $T$ be a linear pseudo-Anosov map preserving
orientations on a compact surface $M$. Then, for all $n$,
\begin{equation}
\label{eq:sdfuiolkjxcvuiop}
  \flattr(\boT^n) = \sum_\alpha d_\alpha \alpha^n,
\end{equation}
where the sum is over all Ruelle resonances $\alpha$ of $T$, and $d_\alpha$
denotes the multiplicity of $\alpha$.
\end{thm}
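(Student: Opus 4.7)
The plan is to evaluate both sides of~\eqref{eq:sdfuiolkjxcvuiop} in closed form: the left-hand side via the classical Lefschetz fixed-point theorem applied to $T^n$, and the right-hand side via the explicit description of the Ruelle spectrum in Theorem~\ref{thm:main_preserves_orientations}. The two closed-form expressions then coincide after a one-line algebraic manipulation.

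For the left-hand side, $T^n$ is a continuous self-map of the compact surface $M$ with only isolated (hence finitely many) fixed points, so the Lefschetz fixed-point formula gives
\begin{equation*}
\sum_{T^n x = x} \ind_{T^n}(x) = L(T^n) = \sum_{k=0}^{2}(-1)^k \tr(T^{*n}|H^k(M)).
\end{equation*}
Since $T$ is an orientation-preserving homeomorphism (its Jacobian in local half-translation charts is $\lambda \cdot \lambda^{-1} = 1 > 0$), $T^*$ acts as the identity on $H^0(M)$ and $H^2(M)$, each contributing $1$. On $H^1(M)$ the eigenvalues of $T^*$ are $\lambda, \lambda^{-1}$ and $\mu_1, \dotsc, \mu_{2g-2}$ by the discussion preceding Theorem~\ref{thm:main_preserves_orientations}. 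Therefore $L(T^n) = 2 - \lambda^n - \lambda^{-n} - \sum_i \mu_i^n = (1-\lambda^n)(1-\lambda^{-n}) - \sum_i \mu_i^n$, and dividing by $(1-\lambda^n)(1-\lambda^{-n})$ yields
\begin{equation*}
\flattr(\boT^n) = 1 - \frac{\sum_{i=1}^{2g-2}\mu_i^n}{(1-\lambda^n)(1-\lambda^{-n})}.
\end{equation*}

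For the right-hand side, Theorem~\ref{thm:main_preserves_orientations} lists the Ruelle resonances with multiplicity as the simple resonance $1$ together with $\lambda^{-m}\mu_i$ of multiplicity $m$ for $m \geq 1$ and $i = 1, \dotsc, 2g-2$. Summing over pairs $(m,i)$ before collecting equal complex numbers (which automatically handles any accidental coincidences) and using $\sum_{m\geq 1} m q^m = q/(1-q)^2$ applied to $q = \lambda^{-n} < 1$, we find
\begin{equation*}
\sum_\alpha d_\alpha \alpha^n = 1 + \sum_{i=1}^{2g-2}\mu_i^n \sum_{m\geq 1} m \lambda^{-mn} = 1 + \sum_{i=1}^{2g-2}\mu_i^n \cdot \frac{\lambda^{-n}}{(1-\lambda^{-n})^2}.
\end{equation*}
Matching this with the previous formula reduces to the elementary identity $\lambda^{-n}/(1-\lambda^{-n})^2 = -1/((1-\lambda^n)(1-\lambda^{-n}))$, which follows from $1-\lambda^n = -\lambda^n(1-\lambda^{-n})$.

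There is essentially no serious obstacle beyond bookkeeping once one grants the Lefschetz formula and Theorem~\ref{thm:main_preserves_orientations}. The only point requiring care is the multiplicity convention on the right-hand side: when two labels $\lambda^{-m}\mu_i$ and $\lambda^{-m'}\mu_j$ represent the same complex number, one sums the contributions before collecting, which is exactly the convention announced after Theorem~\ref{thm:main_preserves_orientations}, and which makes the computation of $\sum_\alpha d_\alpha \alpha^n$ above valid.
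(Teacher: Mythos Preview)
Your proof is correct and follows essentially the same approach as the paper: both compute the left-hand side via the Lefschetz fixed-point formula and the right-hand side by summing the explicit Ruelle spectrum from Theorem~\ref{thm:main_preserves_orientations} using $\sum_{m\geq 1} m q^m = q/(1-q)^2$, then match the two via the identity $q/(1-q)^2 = -1/((1-q)(1-q^{-1}))$.
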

\begin{proof}
The Lefschetz fixed-point formula (see~\cite[Theorem~8.6.2]{katok}) gives
\begin{align*}
  \sum_{T^n x = x} \ind_{T^n} x
  &= \tr((T^n)_{\restr H^0(M)}^*) - \tr((T^n)_{\restr H^1(M)}^*) + \tr((T^n)_{\restr H^2(M)}^*)
  \\& = 1 - \pare*{\lambda^n + \lambda^{-n} + \sum_{i=1}^{2g-2} \mu_i^n} + 1,
\end{align*}
where $\{\mu_1,\dotsc, \mu_{2g-2}\}$ denote the eigenvalues of $T^*$ on the
subspace of $H^1(M)$ orthogonal to $[\dd x]$ and $[\dd y]$, as in the
statement of Theorem~\ref{thm:main_preserves_orientations}. We can also
compute the right hand side of~\eqref{eq:sdfuiolkjxcvuiop}, using the
description of Ruelle resonances: $1$ has multiplicity one, and
$\lambda^{-k}\mu_i$ has multiplicity $k$ for $k\geq 1$. As $\sum k x^k =
x/(1-x)^2 = -1/((1-x)(1-x^{-1}))$, we get
\begin{align*}
  \sum_\alpha d_\alpha \alpha^n
  &= 1 + \sum_{i=1}^{2g-2} \sum_{k=1}^\infty k \lambda^{-nk} \mu_i^n
  = 1 - \sum_{i=1}^{2g-2} \frac{\mu_i^n}{(1-\lambda^{-n})(1-\lambda^n)}
  \\&
  = \frac{(1-\lambda^{-n})(1-\lambda^n) - \sum_{i=1}^{2g-2}\mu_i^n}{(1-\lambda^{-n})(1-\lambda^n)}
  = \frac{2 - \pare*{\lambda^n + \lambda^{-n} +\sum_{i=1}^{2g-2}\mu_i^n}}{(1-\lambda^{-n})(1-\lambda^n)}.
\end{align*}
Combining the two formulas with the definition of the flat trace, we get the
conclusion of the theorem.
\end{proof}

\subsection*{Organization of the paper}
In Section~\ref{sec:def_boB}, we define the anisotropic Banach spaces
$\boB^{-k_h, k_v}$ we will use to understand the spectrum of the composition
operator $\boT$. The construction works in any translation surface. We prove
the basic properties of these Banach spaces, including notably compact
inclusion statements, a duality result, and a cohomological interpretation of
elements of the space which are vertically invariant. All these tools are put
to good use in Section~\ref{sec:Ruelle_spectrum}, where we describe the
Ruelle spectrum of a linear pseudo-Anosov map preserving orientations,
proving Theorem~\ref{thm:main_preserves_orientations}. Then, we use (and
extend) this theorem in Section~\ref{sec:vertically_invariant} to classify
all vertically invariant distributions (proving
Theorem~\ref{thm:Lv_invariant_preserves_orientations}), and in
Section~\ref{sec:cohomological} to find smooth solutions to the cohomological
equation (proving Theorem~\ref{thm:coboundary_Ck_main}). Finally,
Section~\ref{sec:orientations} is devoted to the discussion of the Ruelle
spectrum for linear pseudo-Anosov maps which do not preserve orientations.

\section{Functional spaces on translation surfaces}
\label{sec:def_boB}

\subsection{Anisotropic Banach spaces on translation surfaces}

\label{subsec:Bkhkv}

In this section, we consider a translation surface $(M, \Sigma)$. We wish to
define anisotropic Banach spaces of distributions on such a surface, i.e.,
spaces of distributions which are smooth along the vertical direction, and
dual of smooth along the horizontal direction. Indeed, this is the kind of
space on which the transfer operator associated to a pseudo-Anosov map will
be well behaved, leading ultimately to the existence of Ruelle spectrum for
such a map, and to its explicit description. The definition we use below is
of geometric nature: we will require that the objects in our space can be
integrated along horizontal segments when multiplied by smooth functions, and
that they have vertical derivatives with the same property. This
simple-minded definition in the spirit of~\cite{GL_Anosov2, avila_gouezel} is
very well suited for the constructions we have in mind below (especially for
the cohomological interpretation in Paragraph~\ref{subsec:cohomological}
below) and makes it possible to deal transparently with the singularities.
However, it is probably possible to use other approaches as explained
in~\cite{baladi_ultimate} and references therein.

Let $V^h$ be the unit norm positively oriented horizontal vector field, i.e.,
the vector field equal to $1 \in \C$ in the translation charts. It is
$C^\infty$ on $M-\Sigma$, but singular at $\Sigma$. In particular, the
derivation $L_h$ given by this vector field acts on $C^\infty(M-\Sigma)$. In
the same way, the vertical vector field $V^v$ (equal to $\ic$ in the complex
translation charts) is $C^\infty$ on $M-\Sigma$, and the corresponding
derivation $L_v$ acts on $C^\infty(M-\Sigma)$. On this space, the two
derivations $L_v$ and $L_h$ commute, as this is the case in $\C$.

Choose two real numbers $k\geq 0$ and $\beta >0$. Denote by $\boI^h_\beta$
the set of horizontal segments of length $\beta$ in $M-\Sigma$. For $I\in
\boI^h_\beta$, denote by $C^k_c(I)$ the set of $C^k$ functions on $I$ which
vanish on a neighborhood of the boundary of $I$, endowed with the $C^k$ norm
(when $k$ is not an integer, this is the set of functions of class
$C^{\lfloor k \rfloor}$ whose $\lfloor k \rfloor$-th derivative is H\"older
continuous with exponent $k-\lfloor k \rfloor$).

When $k_h\geq 0$ is a nonnegative real number, and $k_v\geq 0$ is an integer,
we define a seminorm on $C^\infty_c(M-\Sigma)$ by
\begin{equation*}
  \norm{f}'_{-k_h, k_v, \beta} =
    \sup_{I \in \boI^h_\beta} \sup_{\phi \in C^{k_h}_c(I),
    \norm{\phi}_{C^{k_h}} \leq 1} \abs*{\int_I \phi \cdot (L_v)^{k_v} f \dd x}.
\end{equation*}
Essentially, this seminorm measures $k_v$ derivatives in the vertical
direction, and $-k_h$ derivatives in the horizontal direction (as one is
integrating against a function with $k_h$ derivatives). Hence, it is indeed a
norm of anisotropic type. One could define many such norms, but this one is
arguably the simplest one: it takes advantage of the fact that the horizontal
and vertical foliations are smooth, and even affine.

\begin{prop}
\label{prop:beta_irrelevant} If $\beta$ is smaller than the length of the
shortest horizontal saddle connection, then this seminorm does not really
depend on $\beta$: if $\beta_1$ is another such number, then there exists a
constant $C=C(\beta, \beta_1, k_h, k_v)$ such that , for any $f \in
C^\infty_c(M-\Sigma)$,
\begin{equation*}
C^{-1}\norm{f}'_{-k_h, k_v, \beta_1} \leq \norm{f}'_{-k_h, k_v, \beta} \leq C\norm{f}'_{-k_h, k_v, \beta_1}.
\end{equation*}
\end{prop}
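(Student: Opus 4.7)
The strategy is to prove the two inequalities by reducing everything to a single geometric extension lemma, followed by a standard partition-of-unity argument. Without loss of generality assume $\beta_1 \leq \beta$, both strictly smaller than the length $L$ of the shortest horizontal saddle connection.

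The key ingredient is the following \emph{extension lemma}: any horizontal segment $I_1 \in \boI^h_{\beta_1}$ is contained in some $I \in \boI^h_\beta$ lying in $M-\Sigma$. To prove it, classify the maximal horizontal trajectory $J \subset M-\Sigma$ through $I_1$: either $J$ is a horizontal saddle connection (of length at least $L$); or $J$ is a closed horizontal trajectory (whose length equals that of a boundary loop of its containing horizontal cylinder, which is a concatenation of horizontal saddle connections and hence has length at least $L$); or $J$ is unbounded in at least one direction. In all cases the length of $J$ is strictly greater than $\beta$, so there is enough room inside $J$ to enlarge $I_1$ to a segment of length $\beta$.

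Given the extension lemma, the inequality $\norm{f}'_{-k_h, k_v, \beta_1} \leq \norm{f}'_{-k_h, k_v, \beta}$ is immediate: for $I_1 \in \boI^h_{\beta_1}$ and $\phi \in C^{k_h}_c(I_1)$, pick $I \in \boI^h_\beta$ with $I_1 \subset I$; as $\phi$ vanishes near $\partial I_1$, extending by zero gives $\tilde\phi \in C^{k_h}_c(I)$ with the same $C^{k_h}$ norm, and $\int_{I_1} \phi \cdot L_v^{k_v} f \dd x = \int_I \tilde\phi \cdot L_v^{k_v} f \dd x$ is controlled by $\norm{f}'_{-k_h, k_v, \beta}$.

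For the reverse inequality I would use a partition of unity. Fix once and for all smooth bump functions $(\chi_j)_{j=1}^N$ on $\R$ with $\sum_j \chi_j \equiv 1$ on $[0,\beta]$, each $\chi_j$ supported in an interval of length strictly less than $\beta_1$, satisfying $\norm{\chi_j}_{C^{k_h}} \leq C_0$, and with $N = O(\beta/\beta_1)$. For any $I \in \boI^h_\beta$, transport this partition via the horizontal coordinate and decompose $\phi \in C^{k_h}_c(I)$ as $\phi = \sum_j \phi\chi_j$; each piece $\phi\chi_j$ is supported in a subinterval of $I$ of length $<\beta_1$, hence by the extension lemma applied to this subinterval lies in some $I_j \in \boI^h_{\beta_1}$, with $\norm{\phi\chi_j}_{C^{k_h}} \leq C_0 \norm{\phi}_{C^{k_h}}$ by the Leibniz rule. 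Summing the $N$ resulting estimates yields $\norm{f}'_{-k_h, k_v, \beta} \leq N C_0 \norm{f}'_{-k_h, k_v, \beta_1}$, as desired. The only non-routine point is the extension lemma, and more precisely the observation that closed horizontal trajectories have length at least $L$ via the cylinder decomposition; everything else is standard.
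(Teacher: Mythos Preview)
Your proof is correct and follows essentially the same approach as the paper: one inequality by extending the shorter interval and the test function by zero, the other by a fixed smooth partition of unity on $[0,\beta]$ into pieces of small support. Your explicit case analysis in the extension lemma (saddle connection, periodic leaf via cylinder boundaries, or unbounded leaf) is more detailed than the paper's one-line justification, but the underlying argument is the same.
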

We recall that a horizontal saddle connection is a horizontal segment
connecting two singularities. There is no horizontal saddle connection in a
surface carrying a pseudo-Anosov map: otherwise, iterating the inverse of the
map (which contracts uniformly the horizontal segments), we would deduce the
existence of arbitrarily short horizontal saddle connections, a
contradiction.
\begin{proof}
Assume for instance $\beta_1>\beta$. The inequality $\norm{f}'_{-k_h, k_v,
\beta} \leq \norm{f}'_{-k_h, k_v, \beta_1}$ is clear: an interval $I \in
\boI^h_\beta$ is contained in an interval $I_1$ in $\boI^h_{\beta_1}$ as
$\beta_1$ is smaller than the length of any horizontal saddle connection.
Moreover, a compactly supported test function $\phi$ on $I$ can be extended
by $0$ to outside of $I$ to get a test function on $I_1$. The result follows
readily.

Conversely, consider a smooth partition of unity $(\rho_j)_{j\in J}$ on
$[0,\beta_1]$ by $C^\infty$ functions whose support has length at most
$\beta$ (we do not require that the functions vanish at $0$ or $\beta_1$.
Using this partition of unity, for $I_1 \in \boI^h_{\beta_1}$, one may
decompose a test function $\phi\in C^{k_h}_c(I_1)$ as the sum of the
functions $\phi\cdot \rho_j$, which are all compactly supported on intervals
belonging to $\boI^h_{\beta}$. Moreover, their $C^{k_h}$ norms are controlled
by the $C^{k_h}$ norm of $\phi$. It follows that the integrals defining
$\norm{f}'_{-k_h, k_v, \beta_1}$ are controlled by finitely many integrals
that appear in the definition of $\norm{f}'_{-k_h, k_v, \beta}$, giving the
inequality $\norm{f}'_{-k_h, k_v, \beta_1} \leq C \norm{f}'_{-k_h, k_v,
\beta}$.
\end{proof}

By the above proposition, we may use any small enough $\beta$. For
definiteness, let us choose once and for all $\beta=\beta_0$ much smaller
than the distance between any two singularities. This implies that, in all
the local discussions, we will have to consider at most one singularity. From
this point on, we will keep $\beta_0$ implicit, unless there is an ambiguity.

The seminorms $\norm{\cdot}'_{-k_h, k_v}$ are not norms in general on
$C^\infty_c(M-\Sigma)$. For instance, if there is a cylinder made of closed
vertical leaves, then one may find a function which is constant on each
vertical leaf, vanishes close to the singularities, and is nevertheless not
everywhere zero. Then $L_v f= 0$, so that $\norm{f}'_{-k_h, k_v}=0$ if
$k_v>0$, but still $f \neq 0$. This is not the case when there is no vertical
connection: in this case, all vertical leaves are dense, hence a function
which is constant along vertical leaves and vanishes on a neighborhood of the
singularities has to vanish everywhere. In general, this remark indicates
that the above seminorms do not behave very well by themselves. On the other
hand, the following norm is much nicer:
\begin{equation}
\label{eq:define_norm}
  \norm{f}_{-k_h, k_v} = \sup_{j\leq k_v} \norm{f}'_{-k_h, j}
  = \sup_{j\leq k_v} \sup_{I \in \boI^h} \sup_{\phi \in C^{k_h}_c(I),
    \norm{\phi}_{C^{k_h}} \leq 1} \abs*{\int_I \phi \cdot L_v^j f \dd x}.
\end{equation}
This is obviously a norm on $C^\infty_c(M-\Sigma)$. Indeed, if a function $f$
is not identically zero, then it is nonzero at some point $x$. Taking a
horizontal interval $I$ around $x$ and a test function $\phi$ on $I$
supported on a small neighborhood of $x$, one gets $\int_I \phi f \dd x \neq
0$, and therefore $\norm{f}_{-k_h, k_v}>0$.

Then, let us define the space $\boB^{-k_h, k_v}$ as the (abstract) completion
of $C^\infty_c(M-\Sigma)$ for this norm. Note that all the linear forms
$\ell_{I, \phi,j} : f \mapsto \int_I \phi \cdot L_v^j f \dd x$, initially
defined on $C^\infty_c(M-\Sigma)$, extend by continuity to $\boB^{-k_h, k_v}$
(for $I \in \boI^h$ and $\phi \in C^{k_h}_c(I)$ and $j \leq k_v$).
Heuristically, an element in $\boB^{-k_h, k_v}$ can be differentiated in the
vertical direction, and integrated in the horizontal direction. Moreover, the
norm of an element in $\boB^{-k_h, k_v}$ is
\begin{equation}
\label{eq:norme_completion}
  \norm{f}_{-k_h, k_v}
  = \sup_{j\leq k_v} \sup_{I \in \boI^h} \sup_{\phi \in C^{k_h}_c(I),
    \norm{\phi}_{C^{k_h}} \leq 1} \abs{\ell_{I,\phi, j}(f)}.
\end{equation}
This follows directly from the definition of the norm on
$C^\infty_c(M-\Sigma)$ and from the construction of $\boB^{-k_h, k_v}$ as its
completion.

\begin{rmk}
\label{rmk:non_integers} In the spaces $\boB^{-k_h, k_v}$ we have just
defined, the parameter $k_h$ of horizontal regularity can be any nonnegative
real, but the parameter $k_v$ of vertical regularity has to be an integer, as
it counts a number of derivatives. One could also use a non-integer vertical
parameter $k_v$, requiring additionally the following control: if $k_v = k +
r$ where $k$ is an integer and $r\in(0,1)$, then we require the boundedness
of
\begin{equation*}
  \epsilon^{-r} \abs*{\int_{I_0} \phi_0 L_v^k f \dd x -\int_{I_\epsilon} \phi_\epsilon L_v^k f \dd x}
\end{equation*}
when $I_0$ is a horizontal interval of length $\beta_0$, $\phi_0$ is a
compactly supported $C^{k_h}$ function on $I_0$ with norm at most $1$,
$\epsilon\in [0,\beta_0]$ is such that one can translate vertically the
interval $I_0$ into an interval $I_\epsilon$ without hitting any singularity,
and $\phi_\epsilon$ is the push-forward of $\phi_0$ on $I_\epsilon$ using the
vertical translation. In other words, we are requiring that $L_v^k f$ is
H\"older continuous of order $r$ vertically, in the distributional sense. All
the results that follow are true for such a norm, but the proofs become more
cumbersome while the results are not essentially stronger, so we will only
consider integer $k_v$ for the sake of simplicity.
\end{rmk}

Let $\phi$ be a $C^\infty$ function on $M$, and denote by $\dLeb$ the flat
Lebesgue measure on $M$. Then $\ell_\phi : f \mapsto  \int f \phi \dLeb$ is a
linear form on $C^\infty_c(M-\Sigma)$. Contrary to the previous linear forms,
$\ell_\phi$ does \emph{not} extend to a linear form on $\boB^{-k_h, k_v}$,
because of the singularities: from the point of view of the $C^\infty$
structure, horizontals and verticals close to the singularity have a lot of
curvature, so that the restriction of $\phi$ to $I \in \boI^h$ is $C^k$, but
with a large $C^k$ norm (larger when $I$ is closer to the singularity).
This prevents the extension of $\ell_\phi$ to $\boB^{-k_h, k_v}$. On the
other hand, if $\phi$ is supported by $M-B(\Sigma, \delta)$, then one has a
control of the form $\abs{\ell_\phi(f)} \leq C(\delta)
\norm{\phi}_{C^{k_h}}\norm{f}_{-k_h, k_v}$, so that $\ell_\phi$ extends
continuously to $\boB^{-k_h, k_v}$. More precisely, denote by
$\boD^\infty(M-\Sigma)$ the set of distributions on $M-\Sigma$, i.e., the
dual space of $C^\infty_c(M-\Sigma)$ with its natural topology. Then the
above argument shows that there is a map $i : \boB^{-k_h, k_v} \to
\boD^\infty(M-\Sigma)$, extending the canonical inclusion
$C^\infty_c(M-\Sigma) \to \boD^\infty(M-\Sigma)$ given by $\langle i(f), \phi
\rangle = \int f \phi \dLeb$. Locally, if $\phi$ is supported by a small
rectangle foliated by horizontal segments $I_t \in \boI^h$ (where $t$ is an
arc-length parametrization along the vertical direction), one has the
explicit description
\begin{equation}
\label{eq:decrit_if}
  \langle i(f), \phi \rangle = \int \ell_{I_t, \phi_{\restr I_t}, 0}(f) \dd t.
\end{equation}
Indeed, this formula holds when $f$ is $C^\infty$, and extends by uniform
limit to all elements of $\boB^{-k_h, k_v}$.

\begin{prop}
\label{prop:distrib} The map $i: \boB^{-k_h, k_v} \to \boD^\infty(M-\Sigma)$
is injective. Therefore, one can identify $\boB^{-k_h, k_v}$ with a space of
distributions on $M-\Sigma$.
\end{prop}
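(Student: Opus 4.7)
The plan is to recover each of the linear functionals $\ell_{I,\phi,j}$ appearing in the norm formula \eqref{eq:norme_completion} from the distribution $i(f)$ by a vertical mollification, and then conclude that $i(f)=0$ forces $\norm{f}_{-k_h,k_v}=0$, hence $f=0$ in the completion.

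I would fix $I \in \boI^h$ lying at vertical coordinate $t_0$, a test function $\phi \in C^{k_h}_c(I)$, and an integer $j \leq k_v$, and choose a smooth vertical mollifier $\chi_\epsilon$ supported in $[-\epsilon,\epsilon]$ with integral $1$. Setting $\phi_\epsilon(x,t) = \phi(x)\chi_\epsilon(t-t_0)$ yields a $C^\infty$ function whose support lies in a thin rectangle foliated by the vertical translates $I_t$ of $I$, and which is compactly supported in $M-\Sigma$ for small $\epsilon$. For smooth $f$, integration by parts in the vertical variable combined with the local description \eqref{eq:decrit_if} produces the identity
\begin{equation*}
\langle i(f), L_v^j \phi_\epsilon \rangle = (-1)^j \int \chi_\epsilon(t-t_0)\, \ell_{I_t,\phi,j}(f)\, \dd t.
\end{equation*}
Both sides depend continuously on $f$ in the $\boB^{-k_h,k_v}$-norm (the left by construction of $i$, the right through the uniform estimate $\abs{\ell_{I_t,\phi,j}(f)} \leq \norm{\phi}_{C^{k_h}}\norm{f}_{-k_h,k_v}$), so the identity extends by density to all $f \in \boB^{-k_h,k_v}$.

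If $i(f)=0$, then the left-hand side vanishes for every $\epsilon$, and it remains to let $\epsilon \to 0$ to extract $\ell_{I,\phi,j}(f)=0$. This requires continuity at $t_0$ of the map $t \mapsto \ell_{I_t,\phi,j}(f)$. For smooth $f$ this is clear, and the same uniform estimate shows that for any $f_n \to f$ in $\boB^{-k_h,k_v}$ with $f_n$ smooth, the continuous functions $t \mapsto \ell_{I_t,\phi,j}(f_n)$ converge uniformly in $t$ to $t \mapsto \ell_{I_t,\phi,j}(f)$, so the latter is itself continuous. The main technical hurdle will be bookkeeping: one must verify carefully that each step above — the interpretation of $\ell_{I_t,\phi,j}(f)$ for an abstract $f$, the continuous extension of the integration-by-parts identity, and the uniformity in $t$ of the convergence — is correctly anchored in the definitions of $\boB^{-k_h,k_v}$ and $i$, but no idea beyond density and the uniform bound should be needed.
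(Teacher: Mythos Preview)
Your proof is correct and follows essentially the same approach as the paper: both arguments rest on the continuity of $t \mapsto \ell_{I_t,\phi,j}(f)$, established by uniform convergence from smooth approximants, and recover the distribution via product-type test functions $\phi(x)\psi(y)$. The only difference is cosmetic: the paper first shows that $t\mapsto \ell_{I_t,\phi,0}(f)$ is $C^{k_v}$ with $j$-th derivative $\ell_{I_t,\phi,j}(f)$ and then argues that a function with a nonzero derivative is nonzero somewhere, whereas you shortcut this by putting the $L_v^j$ on the test function and mollifying --- but the underlying mechanism is identical.
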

\begin{proof}
Consider $I \in \boI^h$ and $\phi \in C^{k_h}_c(I)$. For small enough $t$,
one can shift vertically $I$ by $t$, and obtain a new interval $I_t \in
\boI^h$, as well as a function $\phi_t:I_t \to \R$ (equal to the composition
of the vertical projection from $I_t$ to $I$, and of $\phi$). For any $f \in
C^\infty_c(M-\Sigma)$, the function $t \mapsto \ell_{I_t, \phi_t, 0}(f)$ is
$C^{k_v}$, with successive derivatives $t \mapsto \ell_{I_t, \phi_t, j}(f)$.
An element $f \in \boB^{-k_h, k_v}$ can be written as a limit of a Cauchy
sequence of smooth functions. Then $\ell_{I_t, \phi_t, j}(f_n)$ converges
uniformly to $\ell_{I_t, \phi_t, j}(f)$. Passing to the limit in $n$, we
deduce that $t \mapsto \ell_{I_t, \phi_t, 0}(f)$ is $C^{k_v}$, with
successive derivatives $t \mapsto \ell_{I_t, \phi_t, j}(f)$.

Consider a nonzero $f \in \boB^{-k_h, k_v}$, with norm $c>0$.
By~\eqref{eq:norme_completion}, there exist $I$, $\phi$ and $j$ such that
$\abs{\ell_{I,\phi,j}(f)} \geq c/2$. Let us shift $I$ vertically as above.
The function $t \mapsto \ell_{I_t, \phi_t, 0}(f)$ has a $j$-th derivative
which is nonzero at $0$, hence it is not locally constant. In particular, it
does not vanish at some parameter $t_0$. Consider $\delta$ such that it is
almost constant on the interval $[t_0-\delta, t_0+\delta]$ by continuity. Let
$\psi$ be a smooth function with positive integral, supported by
$[t_0-\delta, t_0+\delta]$. In local coordinates, let us finally write
$\zeta(x,y)= \phi(x) \psi(y)$. It satisfies $\langle i(f), \zeta \rangle \neq
0$ thanks to the explicit description~\eqref{eq:decrit_if} for $i(f)$.
\end{proof}

It follows that one can think of elements of $\boB^{-k_h, k_v}$ as objects
that can be integrated along horizontal segments, or after an additional
vertical integration as distributions. Even better, since the elements of
$\boB^{-k_h, k_v}$ are designed to be integrated horizontally, the natural
object to consider is rather $f\dd x$. This is a current, i.e., a
differential form with distributional coefficients, but it is nicer than
general currents as it can really be integrated along horizontal segments
(i.e., it is regular in the vertical direction). The process that associates
to such an object a global distribution is simply the exterior product with
$\dd y$. Going back and forth like that between $0$-currents and $1$-currents
will be an essential feature of the forthcoming arguments.

The next lemma makes it possible to use partitions of unity, to decompose an
element of $\boB^{-k_h, k_v}$ into a sum of elements supported in arbitrarily
small balls.
\begin{lem}
\label{lem:partition_unite} Let $\psi \in C^\infty(M)$ be constant in the
neighborhood of each singularity. Then the map $f \mapsto \psi f$, initially
defined on $C^\infty_c(M-\Sigma)$, extends continuously to a linear map on
$\boB^{-k_h, k_v}$.
\end{lem}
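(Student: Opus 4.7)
The plan is to show that $f \mapsto \psi f$ is bounded on $C^\infty_c(M-\Sigma)$ with respect to the norm $\norm{\cdot}_{-k_h, k_v}$, so that it extends by density to $\boB^{-k_h, k_v}$. The starting point is the Leibniz rule: for $j\leq k_v$, any $I \in \boI^h$, and any test function $\phi \in C^{k_h}_c(I)$,
\begin{equation*}
  \int_I \phi \cdot L_v^j(\psi f) \dd x = \sum_{l=0}^j \binom{j}{l} \int_I (\phi \cdot L_v^l\psi) \cdot L_v^{j-l} f \dd x = \sum_{l=0}^{j} \binom{j}{l} \ell_{I, \phi \cdot L_v^l\psi, j-l}(f).
\end{equation*}
Each of the test functions $\phi \cdot L_v^l\psi$ is compactly supported in $I$ (since $\phi$ is), so each term is a linear form of the type appearing in the definition of $\norm{f}_{-k_h, k_v}$.

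The crux is therefore the uniform bound
\begin{equation*}
  \norm{\phi \cdot L_v^l \psi}_{C^{k_h}(I)} \leq C \norm{\phi}_{C^{k_h}(I)}
\end{equation*}
for a constant $C = C(\psi, k_h, k_v)$ independent of $I \in \boI^h$. Equivalently, it suffices to prove that each function $L_h^a L_v^l \psi$ (with $a \leq k_h$ and $l \leq k_v$) is uniformly bounded on $M-\Sigma$. This is exactly where the hypothesis that $\psi$ is constant in a neighborhood of each singularity intervenes: on that neighborhood $L_h^a L_v^l \psi = 0$, while on the complement, which is a compact subset of $M-\Sigma$ where translation charts yield a genuine smooth atlas, $L_h^a L_v^l\psi$ is a continuous function on a compact set and hence bounded. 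Without the hypothesis, the intrinsic smooth structure on $M$ and the translation-affine structure disagree near $\Sigma$, and derivatives of $\psi$ in translation charts could blow up arbitrarily fast when approaching singularities, exactly the obstruction already highlighted for the linear form $\ell_\phi$ on page preceding the lemma.

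Combining these two points, we obtain for every $j\leq k_v$, $I\in\boI^h$ and $\phi \in C^{k_h}_c(I)$ with $\norm{\phi}_{C^{k_h}} \leq 1$,
\begin{equation*}
  \abs*{\int_I \phi \cdot L_v^j(\psi f)\dd x} \leq \sum_{l=0}^{j}\binom{j}{l} \abs{\ell_{I, \phi\cdot L_v^l\psi, j-l}(f)} \leq C' \norm{f}_{-k_h, k_v},
\end{equation*}
so that $\norm{\psi f}_{-k_h, k_v} \leq C' \norm{f}_{-k_h, k_v}$ for every $f\in C^\infty_c(M-\Sigma)$. The density of $C^\infty_c(M-\Sigma)$ in $\boB^{-k_h, k_v}$ then yields the desired continuous extension. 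The main (and essentially only) obstacle is the uniform $C^{k_h}$ bound on $\psi|_I$ described above; everything else is a mechanical application of Leibniz's rule and the definition of the norm.
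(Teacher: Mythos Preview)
Your proof is correct and follows essentially the same approach as the paper: expand $L_v^j(\psi f)$ by the Leibniz rule and observe that each resulting test function $\phi\cdot L_v^l\psi$ is compactly supported $C^{k_h}$ on $I$ with uniformly bounded norm. You are simply more explicit than the paper about why the $C^{k_h}$ norms of these test functions are bounded uniformly in $I$, tracing this back to the hypothesis that $\psi$ is constant near the singularities; the paper leaves this implicit.
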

\begin{proof}
We have to bound $\int_I \phi \cdot L_v^j(\psi f) \dd x$ when $I$ is a
horizontal interval, $\phi$ a compactly supported $C^{k_h}$ function on $I$,
and $j\leq k_v$. We have $L_v^j(\psi f) = \sum_{k\leq j}\binom{j}{k}
L_v^{j-k}\psi \cdot L_v^k f$, hence this integral can be decomposed as a sum
of integrals of $L_v^k f$ against the functions $\phi \cdot L_v^{j-k}\psi$
which are $C^{k_h}$ and compactly supported on $I$. This concludes the proof,
by definition of $\boB^{-k_h, k_v}$.
\end{proof}

One may wonder how rich the space $\boB^{-k_h, k_v}$ is, and if the choice to
take the closure of the set of functions vanishing on a neighborhood of the
singularities really matters. Other functions are natural, for instance the
constants, or more generally the smooth functions that factorize through the
covering projection $\pi: z \mapsto z^p$ around each singularity of angle
$2\bpi p$. The largest natural class is the space of functions $f$ which are
$C^\infty$ on $M-\Sigma$ and such that, for all indices $a_h$ and $a_v$, the
function $L_v^{a_v} L_h^{a_h} f$ is bounded. The next lemma asserts that
starting from any of these classes of functions would not make any
difference, as our space $\boB^{-k_h, k_v}$ is already rich enough to contain
all of them.

\begin{lem}
\label{lem:boB_riche} Consider a function $f$ on $M$ which is $C^{k_v}$ on
every vertical segment and such that $L_v^k f$ is bounded and continuous on
$M-\Sigma$ for any $k \leq k_v$. Then the function $f$ (or rather the
corresponding distribution $i(f)$) belongs to $\boB^{-k_h, k_v}$ for any
$k_h\geq 0$. This is in particular the case of the constant function $f=1$.
\end{lem}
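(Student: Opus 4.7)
The plan is to construct a sequence $(f_n)$ in $C^\infty_c(M-\Sigma)$ that is Cauchy for $\norm{\cdot}_{-k_h,k_v}$ and whose distributional limit equals $i(f)$; by Proposition~\ref{prop:distrib} this identifies $i(f)$ with an element of $\boB^{-k_h,k_v}$, which is exactly the statement. The main difficulty will be the cutoff construction near each singularity: a naive radial cutoff $\chi_n$ with $\chi_n=0$ on $B(\sigma,1/2n)$ and $\chi_n=1$ outside $B(\sigma,1/n)$ yields $|L_v^j\chi_n|\sim n^j$ on a set whose intersection with a horizontal segment has length $\sim 1/n$, so the contribution to the norm is of order $n^{j-1}$, which blows up as soon as $j\geq 2$.

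To avoid this, I will use a cutoff annihilated by $L_v$ inside a neighborhood of each singularity, exploiting the translation-surface structure. Near a cone point $\sigma$ of cone angle $2\pi p_\sigma$, introduce flat polar coordinates $(r_\sigma,\theta_\sigma)$ with $\theta_\sigma\in\R/(2\pi p_\sigma)\Z$, and define $x_\sigma=r_\sigma\cos\theta_\sigma$. Since $2\pi p_\sigma$ is a multiple of $2\pi$, this is smooth on the punctured neighborhood $U_\sigma\setminus\{\sigma\}$ and coincides in any flat chart with the standard horizontal Cartesian coordinate, so $L_v x_\sigma=0$. Fix $\eta\in C^\infty(\R,[0,1])$ equal to $0$ on $[-1/2,1/2]$ and to $1$ outside $(-1,1)$, choose $R$ smaller than half the distance between singularities, smooth bumps $\rho_\sigma\colon M\to[0,1]$ supported in $B(\sigma,R/2)$ and equal to $1$ on $B(\sigma,R/4)$, and a sequence $a_n\to 0$. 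Set $\chi_n^\sigma=1-\rho_\sigma(1-\eta(x_\sigma/a_n))$ on $B(\sigma,R/2)$ (extended by $1$ outside), and $\chi_n=\prod_\sigma\chi_n^\sigma\in C^\infty(M-\Sigma)$. Then $\chi_n$ vanishes on $\bigcup_\sigma B(\sigma,a_n/2)$, and using $L_v x_\sigma=0$ one checks that $L_v^j\chi_n^\sigma=-L_v^j\rho_\sigma\cdot(1-\eta(x_\sigma/a_n))$ for every $j\geq 1$; this is bounded in sup norm uniformly in $n$ and supported in $\{|x_\sigma|\leq a_n\}\cap\{R/4\leq r_\sigma\leq R/2\}$, whose intersection with any $I\in\boI^h$ has length at most $2a_n$.

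With this cutoff the bound $\norm{\chi_n f-\chi_m f}_{-k_h,k_v}\leq C\max(a_n,a_m)$ is routine: expand $L_v^j((\chi_n-\chi_m)f)$ by Leibniz and use that $L_v^k f$ is uniformly bounded; the $k=j$ term gives $|\chi_n-\chi_m|\leq 1$ on a horizontal set of length $\leq 2\max(a_n,a_m)$, while the $k<j$ terms carry an extra factor $L_v^{j-k}(\chi_n-\chi_m)$ bounded independently of $n,m$ and supported in the same thin set (the norm here is extended in the obvious way to continuous functions with bounded vertical derivatives). Since $\chi_n f$ is compactly supported in $M-\Sigma$, continuous, and $C^{k_v}$ vertically with bounded $L_v^k$-derivatives, I approximate it by $g_{n,\epsilon}\in C^\infty_c(M-\Sigma)$ by convolving with mollifiers of width $\epsilon<a_n/2$ in flat local coordinates via a partition of unity on $M-\Sigma$; the uniform convergence $L_v^k g_{n,\epsilon}\to L_v^k(\chi_n f)$ as $\epsilon\to 0$ then yields $\norm{g_{n,\epsilon}-\chi_n f}_{-k_h,k_v}\to 0$.

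Choosing $\epsilon_n$ small enough that $\norm{g_{n,\epsilon_n}-\chi_n f}_{-k_h,k_v}\leq 1/n$ yields a Cauchy sequence $f_n\coloneqq g_{n,\epsilon_n}$ in $(C^\infty_c(M-\Sigma),\norm{\cdot}_{-k_h,k_v})$, hence an element $\hat f\in\boB^{-k_h,k_v}$. For any $\psi\in C^\infty_c(M-\Sigma)$, once $n$ is large enough that $\chi_n\equiv 1$ on $\mathrm{supp}\,\psi$ and $\epsilon_n$ is smaller than the distance from $\mathrm{supp}\,\psi$ to $\Sigma$, one has $\int f_n\psi\dLeb\to\int f\psi\dLeb$, so $i(\hat f)=i(f)$ in $\boD^\infty(M-\Sigma)$, and by injectivity of $i$ this means $f\in\boB^{-k_h,k_v}$. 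The crux of the whole argument, and the obstacle around which everything is designed, is the $L_v$-annihilated cutoff of the second step: it relies essentially on the translation-surface fact that the horizontal coordinate $x_\sigma$ is single-valued on a punctured neighborhood of each cone point, a property that would need to be restated with care in the half-translation case.
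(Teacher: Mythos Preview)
Your proof is correct and shares the paper's core idea—use a cutoff near each singularity that is thin in the horizontal direction, so that horizontal integrals of its vertical derivatives are small—but the execution is genuinely different. The paper takes $\rho_\delta(x+\ic y)=u(x/\delta^N)\,u(y/\delta)$ with $N>k_v$: the vertical derivatives blow up like $\delta^{-j}$, but the horizontal support has length $2\delta^N$, and the product $\delta^{N-j}\to 0$ wins. Your cutoff instead makes the near-singularity factor depend only on $x_\sigma$, so $L_v$ annihilates it outright and the vertical derivatives fall only on the fixed large-scale bump $\rho_\sigma$; this sidesteps the exponent balancing entirely and is arguably cleaner, at the cost of a slightly more elaborate two-layer construction (the $\rho_\sigma$ on top of $\eta(x_\sigma/a_n)$). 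Both arguments ultimately hinge on the same anisotropy of the norm.

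One small correction is needed in the last step. Your claim that $\chi_n\equiv 1$ on $\mathrm{supp}\,\psi$ for large $n$ is false whenever $\mathrm{supp}\,\psi$ meets a vertical segment through $\sigma$ inside $B(\sigma,R/4)$: at such a point $x_\sigma=0$, so $\eta(x_\sigma/a_n)=0$ and $\chi_n^\sigma=1-\rho_\sigma=0$ for \emph{every} $n$. This does not break the argument, since $\chi_n\to 1$ almost everywhere (off the measure-zero set $\{x_\sigma=0\}$) and dominated convergence still gives $\int \chi_n f\,\psi\,\dLeb\to\int f\,\psi\,\dLeb$; together with $\norm{g_{n,\epsilon_n}-\chi_n f}_{L^\infty}\to 0$ (which you should add to the choice of $\epsilon_n$), this yields $\int f_n\psi\,\dLeb\to\int f\psi\,\dLeb$ as required.
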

\begin{proof}
First, if $f$ is supported away from the singularities, one shows that $f \in
\boB^{-k_h, k_v}$ by convolving it with a smooth kernel $\rho_\epsilon$:
the sequence $f_\epsilon = f * \rho_\epsilon$ thus constructed is $C^\infty$ and forms
a Cauchy sequence in $\boB^{-k_h, k_v}$, hence it converges in this space to a limit.
As it converges to $f$ in the distributional sense, this shows $f \in \boB^{-k_h, k_v}$.

To handle the general case, by taking a partition of unity, it suffices to
treat the case of a function $f$ supported in a small neighborhood of a
singularity, such that $L_v^k f$ is continuous and bounded for any $k \leq
k_v$. Let $\pi$ denote the covering projection, defined on a neighborhood of
this singularity. Let $u$ be a real function, equal to $1$ on a neighborhood
of $0$, supported in $[-1,1]$. Let $N>0$ be large enough. For $\delta > 0$,
we define a function $\rho_\delta(x+ \ic y) = u(x/\delta^N) u(y/\delta)$,
supported on the neighborhood $[-\delta^N, \delta^N] + \ic [-\delta, \delta]$
of $0$ in $\C$.

We claim that, if $N>k_v$, then in $\C$ one has $\norm{\rho_\delta}_{-k_h,
k_v} \to 0$ when $\delta \to 0$, where by $\norm{\cdot}_{-k_h, k_v}$ we mean the formal
expression~\eqref{eq:define_norm}, which makes sense for any function but could be
infinite. To prove this, consider a horizontal
interval $I$ of length $\beta_0$, a function $\phi \in C_c^{k_h}(I)$ with
norm at most $1$, and a differentiation order $j \leq k_v$. Then
\begin{align*}
  \abs*{\int_I \phi \cdot L_v^j \rho_\delta \dd x}
  &= \delta^{-j} \abs*{\int_I \phi \cdot u(x/\delta^N) u^{(j)}(y/\delta) \dd x}
  \\&\leq \delta^{-j} \norm{\phi}_{C^0} \norm{u}_{C^0} \norm{u^{(j)}}_{C^0} \Leb([-\delta^N, \delta^N]).
\end{align*}
This quantity tends to $0$ if $N>j$, as claimed.

The same computation, taking moreover into account the fact that the vertical
derivatives of $f$ are bounded, shows that $\norm{f \cdot \rho_\delta\circ
\pi}_{-k_h, k_v} \to 0$ when $\delta \to 0$. It follows that the sequence
$f_n = f (1- \rho_{1/n} \circ \pi)$ is a Cauchy sequence in $\boB^{-k_h,
k_v}$, made of functions in $C^{k_v}_c(M-\Sigma)$ (which is indeed included
in $\boB^{-k_h, k_v}$ by the first step). It converges (in $L^1$,
and therefore in the sense of distributions) to $f$, which has therefore to
coincide with its limit in $\boB^{-k_h, k_v}$.
\end{proof}

In particular, if $\Sigma$ contains an artificial singularity $\sigma$ (i.e.,
around which the angle is equal to $2\bpi$), then one gets the same space
$\boB^{-k_h, k_v}$ by using the singularity sets $\Sigma$ or $\Sigma
-\{\sigma\}$.

\bigskip

The horizontal and vertical derivations $L_h$ and $L_v$ act on
$C^\infty_c(M-\Sigma)$. By duality, they also act on $\boD^\infty(M-\Sigma)$.
In view of Proposition~\ref{prop:distrib} asserting that $\boB^{-k_h, k_v}$ is
a space of distributions, it makes sense to ask if they
stabilize these spaces, or if they send one into the other.

\begin{prop}
\label{prop:Lh_Lv_action} The derivation $L_h$ maps continuously $\boB^{-k_h,
k_v}$ to $\boB^{-k_h-1, k_v}$, and it satisfies $\ell_{I,\phi, j}(L_h f) =
-\ell_{I,\phi', j}(f)$ for every $I \in \boI^h$, $\phi \in C^{k_h+1}_c(I)$,
$j \leq k_v$ and $f \in \boB^{-k_h, k_v}$.

The derivation $L_v$ maps continuously $\boB^{-k_h, k_v}$ to $\boB^{-k_h,
k_v-1}$ if $k_v>0$, and it satisfies $\ell_{I,\phi, j}(L_v f) = \ell_{I,
\phi, j+1}(f)$ for every $I \in \boI^h$, $\phi \in C^{k_h}_c(I)$, $j \leq
k_v-1$ and $f \in \boB^{-k_h, k_v}$.
\end{prop}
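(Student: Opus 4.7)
The plan is to verify both formulas first on smooth functions $f \in C^\infty_c(M-\Sigma)$, where everything is classical, and then extend by density to the completion $\boB^{-k_h,k_v}$.

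For $L_v$, the computation on a smooth $f$ is immediate: since $L_v^j(L_v f) = L_v^{j+1} f$, one has $\ell_{I,\phi,j}(L_v f) = \ell_{I,\phi,j+1}(f)$ for every $I$, $\phi \in C^{k_h}_c(I)$ and $j \leq k_v-1$. Taking the supremum over $j \leq k_v - 1$, $I$, and $\phi$ of $C^{k_h}$-norm at most $1$ directly yields $\norm{L_v f}_{-k_h, k_v - 1} \leq \norm{f}_{-k_h, k_v}$ on smooth functions.

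For $L_h$, the key input is integration by parts along the horizontal segment $I$. Since $L_h$ and $L_v$ commute on $C^\infty(M-\Sigma)$, one has $L_v^j(L_h f) = L_h(L_v^j f)$, and since $\phi \in C^{k_h+1}_c(I)$ is compactly supported in the interior of $I$, the boundary terms vanish and
\begin{equation*}
  \ell_{I,\phi,j}(L_h f) = \int_I \phi \cdot L_h(L_v^j f)\dd x = -\int_I \phi' \cdot L_v^j f \dd x = -\ell_{I,\phi',j}(f).
\end{equation*}
Since $\norm{\phi'}_{C^{k_h}} \leq \norm{\phi}_{C^{k_h+1}}$, taking the supremum gives the norm bound $\norm{L_h f}_{-k_h-1, k_v} \leq \norm{f}_{-k_h, k_v}$.

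With both bounds established on the dense subspace $C^\infty_c(M-\Sigma)$ of $\boB^{-k_h, k_v}$, the operators $L_v$ and $L_h$ extend by continuity to the completions $\boB^{-k_h, k_v-1}$ and $\boB^{-k_h-1, k_v}$ respectively, and the identities for $\ell_{I,\phi,j}$ pass to the limit because the functionals $\ell_{I,\phi,j}$ and $\ell_{I,\phi',j}$, $\ell_{I,\phi,j+1}$ are themselves continuous on the relevant completions. Finally, to confirm that the extended operators coincide with the distributional derivatives $L_h$ and $L_v$ on $\boD^\infty(M-\Sigma)$, one verifies using the explicit formula~\eqref{eq:decrit_if} that the extension agrees with the distributional derivative when paired with any test function in $C^\infty_c(M-\Sigma)$; this is essentially a repetition of the integration-by-parts argument in the vertical direction as well. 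There is no real obstacle here: the whole argument is a standard extension-by-density after integration by parts, the main check being merely that compact support of $\phi$ in the interior of $I$ kills any boundary contribution.
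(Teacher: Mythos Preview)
Your proof is correct and follows the same approach as the paper: verify the formulas on smooth functions by integration by parts (for $L_h$) and by definition (for $L_v$), then extend by density. The paper's version is terser, simply noting that the formulas are obvious for smooth $f$ and that the general case follows by density, while you have helpfully spelled out the norm bounds and the compatibility with the distributional derivative.
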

\begin{proof}
The formulas $\ell_{I,\phi, j}(L_h f) = -\ell_{I,\phi', j}(f)$ and
$\ell_{I,\phi, j}(L_v f) = \ell_{I, \phi, j+1}(f)$ are obvious when $f$ is a
smooth function. The general result follows by density.
\end{proof}

\begin{lem}
\label{lem:Lh_0} Assume that there is no horizontal saddle connection in $M$.
Let $f\in \boB^{-k_h, k_v}$ satisfy $L_h f = 0$. Then $f$ is a constant
function.
\end{lem}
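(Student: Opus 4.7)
The plan is to exploit $L_h f = 0$ to show that $f$ restricts to a constant distribution on each horizontal segment, and then to use minimality of the horizontal foliation (a consequence of the absence of horizontal saddle connections) to conclude that these local constants agree globally.

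First I would translate $L_h f = 0$ via Proposition~\ref{prop:Lh_Lv_action}: for every $I \in \boI^h$ and every $\phi \in C^{k_h+1}_c(I)$, one has $\ell_{I,\phi',0}(f) = -\ell_{I,\phi,0}(L_h f) = 0$. Since the derivatives of functions in $C^{k_h+1}_c(I)$ are precisely the elements of $C^{k_h}_c(I)$ of vanishing integral, the linear form $\phi \mapsto \ell_{I,\phi,0}(f)$ on $C^{k_h}_c(I)$ is proportional to integration against $\dd x$: there is a scalar $c_I$ with $\ell_{I,\phi,0}(f) = c_I \int_I \phi \dd x$. Testing against $\phi$ supported in a common subsegment shows that $c_I$ depends only on the horizontal leaf through $I$, so the $c_I$ assemble into a function $c : M - \Sigma \to \R$ that is constant along horizontal leaves.

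The proof of Proposition~\ref{prop:distrib} shows that $t \mapsto \ell_{I_t, \phi_t, 0}(f)$ is $C^{k_v}$, so in local translation charts $c$ depends only on the vertical coordinate $y$ and is $C^{k_v}$ in $y$; in particular $c$ is continuous on $M-\Sigma$. The absence of horizontal saddle connections excludes horizontal cylinders (whose boundaries would themselves be saddle connections) and, combined with the classical decomposition of the horizontal foliation of a translation surface into cylinders and minimal components, forces the foliation to be minimal. Every horizontal leaf is therefore dense in $M$, and the continuous function $c$, being constant on such a dense leaf, must be globally constant, say equal to $c_0$.

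By Lemma~\ref{lem:boB_riche} the constant function $c_0$ belongs to $\boB^{-k_h, k_v}$. Applying the local description~\eqref{eq:decrit_if} to both $f$ and $c_0$, combined with the identity $\ell_{I,\phi,0}(f) = c_0 \int_I \phi \dd x$, yields $i(f) = i(c_0)$ as distributions on $M-\Sigma$, and injectivity of $i$ from Proposition~\ref{prop:distrib} then gives $f = c_0$ in $\boB^{-k_h, k_v}$. The main delicate point I anticipate is the minimality statement itself: although classical, it must be invoked carefully, since one first needs to rule out cylinders and then appeal to the structure theorem to obtain a unique minimal component filling $M$.
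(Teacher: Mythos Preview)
Your proof is correct and follows essentially the same route as the paper: show that $L_h f=0$ forces the restriction of $f$ to each horizontal interval to be a constant multiple of Lebesgue measure, use vertical continuity of the linear forms $\ell_{I,\phi,0}(f)$ together with minimality of the horizontal foliation (the paper cites Keane's criterion directly) to make this constant global, and conclude $f=c_0$. Your explicit appeal to the injectivity of $i$ from Proposition~\ref{prop:distrib} in the last step is a clean way to finish that the paper leaves implicit.
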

\begin{proof}
As $L_h f = 0$, one has $\ell_{I,\phi', 0}(f) = 0$ for any smooth function
$\phi$ on a horizontal interval $I$. Denoting by $\tau_h$ the translation by
$h$, one gets $\ell_{I,\phi, 0}(f) = \ell_{I,\phi \circ \tau_h, 0}(f)$ if
$\phi$ and $\phi \circ \tau_h$ both have their support in $I$. It follows
that the distribution induced by $f$ on a bi-infinite horizontal leaf is
invariant by translation. Therefore, it is a multiple $c\dLeb$ of Lebesgue
measure. Since there is no horizontal saddle connection by assumption, the
horizontal flow is minimal by Keane's Criterion. In particular, the above
bi-infinite horizontal leaf is dense. At the quantities $\ell_{I,\phi, 0}(f)$
vary continuously when one moves $I$ vertically, it follows that $f$ is equal
to $c\dLeb$ on all horizontal intervals.
\end{proof}

We want to stress that Lemma~\ref{lem:Lh_0} is wrong for $L_v$. A measure
$\mu$ which is invariant for the vertical flow can locally be written as
$\nu\otimes \dd y$, where $\nu$ is a measure along horizontal leaves,
invariant under vertical holonomy. Writing $\nu$ as a limit of measures which
are equivalent to Lebesgue and with smooth densities, one checks that $\mu$
belongs to $\boB^{-k_h, k_v}$, and moreover it satisfies $L_v \mu=0$. In a
translation surface in which the vertical flow is minimal but not uniquely
ergodic, one can find such examples where $\mu$ is not Lebesgue measure.

In the case of surfaces associated to pseudo-Anosov maps, the vertical flow
is uniquely ergodic, so this argument does not apply. However, we will
see later that there are still many nonconstant distributions $f$ in
$\boB^{-k_h, k_v}$ which satisfy $L_v f = 0$.

It is enlightening to try to prove that $f\in \boB^{-k_h, k_v}$ with $L_v f =
0$ has to be constant, and see where the argument fails. The problem stems
from the fact that $f$ is a distribution on horizontal segments. Let $F$ be a
dense vertical leaf, let $I_t$ be a small horizontal interval around the
point at height $t$ on $F$, and let $\phi$ be a function on $I_0$ that we
push vertically to a function on $I_t$ (still denoted $\phi$) while this is
possible. Then we get $\int_{I_t} \phi f \dd x = \int_{I_0} \phi f \dd x$ as
$L_v f= 0$. If this were true for all real $t$, then we would deduce that $f$
is constant. However, the support of $\phi$ has positive length. Hence, when
we push it vertically, we will encounter a singularity in finite time, and
the argument is void afterwards. We could say something on a longer time
interval if we used a function $\tilde\phi$ with smaller support, but the
same problem will happen again. The key point is a competition between the
speed at which $F$ fills the surface, and how close to singularities it
passes. The existence of non-constant distributions $f$ with $L_v f = 0$ is a
manifestation of the fact that $F$ is often too close to singularities.

A related but more detailed discussion is made before the proof of
Theorem~\ref{thm:Lv_integre}, where we study the existence of primitives
under $L_v$ of some eigendistributions, not only $0$.

\subsection{Compact inclusions}

In this paragraph, we prove the following proposition, ensuring that there
is inclusion (resp.\ compact inclusion) in the family of spaces $\boB^{-k_h, k_v}$
if one requires less (resp.\ strictly less) regularity in all directions.
This corresponds to the usual intuitions.

\begin{prop}
\label{prop:compact_inclusion} Consider $k'_h$ with $-k'_h \leq -k_h$ (i.e.,
$k'_h \geq k_h$) and $k'_v$ with $k'_v \leq k_v$. Then there is a continuous
inclusion $\boB^{-k_h, k_v} \subseteq \boB^{-k'_h, k'_v}$. If the two
inequalities are strict, this inclusion is compact.
\end{prop}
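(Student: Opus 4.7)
The continuous inclusion is essentially tautological from the definition (2.1). If $k'_h \geq k_h$ and $k'_v \leq k_v$, then the index $j$ in the defining sup runs over a smaller set ($j \leq k'_v$), and the test functions $\phi$ are required to have stronger regularity ($\norm{\phi}_{C^{k'_h}} \leq 1$ with $k'_h \geq k_h$ implies $\norm{\phi}_{C^{k_h}} \leq C$, with a constant that depends only on the fixed length $\beta_0$ of the test intervals). Hence on smooth functions $\norm{f}_{-k'_h, k'_v} \leq C\norm{f}_{-k_h, k_v}$, and the identity extends by density to a bounded linear map between completions. Injectivity follows from Proposition~\ref{prop:distrib}, since both spaces embed canonically into $\boD^\infty(M-\Sigma)$.

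For the compact inclusion under strict inequalities $k'_h > k_h$ and $k'_v < k_v$, the plan is to verify the finite-rank approximation criterion: for every $\epsilon>0$ I will exhibit finitely many functionals $\ell_{I_i,\phi_i,j_i}$ (continuous on $\boB^{-k_h,k_v}$) such that
\begin{equation*}
  \norm{f}_{-k'_h,k'_v} \leq \epsilon \norm{f}_{-k_h,k_v} + \sum_i \abs{\ell_{I_i,\phi_i,j_i}(f)} \quad \text{for all } f\in\boB^{-k_h,k_v}.
\end{equation*}
This identifies the embedding as an operator-norm limit of finite-rank operators, hence as compact.

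The approximation rests on two equicontinuity estimates. Horizontally, the unit ball of $C^{k'_h}_c(I)$ is relatively compact in $C^{k_h}_c(I)$ by Arzela-Ascoli (using $k'_h>k_h$ and the bounded interval length $\beta_0$), so admits a finite $\epsilon$-net for the $C^{k_h}$ norm, and $\abs{\ell_{I,\phi,j}(f) - \ell_{I,\phi',j}(f)} \leq \norm{\phi-\phi'}_{C^{k_h}} \norm{f}_{-k_h,k_v}$ by linearity. Vertically, denoting by $I_t$ the vertical push of $I$ by time $t$ and by $\phi_t$ the corresponding push of $\phi$, one has the identity $\frac{d}{dt}\ell_{I_t,\phi_t,j}(f) = \ell_{I_t,\phi_t,j+1}(f)$ valid for smooth $f$, which yields $\abs{\ell_{I_t,\phi_t,j}(f) - \ell_{I,\phi,j}(f)} \leq \abs{t} \norm{f}_{-k_h,k_v}$ provided $j+1\leq k_v$; the estimate then extends to $\boB^{-k_h,k_v}$ by density. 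This is precisely where the strict inequality $k'_v<k_v$ is needed, so that $j+1 \leq k_v$ still holds for the extremal value $j=k'_v$ arising in the $\norm{\cdot}_{-k'_h,k'_v}$ sup.

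The last ingredient, and the main obstacle, is to build for each $\epsilon>0$ a finite collection $\{I_m\}$ in $\boI^h$ such that every $I\in \boI^h$ is a vertical translate of some $I_m$ by an amount at most $\epsilon$. The space $\boI^h$ is not compact, since intervals can approach the singularities; to handle this one covers $M$ by finitely many charts: ordinary ``flow boxes'' $I_\alpha\times J_\alpha$ (product of horizontal and vertical intervals) away from $\Sigma$, and branched $p$-sheeted covers of a disk in $\R^2$ around each singularity of cone angle $2\bpi p$. In each such chart the set of horizontal intervals of length $\beta_0$ is parametrized by a bounded set in $\R^2$, which admits a finite $\epsilon$-net; gluing these nets produces the desired global finite collection. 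Given it, for any admissible triple $(I,\phi,j)$ defining $\norm{f}_{-k'_h,k'_v}$, one picks $I_m$ within vertical distance $\epsilon$ of $I$, pushes $\phi$ to an interval $I_m$ and then approximates this pushed function in $C^{k_h}$ by one element $\phi_{m,\ell}$ of the horizontal net, and the two estimates above combine to yield $\abs{\ell_{I,\phi,j}(f) - \ell_{I_m,\phi_{m,\ell},j}(f)} \leq C\epsilon\norm{f}_{-k_h,k_v}$. Taking the finite family $\{\ell_{I_m,\phi_{m,\ell},j}\}_{m,\ell,j \leq k'_v}$ completes the construction and thus the proof.
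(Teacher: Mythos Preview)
Your overall strategy is exactly the one in the paper: the same finite-rank approximation criterion, the same vertical Lipschitz estimate $\abs{\ell_{I_t,\phi_t,j}(f)-\ell_{I,\phi,j}(f)}\leq \abs{t}\norm{f}_{-k_h,k_v}$ (using $j+1\leq k_v$, hence the need for $k'_v<k_v$), and the same horizontal $\epsilon$-net coming from the compact inclusion $C^{k'_h}\hookrightarrow C^{k_h}$ on a fixed bounded interval. The continuous-inclusion and injectivity arguments are also fine.

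There is, however, a genuine gap in your construction of the finite interval collection $\{I_m\}$. You write that you want finitely many $I_m\in\boI^h$ (so of length $\beta_0$) such that every $I\in\boI^h$ is a \emph{vertical translate} of some $I_m$ by at most $\epsilon$. This is impossible: vertical translates of finitely many length-$\beta_0$ intervals cover only finitely many horizontal positions, whereas the left endpoint of $I$ ranges over a continuum. Your own description (``parametrized by a bounded set in $\R^2$, which admits a finite $\epsilon$-net'') implicitly produces a net in \emph{both} directions; but then the nearest $I_m$ is displaced from $I$ horizontally as well as vertically, so your ``push $\phi$ to $I_m$'' step does not make sense: the vertical push of $I$ lands on an interval at the height of $I_m$ but horizontally offset from it, and the pushed $\phi$ need not be supported in $I_m$.

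The paper's fix is to take the net intervals longer: one works with $\boB=\boB^{-k_h,k_v}_{5\beta_0}$ and $\boC=\boB^{-k'_h,k'_v}_{\beta_0}$ (equivalent norms by Proposition~\ref{prop:beta_irrelevant}), and chooses finitely many $J_n\in\boI^h_{5\beta_0}$ such that every $I\in\boI^h_{\beta_0}$ can be translated vertically by at most $\epsilon/2$ into the central part $J_n[\beta_0,4\beta_0]$ of some $J_n$. Then the pushed test function, extended by zero, is compactly supported in $J_n$, and your horizontal $\epsilon$-net $\{\phi_{n,k}\}$ is taken among functions in $C^{k'_h}_c(J_n)$. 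With this enlargement the rest of your argument goes through verbatim.
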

\begin{proof}
The inclusion $\boB^{-k_h, k_v} \subseteq \boB^{-k'_u, k'_v}$ when $k'_h \geq
k_h$ and $k'_v \leq k_v$ is obvious, as one uses less linear forms in the
second space than in the first space to define the norm.

For the compact inclusion, we will use the following criterion. Let $\boB
\subseteq \boC$ be two Banach spaces. Assume that, for every $\epsilon>0$,
there exist finitely many continuous linear forms $\ell_1,\dotsc,\ell_P$ on
$\boB$ such that, for any $x \in \boB$,
\begin{equation}
\label{eq:critere_compacite}
  \norm{x}_{\boC} \leq \epsilon \norm{x}_{\boB} + \sum_{p \leq P} \abs{\ell_p(x)}.
\end{equation}
Then the inclusion of $\boB$ in $\boC$ is compact.

To prove the criterion, suppose its assumptions are satisfied, and consider a
sequence $x_n \in \boB$ of elements with norm at most $1$. Extracting a
subsequence, one can ensure that all the sequences $\ell_i(x_n)$ converge,
for $i\leq P$. We deduce from the above inequality that $\limsup_{m,n\to
\infty} \norm{x_m -x_n}_{\boC} \leq 2 \epsilon$. By a diagonal argument, one
can then extract a subsequence of $x_n$ which is a Cauchy sequence in $\boC$,
and therefore converges.

Let us now apply the criterion to $\boB = \boB^{-k_h, k_v}_{5\beta_0}$ and
$\boC = \boB^{-k'_h, k'_v}_{\beta_0}$ with $k'_h>k_h$ and $k'_v<k_v$. We take
larger intervals in the first space than in the second space for technical
convenience, but this is irrelevant for the result as the spaces do not
depend on $\beta$, see Proposition~\ref{prop:beta_irrelevant}.

Let us first fix a finite family of intervals $(J_n)_{n\leq N}$ in
$\boI^h_{5\beta_0}$ such that any interval in $\boI^h_{\beta_0}$ can be
translated vertically by at most $\epsilon/2$, without hitting a singularity,
and end up in one of the $J_n$, or even better in its central part denoted by
$J_n[\beta_0,4\beta_0]$. Such a family exists by compactness, and the
singularities do not create any problem there. Then, on each $J_n$, let us
fix finitely many functions $(\phi_{n,k})_{k\leq K}$ in $C^{k'_h}_c(J_n)$
with norm at most $1$ such that, for any function $\phi \in C^{k'_h}(J_n)$
with $C^{k'_h}$ norm at most $1$ and with support included in $J_n[\beta_0,
4\beta_0]$, there exists $k$ such that $\norm{\phi - \phi_{n,k}}_{C^{k_h}}
\leq \epsilon/2$. Their existence follows from the compactness of the
inclusion of $C^{k'_h}$ in $C^{k_h}$. We will use the linear forms
$\ell_{n,k,j} = \ell_{J_n, \phi_{n,k}, j}$ for $n \leq N$, $k \leq K$ and $j
\leq k_v$ to apply the criterion~\eqref{eq:critere_compacite}.

Let us fix $f \in \boB^{-k_h, k_v}_{5\beta_0}$. We want to bound its norm in
$\boB^{-k'_h, k'_v}_{\beta_0}$. By density, it is enough to do it for $f \in
C^\infty_c(M-\Sigma)$ -- this does not change anything to the following
argument, but it is comforting. Consider thus $I \in \boI^h_{\beta_0}$, and
$\phi \in C_c^{k'_h}(I)$ with norm at most $1$, and $j \leq k'_v < k_v$. Let
$(I_t)_{0 \leq t \leq \delta}$ be vertical shifts of $I$, parameterized by
the vertical length $t$, with $I_\delta$ included in an interval
$J_n[\beta_0,4\beta_0]$ and $\delta \leq \epsilon/2$. Denote by $\phi_t$ the
push-forward of $\phi$ on $I_t$. Integrating by parts, one gets
\begin{equation*}
  \int_{I_0} \phi\cdot L_v^j f \dd x = \int_{I_\delta} \phi_\delta \cdot L_v^j f \dd x
  -\int_0^\delta \pare*{\int_{I_t} \phi_t L_v^{j+1}f \dd x} \dd t.
\end{equation*}
The integrals on each $I_t$ are bounded by $\norm{f}_{-k_h, k_v}$ as $j\leq
k'_v < k_v$. Hence, the last term is at most $\delta \norm{f}_{-k_h, k_v}
\leq (\epsilon/2) \norm{f}_{-k_h, k_v}$. In the first term, choose $k$ such
that $\norm{\phi_\delta - \phi_{n, k}}_{C^{k_h}} \leq \epsilon/2$. Then this
integral is bounded by $(\epsilon/2)\norm{f}_{-k_h, k_v} +
\abs{\ell_{n,k,j}(f)}$. We have proved that
\begin{equation*}
  \norm{f}_{-k'_h, k'_v} \leq \epsilon \norm{f}_{-k_h, k_v} + \max_{n,k,j} \abs{\ell_{n,k,j}(f)}.
\end{equation*}
This shows that the compactness criterion~\eqref{eq:critere_compacite}
applies, and concludes the proof.
\end{proof}

\subsection{Duality}

Let us define the spaces $\check \boB^{\check k_h, -\check k_v}$ just like
the spaces $\boB^{-k_h, k_v}$ but exchanging horizontals and verticals.
Hence, $\check k_v$ quantifies the regularity of a test function in the
vertical direction, and $\check k_h$ the number of permitted derivatives in
the horizontal direction. The derivations $L_v$ and $L_h$ still act on
$\check\boB$, as in Proposition~\ref{prop:Lh_Lv_action}, but their roles are
swapped compared to $\boB$.

Some of the arguments later to identify the spectrum and the multiplicities
of a pseudo-Anosov map rely on a duality argument, exchanging the roles of
the horizontal and vertical directions. To carry out this argument, we need
to show that there is a duality between the spaces $\boB^{-k_h, k_v}$ and
$\check \boB^{\check k_h, -\check k_v}$ when the global regularity is
positive enough in every direction, i.e., when $-k_h+\check k_h \geq 2$ and
$k_v - \check k_v \geq 0$ (or conversely, as one can exchange the two
directions -- it is possible that the duality holds if $\check k_h - k_h \geq
0$ and $k_v - \check k_v \geq 0$, but our proof requires a little bit more).
This is not surprising: $g \in \boB^{\check k_h, -\check k_v}$ has
essentially $\check k_h$ derivatives along horizontals, and $f \in
\boB^{-k_h, k_v}$ can be integrated along horizontals against $C^{k_h}$
functions, so if $\check k_h \geq k_h$ one expects that one can integrate the
product $fg$ along horizontals, and therefore globally. This argument is
wrong since the horizontal regularity of $g$ is only in the distributional
sense, so we will also have to take advantage of the vertical smoothness of
$f$. Using a computation based on suitable integrations by parts, it is easy
to make this argument rigorous away from singularities. However, as it is
often the case, the proof is much more delicate close to singularities, as
integrations by parts can not cross the singularity, giving rise to
additional boundary terms that can a priori not be controlled, unless one
proceeds in a roundabout way as in the following proof. The technical
difficulty of this proof is probably related to our choice of Banach spaces:
it is possible that another choice of Banach space makes this proposition
essentially trivial. This proof can be skipped on first reading.

\begin{prop}
\label{prop:dualite} Assume $-k_h+\check k_h \geq 2$ and $k_v - \check k_v
\geq 0$. Then there exists $C>0$ such that, for any $f, g \in
C^\infty_c(M-\Sigma)$, one has
\begin{equation*}
  \abs*{\int f g \dLeb} \leq C \norm{f}_{\boB^{-k_h, k_v}} \cdot \norm{g}_{\check \boB^{\check k_h, - \check k_v}}.
\end{equation*}
Therefore, the map $(f,g) \mapsto \int fg \dLeb$ extends by continuity to a
bilinear map on $\boB^{-k_h, k_v} \times \check \boB^{\check k_h, - \check
k_v}$ that we denote by $\langle f, g \rangle$.
\end{prop}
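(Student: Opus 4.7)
The plan is to reduce via a partition of unity to local estimates on small rectangles, prove the bound directly on rectangles away from singularities, and handle neighborhoods of singularities by a limiting procedure. The main technical obstacle will be the uniform control at singularities, which is where the surplus two horizontal derivatives ($\check k_h \geq k_h + 2$) become crucial.

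First, by Lemma~\ref{lem:partition_unite}, I take a partition of unity $1 = \chi_0 + \sum_{\sigma \in \Sigma} \chi_\sigma$ with $\chi_\sigma$ supported in a small ball $B(\sigma, 2r)$, equal to $1$ on $B(\sigma, r)$, and $\chi_0$ supported outside $B(\Sigma, r)$. Each $\chi_\alpha$ is constant near every singularity, so multiplication by $\chi_\alpha$ acts continuously on both $\boB^{-k_h,k_v}$ and $\check\boB^{\check k_h,-\check k_v}$. Expanding $\int fg = \int(\sum \chi_\alpha)^2 fg$ and using $\chi_\sigma \chi_{\sigma'} = 0$ for $\sigma \neq \sigma'$ reduces the estimate to integrands of two types: those supported at positive distance from $\Sigma$ (the $\chi_0^2$ and $\chi_0\chi_\sigma$ terms), and those supported in a single ball $B(\sigma, 2r)$ (the $\chi_\sigma^2$ terms). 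For the first type, a further smooth partition of unity subordinate to a cover by translation-chart rectangles reduces the claim to a local estimate $|\int_R \tilde f \tilde g \, \dd x \, \dd y| \leq C \|\tilde f\|_\boB \|\tilde g\|_{\check\boB}$ for $\tilde f, \tilde g \in C^\infty_c(R^\circ)$ on a rectangle $R = I \times J$.

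To prove this local estimate, I write the double integral as iterated in $(y,x)$ and perform $\check k_v$ vertical integrations by parts to transfer the distributional vertical regularity of $\tilde g$ onto $\tilde f$; this uses the hypothesis $k_v \geq \check k_v$ to keep $L_v^{\check k_v} \tilde f \in \boB^{-k_h, k_v - \check k_v}$ well-defined (by Proposition~\ref{prop:Lh_Lv_action}). The resulting expression pairs $L_v^{\check k_v} \tilde f$ against a vertical primitive of $\tilde g$, which still has $\check k_h$ classical horizontal derivatives controlled by $\|\tilde g\|_{\check\boB}$. The pairing is then bounded using the $\boB$-norm of $L_v^{\check k_v}\tilde f$ tested against $C^{k_h}$ horizontal slices of the primitive; the surplus $\check k_h - k_h \geq 2$ provides the Sobolev-type embedding needed to control these slices uniformly in $y$ by $\|\tilde g\|_{\check\boB}$, converting the distributional vertical regularity of $\tilde g$ into a pointwise bound on the relevant horizontal norms.

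For the second type, the integrand is supported in $B(\sigma, 2r)$ but, since $\tilde f, \tilde g \in C^\infty_c(M - \Sigma)$, it avoids $\sigma$ itself, though possibly by arbitrarily little; a direct cover by fixed-size translation rectangles uniform up to $\sigma$ fails, and multiplying by a cutoff $\chi_\delta$ vanishing on $B(\sigma, \delta)$ would introduce boundary contributions uncontrolled by Lemma~\ref{lem:partition_unite} (since $\chi_\delta$ is not constant near the singularity). The ``roundabout way'' alluded to by the authors is to apply the first-type estimate to $\chi_\delta^2 \tilde f \tilde g$ (with $\chi_\delta$ equal to $0$ on $B(\sigma, \delta)$ and $1$ on $B(\sigma, 2\delta)^c$) and to control the residual $\|(1-\chi_\delta)\tilde f\|_\boB \to 0$ as $\delta \to 0$: the extra horizontal regularity $\check k_h - k_h \geq 2$ allows absorbing, via integration-by-parts identities adapted to the branched-cover geometry, the pointwise growth of $\nabla \chi_\delta$ near $\sigma$. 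Combined with dominated convergence $\int \chi_\delta^2 \tilde f \tilde g \to \int \tilde f \tilde g$, this yields the bound. The main obstacle throughout is this uniform control near singularities, for which the conditions $\check k_h - k_h \geq 2$ and $k_v - \check k_v \geq 0$ are precisely what is needed.
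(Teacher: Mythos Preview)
Your proposal has a genuine gap in both the local estimate and, more seriously, in the treatment near singularities.

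\textbf{The local estimate (step 2).} Your vertical integration-by-parts scheme is backwards relative to the hypotheses. After taking $\check k_v$ vertical primitives of $\tilde g$ you obtain $G(x,y)=\int \psi_y(s)\,\tilde g(x,s)\,ds$ with kernel $\psi_y(s)=\rho(s)\mathbf 1_{s\le y}(y-s)^{\check k_v-1}/(\check k_v-1)!$, which is only $C^{\check k_v-2}$ in $s$. To bound $|L_h^j G(x,y)|$ via the $\check\boB^{\check k_h,-\check k_v}$ norm of $\tilde g$ you would need $\psi_y\in C^{\check k_v}$, hence at least $\check k_v+2$ primitives, hence $k_v\ge\check k_v+2$. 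The stated hypothesis gives the surplus in the \emph{horizontal} direction, not the vertical one, and no ``Sobolev-type embedding'' converts horizontal surplus into the missing vertical regularity of $\psi_y$. The paper instead takes $k=k_h+2$ \emph{horizontal} primitives $F_k$ of $\tilde f$; the kernel $t\mapsto \rho(t)\mathbf 1_{t\le x}(x-t)^{k-1}/(k-1)!$ is $C^{k-2}=C^{k_h}$, which is exactly what the $\boB^{-k_h,k_v}$ norm accepts. This is where the ``$+2$'' genuinely originates.

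\textbf{Near singularities (step 3).} The limiting argument with $\chi_\delta$ cannot produce a constant $C$ independent of $f,g$. For a fixed $f\in C^\infty_c(M-\Sigma)$ one has $(1-\chi_\delta)f=0$ for $\delta$ small, but this is not uniform in $\|f\|_{\boB}$. Conversely, $\chi_\delta$ is \emph{not} constant near $\sigma$, so Lemma~\ref{lem:partition_unite} does not bound $\|\chi_\delta f\|_\boB$ uniformly; the Leibniz expansion of $L_v^j(\chi_\delta f)$ produces factors $L_v^i\chi_\delta=O(\delta^{-i})$ with no compensating smallness (the $\boB$-norm only sees $f$ through integrals against $C^{k_h}$ test functions, not pointwise). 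Thus the away-from-singularity bound on $\chi_\delta^2 fg$ carries a constant blowing up in $\delta$, and your limit gives only the trivial $|\int fg|<\infty$.

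The paper's ``roundabout way'' is not a limiting argument at all: it is the structural decomposition of Lemma~\ref{lem:decomposition_f}, which writes $f$ near $\sigma$ as a covering-invariant piece $f_\sigma$ (pushed to $\C$ and handled as a nonsingular case) plus pieces $f_H$ supported in single half-planes bounded by a horizontal or vertical line through $\sigma$. For each $f_H$ the integration-by-parts produces boundary terms along the singular axis, but because $f_H$ is supported on one side of that axis one can freely modify the test function $t\mapsto(x-t)^{j}\mathbf 1_{t\le x}$ on the other side to make it $C^{k_h}$, recovering the bound. This geometric decomposition, not a cutoff limit, is the missing ingredient.
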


The proof will rely on a decomposition of $f$ into basic pieces for which all
the above integrals can be controlled. We will denote by $\boH$ the set of
local half-planes around all singularities, bounded by horizontal or vertical
lines. Specifically, if $\sigma$ is a singularity of angle $2\bpi \kappa$
with covering projection $\pi$, these sets are the $\kappa$ components of
$\pi^{-1}\{z \st \Re z \geq 0\}$ in a neighborhood of $\sigma$, intersected
with a small disk around $\sigma$, and similarly for the upper half-planes,
lower half-planes and left half-planes, giving rise to $4\kappa$ half-planes
around $\sigma$.
\begin{lem}
\label{lem:decomposition_f} Fix $k_h$ and $k_v$. There exist $N$, $C$, and
rectangles $(R_i)_{i\leq N}$ away from the singularities with the following
property. For any $f \in C^\infty_c(M-\Sigma)$, there is a decomposition
\begin{equation}
\label{eq:decompose_f}
  f = \sum_{i=1}^N f_i + \sum_{\sigma \in \Sigma} f_\sigma + \sum_{H\in \boH} f_H
\end{equation}
where all the $f_i$ and $f_\sigma$ and $f_H$ are $C^{k_v}$ functions with
compact support in $M-\Sigma$. They belong to $\boB^{-k_h, k_v}$ and have
norm at most $C \norm{f}_{-k_h, k_v}$. Moreover, each $f_i$ is supported in
$R_i$, each $f_H$ is supported in $H$, and each $f_\sigma$ is supported in a
small disk $D_\sigma$ around $\sigma$ and is constant on the fibers of the
covering projection $\pi$ around $\sigma$.
\end{lem}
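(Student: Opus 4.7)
The plan is to combine a partition of unity on $M$ with a special treatment at each singularity based on the deck group of the local $\kappa$-fold ramified cover.

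First, I would fix once and for all (independently of $f$) a smooth partition of unity $1 = \eta_0 + \sum_{\sigma \in \Sigma}\eta_\sigma$ on $M$ with each $\eta_\sigma$ supported in a small disk $D_\sigma$ around $\sigma$ and constant on the fibres of the covering projection $\pi_\sigma\colon D_\sigma\to\R^2$. By Lemma~\ref{lem:partition_unite}, multiplication by any of these factors is continuous on $\boB^{-k_h,k_v}$, so the problem splits into controlling $\eta_0 f$ and each $\eta_\sigma f$ separately. For the piece $\eta_0 f$, whose support lies in a fixed compact subset of $M-\Sigma$, I would cover this subset by finitely many open rectangles $R_1,\dots,R_N$ and use a subordinate smooth partition of unity to obtain the pieces $f_i$, with norm control again by Lemma~\ref{lem:partition_unite}.

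Near a singularity $\sigma$, let $G_\sigma = \Z/\kappa$ denote the deck group of $\pi_\sigma$, acting on $D_\sigma$ by translation-structure isomorphisms that preserve horizontal and vertical foliations. I would set $f_\sigma \coloneqq A_\sigma(\eta_\sigma f)$, where $A_\sigma u = \kappa^{-1}\sum_{g\in G_\sigma}u\circ T_g$. Then $f_\sigma$ is $G_\sigma$-invariant, hence constant on the fibres of $\pi_\sigma$, and its norm is bounded by $\norm{\eta_\sigma f}_{-k_h,k_v}$ because the pullback by each $T_g$ is bounded on $\boB^{-k_h,k_v}$ (it sends horizontal intervals to horizontal intervals and commutes with $L_v$). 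The residue $r_\sigma \coloneqq \eta_\sigma f - f_\sigma$ has vanishing $G_\sigma$-average: $\sum_g r_\sigma \circ T_g = 0$. It then remains to decompose $r_\sigma$ as $\sum_{H\in\boH_\sigma} f_H$ with each $f_H$ supported in the half-plane $H$. For this I would lift a smooth angular partition of unity $\sum_H \rho_H = 1$ on $\R^2\setminus\{0\}$, subordinate to the cover by the four standard open half-planes, via $\pi_\sigma$ to a partition $\sum_{H\in\boH_\sigma}\tilde\rho_H=1$ on $D_\sigma\setminus\{\sigma\}$ with each $\tilde\rho_H$ supported in $H$, and set $f_H \coloneqq \tilde\rho_H r_\sigma$; the support and $C^{k_v}$-smoothness are then immediate.

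The main obstacle is the uniform norm bound $\norm{f_H}_{-k_h,k_v} \leq C\norm{f}_{-k_h,k_v}$: the partition functions $\tilde\rho_H$ are not smooth at $\sigma$ (their derivatives of order $j$ blow up like $\abs{z}^{-j}$), so Lemma~\ref{lem:partition_unite} does not apply, and a naive bound would depend on the distance from the support of $r_\sigma$ to $\sigma$, which is not uniform in $f$. The crucial input is the zero $G_\sigma$-average of $r_\sigma$, which allows one to rewrite
\begin{equation*}
  \tilde\rho_H\, r_\sigma = \frac{1}{\kappa}\sum_{g\in G_\sigma}\tilde\rho_H\,(r_\sigma - r_\sigma \circ T_g).
\end{equation*}
Combined with the fact that the $G_\sigma$-action permutes the $\tilde\rho_H$ (so that $\tilde\rho_H\circ T_g$ is another partition function of the family), this lets each test integral $\int_I \phi\, L_v^j(\tilde\rho_H r_\sigma)\,dx$ be rewritten as a finite combination of test integrals against $r_\sigma$ with modified, uniformly $C^{k_h}$-bounded test functions, the singular behavior of $\tilde\rho_H$ cancelling against the zero-average condition. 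This yields the required uniform norm bound and completes the decomposition.
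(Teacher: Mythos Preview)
Your reduction to a neighbourhood of a singularity and the averaging to produce $f_\sigma$ match the paper. The gap is in your last paragraph, where you decompose the zero-average residue $r_\sigma$ using an angular partition of unity $\tilde\rho_H$. You correctly identify that the horizontal derivatives of $\tilde\rho_H$ along an interval at height $y$ blow up like $\abs{y}^{-j}$, but your proposed cure does not work. The identity $\tilde\rho_H r_\sigma = \kappa^{-1}\sum_g \tilde\rho_H(r_\sigma - r_\sigma\circ T_g)$ is correct, yet when you estimate $\int_I \phi\,\tilde\rho_H\,(r_\sigma\circ T_g)\,dx = \int_{T_g(I)} (\phi\circ T_g^{-1})\,\tilde\rho_{T_g(H)}\, r_\sigma\,dx$, the new test function $(\phi\circ T_g^{-1})\tilde\rho_{T_g(H)}$ has exactly the same blown-up $C^{k_h}$ norm as $\phi\tilde\rho_H$; the permutation property merely shuffles which $\tilde\rho_{H'}$ appears. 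Nothing cancels, and one is still left bounding an integral of $r_\sigma$ against a test function whose $C^{k_h}$ norm is of order $\abs{y}^{-k_h}$. The zero-average of $r_\sigma$ gives no pointwise or local smallness near $\sigma$ that could compensate.

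The paper's route is genuinely different and exploits the anisotropy of the norm. It takes the full character decomposition $f = \sum_q f_q$ (not just invariant versus zero-average), and for each $q\neq 0$ proves the boundary estimate $\abs{\int_{-a}^0 \phi\, L_v^j f_q\,dx} \leq C\norm{\phi}_{C^{k_h}}$ even for $\phi$ not vanishing at $0$, by approaching the horizontal ray from above and below and using that the limits differ by the factor $\omega^q\neq 1$. With this estimate in hand, the paper constructs for each horizontal separatrix an explicit function $f_U$ supported in a \emph{vertical} half-plane, equal to the $k_v$-th order vertical Taylor polynomial of $f_q$ along that separatrix (times a cutoff); the boundary estimate gives $\norm{f_U}_{-k_h,k_v}\leq C$. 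The residual $\tilde f_q = f_q - \sum_U f_U$ then vanishes to order $k_v$ along all horizontal separatrices, so its restriction to each \emph{horizontal} half-plane extends by zero to a $C^{k_v}$ function, and the norm bound for these pieces is immediate (characteristic functions of horizontal half-planes do not harm the $\boB^{-k_h,k_v}$ norm). Thus the paper never multiplies by a function with blowing-up derivatives; it instead builds the vertical-half-plane pieces by hand and lets the horizontal-half-plane pieces fall out as a remainder. Your angular cutoff attempts to treat all half-planes symmetrically, which the anisotropic norm does not permit.
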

\begin{proof}
Multiplying $f$ by a partition of unity, we can assume that $f$ is supported
in a small disk around a singularity $\sigma$ with angle $2\bpi\kappa$ (the
terms away from the singularities will give rise to the terms $f_i$ in the
decomposition~\eqref{eq:decompose_f}). We have to construct a decomposition
\begin{equation}
\label{eq:wpuxcvopiuwxopicv}
  f = f_\sigma + \sum_{H\in \boH_\sigma} f_H
\end{equation}
as in the statement of the lemma, where $\boH_\sigma$ denotes the set of
half-planes around $\sigma$. We assume $\norm{f}_{-k_h, k_v}\leq 1$ for
definiteness.

Let $\pi=\pi_\sigma$ be the covering projection, sending $\sigma$ to $0$. We
may assume that $\pi^{-1}([-a,a]^2)$ only contains $\sigma$ as a singularity,
and that $f$ is supported in $\pi^{-1}([-a/2, a/2]^2)$. Denote by $\omega =
e^{2\ic\bpi/\kappa}$ the fundamental $\kappa$-th root of unity. Let $R$ be
the rotation by $2\bpi$ around $\sigma$. For $q \in \Z/\kappa \Z$, let
$f_q(z) = \kappa^{-1} \sum_{j=0}^{\kappa-1} \omega^{qj} f(R^j z)$. This is
the component of $f$ that is multiplied by $\omega^q$ when one turns by
$2\bpi$ around $\sigma$. We have $f = \sum f_q$ by construction, and each
$f_q$ is $C^\infty$, compactly supported, and satisfies
$\norm{f_q}_{\boB^{-k_h, k_v}} \leq 1$ since this is the case for $f$.

The function $f_0$ is constant along the fibers of $\pi$. It will be the
function $f_\sigma$ in the decomposition of $f$. Consider now $q\neq 0$. We
will first work in a chart $U$ sent by $\pi$ on $[-a,a]^2 -[0, \infty)$,
i.e., a chart cut along the positive real axis. When one crosses this axis
from top to bottom, the function $f_q$ is multiplied by $\omega^q$. We will
use the canonical complex coordinates on $U$.

Let us first show the following: for $\phi \in C_c^{k_h}([-a,a])$ and $j \leq
k_v$, one has
\begin{equation}
\label{eq:f_q_restr}
  \abs*{\int_{-a}^0 \phi \cdot L_v^j f_q \dd x} \leq C \norm{\phi}_{C^{k_h}}.
\end{equation}
The interest of this estimate is that $\phi$ is a priori not compactly
supported in $[-a, 0]$, so that this integral can not be controlled directly
using $\norm{f_q}_{-k_h, k_v}$.

For small $y>0$ and $\epsilon \in \{-1, 1\}$, the interval $[-a,a]+\epsilon
\ic y$ is included in $U$. Therefore,
\begin{equation}
\label{eq:iouiouwxiocvu}
  \abs*{\int_{-a}^a \phi(x) f_q(x+\epsilon \ic y) \dd x} \leq \norm{\phi}_{C^{k_h}}.
\end{equation}
Let $y$ tend to $0$. For $x \leq 0$, $f_q(x+\epsilon \ic y)$ tends to
$f_q(x)$. On the other hand, for $x>0$, the limit depends on $\epsilon$: one
gets $f_q(x^+)$ for $\epsilon=1$ and $f_q(x^-) = \omega^q f_q(x^+)$ for
$\epsilon = -1$. Hence,
\begin{equation*}
  \int_{-a}^a \phi(x) f_q(x+\ic y) \dd x - \omega^{-q}  \int_{-a}^a \phi(x) f_q(x-\ic y) \dd x
  \to (1-\omega^{-q}) \int_{-a}^0 \phi(x) f_q(x) \dd x.
\end{equation*}
Combined with the control~\eqref{eq:iouiouwxiocvu}, this
proves~\eqref{eq:f_q_restr} for $j=0$ (for $C=2/\abs{1-\omega^{-q}}$). The
argument is the same for $j>0$.

Consider a $C^\infty$ function $\rho_2$ which is equal to $1$ on $[-a/2,
a/2]^2$ and vanishes outside of $[-a,a]^2$. We define a function $f_U$ on $U$
by $f_U(x+\ic y) = 1_{x\leq 0} \rho_2(x+\ic y) \sum_{j\leq k_v} y^j L_v^j
f_q(x)$. This is a $C^\infty$ function, compactly supported in $M-\Sigma$ (we
recall that $f$, and therefore $f_q$, vanishes in a neighborhood of $\sigma$,
so that $f_q(x)=0$ for $x$ close to $0$ in the chart $U$). This function is
supported by $U$. Its interest is that its germ along $[-a,0]$ is the same as
that of $f_q$. Moreover, it follows from~\eqref{eq:f_q_restr} that the norm
of $f_U$ in $\boB^{-k_h, k_v}$ is uniformly bounded. This function is
supported in the left half-plane $H \in \boH$ contained in $U$. Let us denote
it by $f_{q, H}$. It will be part of the term $f_H$ in the
decomposition~\eqref{eq:wpuxcvopiuwxopicv}.

For each horizontal segment $\tau$ coming out of the singularity $\sigma$,
one can consider a chart $U$ as above cut along $\tau$ (with the difference
that $[-a,a]^2$ can be cut along either the positive real axis, or the
negative real axis, depending on $\tau$), and then the associated function
$f_U$. Let $\tilde f_q = f_q -\sum_U f_U$. This function is bounded by a
constant in $\boB^{-k_h, k_v}$. Its interest is that it vanishes along every
horizontal segment coming out of $\sigma$, and moreover all its vertical
derivatives up to order $k_v$ also vanish there. In particular, the
restriction of $\tilde f_q$ to any upper half-plane or lower half-plane $H\in
\boH$ is still $C^{k_v}$ and it can be extended to the rest of the manifold
by zero. Denote this extended function by $f_{q, H}$. It belongs to
$\boB^{-k_h, k_v}$ and has a bounded norm in this space, and it is supported
in $H$.

Finally, the decomposition~\eqref{eq:wpuxcvopiuwxopicv} of $f$ is obtained by
letting $f_\sigma=f_0$ and $f_H = \sum_{q\neq 0} f_{q,H}$.
\end{proof}

\begin{proof}[Proof of Proposition~\ref{prop:dualite}]
Decomposing $f$ as in Lemma~\ref{prop:dualite}, it suffices to show the
inequality $\int f g \dLeb \leq C \leq C \norm{f}_{\boB^{-k_h, k_v}} \cdot
\norm{g}_{\check \boB^{\check k_h, - \check k_v}}$ when $f$ is:
\begin{enumerate}
\item supported away from the singularities,
\item or supported on a small neighborhood of a singularity, and constant
    on the fibers of the covering projection,
\item or supported in a half-plane close to a singularity.
\end{enumerate}
For definiteness, we will also assume $\norm{f}_{\boB^{-k_h, k_v}} \leq 1$
and $\norm{g}_{\check \boB^{\check k_h, - \check k_v}} \leq 1$.

Let us first handle the case where $f$ is supported in a small rectangle
$[-a, a]^2$ away from the singularities. We can even assume that $f$ is
supported in $[-a/4, a/4]^2$. Multiplying $g$ by a cutoff function, we can
assume that it is also supported in $[-a/2, a/2]^2$.

Using a local chart, we may work in $\C$. Along the horizontal interval
$[-a,a]+\ic y$, the successive primitives of $F_0=f$ vanishing at $-a+\ic y$
are given by
\begin{equation}
\label{eq:formule_Fk}
  F_k(x+\ic y) = \int_{-a}^x f(t+\ic y) (x-t)^{k-1} /(k-1)! \dd t,
\end{equation}
as one checks easily by induction over $k$. Let us take $k = k_h+2$. With $k$
integrations by parts, one gets
\begin{equation}
\label{eq:fg_IPP}
  \int_{[-a,a]+\ic y} f g \dd x = (-1)^{k} \int_{[-a,a]+\ic y} F_{k}\cdot L_h^{k} g \dd x.
\end{equation}
Let us consider a function $\rho(x)$ equal to $1$ for $x\geq -a/2$ and
vanishing on a neighborhood of $-a$. As $f$ is supported by $[-a/2, a/2]^2$,
one has
\begin{equation*}
  F_{k}(x+\ic y) = \int_{-a}^a f(t+\ic y)\cdot \rho(t) 1_{t\leq x} (x-t)^{k-1} /(k-1)!  \dd t.
\end{equation*}
The function
\begin{equation}
\label{eq:fct_discontinue}
  t\mapsto \rho(t) 1_{t\leq x} (x-t)^{k-1} /(k-1)!
\end{equation}
is of class $C^{k-2}$ on $[-a,a]$, with a bounded $C^{k-2}$ norm: Its
singularity at $x$ is a zero of order $k-1$ to the left of $x$, and of
infinite order to the right of $x$, so that everything matches in $C^{k-2}$
topology. Therefore, by the definition of $\boB^{-k_h, k_v}$ and the choice
$k=k_h+2$, one has $\abs{F_k(x+\ic y)} \leq C$ as $\norm{f}_{\boB^{-k_h,
k_v}} \leq 1$. In the same way, the vertical derivatives of $F_k$ involve
vertical derivatives of $f$, which can be integrated against $C^{k_h}$
functions along horizontals. We get, for all $j\leq k_v$ and all $x+\ic y \in
[-a,a]^2$, the inequality $\abs*{L_v^j F_k(x+\ic y)} \leq C$. Therefore,
along any vertical segment of the form $x+\ic[-a,a]$, the function $F_k$ is
$C^{k_v}$ with bounded norm, and it is compactly supported as it vanishes for
$\abs{y} \geq a/2$ (as $f$ is supported by $[-a/2, a/2]^2$).

Let us integrate the equality~\eqref{eq:fg_IPP} with respect to $y$. We get
\begin{equation}
\label{eq:integrale_2d}
  \int f g \dLeb = (-1)^k \int_{x \in [-a,a]} \pare*{ \int_{x+\ic [-a,a]} F_k \cdot L_h^k g \dd y} \dd x.
\end{equation}
When $x$ is fixed, every integral $\int_{x+\ic [-a,a]} F_k \cdot L_h^k g \dd
y$ is the integral against a $C^{k_v}$ function with bounded norm of the
function $L_h^k g$, with $k\leq \check k_h$ and $k_v \geq \check k_v$ by
assumption. By definition, this integral is bounded by $\norm{F_k}_{C^{k_v}}
\norm{g}_{\check \boB^{\check k_h, - \check k_v}} \leq C$. Integrating in
$x$, we obtain the desired inequality $\abs*{\int fg \dLeb} \leq C$.

\medskip

We still have to consider the case where $f$ is supported in the neighborhood
of a singularity $\sigma$ with angle $2\bpi\kappa$. Multiplying $g$ by a
cutoff function, we can assume that $g$ is also supported there. Write $\pi$
for the corresponding covering projection, sending $\sigma$ to $0$. We may
assume that $\pi^{-1}([-a,a]^2)$ only contains $\sigma$ as a singularity, and
that $f$ and $g$ are supported by $\pi^{-1}([-a/2, a/2]^2)$. We would like to
carry out the same argument as before, but the function $F_k$ one obtains by
integrating along a horizontal line is smooth along vertical lines to the
left of the singularity, but it is discontinuous on vertical lines on the
right of the singularity, breaking the argument.

Assume first that $f$ is invariant under the covering projection $\pi$.
Denote by $\omega = e^{2\ic\bpi/\kappa}$ the fundamental $\kappa$-th root of
unity. Let $R$ be the rotation by $2\bpi$ around $\sigma$. For $q \in
\Z/\kappa \Z$, let $g_q(z) = \kappa^{-1} \sum_{j=0}^{\kappa-1} \omega^{qj}
g(R^j z)$. This is the component of $g$ that is multiplied by $\omega^q$ when
one turns by $2\bpi$ around $\sigma$. For $q\neq 0$, the function $f g_q$ is
multiplied by $\omega^q$ when one turns around the singularity. Therefore,
$\int f g_q \dLeb = \omega^q \int f g_q \dLeb$, which implies $\int f g_q
\dLeb = 0$ (this is just the classical fact that two functions living in
different irreducible representations are orthogonal). Let us now handle
$g_0$. The functions $f$ and $g_0$ are both $R$-invariant. They can be
written as $\tilde f\circ \pi$ and $\tilde g\circ \pi$ where $\tilde f$ and
$\tilde g$ are functions on $\C$ supported by $[-a/2, a/2]^2$. The norms of
these functions (in $\boB^{-k_h, k_v}$ and $\check \boB^{\check k_h, -\check
k_v}$ respectively) are bounded by $1$. The case of functions away from
singularities, that we have already treated, shows that $\abs*{\int \tilde f
\tilde g \dLeb}\leq C$. This gives the same estimate for $\int f g_0 \dLeb$.

\medskip

Assume now that $f$ is supported in a vertical half-plane $H$, to the left of
$\sigma$ for instance. Let us show that
\begin{equation} \label{eq:intfUg_main}
  \abs*{\int f g \dLeb} \leq C.
\end{equation}
We proceed like in the proof away from singularities, making integrations by
parts along horizontals. Let $F_j$ be the $j$-th primitive of $f$ along
horizontals, vanishing at $-a+\ic y$. It is given by the
formula~\eqref{eq:formule_Fk}. Then, we do $k=k_h+2$ integrations by parts
along each horizontal line, to get
\begin{equation*}
  \int_{[-a,0]+\ic y} f g \dd x = (-1)^{k} \int_{[-a,0]+\ic y} F_{k}\cdot L_h^{k} g \dd x + \sum_{j<k} (-1)^j F_{j+1}(\ic y) L_h^j g(\ic y).
\end{equation*}
The difference with~\eqref{eq:fg_IPP} is the boundary terms, due to the fact
that $g$ does not vanish on the line $x=0$. Integrating in $y$, we obtain
\begin{equation}
\label{eq:intfUg}
  \int f g \dLeb = (-1)^k \int_{x \in [-a, 0]} \pare*{\int_{x + \ic [-a, a]} F_k \cdot L_h^k g \dd y} \dd x
  + \sum_{j<k} (-1)^j \pare*{\int_{\ic [-a,a]} F_{j+1} \cdot L_h^j g \dd y}.
\end{equation}
The first term is controlled as in the case away from singularities, as the
function $F_k$ is bounded and $C^{k_v}$ along vertical segments since
$k=k_h+2$. On the other hand, the boundary terms are more delicate. The
difficulty is that, a priori, $F_{j+1}(\ic y)$ is not bounded just in terms
of $\norm{f}_{\boB^{-k_h, k_v}}$: The function~\eqref{eq:fct_discontinue}
(with $k$ replaced by $j$ and $x=0$) is not $C^{k_h}$ for $j<k$ because of
its singularity at $0$. Nevertheless, as the distribution $f$ is supported in
$H$, we may replace the function in~\eqref{eq:fct_discontinue} by another
function which coincides with it on $[-a, 0]$ and is $C^{k_h}$ with bounded
norm on $[-a,a]$, without changing the value of the integral. It follows that
in fact $F_j(\ic y)$ is bounded in terms of $\norm{f}_{\boB^{-k_h, k_v}}$. In
the same way, its vertical derivatives are also bounded.  As $g \in
\check\boB^{\check k_h, -\check k_v}$ has norm at most $1$, we obtain
(integrating on a segment with horizontal coordinate $-x$ with $x$ small to
avoid the singularity)
\begin{equation*}
  \pare*{\int_{-a}^a F_{j+1}(\ic y) \cdot L_h^j g(-x + \ic y) \dd y} \leq C.
\end{equation*}
Letting $x$ tend to $0$, we obtain that the second term in~\eqref{eq:intfUg}
is uniformly bounded. This proves~\eqref{eq:intfUg_main}.

\medskip

Finally, assume that $f$ is supported in a horizontal half-plane $H$, for
instance an upper half plane above $\sigma$. We proceed exactly as in the
case without singularities, integrating by parts along horizontal segments.
Let $F_j$ be the $j$-th primitive of $f$ that vanishes on $-a + \ic (0,a]$.
The only difference is at the end of the argument: the analog
of~\eqref{eq:integrale_2d} in our case is
\begin{equation*}
  \int_H f \cdot  g \dLeb = (-1)^k \int_{x \in [-a,a]} \pare*{ \int_{x+\ic (0,a]} F_k \cdot L_h^k g \dd y} \dd x.
\end{equation*}
The function $F_k$ is still smooth along vertical segments, with uniformly
bounded derivatives. However, it is not compactly supported in $x +
\ic[0,a]$, which prevents us from writing.
\begin{equation}
\label{eq:iuwxcviouopiwxcv}
  \abs*{\int_{x+\ic (0,a]} F_k \cdot L_h^k g \dd y} \leq C \norm{g}_{\check\boB^{\check k_h, -\check k_v}}.
\end{equation}
On the other hand, $F_k$ vanishes on $[-a,a]$, as well as its successive
derivatives. Indeed, $f$ is supported in $H$ and smooth vertically, so by
approximating the left and half parts of the boundary of $H$ from below one
obtains this vanishing property. Therefore, we may extend $F_k$ by $0$ for
points with negative imaginary part. This extension is still $C^{k_v}$ along
vertical lines. This justifies the inequality~\eqref{eq:iuwxcviouopiwxcv}.
Integrating in $x$, we obtain the desired inequality $\abs*{\int f \cdot g
\dLeb}\leq C$.
\end{proof}

\begin{lem}
\label{lem:dualite} We have the following duality formulas for $f \in
\boB^{-k_h, k_v}$ and $g \in \check \boB^{\check k_h, -\check k_v}$:
\begin{equation}
\label{eq:dualite}
  \langle L_h f, g \rangle = - \langle f, L_h g\rangle, \quad \langle L_v f, g \rangle = - \langle f, L_v g \rangle.
\end{equation}
\end{lem}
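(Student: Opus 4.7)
The plan is to reduce to the smooth case by density. By construction, $C^\infty_c(M-\Sigma)$ is dense in $\boB^{-k_h, k_v}$, and by the analogous definition it is also dense in $\check\boB^{\check k_h, -\check k_v}$. So I would approximate $f$ by a sequence $f_n \in C^\infty_c(M-\Sigma)$ converging to $f$ in $\boB^{-k_h, k_v}$, and $g$ by $g_m \in C^\infty_c(M-\Sigma)$ converging to $g$ in $\check\boB^{\check k_h, -\check k_v}$.

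For smooth, compactly-supported-away-from-$\Sigma$ functions, the two identities reduce to ordinary integration by parts: in any translation chart the flat Lebesgue measure is $\dd x \wedge \dd y$ and the vector fields $V^h, V^v$ have constant coefficients, so each is divergence-free; moreover the supports stay away from $\Sigma$ so no singular boundary terms appear, and the usual Stokes/IBP argument gives
\begin{equation*}
\int (L_h f_n) g_m \dLeb = -\int f_n (L_h g_m)\dLeb, \qquad \int (L_v f_n) g_m \dLeb = -\int f_n (L_v g_m)\dLeb.
\end{equation*}

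Next I would pass to the limit. By Proposition~\ref{prop:Lh_Lv_action}, $L_h f_n \to L_h f$ in $\boB^{-k_h-1, k_v}$ and $L_v f_n \to L_v f$ in $\boB^{-k_h, k_v-1}$, and analogously for $L_h g_m$, $L_v g_m$ in the $\check\boB$ scale (the roles of the two derivations being swapped). Under the implicit assumption that the regularity indices are sufficient for Proposition~\ref{prop:dualite} to apply to each of the four pairs $(L_h f, g)$, $(f, L_h g)$, $(L_v f, g)$, $(f, L_v g)$, the bilinear pairing $\langle\cdot,\cdot\rangle$ is continuous on the relevant product spaces, so letting first $m\to\infty$ and then $n\to\infty$ in the identities above yields~\eqref{eq:dualite}. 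Note that if the given indices $(k_h,k_v,\check k_h,\check k_v)$ do not satisfy the strict inequalities $-(k_h+1)+\check k_h\geq 2$ needed to apply Proposition~\ref{prop:dualite} to $(L_h f, g)$, one can instead view $f\in \boB^{-k_h,k_v}\subset \boB^{-k_h-1,k_v}$ (by Proposition~\ref{prop:compact_inclusion}), at the cost of enlarging the hypothesis on $\check k_h$ by one; the same remark applies vertically.

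The main (mild) obstacle is the index bookkeeping: one must ensure that all four duality pairings are genuinely defined by Proposition~\ref{prop:dualite} on the appropriate scales. Once this is checked, there is no real analytic difficulty—no singular boundary contribution appears because the approximating functions vanish near $\Sigma$, and the translation-invariance of $\dLeb$ under $V^h$ and $V^v$ makes the integrations by parts automatic in charts.
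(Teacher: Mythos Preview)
Your proposal is correct and follows essentially the same approach as the paper: reduce to $C^\infty_c(M-\Sigma)$ by density, do a straightforward integration by parts there (the paper writes it as a single application of Stokes on the complement of small disks around $\Sigma$, rather than chart-by-chart, but this is cosmetic), and extend by the continuity of the pairing from Proposition~\ref{prop:dualite}. Your remark about the index bookkeeping is a fair point that the paper leaves implicit; in practice the lemma is only invoked on generalized eigenspaces where one is free to choose the indices as large as needed, so the issue never bites.
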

\begin{proof}
It is enough to check these formulas for functions in $C^\infty_c(M-\Sigma)$,
as they extend by density to the whole spaces thanks to
Proposition~\ref{prop:dualite}. The function $fg$ vanishes on a neighborhood
of the singularities. Denote by $\Omega$ the complement of a union of small
disks around the singularities such that $fg = 0$ outside of $\Omega$. We
have
\begin{equation*}
  \int_M L_h(fg) \dd x \wedge \dd y = \int_\Omega \dd(fg \dd y) = -\int_{\partial \Omega} fg\dd y = 0.
\end{equation*}
Hence, $\int L_h f \cdot g \dLeb + \int f \cdot L_h g \dLeb = 0$. This proves
the first identity in~\eqref{eq:dualite}. The second one is identical, upon
exchanging the roles of $x$ and $y$.
\end{proof}

\subsection{Cohomological interpretation}
\label{subsec:cohomological}

In the study of the Ruelle spectrum of pseudo-Anosov maps, a special role
will be played by the elements of $\boB^{-k_h, k_v} \cap \ker L_v$.
Heuristically, the relevant object associated to $f \in \boB^{-k_h, k_v}$ is
the current $f \dd x$. When $f$ satisfies additionally $L_v f = 0$, then the
formal derivative of this current is $\dd (f \dd x)=(\partial_x f \dd x +
\partial_y f \dd y) \wedge \dd x$. The term $\dd x \wedge \dd x$ vanishes.
When $L_v f = 0$, one has $\partial_y f = 0$, and one gets $\dd (f \dd x)=0$.
Therefore, the current $f \dd x$ is closed. It defines a cohomology class in
$H^1(M-\Sigma)$. We will give a more explicit description of this cohomology
class, and show that it even belongs to $H^1(M)$ (i.e., it vanishes if one
integrates it along a small path around a singularity).

Let $\gamma$ be a continuous closed loop in $M-\Sigma$ and let $f \in
\boB^{-k_h, k_v} \cap \ker L_v$. We define the integral of $f$ along
$\gamma$, denoted by $\int_\gamma f \dd x$, as follows. Deforming $\gamma$
slightly, we can first transform it into a loop made of finitely many
horizontal and vertical segments. In $\int_\gamma f \dd x$, the vertical
components of $\gamma$ do not appear. For a horizontal component $I$, we
would like it to contribute by $\int_I f \dd x$, but this does not make sense
since $f$ can only be integrated against smooth functions, which is not the
case for the characteristic function of $I$. Let us smoothen this function by
adding to the end of $I$ a smooth function going from $1$ to $0$. In the next
horizontal interval $J$, that follows $I$ in $\gamma$, on the contrary, we
subtract $\phi$ (pushed forward by the vertical translation from $I$ to $J$)
to the characteristic function $\chi_J$ of $J$ -- this process changes it to
the function $\chi_J - \phi$, which is smooth. In this way, we obtain
integrals that are well defined. As $f$ is invariant under vertical holonomy
by the assumption $L_v f = 0$, it follows that the result is independent of
the choice of $\phi$, and of the choice of the initial deformation of
$\gamma$ in $M-\Sigma$. This concludes the definition of $\int_\gamma f \dd
x$. This construction is reminiscent
of~\cite[Paragraph~1.3]{bufetov_translation_flow}, although the fact that our
distributions can not be integrated against characteristic functions enforces
an additional smoothing step in the definition above.

\begin{prop}
\label{prop:define_H1} Let $f \in \boB^{-k_h, k_v} \cap \ker L_v$. Then the
integral $\int_\gamma f \dd x$ only depends on the homology class of $\gamma$
in $H_1(M)$. Therefore, the map $\gamma \mapsto \int_\gamma f\dd x$ defines a
linear map from $H_1(M)$ to $\R$, i.e., a cohomology class in $H^1(M)$ which
we denote by $[f]$ or $[f\dd x]$.
\end{prop}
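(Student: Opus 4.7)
The plan is to show that $\gamma \mapsto \int_\gamma f\dd x$ descends to a linear functional on $H_1(M)$. I will reduce this to two claims: (a) $\int_{\partial R} f\dd x = 0$ for every small rectangle $R \subset M-\Sigma$ with horizontal/vertical sides, giving a well-defined functional on $H_1(M-\Sigma)$; and (b) $\int_{\gamma_\sigma} f\dd x = 0$ for a small loop $\gamma_\sigma$ around each singularity $\sigma$, forcing it to factor through $H_1(M)$.

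For (a), only the horizontal sides of $\partial R$ contribute, and the top side is a vertical translate of the bottom by the height of $R$. Since $L_v f = 0$, Proposition~\ref{prop:Lh_Lv_action} gives $\partial_t \ell_{I_t, \phi_t, 0}(f) = \ell_{I_t, \phi_t, 0}(L_v f) = 0$, so $\ell_{I_t, \phi_t, 0}(f)$ is constant in $t$ and the top and bottom contributions cancel with opposite orientations.

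For (b), fix $\sigma$ of cone angle $2\pi\kappa$ with covering projection $\pi_\sigma$ presenting a neighborhood $V$ as $\kappa$ sheets $D_1,\dots,D_\kappa$ glued cyclically along horizontal cuts. Take $\gamma_\sigma$ to consist of $\kappa$ square boundaries at distance $\epsilon$ from $\sigma$, one in each sheet, and choose the same symmetric test function $\phi$ on every horizontal segment (identified with $[-\epsilon,\epsilon]$ via the horizontal coordinate), with smoothing localized at $x=\pm\epsilon$; the corner matching under vertical translation at $x=\pm\epsilon$ is then automatic. Split $\phi = \phi\psi_\eta + \phi(1-\psi_\eta)$, where $\psi_\eta$ is a smooth cutoff with $\psi_\eta\equiv 0$ on $[-\eta,\eta]$ and $\psi_\eta\equiv 1$ off $[-2\eta,2\eta]$. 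For the ``away'' piece $\phi\psi_\eta$, vertical translation from $y=\epsilon$ to $y=-\epsilon$ is well-defined at each $x$ in the support (staying within the sheet for $x<0$, crossing a cut to the cyclically adjacent sheet for $x>0$); applying $L_v f=0$ and summing over all $2\kappa$ horizontal segments, the contributions cancel in pairs. For the ``near'' piece $\phi(1-\psi_\eta)$ supported on $[-2\eta,2\eta]$, use $L_v f = 0$ within each sheet together with the continuity of $y\mapsto \ell_{I_y,\phi_y,0}(f)$ to reduce the top and bottom integrals in $D_j$ to limits as $y\to 0^\pm$, yielding a distributional jump $J_j := f|_{y=0^+}^{D_j}-f|_{y=0^-}^{D_j}$ supported on $D_j$'s cut. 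The gluing identifications $f|_{y=0^+}^{D_j}=f|_{y=0^-}^{D_{j-1}}$ and $f|_{y=0^-}^{D_j}=f|_{y=0^+}^{D_{j+1}}$ (cyclically) give $\sum_{j=1}^\kappa J_j=0$ by telescoping, so the near piece vanishes as well.

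The main obstacle is the ``near $x=0$'' piece in (b): one cannot simply translate across the singular column $\{x=0\}$, and a test function supported on a shrinking neighborhood of $x=0$ cannot have small $C^{k_h}$ norm, so local estimates will not kill the contribution. The resolution is global: the jumps $J_j$ across the $\kappa$ cuts cancel cyclically because $f$ is a single distribution on $V$, so the gluing forces opposite contributions on the two sides of each cut.
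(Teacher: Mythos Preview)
Your argument for (a) and for the ``away'' piece in (b) is correct, but the ``near'' piece has a genuine gap. You assert $f|_{y=0^+}^{D_j}=f|_{y=0^-}^{D_{j-1}}$ as distributions on the full interval and then telescope. However, the sheets are glued only along the cut (say the ray $x>0$): for $x<0$ one has instead $f|_{y=0^+}^{D_j}=f|_{y=0^-}^{D_j}$ (same sheet, no cut), and at $x=0$ all sheets meet at the singularity where no identification is available. So while your jump $J_j$ is indeed supported on $\{x\geq 0\}$, the gluing on $\{x>0\}$ says nothing about a possible point contribution at $x=0$; since $\phi(1-\psi_\eta)$ does not vanish at $0$, the telescoping does not go through.

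This is not a mere technicality. Your argument nowhere uses that $\boB^{-k_h,k_v}$ is the \emph{closure} of $C^\infty_c(M-\Sigma)$, so if correct it would apply verbatim to the larger space $\boB_{ext}^{-k_h,k_v}$, where the conclusion is false. The distribution $\xi_\sigma^{(0)}$ (a Dirac mass along a vertical segment ending at $\sigma$, see Page~\pageref{page_B_ext_not_B} and Remark~\ref{rmk:cohomological}) satisfies $L_v\xi_\sigma^{(0)}=0$ near $\sigma$ yet $\int_{\gamma_\sigma}\xi_\sigma^{(0)}\dd x=1$; in your notation, its jump carries a Dirac $\delta_0$ in exactly one sheet, which the gluing on $\{x>0\}$ does not see. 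The paper's proof uses precisely the density property you omit: approximate $f$ by $g\in C^\infty_c(M-\Sigma)$, observe that for such $g$ the loop integral~\eqref{eq:int_gamma_f} tends to $0$ as $y\to 0$ since $g$ vanishes near $\sigma$, and bound the discrepancy by $\kappa\norm{\phi}_{C^{k_h}}\norm{f-g}_{-k_h,k_v}$.
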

\begin{proof}
The fact that $\int_\gamma f \dd x$ only depends on the homology class of
$\gamma$ in $M-\Sigma$ follows directly from the definitions. The only
assertion that remains to be checked is that this integral is not modified
when one crosses a singularity. Equivalently, we have to show that
$\int_\gamma f \dd x = 0$ when $\gamma$ is a positive path around a
singularity $\sigma$.

Let $\pi$ be the covering projection around $\sigma$, well defined on a
neighborhood of size $\delta \in (0, \beta_0/10)$. Let us fix a function
$\phi$ on $\R$ equal to $1$ around $0$, with support included in
$[-\delta,\delta]$. For $y>0$, we may construct a path $\gamma$ around
$\sigma$ by considering $I_y^+ = \pi^{-1}([-\delta, \delta] + \ic y)$ (a
union of $\kappa$ horizontal segments, where $\kappa$ is the degree of
$\sigma$), crossed negatively, and $I_y^- = \pi^{-1}([-\delta, \delta] - \ic
y)$ (a union of $\kappa$ horizontal segments), crossed positively, as well as
the corresponding vertical segments. Then
\begin{equation}
\label{eq:int_gamma_f}
  \int_\gamma f \dd x
  = \int_{I_y^-} \phi(x) f \dd x - \int_{I_y^+} \phi(x) f \dd x
\end{equation}
for any $y>0$, by definition.

Let $\epsilon>0$. By definition of $\boB^{-k_h, k_v}$, we may choose $g \in
C_c^\infty(M-\Sigma)$ with $\norm{f-g}_{-k_h, k_v} < \epsilon$. When $y$
tends to $0$, we have $\int_{I_y^-} \phi g \dd x - \int_{I_y^+} \phi g \dd x
\to 0$ as the horizontal segments compensate each other, and the singularity
does not contribute as $g$ vanishes close to $\sigma$. We can in particular
choose $y$ for which this quantity is less than $\epsilon$. We have
\begin{equation*}
  \abs*{\int_{I_y^-} \phi g \dd x - \int_{I_y^-} \phi f \dd x} \leq \kappa \norm{\phi}_{C^{k_h}} \norm{g - f}_{-k_h, k_v}
  \leq C \epsilon,
\end{equation*}
as the integral along each of the $\kappa$ horizontal segments composing
$I_y^-$ is bounded by $\norm{\phi}_{C^{k_h}} \norm{g - f}_{-k_h, k_v}$. The
same holds on $I_y^+$. Finally, we get $\abs*{\int_{I_y^-} \phi f \dd x -
\int_{I_y^+} \phi f \dd x} \leq (2C+1)\epsilon$. This concludes the proof
thanks to~\eqref{eq:int_gamma_f}.
\end{proof}

By definition of cohomology, a closed current of degree $1$ vanishes in
cohomology if and only if it is the differential of a current of degree $0$.
In the case of currents in $\boB^{-k_h, k_v} \cap \ker L_v$, we will see that
this primitive is of the same type in the next proposition. The primitive of
the current $f \dd x$ is obtained by integrating $f$ along horizontal leaves.
We will have to see that this makes sense, and that the primitive thus
defined has all the required regularity properties. Equivalently, the
primitive $g$ has to satisfy $L_h g = f$.

\begin{prop}
\label{prop:integre_Lu} Assume that there is no horizontal saddle connection.
Consider $f \in \boB^{-k_h, k_v} \cap \ker L_v$ such that $[f] = 0 \in
H^1(M)$, with $k_h>0$. Then there exists $g \in \boB^{-k_h+1, k_v} \cap \ker
L_v$ such that $f = L_h g$.
\end{prop}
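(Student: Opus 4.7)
The plan is to define $g$ by integrating $f \dd x$ along horizontal paths. Fix a base point $p_0 \in M-\Sigma$, and formally set $g(p) = \int_\gamma f \dd x$ for a piecewise horizontal-vertical path $\gamma$ from $p_0$ to $p$. Since $[f \dd x] = 0$ in $H^1(M)$, and since integrals around small loops encircling singularities already vanish by Proposition~\ref{prop:define_H1}, the class of $f\dd x$ also vanishes in $H^1(M-\Sigma)$, so the integral is path-independent. Vertical pieces of $\gamma$ contribute nothing by construction of the integral, so moving $p$ vertically does not change $g(p)$; this will yield $L_v g = 0$, while $L_h g = f$ should follow from the fundamental theorem of calculus.

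To turn this heuristic into a rigorous definition of $g$ as a distribution of order $k_h-1$ on $M-\Sigma$, I would work segment by segment. On each horizontal interval $I = [a,b] \in \boI^h_{\beta_0}$, fix a bump $\chi_I \in C^\infty_c(I)$ with $\int_I \chi_I = 1$ and a piecewise horizontal-vertical path $\gamma_I$ from $p_0$ to the left endpoint of $I$; let $c_I$ be the real number obtained by extending the construction of Proposition~\ref{prop:define_H1} to an open weighted chain consisting of $\gamma_I$ followed by a horizontal leg in $I$ weighted by $\chi_I$ (well defined up to a common additive normalization thanks to $[f]=0$). Given $\phi \in C^{k_h-1}_c(I)$ with mean $m = \int_I \phi$, the function $\phi - m \chi_I$ has mean zero, so its antiderivative $\Psi \in C^{k_h}_c(I)$ vanishing at the endpoints exists and satisfies $\norm{\Psi}_{C^{k_h}} \leq C \norm{\phi}_{C^{k_h-1}}$. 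I would then set
\begin{equation*}
  \langle g|_I, \phi \rangle := \ell_{I, \Psi, 0}(f) + m \cdot c_I.
\end{equation*}
This immediately gives the bound $\abs{\langle g|_I, \phi\rangle} \leq C \norm{\phi}_{C^{k_h-1}} \norm{f}_{-k_h, k_v}$, uniformly in $I$ once the paths $\gamma_I$ are chosen with uniformly bounded length and complexity (possible by compactness of the space of horizontal segments, together with the Proposition~\ref{prop:define_H1}-type bound near singularities).

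The remaining work splits into three items. (i) Check that the local distributions $g|_I$ are consistent on overlaps, producing a single well-defined distribution $g$ on $M-\Sigma$; this is the cohomological heart of the argument, since any two admissible choices of $(\chi_I,\gamma_I)$ or of overlapping segments differ by a closed loop whose contribution vanishes thanks to $[f]=0$. (ii) Derive $L_h g = f$ (tautologically, by integration by parts against mean-zero test functions) and $L_v g = 0$ (from the vertical-shift equivariance of the construction, which in turn uses $L_v f = 0$ together with Proposition~\ref{prop:Lh_Lv_action}). (iii) Show that $g$ actually lies in the completion $\boB^{-k_h+1, k_v}$ and not merely in the dual space of test functions; the natural route is to approximate $f$ in $\boB^{-k_h, k_v}$ by a Cauchy sequence of smooth compactly supported functions $f_n$ using the regularization of Lemma~\ref{lem:boB_riche}, to adjust them so that $L_v f_n = 0$ and $[f_n] = 0$, to apply the above construction to obtain smooth compactly supported $g_n$, and to use the continuity estimate applied to $f_n - f_m$ to conclude that $(g_n)$ is Cauchy with limit $g$.

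The hard part will be item (i): the consistency of the local definitions uses the vanishing of $[f]$ in an essential way, and requires careful handling of paths running close to singularities along the lines of the proof of Proposition~\ref{prop:define_H1}. A secondary but nonnegligible difficulty is the smooth approximation of (iii), where one must simultaneously enforce $L_v f_n = 0$ and $[f_n] = 0$ without losing the Cauchy property in $\boB^{-k_h,k_v}$.
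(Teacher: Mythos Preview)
Your overall strategy (define $g$ by horizontal integration of $f\dd x$, using $[f]=0$ for well-definedness) is the same as the paper's, and your treatment of (i) and (ii) is essentially sound up to a sign (you want $-\ell_{I,\Psi,0}(f)$ rather than $+\ell_{I,\Psi,0}(f)$; check the integration by parts). The paper organizes (i) slightly differently, working along a single dense horizontal half-leaf $F$ rather than with paths $\gamma_I$ to a basepoint, but this is cosmetic.

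The real gap is in your plan for (iii). You propose to approximate $f$ by smooth compactly supported $f_n$ with $L_v f_n=0$ and $[f_n]=0$, and then run your construction to get smooth compactly supported $g_n$. But a function in $C^\infty_c(M-\Sigma)$ with $L_v f_n=0$ is constant along vertical leaves; on a surface with minimal vertical flow (in particular on any surface carrying a pseudo-Anosov) such a function is identically zero. So there is no nontrivial approximating sequence of this kind. Even if you drop the condition $L_v f_n=0$, a global horizontal primitive of a generic smooth $f_n$ is neither compactly supported in $M-\Sigma$ nor vertically smooth across the horizontal separatrices of a singularity, so you cannot manufacture $g_n\in C^\infty_c(M-\Sigma)$ this way either. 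Contrary to what you write, (iii) is the hard step, not (i).

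The paper's fix is local and does not require any special structure on $f_n$: near a singularity $\sigma$ one takes any $f_n\to f$ in $\boB^{-k_h,k_v}$, lets $g_n^0$ be the horizontal primitive of $f_n$ vanishing on the vertical segments through $\sigma$, and corrects it by integration constants $c_n^+(y)$, $c_n^-(y)$ computed by comparing $g_n^0$ with $g$ on reference intervals to the right and to the left of $\sigma$. The key estimate is that $|c_n^+(y)-c_n^-(y)|\le C\|f-f_n\|_{-k_h,k_v}$, obtained by writing the difference as $\int\Phi(f_n-f)$ for a single compactly supported $C^{k_h}$ function $\Phi$ spanning both sides; this is exactly where one exploits that $g$ has already been defined globally in step (i). One then glues $g_n^0+c_n^{\operatorname{sgn} x}(y)\,h_n(x)$ with a cutoff $h_n$ vanishing near $x=0$ to obtain $g_n\in C^\infty_c(M-\Sigma)$, and checks $\|g_n-g\|_{-k_h+1,k_v}\to 0$ directly. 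This two-sided matching of integration constants is the missing idea in your outline.
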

\begin{proof}
Let $x_0$ be a basepoint, and $F$ a horizontal half-line starting at $x_0$,
positively oriented, which does not end at a singularity. Since we assume
there is no horizontal saddle connection, it is dense. We identify it with
$[0,\infty)$. We will denote by $x_t$ the point of $F$ at horizontal distance
$t$ of $x_0$. Choose on $F$ a function $\rho_0$ equal to $1$ in a
neighborhood of $x_0$, and to $0$ on $[\delta/2, +\infty)$, where $\delta$ is
small enough that there is no singularity in the ball of radius $10\delta$
around $x_0$.

Let $\phi$ be a $C^{k_h-1}$ function on $F$ with compact support. Let $\Phi$
be its unique primitive that vanishes at $x_0$. It is constant after some
time $T$, equal to $\int \phi$. Choose a time $t>T$ such that $x_t$ belongs
to the vertical segment of size $\delta$ through $x_0$ (it exists as the
half-line $F$ is dense). Consider then the function $\Phi_t$ equal to $\Phi$
on $[0, t]$, to $(\int \phi) \cdot \rho_0$ on $[t, t+\delta]$ (where $\rho_0$
is pushed vertically to $[x_t, x_{t+\delta}]$), and to $0$ further on. This
is a function of class $C^{k_h}$ with compact support in $F$, so that $\int_F
\Phi_t f \dd x$ is well defined. Then we define formally an object $g$ by the
formula
\begin{equation}
\label{eq:ipwuxcviopuw}
  \int \phi\cdot g \dd x = -\int \Phi_t \cdot f \dd x.
\end{equation}
Let us first notice that this quantity does not depend on $t$. Indeed, if we
choose another time $s>t$ such that $x_s$ also belongs to the vertical
segment of size $\delta$ through $x_0$, then the difference between these two
quantities is given by $(\int \phi) \int_\gamma f \dd x$, where $\gamma$ is
the union of the piece of $F$ between $x_t$ and $x_s$, and a subsegment of
the vertical segment through $x_0$. As $[f] = 0$, this integral vanishes.
Note that, for now, $g$ is only a distribution along $F$.

The interest of this definition is the following. If we prove that $g$
defines a genuine element of $\boB^{-k_h+1, k_v}$, we will have by definition
of $L_h$ that, for any function $\phi$ with compact support on a segment $I
\subseteq F$,
\begin{equation*}
  \int_I \phi \cdot L_h g \dd x = -\int_I \phi' \cdot g \dd x
  =\int_I \Psi_t \cdot f \dd x,
\end{equation*}
where $\Psi_t$ is the primitive of $\phi'$ vanishing at $x_0$, extended to
the right by $(\int\phi') \rho_0 = 0$. Hence, $\Psi_t = \phi$. This formula
shows that $L_h g = f$, at least along subintervals of $F$. As we will see
later that $g$ is invariant under vertical holonomy, we will obtain $L_h g =
f$ everywhere, as desired.

The same argument using $[f] = 0$ shows that, if two segments $I$ and $J$ of
$F$ are obtained one from the other by a vertical translation in a small
chart without singularity, and if $\phi_I$ is a function on $I$, then $\int_I
\phi_I g \dd x = \int_J \phi_J g \dd x$, where $\phi_J$ is the push-forward
to $J$ of $\phi_I$ by vertical translation. This makes it possible to define
$\int_I \phi g \dd x$ for any horizontal segment $I$ and any $\phi \in
C^{k_h-1}_c(I)$, by using the integral on a small vertical translate of $I$
included in $F$. By the above, it does not depend on the choice of the
translate.

Let $\delta>0$ be such that any horizontal segment of length $\beta_0$ can be
translated vertically, in the positive or negative direction, by at least
$\delta$. If $T$ is large enough, then $F[0,T]$ is $\delta$-dense in $M$.
This implies that, to compute $\int_I \phi g \dd x$ for any interval $I$ of
length $\beta_0$, one can first translate it vertically to reduce the
computation to an interval included in $F[0,T+\beta_0]$, and then use a time
$t$ independent of $I$. The function $\Phi_t$ obtained in this way has a
$C^{k_h}$ norm which is bounded by $C\norm{\phi}_{C^{k_h-1}}$. This shows
that, uniformly in $I \in \boI^h$,
\begin{equation*}
  \abs*{\int_I \phi \cdot g \dd x} \leq C \norm{\phi}_{C^{k_h-1}}.
\end{equation*}
Moreover, as $g$ is locally invariant under vertical translations, we have
$\int_I \phi \cdot L_v^j g \dd x = 0$ for all $j>0$. Therefore, $g$ satisfies
all the inequalities that are satisfied by the elements of $\boB^{-k_h+1,
k_v}$.

However, this is not enough to conclude that $g$ is indeed an element of
$\boB^{-k_h+1, k_v}$. We should come back to the definition of this space as
the closure of $C^\infty_c(M-\Sigma)$, and show that $g$ is a limit of smooth
functions with compact support. This is the hardest part of the proof, as one
may not regularize $g$ blindly by convolving it with a smooth kernel along
horizontal segments: this fails for segments that hit the singularity. We
prove the statement locally, as one can then extend it using a partition of
unity. We treat the harder case of the neighborhood of a singularity
$\sigma$, the case away from singularities is easier. Let $\pi: U \to \C$ be
the covering projection of a neighborhood $U$ of $\sigma$ in $\C$, sending
$\sigma$ to $0$. We write $U_r = \pi^{-1}([-r, r] + \ic [-r, r])$. Let $a>0$
be small enough. We fix a smooth function $\rho$ that is equal to $1$ on
$U_{4a}$ and vanishes outside of $U_{5a}$.

By assumption, $f$ itself is the limit in $\boB^{-k_h, k_v}$ of a sequence of
functions $f_n \in C^\infty_c(M-\Sigma)$. Let us consider around $\sigma$ the
function $g^0_n$ which is a primitive of $f_n$ along every horizontal
segment, and vanishes on the vertical segments going through $\sigma$. Then
$\rho g^0_n \in C^\infty_c(M-\Sigma)$. However, $g_n^0$ will not converge in
general to $g$, as one has to adjust integration constants. The difficulty is
that, if we adjust the integration constant by considering what happens to
the left of $\sigma$ in complex charts (i.e., on the set of points whose
image under $\pi$ has negative real part), then this integration constant
will behave nicely along vertical segments to the left of $\sigma$, but it
will be discontinuous along vertical segments to the right of $\sigma$. The
converse problem shows up if we fix the integration constant by using what
happens to the right of $\sigma$. The idea will be to have two integration
constants, coming from the left and from the right, and to show that they are
necessarily close.

Let $\eta$ be a nonnegative $C^\infty$ function on $\R$ with support in
$[0,a]$ and with integral $1$. We will write $\eta_t$ for $\eta(\cdot -t)$,
whose support is contained in $[t, t+a]$. Given a point $y$ on a vertical
segment through $\sigma$, we write
\begin{align*}
  c_n^+(y) & = \int_{[6a, 7a] + \ic y} \eta_{6a} g \dd x
             - \int_{[6a, 7a] + \ic y} \eta_{6a} g^0_n \dd x,
  \\
  c_n^-(y) & = \int_{[-7a, -6a] + \ic y} \eta_{-7a} g \dd x
             - \int_{[-7a, -6a] + \ic y} \eta_{-7a} g^0_n \dd x
\end{align*}
(where we used the local complex coordinates given by $\pi$). These functions
are uniformly bounded. As $g$ is invariant under vertical shift and as
$g^0_n$ is $C^\infty$, they are smooth along vertical segments. More
precisely, $c_n^+$ is $C^\infty$ along vertical segments on the right of the
singularity (in the chart $\pi$), while $c_n^-$ is $C^\infty$ along vertical
segments to the left of the singularity.

We claim that, for $y$ as above, for any function $\phi \in C^{k_h-1}_c([-3a,
3a] + \ic y)$ with norm at most $1$, and for any sign $s = \pm$,
\begin{equation}
\label{eq:ineq_gn_g}
  \abs*{\int \rho \phi g \dd x - \int \rho \phi g^0_n \dd x - \pare*{\int \rho \phi} c^s_n(y)}
  \leq C \norm{f-f_n}_{-k_h, k_v},
\end{equation}
where $C$ does not depend on $n$. Let us prove this for $s=+$ for instance.
By density of $F$ and by continuity of all the objects under consideration,
it suffices to prove it if $y\in F$. The function $\rho \phi -(\int \rho
\phi) \eta_{6a}$ has a vanishing integral on $[-3a, 7a] + \ic y$. Its
primitive $\Phi$ vanishing at $-3a + \ic y$ also vanishes at $7a + \ic y$.
The definition of $g$ entails
\begin{equation*}
  \int \pare*{\rho \phi -  \pare*{\int \rho \phi} \eta_{6a}} g = - \int \Phi f.
\end{equation*}
Moreover,
\begin{equation*}
  \int \pare*{\rho \phi -  \pare*{\int \rho \phi} \eta_{6a}} g^0_n = \int \Phi' g^0_n = - \int \Phi (g^0_n)' = -\int \Phi f_n.
\end{equation*}
Taking the difference between these two equations and using the definition of
$c_n^+(y)$ yields
\begin{equation*}
  \int \rho \phi g - \int \rho \phi g^0_n - \pare*{\int \rho \phi} c_n^+(y) = \int \Phi f_n - \int \Phi f.
\end{equation*}
Thanks to the definition of the norm, this proves~\eqref{eq:ineq_gn_g} since
$\Phi$ is $C^{k_h}$ with norm and support uniformly bounded.

Let us now consider a function $\phi$ supported by $[-3a,3a]$ with integral
$1$. We have $\int \rho \phi = 1$ if $\abs{y} \leq 3a$ by definition of
$\rho$. Using the inequalities~\eqref{eq:ineq_gn_g} with the signs $+$ and
$-$ and taking their differences, we get in particular
\begin{equation}
\label{eq:cn_pm_diff}
  \abs{c^+_n(y) - c^-_n(y)} \leq C \norm{f-f_n}_{-k_h, k_v}.
\end{equation}

Let $h_n$ be a smooth function on $\R$ equal to $0$ in a neighborhood of $0$
and to $1$ for $\abs{x} \geq 1/n$. We define $g_n$ by $g_n(x + \ic y) =
g_n^0(x+\ic y) + c_n^{\sgn x}(y) h_n(x)$. This is a $C^\infty$ function on
$U_{3a}$, vanishing in a neighborhood of $\sigma$. Let $\bar\rho$ be a smooth
function equal to $1$ on $U_a$, vanishing outside of $U_{2a}$. Let us show
that $\bar\rho g_n$ converges to $\bar\rho g$ in $\boB^{-k_h+1, k_v}$, to
conclude the proof.

We first control what happens without vertical derivatives. Let $I$ be a
horizontal interval. We may assume that it is close to $\sigma$, at height
$y$ with $\abs{y}<2a$, otherwise $\bar\rho$ vanishes on $I$ and everything is
trivial. Consider also $\phi \in C^{k_h-1}_c(I)$. Then
\begin{multline*}
  \int_I \phi \cdot \bar\rho g \dd x - \int_I \phi \cdot \bar\rho g_n \dd x
  = \int_I \rho \cdot \bar\rho \phi \cdot g \dd x - \int_I \rho \cdot \bar\rho \phi \cdot g_n^0 \dd x
  - \int_I \rho \cdot \bar\rho \phi c_n^{\sgn x}(y) h_n(x) \dd x
  \\ =\pare*{\int_I \rho \cdot \bar\rho \phi} c_n^+(y) - \int_I \rho \cdot \bar\rho \phi \cdot c_n^{\sgn x}(y) h_n(x) \dd x
  + O(\norm{f-f_n}_{-k_h, k_v}),
\end{multline*}
where the first equality comes from the definition of $g_n$, and the second
one from~\eqref{eq:ineq_gn_g}. In the last integral, if one replaces
$c_n^-(y)$ by $c_n^+(y)$, one makes a mistake which is bounded by
$C\norm{f-f_n}_{-k_h, k_v}$, thanks to~\eqref{eq:cn_pm_diff}. We are left
with
\begin{equation*}
  c_n^+(y) \cdot \int_I \rho \cdot \bar\rho \phi \cdot (1-h_n(x)) \dd x + O(\norm{f-f_n}_{-k_h, k_v}).
\end{equation*}
Since $1-h_n$ is supported in an interval of length $2/n$ and since the
function $\rho \cdot \bar\rho \phi$ is uniformly bounded, as well as $c_n^+$,
this quantity is bounded by $C/n + C \norm{f-f_n}_{-k_h, k_v}$, which tends
to $0$ with $n$. We have therefore proved that $\norm{\bar\rho g_n - \bar
\rho g}_{-k_h+1, 0} \to 0$.

Let us then consider what happens with successive derivatives in the vertical
direction. In $L_v^j(\bar \rho g)$, if one differentiates $\bar \rho$, then
the number of derivatives of $g$ is less than $j$, and one concludes by
induction. We are left with proving the convergence to $0$ of
\begin{equation*}
  \int_I \phi \cdot \bar\rho L_v^j g \dd x - \int_I \phi \cdot \bar\rho L_v^j g_n \dd x.
\end{equation*}
As the vertical derivative of $g$ vanishes, the first term is $0$. For the
second term, the vertical derivatives of $f_n$, integrated against a smooth
function, are small since they are close to the corresponding term for $f$,
which vanishes as $L_v f = 0$. Integrating horizontally, we deduce that the
vertical derivatives of $g_n^0$ are small in the distributional sense. As a
consequence, the vertical derivatives of $c_n^+$ and $c_n^-$ are also small.
The same is true for the vertical derivatives of $g_n$. This concludes the
proof.
\end{proof}

The following lemma will be very important for us, to show that the
eigenvalue $\lambda^{-1}$ of a pseudo-Anosov map acting on $H^1(M)$ does not
show up in its Ruelle spectrum.

\begin{lem}
\label{lem:Lv_preimage_1} There is no $f \in \boB^{-k_h, k_v} \cap \ker L_v$
with $[f] = [\dd y]$.
\end{lem}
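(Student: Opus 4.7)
The plan is to argue by contradiction and exploit the duality of Proposition~\ref{prop:dualite} via a primitive of the closed $1$-current $f\dd x - \dd y$. Suppose such an $f$ exists. Since $[f\dd x] = [\dd y]$ in $H^1(M)$, this closed $1$-current has vanishing cohomology class and so should admit a $0$-current primitive $h$, i.e., a distribution with $L_h h = f$ and $L_v h = -1$. First I would construct $h$ inside some $\boB^{-k_h', k_v'}$, then reach a contradiction by pairing $L_v h$ against the constant function $1$.

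For the construction, the idea is to mimic the definition of $g$ in Proposition~\ref{prop:integre_Lu}. Given a horizontal interval $I$ and a test function $\phi\in C^{k_h-1}_c(I)$, one would set
\begin{equation*}
  \int_I \phi\, h \dd x = -\int \Phi_t\, f \dd x + c(I,\phi),
\end{equation*}
where $\Phi_t$ is the extended primitive of $\phi$ obtained by pushing $\phi$ along a piecewise horizontal/vertical path connecting $I$ to a dense horizontal half-leaf through a basepoint $x_0$, with the integration-constant adjustments used in the proof of Proposition~\ref{prop:integre_Lu}, and $c(I,\phi)$ is a correction term recording the $-\dd y$ contributions along the vertical portions of that path (a linear combination of $\int\phi$ times signed vertical displacements). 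The hypothesis $[f\dd x - \dd y] = 0$ guarantees that this quantity is independent of the choices of path and extension, and since the extra $-\dd y$ piece is smooth and bounded, the partition-of-unity and smoothing arguments near the singularities of Proposition~\ref{prop:integre_Lu} go through verbatim, yielding an element $h \in \boB^{-k_h+1, k_v+1}$ with $L_h h = f$ and $L_v h = -1$. The bulk of the work, and the main obstacle, is this step: a faithful but tedious repetition of the delicate near-singularity analysis performed in Proposition~\ref{prop:integre_Lu}.

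The conclusion is then immediate. By the analogue of Lemma~\ref{lem:boB_riche} for $\check\boB$, the constant function $1$ lies in $\check\boB^{\check k_h, -\check k_v}$ for every $\check k_h, \check k_v \geq 0$. Picking $\check k_h$ large enough and $\check k_v$ small enough that Proposition~\ref{prop:dualite} provides a continuous pairing, Lemma~\ref{lem:dualite} gives
\begin{equation*}
  \langle L_v h,\, 1 \rangle = -\langle h,\, L_v 1\rangle = 0,
\end{equation*}
since $L_v 1 = 0$. But $L_v h$ equals the constant distribution $-1$ by construction, so directly
\begin{equation*}
  \langle L_v h,\, 1\rangle = \int_M (-1)\cdot 1\, \dLeb = -\Leb(M) \neq 0,
\end{equation*}
which is the desired contradiction.
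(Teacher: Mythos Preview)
Your proposal is correct and follows essentially the same route as the paper: argue by contradiction, construct a primitive $h$ (the paper calls it $g$, with $L_v g = +1$ rather than $-1$) by modifying the construction of Proposition~\ref{prop:integre_Lu} with a correction term tracking the $\dd y$-contributions of vertical path segments, and then reach a contradiction via the duality identity $\langle L_v h, 1\rangle = -\langle h, L_v 1\rangle = 0$ versus $\langle L_v h, 1\rangle = \pm\Leb(M)$. The only cosmetic difference is that the paper first enlarges $k_v$ (using $f\in\ker L_v$) to ensure the duality hypotheses of Proposition~\ref{prop:dualite} are met, and places the primitive in $\boB^{-k_h+1,k_v}$ rather than $\boB^{-k_h+1,k_v+1}$.
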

\begin{proof}
We argue by contradiction, assuming that $f \in \boB^{-k_h, k_v} \cap \ker
L_v$ satisfies $[f] = [\dd y]$. Increasing $k_h$ (which only makes the space
larger), we can assume $k_h \geq 1$. Since $f$ is in the kernel of $L_v$, its
vertical smoothness is infinite, so we can also assume $k_v\geq 3$. We claim
that, in this case, there exists $g \in \boB^{-k_h+1, k_v}$ with $L_h g = f$
and $L_v g = 1$.

We follow the construction in Proposition~\ref{prop:integre_Lu} to construct
the primitive $g$ of $f$. Let us use all the notations of the corresponding
proof. In particular, let $F$ be a half-infinite horizontal leaf starting at
a point $x_0$, and let $x_t$ be the point at distance $t$ of $x_0$ in $F$,
and let $\rho_0$ be a function on $F$ which is equal to $1$ on a neighborhood
of $x_0$ and to $0$ on $[\delta/2, +\infty[$, where $\delta$ is small enough.

Let $\phi$ be a $C^{k_h-1}$ function on $F$, with compact support. Denote by
$\Phi$ its primitive that vanishes at $0$. It is eventually constant and
equal to $\int \phi$ after some time $T$. Choose $t>T$ such that $x_t$
belongs to the vertical segment of size $\delta$ through $x_0$ (such a time
exists as the half-leaf $F$ is dense), at a vertical distance $y(x_t)$. Let
us consider the function $\Phi_t$ equal to $\Phi$ on $[0,t]$, to $(\int \phi)
\cdot \rho_0$ on $[t, t+\delta]$ (where $\rho_0$ is pushed vertically to
$[x_t, x_{t+\delta}]$), and to $0$ afterwards. This is a compactly support
$C^{k_h}$ function on $F$. Therefore, $\int_F \Phi_t f \dd x$ is well
defined. Let us define formally
\begin{equation}
\label{eq:puiowxuiopv}
  \int \phi\cdot g \dd x = -\int \Phi_t \cdot f \dd x - y(x_t) \cdot \int \phi.
\end{equation}
The last term is the only difference with~\eqref{eq:ipwuxcviopuw}.

This quantity does not depend on $t$. Indeed, choose $s>t$ such that $x_s$ is
also on the vertical leaf of size $\delta$ through $x_0$. Then
\begin{multline*}
  \pare*{-\int \Phi_s \cdot f \dd x - y(x_s) \cdot \int \phi} - \pare*{-\int \Phi_t \cdot f \dd x - y(x_t) \cdot \int \phi}
  \\ = -\pare*{\int \phi} \pare*{\int_\gamma f \dd x + y(x_s) - y(x_t)},
\end{multline*}
where $\gamma$ is the union of the piece of $F$ between $x_t$ and $x_s$, and
of the small vertical segment between $x_s$ and $x_t$. As $[f] = [\dd y]$, we
have $\int_\gamma f \dd x = y(x_t) - y(x_s)$. Therefore, the above difference
vanishes.

Let $I_0$ be a subsegment of $F$, let $\phi$ be a compactly supported
function on $I_0$, let $I_\epsilon$ be a vertical translate of $I_0$ by a
small parameter $\epsilon$ so that there is no singularity in between and so
that $I_\epsilon$ is also included in $F$. Then we have
\begin{equation}
\label{eq:translate_vertical}
  \int_{I_\epsilon} \phi \cdot g \dd x - \int_{I_0} \phi \cdot g \dd x = \pare*{\int \phi} \epsilon.
\end{equation}
Indeed, let us use in Definition~\eqref{eq:puiowxuiopv} a time $t$ which is
large enough to work as well for $I_0$ and $I_\epsilon$. The difference
between the primitives of $\phi$ on $I_0$ and $I_\epsilon$ is then supported
on the subsegment of $F$ between $I_0$ and $I_\epsilon$, and is equal to
$\int \phi$ except in the boundaries $I_0$ and $I_\epsilon$. We obtain
\begin{equation*}
  \int_{I_\epsilon} \phi \cdot g \dd x - \int_{I_0} \phi \cdot g \dd x = - \pare*{\int \phi} \int_\gamma f \dd x,
\end{equation*}
where $\gamma$ is made of a horizontal piece of $F$ and of the vertical
segment between the left endpoints of $I_\epsilon$ and $I_0$, with length
$\epsilon$. As $[f] = [\dd y]$, we have $\int_\gamma f \dd x = \int_\gamma
\dd y = - \epsilon$. This proves~\eqref{eq:translate_vertical}.

We can then extend by continuity $g$ to all horizontal segments, ensuring
that~\eqref{eq:translate_vertical} is always satisfied. Then, by definition,
$L_v g = 1$ in the distributional sense. It remains to check that $g$ belongs
to $\boB^{-k_h+1, k_v}$. The argument is completely identical to the
corresponding argument in the proof of Proposition~\ref{prop:integre_Lu}.

We have obtained $g \in \boB^{-k_h+1, k_v}$ with $L_v g = 1$. With the
duality from Lemma~\ref{lem:dualite}, we get
\begin{equation*}
  \Leb M = \langle 1, 1\rangle = \langle L_v g, 1 \rangle = - \langle g, L_v 1 \rangle
  = 0.
\end{equation*}
This is a contradiction, concluding the proof of the lemma.
\end{proof}

\section{The Ruelle spectrum of pseudo-Anosov maps with orientable
foliations}

\label{sec:Ruelle_spectrum}

Let $T$ be a pseudo-Anosov map preserving orientations, on a translation
surface $(M,\Sigma)$. This section is devoted to the description of its
Ruelle spectrum, culminating with the proof of
Theorem~\ref{thm:main_preserves_orientations}.

\subsection{Quasi-compactness of the transfer operator}

In this paragraph, we show that the operator $\boT$ of composition with $T$
acts on $\boB^{-k_h, k_v}$, and is quasi-compact with a small essential
spectral radius. Namely:

\begin{thm}
\label{thm:rho_ess} The operator $\boT$ acting on $\boB^{-k_h, k_v}$ has a
spectral radius bounded by $1$, and an essential spectral radius bounded by
$\lambda^{- \min(k_h, k_v)}$.
\end{thm}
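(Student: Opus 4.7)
My plan is to establish two ingredients: first, uniform boundedness $\sup_n \|\boT^n\|_{\boB^{-k_h, k_v}} < \infty$, which yields spectral radius $\leq 1$; and second, a Lasota--Yorke inequality of the form
\begin{equation*}
  \|\boT^n f\|_{-k_h, k_v} \leq C \lambda^{-n \min(k_h, k_v)} \|f\|_{-k_h, k_v} + C \|f\|_{-(k_h+1), k_v-1}.
\end{equation*}
Combined with the compact inclusion $\boB^{-k_h, k_v} \hookrightarrow \boB^{-(k_h+1), k_v-1}$ of Proposition~\ref{prop:compact_inclusion} (the strictness of both inequalities is essential) and Hennion's theorem, this yields the bound on the essential spectral radius. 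The case $k_v = 0$ is trivial as $\min(k_h, 0) = 0$, so I assume $k_v \geq 1$.

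In the linear charts of $T$, the relation $L_v \boT = \lambda^{-1} \boT L_v$ iterates to $L_v^j \boT^n = \lambda^{-nj} \boT^n L_v^j$. For $I \in \boI^h_{\beta_0}$, $\phi \in C^{k_h}_c(I)$ and $j \leq k_v$, the change of variables $u = T^n x$ along the horizontal leaf gives
\begin{equation*}
  \int_I \phi \cdot L_v^j (\boT^n f) \dd x = \lambda^{-nj - n} \int_{T^n(I)} (\phi \circ T^{-n}) \cdot L_v^j f \dd u.
\end{equation*}
The image $T^n(I)$ is a horizontal segment of length $\lambda^n \beta_0$ lying in $M - \Sigma$ (no horizontal saddle connections). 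I would cut it into $O(\lambda^n)$ sub-intervals $J_k \in \boI^h_{\beta_0}$, equipped with a subordinate $C^\infty$ partition of unity $(\chi_k)$ whose $C^{k_h+1}$ norms are uniformly bounded. The naive bound $|\int_{J_k} \chi_k(\phi\circ T^{-n}) L_v^j f \dd u| \leq \|\chi_k(\phi\circ T^{-n})\|_{C^{k_h}}\|f\|_{-k_h, k_v} \leq C\|\phi\|_{C^{k_h}}\|f\|_{-k_h, k_v}$, summed over the $O(\lambda^n)$ pieces and multiplied by $\lambda^{-nj-n}$, produces $C\lambda^{-nj}\|f\|_{-k_h, k_v}$. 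Taking $j = k_v$ yields the factor $\lambda^{-n k_v}$, and arbitrary $j$ yields the uniform bound needed for spectral radius $\leq 1$.

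For the factor $\lambda^{-n k_h}$ (necessary when $k_h < k_v$), I would refine the bound for $j \leq k_v-1$ by Taylor-expanding on each $J_k$: write $\phi \circ T^{-n} = P_k + R_k$ where $P_k$ is the degree-$(k_h-1)$ Taylor polynomial at the midpoint of $J_k$. Using that $(\phi\circ T^{-n})^{(l)}$ has $C^0$ norm at most $\lambda^{-nl}\|\phi\|_{C^l}$, Taylor's inequality gives $\|R_k\|_{C^{k_h}} \leq C\lambda^{-n k_h}\|\phi\|_{C^{k_h}}$, while $P_k$ being polynomial satisfies $\|\chi_k P_k\|_{C^{k_h+1}} \leq C\|\phi\|_{C^{k_h}}$. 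The $R_k$ contributions sum to $C\lambda^{-n k_h}\|f\|_{-k_h, k_v}$, while the $P_k$ contributions are bounded by $C\|f\|_{-(k_h+1), k_v-1}$ using the weaker norm (this requires $j \leq k_v-1$, precisely complementary to the already-handled $j = k_v$ case). Combining these estimates yields the Lasota--Yorke inequality, and Hennion's theorem concludes. The main delicate point is the Taylor decomposition together with the uniform bookkeeping of the $O(\lambda^n)$ pieces; no extra work is needed near $\Sigma$, since horizontal segments of length $\beta_0$ avoid the singularities and $T$ has the same local linear form everywhere on $M-\Sigma$.
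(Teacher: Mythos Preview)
Your argument is correct and follows the same overall architecture as the paper: prove a Lasota--Yorke inequality of the stated form, then invoke the compact inclusion of Proposition~\ref{prop:compact_inclusion} and Hennion's theorem. The treatment of the $j=k_v$ term and of the uniform boundedness is identical to the paper's.

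The one genuine technical difference is how you extract the horizontal gain $\lambda^{-nk_h}$ for $j<k_v$. The paper mollifies the test function, writing $\phi=(\phi-\phi_\epsilon)+\phi_\epsilon$ with $\phi_\epsilon=\phi*\theta_\epsilon$ and $\epsilon=\lambda^{-k_hn}$: the smooth part $\phi_\epsilon$ has $C^{k_h+1}$ norm $O(\epsilon^{-1})$ and is sent to the weak norm (yielding an $n$-dependent constant $C_n$), while $\phi-\phi_\epsilon$ composed with $T^{-n}$ has $C^{k_h}$ norm $O(\lambda^{-nk_h})$. You instead Taylor-expand $\phi\circ T^{-n}$ on each piece $J_k$: the polynomial part $P_k$ has uniformly bounded $C^{k_h+1}$ norm (so your weak-norm constant is actually independent of $n$, a harmless improvement), and the remainder $R_k$ picks up the $\lambda^{-nk_h}$ directly from the contraction of horizontal derivatives. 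Both decompositions achieve the same splitting into ``smooth but large'' and ``rough but small'' pieces; your Taylor approach is a bit more explicit for integer $k_h$, while the paper's convolution approach adapts more seamlessly to non-integer $k_h$ (where the phrase ``degree-$(k_h-1)$ Taylor polynomial'' would need to be read as degree $\lfloor k_h\rfloor-1$ with a H\"older remainder estimate).
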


The proof will use a Lasota-Yorke inequality given in the next proposition.

\begin{proof}[Proof of Theorem~\ref{thm:rho_ess} assuming Proposition~\ref{prop:LY}]
This follows readily from Hennion's Theorem~\cite{hennion}, from the compact
embedding proposition~\ref{prop:compact_inclusion} and from the Lasota-Yorke
inequality given in Proposition~\ref{prop:LY}.
\end{proof}

\begin{prop}
\label{prop:LY} Let $k_h, l_v \geq 0$. The operator $\boT : f \mapsto f \circ
T$, initially defined for $f \in C^\infty_c(M-\Sigma)$, extends to a
continuous linear operator on $\boB^{-k_h, k_v}$, whose iterates are
uniformly bounded. Moreover, it satisfies the inequality
\begin{equation}
\label{eq:LY}
  \norm{\boT^n f}_{-k_h, k_v} \leq C \lambda^{-\min(k_h, k_v) n} \norm{f}_{-k_h, k_v} + C_n \norm{f}_{-k_h-1, k_v-1},
\end{equation}
where $C$ and $C_n$ are constants that do not depend on $f$. (When $k_v=0$,
the last term should be omitted).
\end{prop}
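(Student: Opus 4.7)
The proof proceeds by combining the commutation relation $L_v \boT = \lambda^{-1} \boT L_v$ (which reflects the vertical contraction of $T$) with a change of variables in the horizontal direction. Fix $f \in C^\infty_c(M-\Sigma)$, an index $j \leq k_v$, a segment $I \in \boI^h_{\beta_0}$, and a test function $\phi \in C^{k_h}_c(I)$ with $\norm{\phi}_{C^{k_h}} \leq 1$. Iterating the commutation gives $L_v^j(\boT^n f) = \lambda^{-nj} \boT^n(L_v^j f)$, and the change of variables $y = T^n x$ along $I$ yields
\[
\int_I \phi \cdot L_v^j(\boT^n f)\dd x \;=\; \lambda^{-n(j+1)} \int_{T^n I} \tilde\phi \cdot L_v^j f \dd y, \qquad \tilde\phi \coloneqq \phi \circ T^{-n}.
\]
The crucial fact is that the $m$-th horizontal derivative of $\tilde\phi$ equals $\lambda^{-nm}\phi^{(m)} \circ T^{-n}$, so higher-order horizontal derivatives of $\tilde\phi$ are damped by $\lambda^{-nm}$. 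The image $T^n I$ is a horizontal segment of length $\lambda^n \beta_0$ lying in $M-\Sigma$, which I decompose into $K = O(\lambda^n)$ subsegments $J_i$ of length at most $\beta_0$ (each belonging to $\boI^h_{\beta_0}$) with a smooth subordinate partition of unity $\{\rho_i\}$ whose derivatives are uniformly bounded.

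For the top index $j = k_v$, one directly estimates $\abs*{\int_{J_i} \tilde\phi \rho_i L_v^{k_v} f \dd y} \leq C\norm{\phi}_{C^{k_h}}\norm{f}_{-k_h, k_v}$ on each piece. Summing over the $K \sim \lambda^n$ pieces and multiplying by $\lambda^{-n(k_v + 1)}$ yields the estimate $C\lambda^{-n k_v}\norm{\phi}_{C^{k_h}}\norm{f}_{-k_h, k_v}$. For lower indices $j \leq k_v - 1$, I Taylor-expand $\tilde\phi$ at a reference point $y_i \in J_i$ to order $k_h - 1$: writing $\tilde\phi \rho_i = \sum_{l=0}^{k_h-1} a_{i,l} P_{i,l} + r_i$, the smooth polynomial factors $P_{i,l}(y) = (y - y_i)^l \rho_i(y)$ have uniformly bounded $C^{k_h+1}$ norms, the Taylor coefficients satisfy $\abs{a_{i,l}} \leq C\lambda^{-nl}\norm{\phi}_{C^{k_h}}$, and the remainder obeys $\norm{r_i}_{C^{k_h}} \leq C\lambda^{-n k_h}\norm{\phi}_{C^{k_h}}$ thanks to the damping of $\tilde\phi^{(k_h)}$. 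Since $j \leq k_v - 1$, the polynomial contributions can be controlled by the weaker norm, $\abs*{\int P_{i,l} L_v^j f \dd y} \leq C\norm{f}_{-k_h-1, k_v-1}$, while the remainder yields $\abs*{\int r_i L_v^j f \dd y} \leq C\lambda^{-n k_h}\norm{\phi}_{C^{k_h}}\norm{f}_{-k_h, k_v}$. After summing over the $O(\lambda^n)$ pieces and multiplying by $\lambda^{-n(j+1)}$ the net powers of $\lambda^n$ cancel, leaving
\[
\abs*{\int_I \phi \cdot L_v^j(\boT^n f)\dd x} \leq C\norm{\phi}_{C^{k_h}}\pare*{\norm{f}_{-k_h-1, k_v-1} + \lambda^{-n k_h}\norm{f}_{-k_h, k_v}}.
\]

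Combining both cases and taking the supremum over $j \leq k_v$ and over $(I,\phi)$ produces the announced inequality, with constants that are in fact independent of $n$. The continuous extension of $\boT$ to $\boB^{-k_h, k_v}$ and the uniform boundedness of its iterates both follow from this together with the density of $C^\infty_c(M - \Sigma)$. The main technical concern I anticipate is ensuring that the partition of unity and the Taylor-type estimate are truly uniform across the $O(\lambda^n)$ pieces $J_i$, even when $T^n I$ passes close to a singularity; this is resolved by observing that each $J_i$ sits entirely inside $M - \Sigma$, where the translation chart is Euclidean, so classical one-dimensional Taylor estimates on pieces of length at most $\beta_0$ apply with constants independent of $i$ and $n$. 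The degenerate case $k_v = 0$ only involves $j = 0$ with $\min(k_h, 0) = 0$: the direct bound gives plain uniform boundedness and no compact term is required, consistent with the parenthetical remark in the statement.
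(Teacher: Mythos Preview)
Your argument is correct and follows the same overall architecture as the paper: the top index $j=k_v$ is handled exactly as in the paper (change of variables, partition of unity on $T^nI$, gain $\lambda^{-k_vn}$), and for $j<k_v$ you split the pushed test function into a part that is one degree smoother (sent to the weak norm) and a part that is small in $C^{k_h}$ (giving the $\lambda^{-k_hn}$ gain).

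The implementation of this splitting is genuinely different, however. The paper mollifies the original test function, writing $\phi=(\phi-\phi_\epsilon)+\phi_\epsilon$ with $\phi_\epsilon=\phi*\theta_\epsilon$; the piece $\phi_\epsilon$ is $C^{k_h+1}$ with norm $O(1/\epsilon)$ and feeds the weak-norm term, while $(\phi-\phi_\epsilon)\circ T^{-n}$ has $C^{k_h}$ norm $O(\lambda^{-k_hn})$ once one tunes $\epsilon=\lambda^{-k_hn}$. You instead Taylor-expand $\tilde\phi=\phi\circ T^{-n}$ on each piece $J_i$: the polynomial part is automatically $C^{k_h+1}$ with uniformly bounded norm, and the remainder inherits the smallness $O(\lambda^{-k_hn})$ directly from the bound $\tilde\phi^{(k_h)}=\lambda^{-k_hn}\phi^{(k_h)}\circ T^{-n}$. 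A small bonus of your route is that the weak-norm constant comes out uniform in $n$ (the paper's $C_n$ grows like $\lambda^{k_hn}$), though this has no bearing on the application via Hennion's theorem. Conversely, the paper's mollification argument is marginally more robust when $k_h$ is not an integer, since your phrase ``Taylor-expand to order $k_h-1$'' needs a word of interpretation in the H\"older case; this is easily fixed by expanding to degree $\lfloor k_h\rfloor-1$ and using the H\"older control on $\tilde\phi^{(\lfloor k_h\rfloor)}$ for the remainder.
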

\begin{proof}
Assume that we can prove the inequality~\eqref{eq:LY} for $f \in
C^\infty_c(M-\Sigma)$. Then, it extends to $\boB^{-k_h, k_v}$ by density, and
proves that $\boT$ acts continuously on this space thanks to the inclusion
$\boB^{-k_h, k_v} \subseteq \boB^{-k_h-1, k_v-1}$.

Let us now prove~\eqref{eq:LY} for smooth $f$. In the course of the proof, we
will also establish the boundedness of the iterates of $\boT$ on $\boB^{-k_h,
k_v}$. First, we estimate the contribution of $\norm{\boT^n f}'_{-k_h, k_v}$
to $\norm{\boT^n f}_{-k_h, k_v}$. Consider $I \in \boI^h$ and $\phi \in
C^{k_h}_c(I)$ with norm at most $1$, and compute
\begin{equation*}
  \int_I \phi \cdot L_v^{k_v}(f \circ T^n) \dd x
  = \lambda^{-k_v n} \int_I \phi \cdot (L_v^{k_v} f) \circ T^n \dd x
  = \lambda^{-k_v n} \cdot \lambda^{-n} \int_{T^n I} \phi \circ T^{-n} \cdot L_v^{k_v}f \dd x.
\end{equation*}
Let us then introduce a partition of unity $\rho_p$ on $T^n I$ into smooth
functions with supports of size $\leq \beta_0$ and bounded intersection
multiplicity. Thus, we decompose $T^n I$ as a union of at most $C \lambda^n$
intervals in $\boI^h$. On each of these intervals, the integral is bounded by
$C \norm{f}'_{-k_h, k_v}$ as the function $\phi \circ T^{-n} \cdot \rho_p$
has a $C^{k_h}$-norm which is uniformly bounded (this is the case for $\phi$
and $\rho_p$, and the map $T^{-n}$ only makes things better as it is a
uniform contraction by $\lambda^{-n}$). Summing over $p$, we get a bound $C
\lambda^{-k_v n} \norm{f}'_{-k_h, k_v}$. Hence,
\begin{equation}
\label{eq:ksprime}
  \norm{\boT^n f}'_{-k_h, k_v} \leq C \lambda^{-k_v n} \norm{f}_{-k_h, k_v}.
\end{equation}

If we use the same argument with a norm involving $j < k_v$ stable
derivatives, we get a weaker gain $\lambda^{-jn}$. Summing over $j$, this
shows that the iterates of $\boT$ are uniformly bounded on $\boB^{-k_h,
k_v}$, but this is not enough to prove~\eqref{eq:LY}. To prove it, we will
take advantage of the expansion in the horizontal direction, which we have
not used yet. We can extend $I$ in one of the two horizontal directions
without meeting a singularity, for instance to its right, to an interval $I'
\in \boI^h_{2\beta_0}$. Let  $\phi_\epsilon = \phi \star \theta_\epsilon$
where $\theta_\epsilon$ is a kernel supported on $[0,\epsilon]$, and
$\epsilon<\beta_0$ is a small parameter that will be chosen later on,
depending on $n$. (If the interval $I$ had been extended to its left, we
would have taken the support of $\theta_\epsilon$ in $[-\epsilon, 0]$). Then
$\phi_\epsilon$ is compactly supported in $I'$ if $\epsilon<\beta_0$, and it
satisfies
\begin{equation}
  \label{eq:phi_epsilon}
  \norm{\phi-\phi_\epsilon}_{C^{k_h-1}} \leq C \epsilon,\quad
  \norm{\phi_\epsilon}_{C^{k_h}} \leq C, \quad
  \norm{\phi_\epsilon}_{C^{k_h+1}} \leq C/\epsilon.
\end{equation}

Let us compute as above, introducing a partition of unity $\rho_p$ on $T^n
I'$. We get
\begin{multline*}
  \int_I \phi \cdot L_v^j(f \circ T^n) \dd x
  = \lambda^{-jn} \cdot \lambda^{-n} \sum_p \int_{I_p} (\phi-\phi_\epsilon) \circ T^{-n} \cdot \rho_p \cdot L_v^j f \dd x
  \\ + \lambda^{-jn} \cdot \lambda^{-n} \sum_p \int_{I_p} \phi_\epsilon \circ T^{-n} \cdot \rho_p \cdot L_v^j f \dd x.
\end{multline*}
In the second sum, the test function $\phi_\epsilon \circ T^{-n} \cdot
\rho_p$ has a $C^{k_h+1}$ norm which is bounded by $C/\epsilon$. As the
number $j$ of derivatives we consider is $<k_v$, we deduce that this term is
bounded by $C \lambda^{-jn} \epsilon^{-1} \norm{f}_{-k_h-1, k_v-1} \leq
C(\epsilon, n) \norm{f}_{-k_h-1, k_v-1}$. In the first sum, the first $k_h-1$
derivatives of $(\phi-\phi_\epsilon) \circ T^{-n}$ are bounded by
$C\epsilon$, as this already holds for $\phi-\phi_\epsilon$
by~\eqref{eq:phi_epsilon}. The $k_h$-th derivative of $\phi-\phi_\epsilon$ is
only bounded by a constant. As $T^{-n}$ contracts by $\lambda^{-n}$, the
$k_h$-th derivative of $(\phi-\phi_\epsilon) \circ T^{-n}$ is therefore
bounded by $C \lambda^{-k_h n}$. Hence, taking $\epsilon = \lambda^{-k_h n}$,
we get $\norm{(\phi-\phi_\epsilon) \circ T^{-n}}_{C^{k_h}} \leq C
\lambda^{-k_h n}$. Multiplying by $\rho_p$ (whose derivatives are all
bounded) and then integrating and summing, we find that the first sum is
bounded by $C \lambda^{-(j+k_h)n} \norm{f}_{-k_h, k_v} \leq C \lambda^{-k_h
n} \norm{f}_{-k_h, k_v}$.

Finally, we have proved that, for $j < k_v$,
\begin{equation*}
  \norm{\boT^n f}'_{-k_h, j} \leq C \lambda^{-k_h n} \norm{f}_{-k_h, k_v} + C_n \norm{f}_{-k_h-1, k_v-1}.
\end{equation*}
Together with the inequality~\eqref{eq:ksprime}, we get the conclusion of the
proposition.
\end{proof}

Theorem~\ref{thm:rho_ess} shows that the spectrum of $\boT$ acting on
$\boB^{-k_h, k_v}$ is discrete in $\{z \st \abs{z} > \lambda^{-\min(k_h,
k_v)}\}$, made of at most countably many eigenvalues which are all discrete
and of finite multiplicity. A priori, the spectrum could depend on the space
$\boB^{-k_h, k_v}$ we consider. However, all these spaces contain the dense
subspace $C^\infty_c(M-\Sigma)$ and they are all continuously embedded in the
distribution space $\boD^\infty(M-\Sigma)$. A theorem of
Baladi-Tsujii~\cite[Lemma~A.1]{bt_zeta} then  ensures that the spectrum (and
even the eigenspaces, considered as subspaces of the space of distributions)
do not depend on the space one considers, if one is beyond the essential
spectral radius. Hence, it makes sense to talk about the~\emph{spectrum} of
$\boT$, independently of the space $\boB^{-k_h, k_v}$. We have proved the
existence of a Ruelle spectrum for $T$ in the sense of
Definition~\ref{defn:Ruelle_spectrum}. To complete the proof of Theorem
~\ref{thm:main_preserves_orientations}, we still have to identify this
spectrum.

For $\alpha \neq 0$, let us denote by $E_\alpha^{(1)}$ the eigenspace
corresponding to the eigenvalue $\alpha$, and by $E_\alpha$ the corresponding
generalized eigenspace (containing the eigenvectors and more generally the
generalized eigenvectors, i.e., such that $(\boT -\alpha I)^k f = 0$ for some
$k > 0$). They are included in $\boB^{-k_h, k_v}$ when $\abs{\alpha}
> \lambda^{-\min(k_h, k_v)}$.

\subsection{Description of the spectrum}

To describe the spectrum, we will rely crucially on the action of the
operators $L_h$ and $L_v$.

\begin{prop}
\label{prop:action_Lv} We have $\boT \circ L_v = \lambda L_v \circ \boT$ on
$C^\infty_c(M-\Sigma)$. This equality still holds on all spaces to which
these operators extend continuously, in particular as operators from
$\boB^{-k_h, k_v}$ to $\boB^{-k_h, k_v-1}$ when $k_v>0$.

In the same way, $\boT \circ L_h = \lambda^{-1} L_h \circ \boT$ on
$C^\infty_c(M-\Sigma)$. This equality still holds on all spaces to which
these operators extend continuously, in particular as operators from
$\boB^{-k_h, k_v}$ to $\boB^{-k_h-1, k_v}$.
\end{prop}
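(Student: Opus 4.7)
The plan is to prove both identities first on $C^\infty_c(M-\Sigma)$ by a direct computation in local half-translation charts, and then extend by density.

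On $C^\infty_c(M-\Sigma)$, pick $x \in M-\Sigma$ and work in translation charts around $x$ and $Tx$ in which $T$ has the form $(u,v) \mapsto (\lambda u, \lambda^{-1}v)$. Since in these charts $V^v = \partial_v$ and $V^h = \partial_u$, the chain rule gives, for $f \in C^\infty_c(M-\Sigma)$,
\begin{equation*}
  L_v(\boT f)(u,v) = \partial_v\bigl(f(\lambda u, \lambda^{-1} v)\bigr) = \lambda^{-1} (L_v f)(Tx) = \lambda^{-1} \boT(L_v f)(x),
\end{equation*}
so $L_v \circ \boT = \lambda^{-1} \boT \circ L_v$ on $C^\infty_c(M-\Sigma)$, which rearranges to $\boT \circ L_v = \lambda L_v \circ \boT$. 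An identical computation with $\partial_u$ in place of $\partial_v$ yields $L_h \circ \boT = \lambda \boT \circ L_h$, i.e.\ $\boT \circ L_h = \lambda^{-1} L_h \circ \boT$. Since the orientations are preserved, no sign issues arise.

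To upgrade these identities to $\boB^{-k_h, k_v}$, I would invoke the continuous extensions established earlier. By Proposition~\ref{prop:LY}, $\boT$ extends to a bounded operator on each $\boB^{-k_h, k_v}$ (and on $\boB^{-k_h, k_v-1}$). By Proposition~\ref{prop:Lh_Lv_action}, $L_v$ is bounded from $\boB^{-k_h, k_v}$ to $\boB^{-k_h, k_v-1}$ (when $k_v > 0$), and $L_h$ is bounded from $\boB^{-k_h, k_v}$ to $\boB^{-k_h-1, k_v}$. Consequently, both sides of each of the two identities define bounded linear maps $\boB^{-k_h, k_v} \to \boB^{-k_h, k_v-1}$ (respectively $\boB^{-k_h-1, k_v}$). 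Since they agree on the dense subspace $C^\infty_c(M-\Sigma)$ (which is dense by the very definition of $\boB^{-k_h, k_v}$ as a completion), they agree on the whole space.

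There is no real obstacle here; the proposition is essentially a chain-rule identity promoted via density. The only thing worth being careful about is that the identities must be applied at the level where both sides are defined: the target spaces for $\boT\circ L_v$ and $L_v\circ \boT$ coincide (thanks to the commutation relations on indices), so the extension argument is consistent. The analogous remark applies to $L_h$.
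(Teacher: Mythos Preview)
Your proof is correct and follows essentially the same approach as the paper: a chain-rule computation in local translation charts establishes the identities on $C^\infty_c(M-\Sigma)$, and density does the rest. The paper's own proof is even terser, simply noting $(L_v\circ\boT)(f)=L_v(f\circ T)=\lambda^{-1}(L_v f)\circ T$ since $T$ contracts vertically by $\lambda^{-1}$, and leaving the extension by continuity implicit in the statement.
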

\begin{proof}
We compute: $(\boT \circ L_v)(f) = (L_v f) \circ T$, and $(L_v \circ \boT)(f)
= L_v (f \circ T) = \lambda^{-1} (L_v f)\circ T$ as $T$ contracts by
$\lambda^{-1}$ in the vertical direction. This proves the desired equality
for $L_v$. The argument is the same for $L_h$.
\end{proof}

\begin{cor}
\label{cor:Lu_Ls_Erho} The operator $L_v$ sends $E_\alpha$ to
$E_{\lambda\alpha}$. The operator $L_h$ sends $E_\alpha$ to
$E_{\lambda^{-1}\alpha}$.
\end{cor}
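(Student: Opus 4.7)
The plan is to deduce the corollary directly from the intertwining relations established in Proposition~\ref{prop:action_Lv}, namely $\boT \circ L_v = \lambda L_v \circ \boT$ and $\boT \circ L_h = \lambda^{-1} L_h \circ \boT$, which hold on the relevant spaces in the scale $\boB^{-k_h, k_v}$. The key algebraic observation is that these commutation relations transform the operator $(\boT - \alpha I)$ into $(\boT - \lambda \alpha I)$ after left-multiplication by $L_v$, and into $(\boT - \lambda^{-1}\alpha I)$ after left-multiplication by $L_h$.

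More precisely, I would first compute $(\boT - \lambda \alpha I) \circ L_v = \lambda L_v \circ \boT - \lambda \alpha L_v = \lambda L_v \circ (\boT - \alpha I)$, and then prove by induction on $k$ that $(\boT - \lambda \alpha I)^k \circ L_v = \lambda^k L_v \circ (\boT - \alpha I)^k$, viewed as operators between the appropriate spaces in the scale (e.g., from $\boB^{-k_h, k_v}$ to $\boB^{-k_h, k_v-k}$, assuming $k_v$ large enough; by Theorem~\ref{thm:rho_ess} the generalized eigenspace $E_\alpha$ is independent of the choice of $k_h, k_v$ with $\lambda^{-\min(k_h, k_v)} < |\alpha|$, so we may pick them as large as needed). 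The analogous identity for $L_h$ reads $(\boT - \lambda^{-1}\alpha I)^k \circ L_h = \lambda^{-k} L_h \circ (\boT - \alpha I)^k$.

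Now if $f \in E_\alpha$, there exists $k$ with $(\boT - \alpha I)^k f = 0$. Applying the identity above yields $(\boT - \lambda \alpha I)^k (L_v f) = \lambda^k L_v (\boT - \alpha I)^k f = 0$, so $L_v f \in E_{\lambda \alpha}$. The same argument with $L_h$ gives $L_h f \in E_{\lambda^{-1}\alpha}$.

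There is no real obstacle here, only a minor bookkeeping point: one must make sure that everything is interpreted in a space where $L_v$, $L_h$ and $\boT$ all act, and that the generalized eigenspace is unchanged by enlarging $k_h$ and $k_v$. This is guaranteed by the Baladi--Tsujii invariance result cited just after Theorem~\ref{thm:rho_ess}, together with Proposition~\ref{prop:Lh_Lv_action} ensuring that $L_v$ and $L_h$ map $\boB^{-k_h, k_v}$ into nearby spaces of the same scale. Once this is in place, the proof reduces to the two one-line computations above.
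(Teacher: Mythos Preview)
Your proof is correct and follows essentially the same approach as the paper: the same one-line intertwining identity $(\boT - \lambda\alpha I)\circ L_v = \lambda L_v\circ(\boT - \alpha I)$, the same induction to power $k$, and the same conclusion for generalized eigenvectors, with the analogous argument for $L_h$. Your additional remarks on the choice of $k_h, k_v$ and the Baladi--Tsujii invariance are more explicit than the paper's presentation but do not change the argument.
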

\begin{proof}
A generalized eigendistribution $f$ for $\alpha$ satisfies $(\boT-\alpha I)^k
f= 0$ for large enough $k$. Moreover, we have $(\boT -\lambda \alpha I) \circ
L_v = \lambda L_v \circ (\boT - \alpha I)$ by
Proposition~\ref{prop:action_Lv}. By induction, $(\boT -\lambda \alpha I)^k
\circ L_v = \lambda^k L_v \circ (\boT-\alpha I)^k$. Therefore, $(\boT
-\lambda \alpha I)^k (L_v f) = \lambda^k L_v ( (\boT-\alpha I)^k f) = 0$.
This shows that $L_v$ maps $E_\alpha$ to $E_{\lambda \alpha}$. The argument
is the same for $L_h$.
\end{proof}

\begin{cor}
\label{cor:iter_Lv_0} For $f \in E_\alpha$, we have $L_v^k f = 0$ when $k$ is
large enough, more specifically when $\lambda^k \abs{\alpha}>1$.
\end{cor}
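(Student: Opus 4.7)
The plan is to combine Corollary~\ref{cor:Lu_Ls_Erho} with the spectral radius bound of Theorem~\ref{thm:rho_ess}. The idea is essentially one line: iterating $L_v$ pushes the generalized eigenspace out past the unit circle, where no eigenvalues can live, so the image must vanish.

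More concretely, first I would argue by induction on $k$ that $L_v^k$ maps $E_\alpha$ into $E_{\lambda^k \alpha}$. The base case $k=1$ is precisely Corollary~\ref{cor:Lu_Ls_Erho}, and the inductive step is immediate from the same corollary applied to $\lambda^{k-1}\alpha$. Therefore, for $f \in E_\alpha$, the distribution $L_v^k f$ is a generalized eigenvector of $\boT$ for the eigenvalue $\lambda^k \alpha$ (interpreted in an appropriate $\boB^{-k_h, k_v'}$ with $k_v'$ large enough so that $L_v^k f$ is well defined; by Proposition~\ref{prop:Lh_Lv_action} this costs only $k$ vertical derivatives, and we can start from a space with $k_v$ as large as we like).

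Next, Theorem~\ref{thm:rho_ess} tells us that the spectrum of $\boT$ on $\boB^{-k_h, k_v}$ is contained in the closed unit disk. If $\lambda^k \abs{\alpha} > 1$, then $\lambda^k \alpha$ lies strictly outside this disk and hence is not an eigenvalue of $\boT$, so the generalized eigenspace $E_{\lambda^k \alpha}$ is trivial. Consequently $L_v^k f = 0$.

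There is no real obstacle here since everything needed is already in place. The only point that deserves a line of care is that the eigenspaces $E_\beta$ a priori depend on the choice of Banach space, so one should verify that the spectral bound applies in the space containing $L_v^k f$; this is taken care of by the Baladi–Tsujii identification of eigenspaces (beyond the essential spectral radius) across the different $\boB^{-k_h, k_v}$, cited immediately after Theorem~\ref{thm:rho_ess}.
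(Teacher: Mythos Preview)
Your proof is correct and follows essentially the same line as the paper's: $L_v^k f \in E_{\lambda^k\alpha}$ by Corollary~\ref{cor:Lu_Ls_Erho}, and this space is trivial once $\lambda^k\abs{\alpha}>1$ since the spectral radius of $\boT$ is at most $1$ (the paper phrases this via the uniform boundedness of the iterates in Proposition~\ref{prop:LY}, which amounts to the same thing). Your extra remark about Baladi--Tsujii is harmless but unnecessary here, since bounded iterates already rule out any eigenvalue of modulus $>1$ regardless of which $\boB^{-k_h,k_v}$ one works in.
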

\begin{proof}
We have $L_v^k f \in E_{\lambda^k \alpha}$. This space is trivial if
$\abs{\lambda^k \alpha}>1$ as the iterates of $\boT$ are bounded on
$\boB^{-k_h, k_v}$ by Proposition~\ref{prop:LY}.
\end{proof}

If we start from a nonzero generalized eigendistribution, we can consider the
smallest $k$ such that $L_v^k f = 0$. Then $L_v^{k-1} f$ is a generalized
eigendistribution for $\boT$, and it satisfies $L_v f = 0$. Such elements are
the main building blocks to describe the spectrum of $\boT$. We will take
advantage of the cohomological description of such objects we have given in
Paragraph~\ref{subsec:cohomological} to go further in the description of the
spectrum.

Let us now try to see if any cohomology class can be realized by elements in
$\boB^{-k_h, k_v} \cap \ker L_v$ -- and if the class is a (generalized)
eigenfunction for the action of $T$ on cohomology we will try to realize it
by a (generalized) eigendistribution for $\boT$, for the same eigenvalue.
This is not always possible: if one considers the action of a linear Anosov
matrix on the torus, then the cohomology has dimension $2$, but the spectrum
of $\boT$ is reduced to $\{1\}$: it is not possible to realize in this way
the cohomology class corresponding to the stable foliation. We will see that
this is the only obstruction: all the other eigenvectors in cohomology (which
correspond to eigenvalues in $(\lambda^{-1}, \lambda]$) can be realized.

\begin{thm}
\label{thm:realise_H1} Let $h \in H^1(M)$ be a cohomology class which is a
generalized eigenfunction for the linear action of $T$ on cohomology: we have
$(T^* - \mu)^J h = 0$ for some $J\geq 1$ and some $\mu$ with $\abs{\mu}\in
[\lambda^{-1}, \lambda]$ (where $\mu = \lambda$ if and only if $h$ is a
multiple of the class of the horizontal foliation $\dd x$, and $\mu =
\lambda^{-1}$ if and only if $h$ is a multiple of the class of the vertical
foliation $\dd y$). We assume $\mu \neq \lambda^{-1}$, i.e., we exclude
multiples of $\dd y$.

Then, for $\min(k_h, k_v) \geq 3$, there exists $f \in \boB^{-k_h, k_v} \cap
\ker L_v$ in the generalized eigenspace $E_{\lambda^{-1} \mu}$ whose
cohomology class $[f]$ is equal to $h$. In particular, if $h \neq 0$, the
eigenspace is nontrivial.
\end{thm}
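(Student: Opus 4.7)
The plan is to construct $f$ by a resolvent-type formula applied to the horizontal component of a smooth representative of $h$; this is the formal limit of the asymptotic expansion of $\rho^{-n}\boT^n \omega_x$ (with $\rho = \lambda^{-1}\mu$) suggested in the introduction. First I handle the genuine eigenvector case ($J = 1$), then extend to generalized eigenvectors by induction on the chain length.

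Pick a smooth closed $1$-form $\omega = \omega_x\dd x + \omega_y\dd y$ on $M$ representing $h$, regular enough that $\omega_x,\omega_y\in \boB^{-k_h,k_v}$ via Lemma~\ref{lem:boB_riche}. Since $T^*h = \mu h$ in cohomology there is a smooth function $u$ with $T^*\omega - \mu\omega = \dd u$; comparing the $\dd x$ and $\dd y$ coefficients, using $T^*\dd x=\lambda\dd x$ and $T^*\dd y=\lambda^{-1}\dd y$, gives
\begin{equation*}
  \boT\omega_x = \rho\omega_x + \lambda^{-1}L_h u, \qquad \boT\omega_y = \lambda\mu\omega_y + \lambda L_v u.
\end{equation*}
The ansatz is $f := \omega_x + L_h v$ with $v := (\mu - \boT)^{-1}u$, where the resolvent is a priori well-defined when $\mu\notin\mathrm{spec}(\boT)$.

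Next I verify the three required properties directly. For the eigenvalue equation, using $\boT L_h = \lambda^{-1}L_h \boT$ and $(\mu-\boT)v=u$,
\begin{equation*}
  \boT f = \rho\omega_x + \lambda^{-1}L_h(u+\boT v) = \rho\omega_x + \lambda^{-1}\mu L_h v = \rho f.
\end{equation*}
For vertical invariance, $\boT L_v = \lambda L_v\boT$ yields the intertwining $L_v(\mu-\boT)^{-1} = \lambda(\lambda\mu-\boT)^{-1}L_v$, whence $L_v v = \lambda(\lambda\mu-\boT)^{-1}L_v u$. The second equation above can be rewritten as $\omega_y = -\lambda(\lambda\mu-\boT)^{-1}L_v u$, so $L_v v = -\omega_y$, and then closedness of $\omega$ (i.e.\ $L_v\omega_x = L_h\omega_y$) gives $L_v f = L_v\omega_x + L_h L_v v = 0$. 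For the cohomology class, the currents satisfy $f\dd x - \omega = L_h v\dd x - \omega_y\dd y = \dd v$, which is exact, so by Proposition~\ref{prop:define_H1} (and standard de Rham for $\omega$) one has $[f\dd x] = [\omega] = h$.

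The main obstacle is the existence of the resolvents $(\mu-\boT)^{-1}$ and $(\lambda\mu-\boT)^{-1}$. Ruelle resonances lie in the closed unit disk and $|\lambda\mu|\geq 1$, so $\lambda\mu$ is only problematic when $|\mu|=\lambda^{-1}$; combined with $\mu\neq\lambda^{-1}$ this would force $\lambda\mu$ to be a unit-circle resonance different from $1$, which one rules out by a separate bootstrap argument (pseudo-Anosov mixing implies $1$ is the only resonance on the unit circle). When $\mu$ itself is a resonance, which can occur since $|\mu|$ may be $\leq 1$, I would replace $(\mu-\boT)^{-1}$ by a Kato pseudo-inverse; the only obstruction is the component of $u$ in the generalized eigenspace $E_\mu^{(gen)}$ of $\boT$, which can be killed by modifying $\omega$ by an exact form (which does not change $[\omega]$). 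Finally, the generalized eigenvector case $(T^*-\mu)^J h = 0$ with $J\geq 2$ is handled by induction on $J$: set $h' := (T^*-\mu)h$, apply the inductive hypothesis to obtain $f'\in E_\rho^{(gen)}\cap\ker L_v$ with $[f']=h'$, then adapt the above ansatz with an extra correction term so that $(\boT-\rho)f$ is proportional to $f'$ while $[f]=h$.
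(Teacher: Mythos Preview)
Your resolvent construction $f=\omega_x+L_h(\mu-\boT)^{-1}u$ is a genuinely different route from the paper's: there, $f$ is extracted as the coefficient $f_{\lambda^{-1}\mu,0}$ in the spectral expansion $\boT^n\omega_x=\sum_{|r|\geq\lambda^{-2}}r^nn^jf_{r,j}+R_n$, and both $L_vf=0$ and $[f]=h$ are read off from the asymptotics of $\int_{\bar\gamma_h}\boT^n\omega_x\,\dd x$ over horizontally smoothed loops $\bar\gamma$. Your algebraic checks of $\boT f=\rho f$ and $L_vf=0$ are clean, but two steps do not go through as written.

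First, the claim that the $E_\mu$-component of $u$ can be killed by replacing $\omega$ with $\omega+\dd w$ fails: this changes $u$ by $(\boT-\mu)w$, whose $E_\mu$-component lies in the image of the nilpotent $(\boT-\mu)|_{E_\mu}$, a proper subspace of $E_\mu$. The correct fix is that you do not need to kill it: solving $(\mu-\boT)v\equiv u\pmod{E_\mu}$ already gives $(\boT-\rho)f\in L_hE_\mu\subseteq E_\rho$, so $f$ is a \emph{generalized} eigenvector, which is all that is claimed. (Your mixing bootstrap for $|\mu|=\lambda^{-1}$ would also be circular, since the resonance classification uses the present theorem; fortunately the case is vacuous because $|\mu_i|>\lambda^{-1}$ strictly for $\mu_i\in\Xi$.) Second, the cohomology step is incomplete: $[f]$ is defined via the smoothed-loop procedure of Proposition~\ref{prop:define_H1}, not by de~Rham theory for currents, and $v$ is only a distribution in $\boB^{-k_h,k_v}$. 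The current identity $f\,\dd x-\omega=\dd v$ does not immediately yield $[f]=[\omega]$; you must still verify that the smoothed horizontal integral of $L_hv\,\dd x$ around a loop equals the smoothed integral of $\omega_y\,\dd y$ along the vertical connectors, using $L_vv=-\omega_y$. This can be done, but it is precisely the work that the paper's asymptotic argument sidesteps by never introducing an explicit primitive $v$.
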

\begin{proof}
Let $\omega$ be a closed $1$-form with compact support in $M-\Sigma$ such
that $[\omega] = h$, i.e., $\int_\gamma \omega = \langle h, \gamma \rangle$
for any closed curve $\gamma$. It is possible to choose such an $\omega$
which vanishes on a neighborhood of $\Sigma$ as part of the long exact
sequence in cohomology reads $H^1_c(M-\Sigma) \to H^1_c(M) \to
H^1_c(\Sigma)$. As the last term is $0$, the previous arrow is onto.

Let us write $\omega = \omega_x \dd x + \omega_y \dd y$ where $\omega_x$ and
$\omega_y$ belong to $C^\infty_c(M-\Sigma)$. Then we have
\begin{equation*}
  (T^n)^* \omega = \lambda^n (\boT^n \omega_x) \dd x + \lambda^{-n} (\boT^n \omega_y) \dd y,
\end{equation*}
as $T$ expands horizontally by $\lambda$ and contracts vertically by
$\lambda$.

Consider a closed path $\gamma$ made of horizontal and vertical segments,
away from the singularities. Denote by $\gamma_t$ the same path but shifted
horizontally by $t$. If $t$ is small enough, it does not meet any singularity
either. Let $\bar\gamma = \int_t \eta(t) \gamma_t$ where $\eta$ is a smooth
function whose support is small enough to ensure that this is well defined.
This integral should be understood in the weak sense, i.e., for any form
$\omega$ the integral of $\omega$ on $\bar\gamma$ is by definition $\int_t
\eta(t) (\int_{\gamma_t} \omega)$. Then $\bar \gamma$ is made of horizontal
segments weighted by a $C^\infty$ compactly supported function -- we denote
this part by $\bar\gamma_h$ -- and of vertical parts that we denote by $\bar
\gamma_v$. Then
\begin{equation*}
  \int_{\bar \gamma_h} (\boT^n \omega_x) \dd x = \lambda^{-n} \int_{\bar\gamma} (T^n)^* \omega -
  \lambda^{-2n} \int_{\bar \gamma_v} (\boT^n \omega_y) \dd y.
\end{equation*}
The last integral is uniformly bounded as $\omega_y$ is a bounded function.
Hence, its contribution is $O(\lambda^{-2n})$. In the first term, as $(T^n)^*
\omega$ is closed, it is equivalent to integrate just on $\gamma$. This only
depends on the homology class $h$ of $\omega$, which is a generalized
eigenvector for $T^*$. By Jordan's decomposition, we may write
\begin{equation*}
  (T^n)^* h = \mu^n \sum_{j < J} n^j h_j,
\end{equation*}
with $h_0 = h$. We get
\begin{equation}
\label{eq:asymp_int_gamma_h}
  \int_{\bar \gamma_h} (\boT^n \omega_x) \dd x = \pare*{\int \eta} \cdot (\lambda^{-1}\mu)^n \sum_{j< J} n^j \langle h_j, \gamma \rangle + O(\lambda^{-2n}).
\end{equation}
In $\boB^{-k_h, k_v}$, we can write
\begin{equation*}
  \boT^n \omega_x =
  \sum_{\abs{r}\geq \lambda^{-2}} \sum_{j \leq C} r^n n^j f_{r,j}
  + R_n,
\end{equation*}
where $r$ runs along the eigenvalues of modulus $\geq \lambda^{-2}$ of
$\boT$, the $f_{r,j}$ belong to $E_r$ and $R_n$ is a remainder term which
decays faster than $\lambda^{-2n}$. Identifying the terms in the
asymptotic~\eqref{eq:asymp_int_gamma_h} thanks to the assumption $\abs{\mu} >
\lambda^{-1}$ and using $h_0= h$, we obtain for $f=f_{\lambda^{-1}\mu,0}$ the
equality
\begin{equation}
\label{eq:bargammah}
  \int_{\bar\gamma_h} f \dd x = \pare*{\int \eta} \langle h, \gamma \rangle.
\end{equation}

Let us show that $f$ satisfies $L_v f = 0$. Consider a horizontal interval
$I_0 = [0, q]$, a small vertical translate $I_\epsilon = I_0+\ic \epsilon$ of
this interval (in a chart away from singularities), and a compactly supported
test function $\phi_0$ on $I_0$. We want to show that $\int_{I_0} \phi_0 f
\dd x = \int_{I_\epsilon} \phi_\epsilon f \dd x$ where $\phi_\epsilon$ is the
vertical push-forward of $\phi_0$ on $I_\epsilon$. To do this, denote by
$\gamma_t$ the path from $0$ to $t$ then to $\ic \epsilon+t$ then to
$\ic\epsilon$ then to $0$. $0$. Let also $\eta(t) = -\phi_0'(t)$. In
$\bar\gamma=\int \eta(t) \gamma_t \dd t$, a point $x \in [0,q]$ is counted
with a weight $\int_{t \in [x,q]} \eta(t) \dd t = -\phi_0(q) + \phi_0(x) =
\phi_0(x)$. One can argue similarly along $I_\epsilon$. Therefore, by
definition, $\int_{I_0} \phi_0 f \dd x - \int_{I_\epsilon} \phi_\epsilon f
\dd x = \int_{\bar \gamma_h} f \dd x$. This integral vanishes
by~\eqref{eq:bargammah} as $\int \eta=0$. This shows that $f$ is invariant
under vertical translation, i.e., $L_v f = 0$.

The cohomology class $[f]$ is then well defined by
Proposition~\ref{prop:define_H1}, as well as $\int_\gamma f \dd x$ for any
closed path. By definition of this integral, it coincides with
$\int_{\bar\gamma_h} f \dd x$ when $\bar\gamma$ is a smoothing of $\gamma$ as
above and $\eta$ has integral $1$. We deduce from~\eqref{eq:bargammah} that
$\int_\gamma f \dd x= \langle h, \gamma \rangle$ for any closed path
$\gamma$. By definition, this shows that $[f] = h$.
\end{proof}

We can use this statement to show that the spectrum of $T$ contains the set
mentioned in Theorem~~\ref{thm:main_preserves_orientations}:

\begin{cor}
\label{cor:spectre_contient} The Ruelle spectrum of $T$ contains all the
$\lambda^{-n} \mu$ for $n\geq 1$ and $\mu \in \Xi$, where $\Xi$ is the
spectrum of $T^*$ on the subspace of $H^1(M)$ made of $1$-forms which are
orthogonal to $\dd x$ and $\dd y$, as in the statement of
Theorem~\ref{thm:main_preserves_orientations}.
\end{cor}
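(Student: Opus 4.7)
The plan is to combine Theorem~\ref{thm:realise_H1} (which realizes cohomological eigenvectors as eigendistributions) with repeated application of the horizontal derivation $L_h$, which shifts the eigenvalue by a factor of $\lambda^{-1}$ thanks to Corollary~\ref{cor:Lu_Ls_Erho}. The only nontrivial point is to check non-vanishing after iterating $L_h$, and this turns out to follow from Lemma~\ref{lem:Lh_0}.

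In detail, I would first fix $\mu \in \Xi$ and choose any nonzero eigenvector $h \in H^1(M)$ with $T^* h = \mu h$. Since $\mu \in \Xi$ sits on the subspace orthogonal to $[\dd x]$ and $[\dd y]$, we have $\abs{\mu} \in (\lambda^{-1}, \lambda)$ and in particular $\mu \neq \lambda^{-1}$, so the hypotheses of Theorem~\ref{thm:realise_H1} are satisfied (with $J=1$). Choose $k_h, k_v$ both large (to be adjusted below) and apply the theorem to obtain $f \in \boB^{-k_h, k_v} \cap \ker L_v$ with $f \in E_{\lambda^{-1}\mu}$ and $[f] = h$. Since $h \neq 0$ we have $f \neq 0$, so $\lambda^{-1}\mu$ is a Ruelle resonance, handling the case $n=1$.

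For $n \geq 2$, I would form $L_h^{n-1} f$. By Corollary~\ref{cor:Lu_Ls_Erho}, iterated $n-1$ times, this element lies in $E_{\lambda^{-n}\mu}$, viewed in the space $\boB^{-k_h-(n-1), k_v}$ (which is the reason for taking $k_h$ large from the start; the Baladi--Tsujii argument quoted after Theorem~\ref{thm:rho_ess} ensures the generalized eigenspaces are the same regardless of the space used, as long as we stay past the essential spectral radius). It remains to verify that $L_h^{n-1} f \neq 0$, and this is the main (though short) obstacle.

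Suppose for contradiction that $L_h^{n-1}f = 0$, and let $k \in \{1,\dotsc,n-1\}$ be minimal with $L_h^k f = 0$. Set $g = L_h^{k-1} f$, so $g \neq 0$ and $L_h g = 0$. By Lemma~\ref{lem:Lh_0} (valid since there is no horizontal saddle connection in a surface carrying a pseudo-Anosov map), $g$ must be a nonzero constant. But constants are eigenvectors of $\boT$ with eigenvalue $1$, whereas by Corollary~\ref{cor:Lu_Ls_Erho} we also have $g \in E_{\lambda^{-k}\mu}$. Generalized eigenspaces for distinct eigenvalues of $\boT$ intersect trivially, so we would need $\lambda^{-k}\mu = 1$, i.e.\ $\abs{\mu} = \lambda^k \geq \lambda$, contradicting $\abs{\mu} < \lambda$. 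Hence $L_h^{n-1}f \neq 0$, $E_{\lambda^{-n}\mu}$ is nontrivial, and $\lambda^{-n}\mu$ belongs to the Ruelle spectrum, as desired.
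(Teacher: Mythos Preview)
Your proof is correct and follows essentially the same approach as the paper: use Theorem~\ref{thm:realise_H1} to get a nonzero element of $E_{\lambda^{-1}\mu}$, then iterate $L_h$ and invoke Lemma~\ref{lem:Lh_0} to ensure non-vanishing. The paper phrases the last step slightly more succinctly by observing directly that $\ker L_h$ consists of constants and is therefore contained in $E_1$, so $L_h$ is injective on every $E_\alpha$ with $\alpha\neq 1$; your contradiction argument unwinds exactly this.
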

\begin{proof}
Theorem~\ref{thm:realise_H1} ensures that $\lambda^{-1} \mu$ belongs to the
Ruelle spectrum of $T$. The map $L_h$ is injective on the generalized
eigenspace $E_{\lambda^{-1}\mu}$ by Lemma~\ref{lem:Lh_0}, as the kernel of
$L_h$ is included in $E_1$. It sends it to $E_{\lambda^{-2}\mu}$ by
Corollary~\ref{cor:Lu_Ls_Erho}, hence this space is nontrivial. By induction,
one proves in the same way that all the spaces $E_{\lambda^{-n}\mu}$ are
nontrivial.
\end{proof}

\begin{prop}
\label{prop:L_h_surj} For any $\alpha \neq 0$, the operator $L_h$ is onto
from $E_\alpha \cap \ker L_v$ to $E_{\lambda^{-1}\alpha} \cap \ker L_v \cap
\ker [\cdot]$. It is bijective for $\alpha \neq 1$.
\end{prop}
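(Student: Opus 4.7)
The plan is to combine Proposition~\ref{prop:integre_Lu} (primitives under $L_h$ exist when the cohomology class vanishes) with the intertwining $L_h \boT = \lambda \boT L_h$ coming from Proposition~\ref{prop:action_Lv}, and then ``project'' the resulting primitive onto the correct generalized eigenspace by a finite-dimensional linear algebra argument.

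\smallskip

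\textbf{Surjectivity.} Fix $g \in E_{\lambda^{-1}\alpha} \cap \ker L_v \cap \ker[\cdot]$. Since $[g] = 0$, Proposition~\ref{prop:integre_Lu} produces a primitive $\tilde f \in \boB^{-k_h+1, k_v} \cap \ker L_v$ with $L_h \tilde f = g$. The space
\begin{equation*}
    V \coloneqq \{h \in \boB^{-k_h+1, k_v} \cap \ker L_v \st L_h h \in E_{\lambda^{-1}\alpha}\}
\end{equation*}
contains $\tilde f$ and the constant functions. From $L_h \boT = \lambda \boT L_h$ and the $\boT$-invariance of $E_{\lambda^{-1}\alpha}$, the subspace $V$ is $\boT$-invariant. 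Moreover Lemma~\ref{lem:Lh_0} tells us that $\ker L_h \cap \ker L_v$ consists of the constants, so $L_h : V \to E_{\lambda^{-1}\alpha}$ has a one-dimensional kernel and finite-dimensional image; hence $V$ is finite-dimensional.

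\smallskip

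\textbf{Spectral decomposition of $V$.} Decompose $V = \bigoplus_\beta V_\beta$ into generalized eigenspaces of $\boT\restr V$. By the intertwining relation, $L_h(V_\beta) \subseteq E_{\lambda^{-1}\beta}$. Since generalized eigenspaces of $\boT$ associated to distinct eigenvalues intersect trivially, $L_h(V_\beta) \subseteq E_{\lambda^{-1}\alpha}$ forces $L_h(V_\beta) = 0$ whenever $\beta \neq \alpha$. Consequently
\begin{equation*}
    g = L_h \tilde f \in L_h(V) = L_h(V_\alpha),
\end{equation*}
so there exists $f \in V_\alpha \subseteq E_\alpha \cap \ker L_v$ with $L_h f = g$, which proves surjectivity.

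\smallskip

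\textbf{Injectivity for $\alpha \neq 1$.} If $f \in E_\alpha \cap \ker L_v$ satisfies $L_h f = 0$, Lemma~\ref{lem:Lh_0} says that $f$ is a constant function. Constants lie in $E_1$, and generalized eigenspaces for distinct eigenvalues meet only in $\{0\}$, so $f = 0$ as soon as $\alpha \neq 1$.

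\smallskip

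The only subtle point is Step 2 (the construction of the finite-dimensional invariant space $V$): one needs that $L_h$ on $\ker L_v$ has a small, well-understood kernel, which is exactly what Lemma~\ref{lem:Lh_0} provides, and that $L_h$ intertwines with $\boT$ up to the scalar $\lambda$, so that the generalized eigenspaces of $\boT$ are respected. Everything else is then a direct application of Proposition~\ref{prop:integre_Lu}, Proposition~\ref{prop:action_Lv} and finite-dimensional linear algebra.
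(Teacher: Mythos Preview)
Your surjectivity and injectivity arguments are correct. The device of introducing the finite-dimensional $\boT$-invariant space $V$ and decomposing it into generalized eigenspaces of $\boT\restr V$ is a clean way to isolate the $E_\alpha$-component of the primitive $\tilde f$. The paper carries out the same projection more concretely: it observes that $L_h\bigl((\boT-\alpha)^j g\bigr)=\lambda^j(\boT-\lambda^{-1}\alpha)^j L_h g=0$, so $(\boT-\alpha)^j g$ is a constant $c$ by Lemma~\ref{lem:Lh_0}, and then replaces $g$ by $g-c/(1-\alpha)^j$ (or applies $\boT-1$ once more when $\alpha=1$). Both arguments implement the same spectral projection; yours is just packaged more abstractly.

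There is, however, a genuine omission. The proposition asserts that $L_h$ maps $E_\alpha\cap\ker L_v$ \emph{into} $E_{\lambda^{-1}\alpha}\cap\ker L_v\cap\ker[\cdot]$, and you never verify the last inclusion $[L_h f]=0$. That $L_h(E_\alpha)\subseteq E_{\lambda^{-1}\alpha}$ is Corollary~\ref{cor:Lu_Ls_Erho}, and that $L_h$ preserves $\ker L_v$ is immediate from $[L_h,L_v]=0$; but the vanishing of the cohomology class of $L_h f$ is not automatic. One must return to the definition of $\int_\gamma (L_h f)\,\dd x$ in Paragraph~\ref{subsec:cohomological}: after the regularized integration by parts along each horizontal piece of $\gamma$, the boundary contribution at the end of one horizontal segment cancels with the contribution at the beginning of the next one precisely because $f$ is vertically invariant. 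The paper spells this out as the first step of its proof. Without it, your claim that $L_h$ is onto (and bijective for $\alpha\neq 1$) from $E_\alpha\cap\ker L_v$ to $E_{\lambda^{-1}\alpha}\cap\ker L_v\cap\ker[\cdot]$ is incomplete, since you have only shown $L_h(E_\alpha\cap\ker L_v)\supseteq E_{\lambda^{-1}\alpha}\cap\ker L_v\cap\ker[\cdot]$, not the reverse inclusion.
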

\begin{proof}
First, $L_h$ sends $E_\alpha$ to $E_{\lambda^{-1}\alpha}$ by
Corollary~\ref{cor:Lu_Ls_Erho}. As it commutes with $L_v$, it even sends
$E_\alpha \cap \ker L_v$ to $E_{\lambda^{-1}\alpha} \cap \ker L_v$. Let us
show that its image is contained in $\ker [\cdot]$. Let $f \in \ker L_v$, we
have to see that $[L_h f] = 0$. Consider a path $\gamma$ made of horizontal
and vertical segments. We compute $\int_\gamma L_h f \dd x$ by coming back to
its definition. Informally, we have $\int_\gamma L_h f \dd x = \sum_I \int_I
L_h f \dd x$ where the sum is over horizontal parts of $\gamma$. With an
integration by parts, $\int_\gamma L_h f \dd x = \sum_I (f(y_I) - f(x_I))$
where $y_I$ and $x_I$ are the endpoints of $I$. As $\gamma$ is a closed path
and $f$ is invariant vertically each $f(y_I)$ cancels out with $-f(x_J)$
where $J$ is the horizontal interval following $I$ in $\gamma$. We are left
with $\int_\gamma L_h f \dd x = 0$.

This computation is not rigorous as $f$ can not be integrated against
characteristic functions, and $f(y_I)$ makes no sense ($f$ is only a
distribution). This is why $\int_\gamma L_h f \dd x$ is defined in
Paragraph~\ref{subsec:cohomological} by using a regularization of the
characteristic function of $I$. The above argument works with the
regularization. As $f$ is vertically invariant, the contribution of the end
of the interval $I$ to $\int_\gamma L_h f \dd x$ compensates exactly with the
contribution of the beginning of the next interval, and we are left with
$\int_\gamma L_h f = 0$ as desired.

It remains to show that $L_h : E_\alpha \cap \ker L_v \to
E_{\lambda^{-1}\alpha} \cap \ker L_v \cap \ker [\cdot]$ is surjective (its
bijectivity for $\alpha \neq 1$ follows directly as $L_h$ is injective away
from constants by Lemma~\ref{lem:Lh_0}). Fix $f \in E_{\lambda^{-1}\alpha}
\cap \ker L_v \cap \ker [\cdot]$. By Proposition~\ref{prop:integre_Lu}, if
$k_h$ and $k_v$ are large enough, there exists $g \in \boB^{-k_h+1, k_v}$
such that $L_v g = 0$ and $L_h g = f$. The question is whether one can take
$g \in E_\alpha$.

Consider $j$ such that $(\boT-\lambda^{-1}\alpha)^j f = 0$. We have
$(\boT-\lambda^{-1}\alpha)^j \circ L_h = \lambda^{-j} L_h  \circ
(\boT-\alpha)^j$ by Proposition~\ref{prop:action_Lv}. Therefore, $L_h(
(\boT-\alpha)^j g) = 0$, i.e., there exists a constant $c$ such that
$(\boT-\alpha)^j g = c$ by Lemma~\ref{lem:Lh_0}. If $\alpha \neq 1$, we have
then $(\boT-\alpha)^j (g - c/(1-\alpha)^j) = 0$. Therefore, $\tilde g = g -
c/(1-\alpha)^j$ satisfies $\tilde g \in E_\alpha \cap \ker L_v$ and $L_h
\tilde g = f$, as announced. If $\alpha = 1$, then $(\boT-\alpha)^{j+1} g =
(\boT - 1) c = 0$, so $g$ itself already belongs to $E_\alpha$.
\end{proof}

There are two possible spectral values, corresponding to the eigenvalues
$\lambda$ and $\lambda^{-1}$ of $T^*: H^1(M) \to H^1(M)$, i.e., to $\dd x$
and $\dd y$. They have a special status in
Theorem~\ref{thm:main_preserves_orientations}: the first one is simple and
does not interact with the rest of the spectrum, while the second one does
not belong to the Ruelle spectrum. Let us now give the specific results about
these values that we will need to classify the Ruelle spectrum.

\begin{lem}
\label{lem:E1} The generalized eigenspace $E_1$ is one-dimensional, made of
constants.
\end{lem}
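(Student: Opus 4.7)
The plan is to peel $f \in E_1$ apart using the three tools developed in Sections~\ref{sec:def_boB} and the start of Section~\ref{sec:Ruelle_spectrum}: the commutation relations of $\boT$ with $L_v, L_h$, the cohomological interpretation of $\ker L_v$, and the surjectivity Proposition~\ref{prop:L_h_surj}. Since the generalized eigenspaces are independent of the scale $\boB^{-k_h, k_v}$ (once $k_h, k_v$ are large enough for $1$ to sit beyond the essential spectral radius), I will assume $\min(k_h, k_v) \geq 3$ throughout so that all earlier results apply.

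Start from $f \in E_1$, so $(\boT - I)^J f = 0$ for some $J$. First I would show $L_v f = 0$. By Corollary~\ref{cor:Lu_Ls_Erho}, $L_v$ maps $E_1$ into $E_\lambda$. But Proposition~\ref{prop:LY} bounds the spectral radius of $\boT$ by $1$, while $\lambda > 1$; hence $E_\lambda = \{0\}$ and $L_v f = 0$. In particular the cohomology class $[f] \in H^1(M)$ from Proposition~\ref{prop:define_H1} is well defined.

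Next I would pin down this class. A direct computation using $T^*(f\dd x) = (f\circ T)\,\lambda\dd x = \lambda (\boT f)\dd x$ gives $[\boT f] = \lambda^{-1} T^*[f]$ in $H^1(M)$. Plugging this into $(\boT - I)^J f = 0$ yields $(\lambda^{-1}T^* - I)^J [f] = 0$, so $[f]$ is a generalized eigenvector of $T^*$ on $H^1(M)$ for the eigenvalue $\lambda$. By Thurston's theorem (recalled in the introduction), $\lambda$ is a simple eigenvalue of $T^*$ with eigenline spanned by the class of the horizontal foliation $[\dd x]$. Therefore $[f] = c[\dd x]$ for some scalar $c$. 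Viewing $c$ as a constant distribution in $\boB^{-k_h, k_v}$ (via Lemma~\ref{lem:boB_riche}), we have $L_v c = 0$ and $[c \cdot \dd x] = c[\dd x] = [f]$, so $f - c \in E_1 \cap \ker L_v \cap \ker[\cdot\,]$.

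Finally I would apply Proposition~\ref{prop:L_h_surj} with $\alpha = \lambda$: $L_h$ maps $E_\lambda \cap \ker L_v$ onto $E_1 \cap \ker L_v \cap \ker[\cdot\,]$. Since $E_\lambda = \{0\}$ as already noted, the target space must be zero, forcing $f = c$. This exhibits $E_1$ as exactly the line of constants. The only step that requires any subtlety is the transfer of the cohomological equation $[\boT f] = \lambda^{-1}T^*[f]$ from its pointwise form to the distributional level, which should follow because Proposition~\ref{prop:define_H1} yields a linear map $f \mapsto [f]$ on $\boB^{-k_h, k_v} \cap \ker L_v$ and $\boT$ acts continuously there; everything else is a clean assembly of the tools already established.
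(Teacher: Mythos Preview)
Your argument is correct and follows essentially the same route as the paper's proof: both show $L_v f = 0$ via $E_\lambda = \{0\}$, observe that the resulting cohomology class lies in the (one-dimensional) generalized eigenspace of $T^*$ for $\lambda$, and then use Proposition~\ref{prop:L_h_surj} with $\alpha=\lambda$ to kill the part with vanishing class. The only cosmetic difference is that the paper phrases the last two steps as ``the map $f\mapsto[f]$ is injective with one-dimensional target'', whereas you explicitly subtract the constant $c$ first; the content is identical.
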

\begin{proof}
The generalized eigenspace $E_1$ contains the constants as the function $1$
belongs to $\boB^{-k_h, k_v}$ by Lemma~\ref{lem:boB_riche}. Moreover, any
element $f$ of $E_1$ satisfies $L_v f = 0$ (as $L_v f$ belongs to
$E_{\lambda}$ by  Corollary~\ref{cor:Lu_Ls_Erho}, and this space is trivial
by Theorem~\ref{thm:rho_ess}). Therefore, there is a linear map $f \mapsto
[f]$ from $E_1$ to $H^1(M)$, taking its values in the generalized eigenspace
for the eigenvalue $\lambda$ of $T^*$. This space has dimension $1$. To
conclude, it suffices to show that this map is injective, i.e., if $f\in E_1$
satisfies $[f] = 0$ then $f$ vanishes. When $[f] = 0$,
Proposition~\ref{prop:L_h_surj} shows that $f$ can be written as $L_h g$ with
$g \in E_{\lambda}$. As this space is trivial, we get $g=0$ and then $f=0$.
\end{proof}

We have almost all the tools to show that the Ruelle spectrum of $T$ is given
exactly by the set described in
Theorem~\ref{thm:main_preserves_orientations}. More precisely, we can already
show the following partial result.

\begin{prop}
\label{prop:partiel} The Ruelle spectrum of $T$ is given exactly by the set
described in Theorem~\ref{thm:main_preserves_orientations}, i.e., it is made
of $1$ and of the numbers $\lambda^{-n} \mu$ with $n\geq 1$ and $\mu \in
\Xi$.
\end{prop}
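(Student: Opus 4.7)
The proof plan is to combine Corollary~\ref{cor:spectre_contient}, which already gives the inclusion $\{1\}\cup\{\lambda^{-n}\mu_i : n\geq 1,\ \mu_i\in\Xi\}\subseteq\operatorname{Spec}(\boT)$, with a matching upper bound: every eigenvalue $\rho$ of $\boT$ belongs to this set. Pick a nonzero $f\in E_\rho$. By Corollary~\ref{cor:iter_Lv_0}, $L_v^k f = 0$ for $k$ large; take the smallest such $k\geq 1$ and set $g_0 := L_v^{k-1}f\in E_{\rho_0}\cap\ker L_v$ with $\rho_0=\lambda^{k-1}\rho$. Thus the problem reduces to describing nonzero $L_v$-annihilated generalized eigenvectors, which is the content of the cohomological machinery of Section~\ref{subsec:cohomological}.

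Next, I would run an iterative horizontal lift. While $[g_i]=0$ in $H^1(M)$, Proposition~\ref{prop:L_h_surj} produces $g_{i+1}\in E_{\lambda\rho_i}\cap\ker L_v$ with $L_h g_{i+1}=g_i$, and necessarily $g_{i+1}\neq 0$. Since $|\rho_{i+1}|=\lambda|\rho_i|$ and the eigenspace $E_{\rho_{i+1}}$ is nonzero only for $|\rho_{i+1}|\leq 1$, this procedure must terminate at some index $m\geq 0$ where $[g_m]\neq 0$. A short computation using the definition of $[\,\cdot\,]$ and the identity $\boT L_v=\lambda L_v\boT$ shows that $[\boT g]=\lambda^{-1}T^*[g]$ on $\ker L_v$; hence $[g_m]$ is a generalized eigenvector of $T^*:H^1(M)\to H^1(M)$ with eigenvalue $\lambda\rho_m=\lambda^{k+m}\rho$. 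This eigenvalue therefore lies in $\{\lambda,\lambda^{-1}\}\cup\Xi$.

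The main obstacle is handling the two ``boundary'' eigenvalues of $T^*$ on $H^1(M)$. The case $\lambda\rho_m=\lambda^{-1}$ means $[g_m]$ is a nonzero multiple of $[\dd y]$, which is directly ruled out by Lemma~\ref{lem:Lv_preimage_1}. The case $\lambda\rho_m=\lambda$ means $\rho_m=1$, and Lemma~\ref{lem:E1} forces $g_m$ to be a nonzero constant. If $m\geq 1$, then $g_{m-1}=L_h g_m=0$, contradicting $g_{m-1}\neq 0$. So $m=0$, and $g_0=L_v^{k-1}f$ is a nonzero constant $c$; if $k\geq 2$, set $h:=L_v^{k-2}f$, so that $L_v h=c$ with $c\neq 0$. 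Taking the pairing with the constant function $1\in\check\boB$ (which belongs to every such space by Lemma~\ref{lem:boB_riche}) and applying the duality of Lemma~\ref{lem:dualite},
\begin{equation*}
c\cdot \Leb(M)=\langle L_v h, 1\rangle = -\langle h, L_v 1\rangle = 0,
\end{equation*}
contradicting $c\neq 0$. Hence $k=1$, giving $\rho=1$.

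Finally, in the remaining case $\lambda\rho_m=\mu_j\in\Xi$, one reads off $\rho=\lambda^{-(k+m)}\mu_j$ with $k+m\geq 1$, matching the claimed list. Combining the three cases completes the proof of Proposition~\ref{prop:partiel}. The delicate point is really the exclusion of spurious eigenvalues associated with $[\dd x]$ and $[\dd y]$: Lemma~\ref{lem:Lv_preimage_1} (whose proof uses the full force of the cohomological construction together with the duality) is what ensures that the eigenvalue $\lambda^{-1}$ of $T^*$ leaves no trace in the Ruelle spectrum, and the duality-based rigidity argument is what prevents the appearance of spurious eigenvalues $\lambda^{-n}$ coming from iterated vertical primitives of constants.
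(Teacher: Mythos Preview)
Your proof is correct and follows essentially the same route as the paper's: iterate $L_v$ to land in $\ker L_v$, then lift horizontally via Proposition~\ref{prop:L_h_surj} until the cohomology class is nonzero, exclude $[\dd y]$ by Lemma~\ref{lem:Lv_preimage_1}, and handle the $[\dd x]$ case via Lemma~\ref{lem:E1} together with the duality pairing against $1$. The only cosmetic differences are your indexing convention (your $k$ is the paper's $k+1$) and your slightly cleaner duality step (pairing $L_v h$ with $1$ once rather than writing $\langle L_v^k f,1\rangle$); the remark that $[\boT g]=\lambda^{-1}T^*[g]$ is stated without proof in the paper but is exactly what underlies its assertion that $[f_{k+n}]$ lies in the generalized $\lambda^{k+n+1}\alpha$-eigenspace of $T^*$.
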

\begin{proof}
On the one hand, $1$ belongs to the spectrum by Lemma~\ref{lem:E1}. On the
other hand, for $\mu \in \Xi$ and $n\geq 1$, then $E_{\lambda^{-n} \mu}$ is
nontrivial by Corollary~\ref{cor:spectre_contient}. This shows one inclusion
in the proposition.

For the converse, consider $\alpha\neq 0$ such that $E_\alpha$ is nontrivial,
and take a nonzero $f \in E_\alpha$. Let $k\geq 0$ be the integer such that
$L_v^k f \neq 0$ and $L_v^{k+1}f=0$. It exists by
Corollary~\ref{cor:iter_Lv_0}. The function $f_k = L_v^k f$ belongs to
$E_{\lambda^k \alpha}$ by Corollary~\ref{cor:iter_Lv_0}, and to $\ker L_v$ by
construction. If $[f_k] = 0$, Proposition~\ref{prop:L_h_surj} shows that
there exists $f_{k+1} \in E_{\lambda^{k+1}\alpha} \cap \ker L_v$ with $L_h
f_{k+1} = f_k$. If $[f_{k+1}] = 0$, we can iterate the same process. It has
to stop at some point as $E_{\lambda^{k+n} \alpha}$ is trivial for $n$ large.
Therefore, we get an integer $n$ and a distribution $f_{k+n} \in
E_{\lambda^{k+n} \alpha} \cap \ker L_v$ with $L_h^n f_{k+n} = f_k$ and
$[f_{k+n}] \neq 0$. The cohomology class $[f_{k+n}]$ belongs to the
generalized eigenspace for $T^*: H^1(M) \to H^1(M)$ for the eigenvalue
$\alpha' = \lambda^{k+n+1} \alpha$. We have $\alpha' \neq \lambda^{-1}$,
since otherwise the corresponding cohomology class would be a nonzero
multiple of $[\dd y]$, contradicting Lemma~\ref{lem:Lv_preimage_1}. Hence,
$\alpha' \in \Xi$ or $\alpha' = \lambda$. If $\alpha' \in \Xi$, we have
written $\alpha$ as $\lambda^{-p} \alpha'$ with $p\geq 1$, in accordance with
the claim of the proposition. If $\alpha' = \lambda$, then $f_{k+n} \in E_1$.
By Lemma~\ref{lem:E1}, $f_{k+n}$ is constant. As $L_h^n f_{k+n} = f_k \neq
0$, we deduce $n = 0$. Then $L_v^k f = f_k$ is a nonzero constant $c$. Using
the duality formula from Lemma~\ref{lem:dualite}, we get
\begin{equation*}
  c \Leb M = \langle f_k, 1 \rangle = \langle L_v^k f, 1 \rangle
  = - \langle f, L_v^k 1 \rangle.
\end{equation*}
If $k$ were nonzero, then $L_v^k 1$ would vanish and we would get a
contradiction. Therefore, $k=0$. Finally, $\alpha=1$, again in accordance
with the claim.
\end{proof}

The conclusion of the proof of Theorem~\ref{thm:main_preserves_orientations}
relies on the following statement.
\begin{thm}
\label{thm:Lv_integre} Let $\alpha \notin  \{0,1\}$. Then $L_v :
E_{\lambda^{-1}\alpha} \to E_\alpha$ is onto.
\end{thm}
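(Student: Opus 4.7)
I would prove the theorem by a descending induction on $|\alpha|$, combined with an explicit spectral construction in a single irreducible ``base'' case. The main non-trivial ingredient is a tower of reductions that peels off the easy pieces using already-established results (Proposition~\ref{prop:L_h_surj}, Lemma~\ref{lem:Lv_preimage_1}, the commutation $L_v\boT = \lambda^{-1}\boT L_v$), leaving a single case in which $f$ is vertically invariant with nonzero cohomology class, which forces $\alpha = \lambda^{-1}\mu_i$ for some $\mu_i \in \Xi$.

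\textbf{Step 1 (set-up).} By Proposition~\ref{prop:partiel} and Corollary~\ref{cor:iter_Lv_0}, $E_\alpha$ is trivial unless $\alpha$ lies in the Ruelle spectrum, so it suffices to treat $\alpha = \lambda^{-n}\mu_i$ for $n \geq 1$. Order these eigenvalues by decreasing modulus and assume by induction that the surjectivity of $L_v : E_{\lambda^{-1}\alpha'}\to E_{\alpha'}$ has been established for every $\alpha'$ in the Ruelle spectrum with $|\alpha'|>|\alpha|$.

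\textbf{Step 2 (inductive reduction).} Given $f \in E_\alpha$, two reductions apply. First, if $L_v f\neq 0$, then $L_v f \in E_{\lambda \alpha}$ and $|\lambda\alpha|>|\alpha|$; the inductive hypothesis produces $g_1 \in E_\alpha$ with $L_v g_1 = L_v f$, so $f - g_1 \in E_\alpha \cap \ker L_v$ and it suffices to treat $f$ in $\ker L_v$. Second, if $f \in E_\alpha \cap \ker L_v$ and $[f]=0$ in $H^1(M)$, then Proposition~\ref{prop:L_h_surj} gives $f = L_h f''$ with $f''\in E_{\lambda\alpha}\cap \ker L_v$. By induction there exists $g''\in E_\alpha$ with $L_v g'' = f''$; since $L_h$ and $L_v$ commute, $g := L_h g'' \in E_{\lambda^{-1}\alpha}$ satisfies $L_v g = L_h f'' = f$.

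\textbf{Step 3 (irreducible case).} After these reductions only $f \in E_\alpha \cap \ker L_v$ with $[f]\neq 0$ remains. The class $[f]$ is an eigenvector of $T^*$ on $H^1(M)$ for the eigenvalue $\lambda\alpha$, and Lemma~\ref{lem:Lv_preimage_1} rules out $\lambda\alpha = \lambda^{-1}$. Because $\alpha \neq 1$, this forces $\lambda\alpha \in \Xi$, i.e.\ $\alpha = \lambda^{-1}\mu_i$, and we must construct $g \in E_{\lambda^{-2}\mu_i}$ with $L_v g = f$.

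\textbf{Step 4 (construction).} Here I would mimic the proof of Theorem~\ref{thm:realise_H1} with a shifted normalization. Pick a closed representative $\omega = \omega_x\dd x + \omega_y \dd y \in C^\infty_c(M-\Sigma)$ of $[f]$ and, using that $\omega$ is closed together with $[f]\neq 0$, produce a distribution $\Omega$ with $L_v \Omega = \omega_x$ living in some $\boB^{-k_h,k_v}$; then set
\begin{equation*}
  g \ =\ \lim_{n\to\infty}\ (\lambda^{-2}\mu_i)^{-n}\,\mathbf{P}_{\lambda^{-2}\mu_i}\,\boT^n \Omega,
\end{equation*}
where $\mathbf{P}_{\lambda^{-2}\mu_i}$ is the spectral projector onto $E_{\lambda^{-2}\mu_i}$, which exists because this eigenvalue is isolated (Theorem~\ref{thm:rho_ess}). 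Using $L_v \boT^n = \lambda^{-n}\boT^n L_v$ and the commutation of $L_v$ with $\mathbf{P}_{\lambda^{-2}\mu_i}$ (which intertwines spectral projectors shifted by $\lambda$), the definition of $f$ from Theorem~\ref{thm:realise_H1} yields
\begin{equation*}
  L_v g \ =\ \lim_{n\to\infty}\ (\lambda^{-1}\mu_i)^{-n}\,\mathbf{P}_{\lambda^{-1}\mu_i}\,\boT^n\omega_x \ =\ f,
\end{equation*}
and by construction $g \in E_{\lambda^{-1}\alpha}$.

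\textbf{Main obstacle.} The only genuinely delicate step is the construction of the auxiliary distribution $\Omega$ with $L_v \Omega = \omega_x$ and appropriate control in the anisotropic norm, together with the verification that the limit defining $g$ converges in $\boB^{-k_h,k_v}$. The freedom to modify $\omega$ by an exact form and $\Omega$ by anything in $\ker L_v$ (which does not affect the output of the spectral projector onto an eigenspace transverse to $\ker L_v$) should be enough to absorb the potential obstructions, but the control of the limit needs the Lasota--Yorke estimate of Proposition~\ref{prop:LY} and the stability of the spectral projector in the scale of spaces.
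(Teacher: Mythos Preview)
There are two genuine gaps in your plan.

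\textbf{Step 2, first reduction, does not reduce anything.} You produce $g_1 \in E_\alpha$ with $L_v g_1 = L_v f$, but then you need an $L_v$-preimage not only of $f-g_1 \in \ker L_v$ but also of $g_1$ itself---and $g_1$ has the same $L_v$-nilpotency index as $f$ (since $L_v^j g_1 = L_v^j f$ for all $j\geq 1$). In fact your use of the inductive hypothesis here is vacuous: it says $L_v f$ lies in the image of $L_v : E_\alpha \to E_{\lambda\alpha}$, which is tautological (take $g_1=f$). So even granting Steps~3--4, you would only have covered $E_\alpha\cap\ker L_v$, not all of $E_\alpha$, and no induction on nilpotency index or on $|\alpha|$ closes this gap.

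\textbf{Step 4 is essentially circular.} Producing $\Omega \in \boB^{-k_h,k_v}$ with $L_v\Omega = \omega_x$ is exactly the vertical cohomological equation, whose obstructions are the vertically invariant distributions in $\bigcup_\beta E_\beta\cap\ker L_v$; the paper solves this equation only in Section~\ref{sec:cohomological}, \emph{using} the spectral classification that depends on the present theorem. Your proposed fix---modifying $\omega$ by $d\psi$, hence $\omega_x$ by $L_h\psi$---changes $\langle\eta,\omega_x\rangle$ by $-\langle L_h\eta,\psi\rangle$, which does not obviously kill all obstructions, and in any case the existence and norm control of $\Omega$ is precisely the hard analytic content you have not supplied. (Incidentally, $E_{\lambda^{-2}\mu_i}$ is \emph{not} transverse to $\ker L_v$: it contains $L_h(E_{\lambda^{-1}\mu_i})\subset\ker L_v$, so modifying $\Omega$ by elements of $\ker L_v$ \emph{does} affect the spectral projection.) The paper explicitly flags the direct-construction approach as problematic in the discussion preceding the proof.

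The paper's actual proof sidesteps both issues via duality: surjectivity of $L_v : E_{\lambda^{-1}\alpha}\to E_\alpha$ is equivalent to injectivity of its adjoint, which under the perfect pairing of Lemma~\ref{lem:dualite_Ealpha} is $-L_v : \check E_\alpha \to \check E_{\lambda^{-1}\alpha}$. On the ``checked'' spaces the roles of horizontal and vertical are swapped, so $L_v$ has only constants in its kernel by the analogue of Lemma~\ref{lem:Lh_0}, and constants lie in $\check E_1$. This is a two-line argument once the duality machinery (Proposition~\ref{prop:dualite}, Lemma~\ref{lem:dualite_Ealpha}) is in place, and is the reason that machinery was developed.
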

Before proving the theorem, let us show how we can conclude the proof of
Theorem~\ref{thm:main_preserves_orientations}.

\begin{proof}[Proof of Theorem~\ref{thm:main_preserves_orientations} using
Theorem~\ref{thm:Lv_integre}] To simplify the notations, we will assume that
for $\mu \in \Xi$ then $\lambda^{-1} \mu \notin \Xi$ (otherwise, there is a
superposition phenomenon as explained after the statement of
Theorem~\ref{thm:main_preserves_orientations}, which makes things more
complicated to write but does not change anything to the proof).

In Proposition~\ref{prop:partiel}, we have described exactly the spectrum of
$T$, and moreover we have shown how the generalized eigenspaces were
constructed. On the one hand, there is the space $E_1$, which is
one-dimensional by Lemma~\ref{lem:E1}. On the other hand, for $\mu \in \Xi$,
the space $E_{\lambda^{-1}\mu}$ is in bijection with the generalized
eigenspace for the action of $T^*$ on $H^1(M)$ and the eigenvalue $\mu$, with
dimension $d_\mu$.

Finally, $E_{\lambda^{-(n+1)} \mu}$ is made of elements sent by $L_v$ to
$E_{\lambda^{-n} \mu}$, and of elements in $E_{\lambda^{-(n+1)} \mu} \cap
\ker L_v$. Proposition~\ref{prop:L_h_surj} shows that $L_h$ is a bijection
between $E_{\lambda^{-n} \mu} \cap \ker L_v$ and $E_{\lambda^{-(n+1)} \mu}
\cap \ker L_v$ (as, on the second space, the condition $[f] = 0$ is always
satisfied thanks to our non-superposition assumption). Therefore, by
induction, all these spaces have dimension $d_\mu$. As $L_v :
E_{\lambda^{-(n+1)} \mu} \to E_{\lambda^{-n} \mu}$ is onto by
Theorem~\ref{thm:Lv_integre}, we get
\begin{equation*}
  \dim E_{\lambda^{-(n+1)} \mu} =
  \dim E_{\lambda^{-n} \mu} + \dim E_{\lambda^{-(n+1)} \mu} \cap \ker L_v
  = \dim E_{\lambda^{-n} \mu} + d_\mu.
\end{equation*}
By induction, we obtain $\dim E_{\lambda^{-n} \mu} = n d_\mu$. In fact, we
have even proved the flag decomposition expressed
in~\eqref{eq:flag_decomposition}.
\end{proof}

We recall that $L_v$ sends $E_{\lambda^{-1}\alpha}$ to $E_\alpha$ by
Corollary~\ref{cor:Lu_Ls_Erho}. To prove Theorem~\ref{thm:Lv_integre}, the
most natural approach would be to start from an element of $E_\alpha$ with
$\alpha\notin\{0,1\}$ and to construct a preimage under $L_v$, by integrating
along vertical lines as we did in the proof of
Proposition~\ref{prop:integre_Lu}. But we have no cohomological condition to
use, and moreover we only have a distributional object for which the meaning
of vertical integration is not clear. If one thinks about it, the result of
the theorem is even counterintuitive.

Let us try to prove the opposite of Theorem~\ref{thm:Lv_integre}, to see the
subtlety. Assume for instance that $f \in E_\alpha$ is nonzero and satisfies
$L_v f = 0$, and that we can find a vertical primitive $g$ of $f$, i.e., one
has $L_v g = f$. Let us try to prove that $f=0$. We should not succeed (this
would be a contradiction with Theorem~\ref{thm:Lv_integre}), but we will see
that there is a strong nonrigorous argument in favor of the equality $f=0$.
Consider an embedded rectangle with horizontal sides $I_0$ and $I_R$ and very
long vertical sides of length $R$. Fix a smooth compactly supported function
$\phi$ on $I_0$, and push it vertically to $I_R$. We should have $\int_{I_R}
\phi g \dd x - \int_{I_0} \phi g \dd x = R \int_{I_0} \phi f \dd x$. As the
left hand side is bounded, we obtain
\begin{equation*}
  \int_{I_0} \phi f \dd x = O( \norm{\phi}_{C^{k_h}} / R).
\end{equation*}
Letting $R$ tend to infinity, we can almost deduce that $f$ vanishes, except
that this argument is not correct as one can not take $R$ arbitrarily large
because of the singularities. If one tries to cut $I_0$ into smaller pieces
for which one can increase $R$, then we will use a partition of unity with a
large $C^{k_h}$ norm, so that we will improve the bound at the level of
$1/R$, but lose at the level of $\norm{\phi}_{C^{k_h}}$. Therefore, we can
not prove in this way that $f$ vanishes, so there is hope that
Theorem~\ref{thm:Lv_integre} is true. But this shows that this theorem is
non-trivial, and follows from a subtle balance.

The proof we will give of Theorem~\ref{thm:Lv_integre} will not follow the
constructive approach we sketched above. Instead, it will follow from an
indirect duality argument: we will show that the adjoint of $L_v$ is
injective. To do this, let us define the operator $\check \boT$ which extends
to $\check \boB^{\check k_h, -\check k_v}$ the operator $f \mapsto f \circ
T^{-1}$ initially defined on $C^\infty_c(M-\Sigma)$. As $T^{-1}$ is a
pseudo-Anosov map, all the results of the previous paragraphs apply to
$\check\boT$. In particular, one can talk about its Ruelle spectrum. We will
write $\check E_\alpha$ for the generalized eigenspace of $\check\boT$
associated to the eigenvalue $\alpha$, on any space $\check\boB^{\check k_h,
- \check k_v}$ with $\abs{\alpha} > \lambda^{-\min(\check k_h, \check k_v)}$.

From this point on, we will only consider non-negative integers $k_h$, $k_v$, $\check k_h$
and $\check k_v$ that satisfy the conditions of the duality
Proposition~\ref{prop:dualite}, i.e., $-k_h+\check k_h \geq 2$ and $k_v -
\check k_v \geq 0$ (or conversely). If we are dealing with an eigenvalue
$\alpha$, we will moreover choose them with $\abs{\alpha} >
\lambda^{-\min(k_h, k_v)}$ and $\abs{\alpha} > \lambda^{-\min(\check k_h,
\check k_v)}$ to ensure that the corresponding generalized eigenspaces for
$\boT$ and $\check\boT$ are included respectively in $\boB^{-k_h, k_v}$ and
$\boB^{\check k_h, -\check k_v}$. This implies in particular that the duality
is well defined on $E_\alpha \times \check E_{\alpha'}$ for all $\alpha,
\alpha' \neq 0$.

In addition to the duality formulas for $L_h$ and $L_v$ given in
Lemma~\ref{lem:dualite}, we will also use the following one: For $f \in
\boB^{-k_h, k_v}$ and $g \in \check \boB^{\check k_h, -\check k_v}$,
\begin{equation}
\label{eq:dualite2}
  \langle \boT f, g \rangle = \langle f, \check \boT g \rangle.
\end{equation}
It follows readily from the definitions and the fact that $T$ preserves
Lebesgue measure.

\begin{lem}
\label{lem:dualite_Ealpha} We have $\langle f, g\rangle = 0$ for $f\in
E_\alpha$ and $g\in \check E_{\alpha'}$ with $\alpha \neq \alpha'$. Moreover,
$(f,g)\mapsto \langle f, g\rangle$ is a perfect duality on $E_\alpha \times
\check E_\alpha$, i.e., it identifies $E_\alpha$ with the dual of $\check
E_\alpha$, and conversely.
\end{lem}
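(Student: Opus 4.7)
The core identity is the formula $\langle \boT f, g \rangle = \langle f, \check\boT g\rangle$ from~\eqref{eq:dualite2}. By iterating it I obtain, for every polynomial $P$, the identity $\langle P(\boT)f, g\rangle = \langle f, P(\check\boT)g\rangle$ on the spaces in which the duality of Proposition~\ref{prop:dualite} is valid.

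For the orthogonality statement, I take $f\in E_\alpha$ and $g\in\check E_{\alpha'}$ with $\alpha\neq\alpha'$, and pick $M$ large enough that $(\check\boT-\alpha')^M g=0$. Since the spectrum of $\boT$ restricted to the finite-dimensional generalized eigenspace $E_\alpha$ is $\{\alpha\}$, the operator $\boT-\alpha' I$ is invertible on $E_\alpha$, hence so is $(\boT-\alpha')^M$. Writing $f=(\boT-\alpha')^M h$ for some $h\in E_\alpha$, I conclude
\begin{equation*}
  \langle f, g\rangle = \langle (\boT-\alpha')^M h, g\rangle = \langle h, (\check\boT-\alpha')^M g\rangle = 0.
\end{equation*}

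For the non-degeneracy of the pairing on $E_\alpha\times \check E_\alpha$, my plan is to use the spectral (Riesz) projectors. Let $\Pi_\alpha$ be the projector onto $E_\alpha$ associated to $\boT$, and $\check\Pi_\alpha$ the analogous projector for $\check\boT$. Both are defined by a contour integral of the resolvent, and the duality identity extends from polynomials to resolvents (as $\langle (z-\boT)^{-1}f,g\rangle = \langle f,(z-\check\boT)^{-1}g\rangle$ whenever $z$ lies in the common resolvent set). Integrating around $\alpha$, I obtain $\langle \Pi_\alpha f, g\rangle = \langle f, \check\Pi_\alpha g\rangle$ for all admissible $f,g$.

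Now suppose $f\in E_\alpha$ satisfies $\langle f, g\rangle=0$ for every $g\in \check E_\alpha$. For an arbitrary $g\in \check\boB^{\check k_h,-\check k_v}$ the projection $\check\Pi_\alpha g$ lies in $\check E_\alpha$, so
\begin{equation*}
  \langle f, g\rangle = \langle \Pi_\alpha f, g\rangle = \langle f, \check\Pi_\alpha g\rangle = 0,
\end{equation*}
using $\Pi_\alpha f = f$. In particular $\langle f, g\rangle=0$ for every $g\in C^\infty_c(M-\Sigma)$. Since the pairing on such $g$ is exactly $\langle i(f), g\rangle$ in the sense of Proposition~\ref{prop:distrib}, this forces $i(f)=0$ and hence $f=0$. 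The symmetric argument, exchanging the roles of $\boT$ and $\check\boT$, gives the analogous injectivity on the other side. Combined with the orthogonality of distinct generalized eigenspaces and the finite-dimensionality of $E_\alpha$ and $\check E_\alpha$, this yields the perfect duality. The only delicate point I anticipate is justifying the extension of the duality identity to the resolvents and the contour integral defining $\Pi_\alpha$, but this is a standard consequence of the boundedness of $\boT$ and $\check\boT$ together with Proposition~\ref{prop:dualite}.
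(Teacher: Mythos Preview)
Your proof is correct, and both halves take a somewhat different route from the paper's.

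For the orthogonality, the paper expands $\boT^n f = \sum_j \alpha^n n^j f_j$ and $\check\boT^n g = \sum_j (\alpha')^n n^j g_j$, applies the duality $\langle \boT^n f, g\rangle = \langle f, \check\boT^n g\rangle$, and identifies the resulting asymptotic expansions in $n$ to conclude that each $\langle f_j, g\rangle$ vanishes. Your argument via the invertibility of $(\boT-\alpha')^M$ on $E_\alpha$ is the cleaner finite-dimensional linear algebra version of the same idea and avoids any asymptotic bookkeeping.

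For the non-degeneracy, the paper argues constructively: given a nonzero $f\in E_\alpha$, it picks $h\in C^\infty_c(M-\Sigma)$ with $\langle f,h\rangle\neq 0$, writes the spectral expansion of $\check\boT^n h$, and matches asymptotics against those of $\boT^n f$ to exhibit a specific $g\in\check E_\alpha$ with $\langle f,g\rangle\neq 0$. Your Riesz-projector argument is the abstract counterpart: the identity $\langle \Pi_\alpha f, g\rangle = \langle f, \check\Pi_\alpha g\rangle$ packages the same information, and reducing to $\langle f,g\rangle=0$ for all smooth $g$ and then invoking Proposition~\ref{prop:distrib} is a nice way to close. The justification you flag as delicate is indeed routine: the resolvent identity holds for $\abs{z}>1$ by the Neumann series and~\eqref{eq:dualite2}, and since both spectra are discrete in $\{\abs{z}>\lambda^{-\min(k_h,k_v,\check k_h,\check k_v)}\}$ the common resolvent set there is connected, so analytic continuation carries the identity to a small circle around $\alpha$. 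Your approach is more functional-analytic in flavor; the paper's is more hands-on. Either way the content is the same.
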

\begin{proof}
Take $f\in E_\alpha$. Then $\boT^n f = \sum_{j\leq J} \alpha^n n^j f_j$ for
some $f_j \in E_\alpha$, with $f_0 = f$. In the same way, for $g \in \check
E_{\alpha'}$, we have $\check \boT^n g = \sum_{j\leq J} (\alpha')^n n^j g_j$
for some $g_j \in \check E_{\alpha'}$ with $g_0 = g$. Using the
duality~\eqref{eq:dualite2}, we obtain for all $n$
\begin{equation*}
  \sum \alpha^n n^j \langle f_j, g \rangle = \langle \boT^n f, g\rangle = \langle f, \check \boT^n g \rangle
  = \sum (\alpha')^n n^j \langle f, g_j \rangle.
\end{equation*}
When $\alpha \neq \alpha'$, one gets by identifying the asymptotics that
$\langle f_j, g \rangle = 0$ for all $j$. In particular, for $j=0$, this
gives $\langle f, g \rangle = 0$ and shows that $E_\alpha$ and $\check
E_{\alpha'}$ are orthogonal.

To prove that there is a perfect duality between $E_\alpha$ and $\check
E_\alpha$, we have to show that the duality is nondegenerate: for any $f \in
E_\alpha$, we have to find $g \in \check E_\alpha$ with $\langle f, g \rangle
\neq 0$ (and conversely, but the argument is the same). As $f$ is a
distribution, there exists a function $h \in C^\infty_c(M-\Sigma)$ with
$\langle f, h\rangle \neq 0$. We think of $h$ as an element of
$\check\boB^{\check k_h, -\check k_v}$, and we write its spectral
decomposition for $\check \boT$: we have $\check\boT^n h = \sum_{i,j}
\alpha_i^n n^j h_{i,j} + O(\epsilon^n)$ where $\epsilon<\abs{\alpha}$ and
$h_{i,j} \in \check E_{\alpha_i}$. As above, using~\eqref{eq:dualite2}, we
find
\begin{equation*}
  \sum \alpha^n n^j \langle f_j, h \rangle = \langle \boT^n f, h\rangle = \langle f, \check \boT^n h \rangle
  = \sum_{i,j}  \alpha_i^n n^j \langle f, h_{i,j}\rangle + O(\epsilon^n).
\end{equation*}
In the sum on the left, there is the term $\alpha^n \langle f_0, h\rangle$
with $\langle f_0,h \rangle = \langle f, h\rangle \neq 0$. Therefore, there
also has to be a term in $\alpha^n$ on the right hand side. This entails that
one of the $\alpha_i$ equals $\alpha$, and the corresponding function $g =
h_{i,0}$ belongs to $\check E_\alpha$ and satisfies $\langle f, g\rangle \neq
0$, as desired.
\end{proof}

\begin{proof}[Proof of Theorem~\ref{thm:Lv_integre}]
Let $\alpha \notin \{0, 1\}$. We want to show that $L_v : E_{\lambda^{-1}
\alpha} \to E_\alpha$ is onto. Equivalently, we want to show that its
adjoint, from $E_\alpha^*$ to $E_{\lambda^{-1} \alpha}^*$, is injective.
These spaces are identified respectively with $\check E_\alpha$ and $\check
E_{\lambda^{-1} \alpha}$ by the duality of Lemma~\ref{lem:dualite_Ealpha},
and the adjoint of $L_v$ is $-L_v$ by~\eqref{eq:dualite}. Hence, it is enough
to show that $L_v : \check E_\alpha \to \check E_{\lambda^{-1} \alpha}$ is
injective. This follows from Lemma~\ref{lem:Lh_0} (we recall that $L_v$ plays
in $\check \boB$ the same role as $L_h$ in $\boB$).
\end{proof}

\section{Vertically invariant distributions}
\label{sec:vertically_invariant}

Let $(M,\Sigma)$ be a translation surface, and $T$ a linear pseudo-Anosov map
on $(M,\Sigma)$, preserving orientations.
Theorem~\ref{thm:main_preserves_orientations} and its proof give a whole set
of distributions which are annihilated by $L_v$. Indeed, this is the case of
the constant distribution, of the distributions in $E_{\lambda^{-1}\mu_i}
\cap \ker L_v$, and of their images under $L_h^n$. These are the only
distributions in $\boB^{-k_h, k_v}$ which are vertically invariant:

\begin{lem}
\label{lem:Lv_invariant_boB} Any distribution in $\boB^{-k_h, k_v} \cap \ker
L_v$ belongs to the linear span of the constant distributions and of the
spaces $L_h^n(E_{\lambda^{-1}\mu_i} \cap \ker L_v)$ for $i=1,\dotsc, 2g-2$
and $n\geq 0$.
\end{lem}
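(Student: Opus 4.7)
The plan is to combine the spectral decomposition of $\boT$ with the cohomological framework of Paragraph~\ref{subsec:cohomological}. Since $L_v f = 0$ makes $\|f\|_{-k_h, k_v'} = \|f\|_{-k_h, k_v}$ for every $k_v'\geq k_v$ (only the $j=0$ term contributes in~\eqref{eq:define_norm}), I may take $k_v$ as large as needed. For $K\leq \min(k_h, k_v)$, Theorem~\ref{thm:rho_ess} and Proposition~\ref{prop:partiel} yield a spectral decomposition
\begin{equation*}
f = \sum_{\alpha\in \Lambda_K} P_\alpha f + R_K,
\end{equation*}
where $\Lambda_K$ is the finite set of Ruelle eigenvalues with $|\alpha|>\lambda^{-K}$, $P_\alpha$ is the spectral projector onto $E_\alpha$, and $\|\boT^n R_K\|_{-k_h, k_v}\leq C_\delta(\lambda^{-K}+\delta)^n$ for every $\delta>0$. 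The commutation $\boT L_v = \lambda L_v\boT$ from Proposition~\ref{prop:action_Lv} gives, via the resolvent formula, the intertwining $L_v P_\alpha = P_{\lambda\alpha}L_v$; so $L_v f = 0$ forces $L_v P_\alpha f = 0$ for every $\alpha$ and $L_v R_K = 0$.

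For each $\alpha \in \Lambda_K$, the piece $P_\alpha f$ belongs to $E_\alpha\cap \ker L_v$, and so it is already in the claimed span: Lemma~\ref{lem:E1} identifies $E_1 \cap \ker L_v = E_1 = \R\cdot 1$, while the flag decomposition~\eqref{eq:flag_decomposition} identifies $E_{\lambda^{-n-1}\mu_i}\cap \ker L_v$ with $L_h^n(E_{\lambda^{-1}\mu_i}\cap \ker L_v)$ for every $n\geq 0$ and every $i$.

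The main obstacle is showing $R_K = 0$ for $K$ large enough. My plan: since $R_K\in \ker L_v$, the class $[R_K]\in H^1(M)$ is well-defined by Proposition~\ref{prop:define_H1}, and the pull-back identity $T^*(R\,dx) = \lambda\cdot(\boT R)\,dx$ in translation charts yields $[\boT^n R_K] = \lambda^{-n}(T^*)^n[R_K]$ in $H^1(M)$. Combined with the continuity of the class map and the spectral decay of $\|\boT^n R_K\|$, this gives $\|(T^*)^n[R_K]\|_{H^1}\leq C(\lambda^{1-K}+\delta)^n$. Since every eigenvalue of $T^*$ acting on $H^1(M)$ has modulus at least $\lambda^{-1}$, taking $K\geq 3$ forces $[R_K] = 0$. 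Proposition~\ref{prop:integre_Lu} then provides $R_K = L_h g_1$ with $g_1\in \boB^{-k_h+1, k_v}\cap \ker L_v$; applying the same spectral analysis to $g_1$ and noting that $L_h P_\alpha g_1 = P_{\lambda^{-1}\alpha}R_K = 0$ whenever $|\alpha|>\lambda^{-K+1}$, combined with Lemma~\ref{lem:Lh_0}, shows that $g_1$ is a constant plus a residual term of the same type. Iterating yields $R_K = L_h^N g_N$ for every $N\leq K-2$, with $g_N\in\boB^{-k_h+N, k_v}\cap\ker L_v$.

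To close the argument and extract $R_K = 0$, I invoke the duality of Proposition~\ref{prop:dualite} together with Lemma~\ref{lem:dualite_Ealpha}: because the Ruelle spectra of $\boT$ and $\check\boT$ coincide (via the symplectic structure on $H^1(M)$ that pairs $\mu_i$ with $\mu_i^{-1}$), there is a perfect duality $E_\alpha\times \check E_\alpha$ for every Ruelle eigenvalue $\alpha$. For $g\in \check E_\alpha$ with $|\alpha|>\lambda^{-K}$, the orthogonality relations of Lemma~\ref{lem:dualite_Ealpha} give $\langle R_K, g\rangle = \langle f, g\rangle - \langle P_\alpha f, g\rangle = 0$; letting $K$ grow exhausts all $\check E_\alpha$. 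Pairing $R_K$ with an arbitrary $\phi\in C^\infty_c(M-\Sigma)$ via its dual spectral decomposition, and using $\langle \boT^n R_K, \cdot\rangle = \langle R_K, \check\boT^n\cdot\rangle$ together with the exponential decay of $\boT^n R_K$ to annihilate the residual dual pairing, gives $\langle R_K, \phi\rangle = 0$ for all test functions, hence $R_K = 0$.
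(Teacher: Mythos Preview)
Your closing duality argument has a genuine gap. You correctly show $\langle R_K, g\rangle = 0$ for $g \in \check E_\alpha$ with $|\alpha| > \lambda^{-K}$ (via $|\alpha|^n \langle R_K, g\rangle = \langle \boT^n R_K, g\rangle \to 0$). But to get $\langle R_K, \phi\rangle = 0$ for arbitrary $\phi \in C^\infty_c(M-\Sigma)$ you still need to kill the dual remainder $\check R_{K'}$, and the sentence ``exponential decay of $\boT^n R_K$ to annihilate the residual dual pairing'' does not do this: the identity $\langle \boT^n R_K, \check R_{K'}\rangle = \langle R_K, \check\boT^n \check R_{K'}\rangle$ shows that \emph{this} quantity tends to $0$, not that $\langle R_K, \check R_{K'}\rangle$ vanishes. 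The eigenspaces $\bigoplus_\alpha \check E_\alpha$ are not total in $\check\boB^{\check k_h, -\check k_v}$ (there is essential spectrum), so orthogonality to them is not enough. Relatedly, ``letting $K$ grow'' is impossible: even granting your claim that $k_v$ may be taken large (which itself needs justification, since equality of the norms in~\eqref{eq:define_norm} does not by itself put $f$ in the \emph{closure} of $C^\infty_c$ for the stronger norm), the essential spectral radius is $\lambda^{-\min(k_h,k_v)} = \lambda^{-k_h}$, so $K$ is capped by $k_h$.

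Your iteration $R_K = L_h g_1 = L_h^2 g_2 = \dotsb$ is in fact the right mechanism, and it is exactly what the paper does---but the paper organises it as an induction on the order of $\omega$ and supplies the missing base case. At each step one subtracts from $\omega$ an element of the span so that the class in $H^1(M)$ vanishes (using Lemma~\ref{lem:Lv_preimage_1} to exclude $[\dd y]$), then applies Proposition~\ref{prop:integre_Lu} to write the difference as $L_h\eta$ with $\eta \in \boB^{-k_h+1, k_v}\cap \ker L_v$, of strictly lower order. The induction terminates at order $0$: there $\eta$ is a genuine continuous function (being in $\boB^{0,k_v}$), vertically invariant, hence constant by minimality of the vertical flow. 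That dynamical base case is precisely what replaces your unclosed duality step; no spectral decomposition of $f$ is needed at all.
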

\begin{proof}
This follows from the same inductive strategy used to classify Ruelle
resonances. We show that any $\omega \in \boB^{-k_h, k_v} \cap \ker L_v$
belongs to the space $F$ spanned by the constant distributions and the spaces
$L_h^n(E_{\lambda^{-1}\mu_i} \cap \ker L_v)$ for $i=1,\dotsc, 2g-2$ and
$n\geq 0$, by induction on the order of $\omega$.

The constant distributions and the distributions in $E_{\lambda^{-1}\mu_i}
\cap \ker L_v$ have cohomology classes which span all the classes without any
$[\dd y]$ components, i.e., the orthogonal to $[\dd x]$. Therefore, there
exists $\tilde \omega$ in $F$ such that $[\omega - \tilde \omega]$ is a
multiple of $[\dd y]$. By Lemma~\ref{lem:Lv_preimage_1}, we have in fact
$[\omega - \tilde \omega] = 0$. Therefore, by
Proposition~\ref{prop:integre_Lu}, there exists $\eta \in \boB^{-k_h+1, k_v}
\cap \ker L_v$ (and therefore in $\boB^{-k_h, k_v} \cap \ker L_v$) such that
$\omega-\tilde \omega = L_h \eta$. The order of $\eta$ being strictly smaller
than the order of $\omega$, the induction assumption ensures that $\eta \in
F$. As $F$ is stable under $L_h$, we get $\omega = \tilde \omega + L_h \eta
\in F$.

We should also check the initial step of the induction, when $\omega$ is of
order $0$. With the same construction as above, $\eta$ is a continuous
function. As it is vertically invariant, we deduce that it is constant by
minimality of the vertical flow. In particular, it belongs to $F$, and so
does $\omega$.
\end{proof}

However, there are some distributions that are not seen with this point of
view, as they are not in the closure of $C^\infty_c(M-\Sigma)$. To describe
them, we will follow the same route as above, but replacing our Banach space
$\boB^{-k_h, k_v}$ by an extended space $\boB_{ext}^{-k_h, k_v}$.

We define an element $\omega$ of $\boB_{ext}^{-k_h, k_v}$ to be a family of
distributions $\omega_I$ of order at most $k_h$ on all horizontal segments
$I$ in $\boI^h$, with the following conditions:
\begin{enumerate}
\item Compatibility: if two segments $I, I'\in \boI^h$ intersect, then the
    corresponding distributions coincide on functions supported in $I\cap
    I'$.
\item Smoothness in the vertical direction: for any interval $I \in \boI$,
    and any test function $\phi \in C^{k_h}_c(I)$ with norm at most $1$,
    denote by $I_t$ the vertical translation by $t$ of $I$ for small enough
    $t$, and by $\phi_t$ the vertical push-forward of $\phi$ on $I_t$. Then
    we require that $t\mapsto \int_{I_t} \phi_t \omega_{I_t}$ is $C^{k_v}$,
    with all derivatives bounded by a constant $C$ independent of $I$ or
    $\phi$. The best such $C$ is by definition the norm of $\omega$ in
    $\boB_{ext}^{-k_h, k_v}$.
\item Extension to the singularity: if $(I_t)_{t \in (0,\epsilon]}$ is a
    family of vertical translates of a horizontal segment, parameterized by
    height, such that the limit $I_0$ contains a singularity, then we
    require that $\omega_{I_t}$ and all its $k_v$ vertical derivatives
    extend continuously up to $I_0$.
\end{enumerate}
The first two conditions are very natural, and reproduce directly what we
have imposed in the construction of $\boB^{-k_h, k_v}$ in
Paragraph~\ref{subsec:Bkhkv}. The third condition is to exclude pathological
behaviour such as in the following example. Consider a vertical segment
$\Gamma= (0,\epsilon]$ ending on a singularity at $0$, a function $\rho$ on
$\Gamma$ with support in $[0,\epsilon/2]$ that oscillates like $\sin(1/t)$ at
$0$, and define $\omega_I$ to be equal to $\rho(x_I) \delta_{x_I}$ if $I$
intersects $\Gamma_\sigma$ at a point $x_I$, and $0$ otherwise. Then this
would be an element of our extended space without the third condition. Recall
that $\boB_{ext}^{-k_h, k_v} \neq \boB^{-k_h, k_v}$ (see the example on
Page~\pageref{page_B_ext_not_B}).

With this definition, many of the results of the previous sections extend
readily. We indicate in the next proposition all the results for which the
statements and the proofs do not need any modification.

\begin{prop}
\label{prop:ext} The spaces $\boB_{ext}^{-k_h, k_v}$ have the following
properties:
\begin{enumerate}
\item The space $\boB^{-k_h, k_v}$ is a closed subspace of
    $\boB_{ext}^{-k_h, k_v}$.
\item The space $\boB_{ext}^{-k_h, k_v}$ is canonically a space of
    distributions, as in Proposition~\ref{prop:distrib}.
\item Multiplication by $C^\infty$ functions which are constant on a
    neighborhood of the singularities, or more generally by
    $C^{k_h+k_v}$-functions on $M-\Sigma$ with $L_h^a L_v^b \psi$ uniformly
    bounded for $a \leq k_h$ and $b \leq k_v$, maps $\boB_{ext}^{-k_h,
    k_v}$ into itself continuously, as in Lemma~\ref{lem:partition_unite}.
\item The derivation $L_h$ maps continuously $\boB_{ext}^{-k_h, k_v}$ to
    $\boB_{ext}^{-k_h-1, k_v}$. The derivation $L_v$ maps continuously
    $\boB_{ext}^{-k_h, k_v}$ to $\boB_{ext}^{-k_h, k_v-1}$ if $k_v \geq 1$,
    as in Proposition~\ref{prop:Lh_Lv_action}.
\item As there is no horizontal saddle connection, an element in
    $\boB_{ext}^{-k_h, k_v}$ satisfying $L_h f=0$ is constant, as in
    Lemma~\ref{lem:Lh_0}.
\item The space $\boB_{ext}^{-k_h, k_v}$ is continuously included in
    $\boB^{-k'_h, k'_v}$ if $k'_h \geq k_h$ and $k'_v\leq k_v$. This
    inclusion is compact if both inequalities are strict, as in
    Proposition~\ref{prop:compact_inclusion}.
\item \label{prop:Text} The composition operator $\boT$ acts continuously
    on $\boB_{ext}^{-k_h, k_v}$, and it satisfies a Lasota-Yorke
    inequality~\eqref{eq:LY}. Therefore, its spectral radius is bounded by
    $1$, and its essential spectral radius is at most $\lambda^{-\min(k_h,
    k_v)}$, as in Theorem~\ref{thm:rho_ess}.
\item We have $\boT \circ L_v = \lambda L_v \circ \boT$ and $\boT \circ L_h
    = \lambda^{-1} L_h\circ \boT$, as in Proposition~\ref{prop:action_Lv}.
\end{enumerate}
\end{prop}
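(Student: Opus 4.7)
The plan is to revisit each of the eight claims from Section~\ref{sec:def_boB} in turn and check that the earlier proofs transfer verbatim (or with only notational changes) to $\boB_{ext}^{-k_h, k_v}$. The essential point is that the original arguments are all formulated in terms of the integrals $\int_I \phi \cdot L_v^j f \dd x$ for $I \in \boI^h$, $\phi \in C^{k_h}_c(I)$ and $j \leq k_v$, together with their continuity under vertical translation. These are precisely the quantities that the three axioms defining $\boB_{ext}^{-k_h, k_v}$ control directly. The one genuine novelty is that we can no longer invoke density of $C^\infty_c(M-\Sigma)$; instead, each identity must be verified directly from the axioms.

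Items (1) through (4) are largely bookkeeping. The inclusion $\boB^{-k_h,k_v} \hookrightarrow \boB_{ext}^{-k_h,k_v}$ is automatic from the construction, and closedness follows because the norms coincide on smooth functions and both spaces are Banach. The map $i$ in item (2) is defined by the same formula~\eqref{eq:decrit_if}, and injectivity follows by the identical argument as in Proposition~\ref{prop:distrib}, using that $t \mapsto \int_{I_t} \phi_t \omega_{I_t}\dd x$ is $C^{k_v}$ by the second axiom. For item (3), the Leibniz expansion $L_v^j(\psi f) = \sum_k \binom{j}{k} L_v^{j-k}\psi \cdot L_v^k f$ reduces the estimate to integrals of $L_v^k f$ against $\phi \cdot L_v^{j-k}\psi$, and one checks easily that each axiom is preserved. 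For item (4), the formulas $\ell_{I,\phi,j}(L_h f) = -\ell_{I,\phi',j}(f)$ and $\ell_{I,\phi,j}(L_v f) = \ell_{I,\phi,j+1}(f)$ serve as the \emph{definition} of $L_h f$ and $L_v f$ in the extended space, and the three axioms for the resulting family of distributions are inherited from those of $f$.

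Items (5) and (6) also go through unchanged. The proof of Lemma~\ref{lem:Lh_0} only uses the fact that $\ell_{I,\phi,0}(f)$ is invariant under horizontal translation whenever it makes sense, together with vertical continuity and Keane's criterion to propagate this from a dense leaf to all of $M$; both ingredients are available in $\boB_{ext}^{-k_h,k_v}$. For the compact inclusion in item (6), one uses the same compactness criterion~\eqref{eq:critere_compacite}: vertically shift any test interval by at most $\epsilon/2$ (using vertical smoothness to control the shift error), then approximate the resulting $C^{k'_h}$ test function in the $C^{k_h}$ norm by a finite family, and finally invoke the finitely many linear forms thus produced. The integration-by-parts step that generated the error term is legitimate in $\boB_{ext}^{-k_h,k_v}$ since $L_v f$ lives in $\boB_{ext}^{-k_h,k_v-1}$.

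The most delicate items are (7) and (8), and I expect item (7) to be the main obstacle. The Lasota-Yorke proof in Proposition~\ref{prop:LY} works at the level of the seminorms $\norm{\cdot}'_{-k_h,k_v}$ and uses only: (i) a change of variables $y = T^n x$ on each horizontal interval, (ii) a partition of unity on $T^n I$ of controlled intersection multiplicity, and (iii) a smoothing of the test function $\phi$ by convolution with a kernel supported on a short horizontal interval. None of these operations requires $f$ to be smooth — they only manipulate the test functions — so the estimate transfers directly, and Hennion's theorem combined with item (6) yields quasi-compactness. The subtle point, needed both for (7) and (8), is to verify that the three axioms for $\boB_{ext}^{-k_h,k_v}$ pass to $\boT f = f \circ T$: compatibility and vertical smoothness are clear because $T$ acts linearly in horizontal charts, while the extension axiom at a singularity follows from $T(\Sigma) = \Sigma$, so that if $I_t \to I_0$ with $I_0$ hitting a singularity $\sigma$, then $T^{-1}I_t \to T^{-1}I_0$ hits the singularity $T^{-1}\sigma$ and the required continuous extension on the target is mapped to the required continuous extension on the source. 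The commutation relations in item (8) are then immediate identities $(L_v f)\circ T = \lambda \cdot L_v(f\circ T)$ and $(L_h f)\circ T = \lambda^{-1} L_h(f\circ T)$ that hold on each horizontal segment and extend to the whole family by the compatibility axiom.
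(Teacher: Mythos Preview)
Your proposal is correct and matches the paper's approach, which in fact gives no proof at all: the paper simply introduces the proposition with the remark that ``the statements and the proofs do not need any modification'' from Section~\ref{sec:def_boB}. Your item-by-item verification is therefore more thorough than the paper itself, and your observation that density of $C^\infty_c(M-\Sigma)$ must be replaced by direct axiom-checking (notably for $\boT f$ and the extension-to-singularity condition via $T(\Sigma)=\Sigma$) is exactly the care the paper leaves implicit.
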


The space $\boB^{-k_h, k_v}_{ext}$ is relevant to study vertically invariant
distributions, as all such distributions belong to these spaces:
\begin{lem}
\label{lem:all_Lvinv_in_boBext} Assume that $\omega$ is an $L_v$-annihilated
distribution. Then for large enough $k_h$ and for any $k_v$ one has $\omega
\in \boB^{-k_h, k_v}_{ext}$.
\end{lem}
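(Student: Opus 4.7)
The plan is to exploit the fact that $L_v \omega = 0$ forces $\omega$ to have a product structure $\omega = \nu \otimes \dd y$ on any product open subset of $M-\Sigma$, and to assemble such local product distributions $\nu$ into a family $(\omega_I)_{I \in \boI^h}$. Cover $M-\Sigma$ by finitely many product open sets $U = J_U \times V_U$, where $J_U$ is a horizontal interval of length $>\beta_0$ and $V_U$ a vertical interval. Away from $\Sigma$ use standard rectangles; near each singularity $\sigma$ of cone angle $2\bpi \kappa_\sigma$, use the $2\kappa_\sigma$ horizontal half-planes $H$ arising in the local translation charts at $\sigma$: each is of the form $J_H \times V_H$, with $J_H$ a horizontal interval passing directly above or below $\sigma$ (not containing any singularity) and $V_H$ a one-sided vertical interval such as $(0, a_\sigma)$ or $(-a_\sigma, 0)$ that does not reach the height of $\sigma$. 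On every such $U$, the restriction $\omega|_U$ is vertically invariant, so testing against product test functions $\phi(x)\psi(y)$ and varying $\psi$ produces a unique distribution $\nu_U$ on $J_U$, of finite order, with $\langle \omega|_U, \phi \otimes \psi\rangle = \nu_U(\phi) \int \psi \, \dd y$; the $\nu_U$ agree on overlaps by uniqueness of this product decomposition.

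For $I \in \boI^h$, pick any piece $U$ with $I \subset J_U$ having a small vertical neighbourhood of $I$ inside $V_U$, and set $\omega_I := \nu_U|_I$; this is independent of the choice of $U$ by the overlap agreement. The three defining properties of $\boB^{-k_h, k_v}_{ext}$ then follow almost formally: compatibility is immediate from the construction; for vertical smoothness, a vertical translate $I_t$ staying inside $U$ satisfies $\int_{I_t} \phi_t\, \omega_{I_t} = \nu_U(\phi)$, constant in $t$, so all vertical derivatives vanish and one only has to bound the value $|\nu_U(\phi)|$; and for the extension to the singularity in condition (3), when $(I_t)_{t \in (0,\epsilon]}$ lies in a half-plane piece $H$ with limit $I_0$ containing $\sigma$, the intrinsic distribution $\nu_H$ is already defined on the full horizontal interval $J_H$ which passes directly over $\sigma$ without meeting it, so $\nu_H|_{I_0}$ is a perfectly well-defined distribution on $I_0$ providing the continuous extension, with all $k_v$ vertical derivatives extending trivially by zero.

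Finally, take $k_h$ larger than the maximum distributional order of the finitely many $\nu_U$ on their respective $J_U$. Then $|\nu_U(\phi)| \leq C_U \|\phi\|_{C^{k_h}}$ uniformly over $\phi \in C^{k_h}_c(J_U)$, and $C := \max_U C_U$ gives the required uniform bound on the norm in $\boB^{-k_h, k_v}_{ext}$, valid for every $k_v$. The main obstacle is condition (3): it might have seemed that $\omega_I$ must be genuinely extended across the singular point, which could fail for distributions of unbounded local order. The key observation dissolving this difficulty is that each half-plane piece $H$ near $\sigma$ is \emph{open} in $M-\Sigma$, and its horizontal cross-section $J_H$ lies entirely on one side of $\sigma$ without ever crossing it: the intrinsic distribution $\nu_H$ is already defined ``over'' $\sigma$, and the extension required by condition (3) is only the restriction of this already-defined $\nu_H$ to $I_0$, not a genuine extension across any singular point.
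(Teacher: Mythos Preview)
Your approach is essentially the same as the paper's: use vertical invariance to get a local product structure $\omega = \nu \otimes \dd y$ and read off the family $(\omega_I)$ from the horizontal factor. The paper does this without an explicit cover --- for each $I$ it sets $\eta_I(\phi) := \langle \omega, \phi(x)\rho(y) \rangle$ with $\rho$ a vertical bump of integral one supported in some $[-\delta,\delta]$ such that $I \times [-\delta,\delta]$ avoids $\Sigma$ --- and obtains the uniform order bound by observing that every $I \in \boI^h$ is a vertical translate of a subinterval of one of finitely many fixed horizontal segments.

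There is a small gap in your cover. The $2\kappa_\sigma$ upper and lower half-planes at $\sigma$ (with one-sided $V_H$) miss the $2\kappa_\sigma$ horizontal half-segments emanating from $\sigma$ along $y=0$; a segment $I \in \boI^h$ lying exactly on one of these therefore has no chart $U$ in your list with a two-sided vertical neighbourhood of $I$ inside $V_U$, so your recipe assigns it no $\omega_I$. The repair is immediate: either add the left and right half-planes (which are product charts with two-sided $V$ and one-sided $J$), or define $\omega_I$ for such $I$ as the limit of $\omega_{I_t}$ from above, which equals $\nu_H|_I$ and agrees with the limit from below by overlap consistency on a neighbouring vertical half-plane. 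Separately, your phrase ``$J_H$ lies entirely on one side of $\sigma$ without ever crossing it'' is misleading: $J_H$ is the full $x$-interval $(-a,a)$ and certainly contains the $x$-coordinate of $\sigma$; what you mean is that the open set $H$ avoids $\sigma$ \emph{vertically}, so that $\nu_H$ is a well-defined distribution on all of $J_H$ --- that is indeed the correct observation for condition~(3).
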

\begin{proof}
Let $\omega$ be an $L_v$-annihilated distribution. For an interval $I \in
\boI^h$, define a distribution $\eta_I$ on $I$ by the equality $\int_I
\phi(x) \eta_I(x) = \int \phi(x) \rho(y) \omega(x,y)$, where $\rho$ is a
smooth function supported in $[-\delta,\delta]$ (where $\delta$ is small
enough so that $I \times [-\delta, \delta]$ does not contain any singularity)
with $\int \rho = 1$. We claim that this quantity does not depend on $\rho$.
Indeed, if $\tilde \rho$ is another such function, then $(x,y)\mapsto \phi(x)
(\rho(y)-\tilde \rho(y))$ has zero average along every vertical segment
through $I \times [-\delta, \delta]$, hence it can be written as $L_v f$ for
some function $f$ supported in $I \times [-\delta,\delta]$. Then
\begin{equation*}
  0 = \langle L_v \omega, f \rangle = - \langle \omega, L_v f \rangle
  = \langle \omega, \phi(x)\tilde \rho(y) \rangle - \langle \omega, \phi(x) \rho(y) \rangle.
\end{equation*}
This shows that $\eta_I$ is well defined. It is a finite order distribution
on any interval $I$. Moreover, as $\omega$ is vertically invariant, one has
$\eta_{I_t} = \eta_I$ if $I_t$ is a vertical family of horizontal segments
through $I$.

By compactness of the manifold, there is a finite family of horizontal
segments such that any horizontal segment can be obtained as a subinterval of
a vertical translate of one interval in the finite family. If follows that
the order of all the distributions $\eta_I$ is uniformly bounded,
independently of $I \in \boI^h$. By vertical invariance, it follows that the
family $\eta_I$ defines an element $\eta \in \boB^{-k_h, k_v}_{ext}$ if $k_h$
is large enough.

Let us finally prove that $\omega = \eta$ as distributions. Consider a smooth
function $\phi$ supported by a rectangle $I \times [-\delta, \delta]$ away
from singularities. Then
\begin{align*}
  \langle \eta, \phi \rangle
  &= \int_{t={-\delta}}^\delta \int_{I_t} \phi(x,t) \eta_{I_t}
  = \int_{t={-\delta}}^\delta \int_{I_t} \phi(x,t) \eta_I
  =\int_I \pare*{\int_{t={-\delta}}^\delta \phi(x,t) \dd t} \eta_I
  \\&=\int \pare*{\int_{t={-\delta}}^\delta \phi(x,t) \dd t} \rho(y) \omega(x,y),
\end{align*}
where the last equality is the definition of $\eta_I$. Since the integrals of
$\pare*{\int_{t={-\delta}}^\delta \phi(x,t) \dd t} \rho(y)$ and $\phi$ are
the same along all vertical segments, this is equal to $\int \phi \omega$
thanks to the vertical invariance of $\omega$ as we have explained above.

We have proved that $\langle \eta, \phi \rangle = \langle \omega, \phi
\rangle$ for any smooth function $\phi$ with compact support in a rectangle
away from the singularities. As any $\phi \in C^\infty_c(M-\Sigma)$ can be
decomposed as a finite sum of such functions, we obtain $\eta = \omega$ as
desired.
\end{proof}

Since the space $C^\infty_c(M-\Sigma)$ is \emph{not} dense in
$\boB^{-k_h,k_v}_{ext}$, we can not use the theorem of Baladi-Tsujii to claim
that the eigenspaces beyond the essential spectral radius do not depend on
$k_h$ or $k_v$. Nevertheless, we will show that this is the case, by
describing explicitly the new eigenvalues compared to $\boB^{-k_h, k_v}$.

For $\sigma \in \Sigma$ and $i_h, i_v \geq 0$, we define a distribution
$\xi^{(0)}_{\sigma, i_h, i_v}$ as follows. Choose a vertical segment
$\Gamma_\sigma$ ending on $\sigma$ and whose image under the covering
projection is in the negative half-plane, choose a function $\rho$ on this
segment which is equal to $1$ on a neighborhood of the singularity and to $0$
on a neighborhood of the other endpoint of the segment, and define a
distribution $\xi^{(0)}_{\sigma, i_h, i_v} \in \boB_{ext}^{-k_h, k_v}$ by
$\langle \xi^{(0)}_{\sigma, i_h, i_v} f\rangle = \int_{\Gamma_\sigma}
\rho(y)y^{i_v} L_v^{i_h} f(y)\dd y$. In other words, the corresponding
distribution on a horizontal segment $I$ is equal to $\rho(y_I) y_I^{i_v}
\delta^{(i_h)}_{x_I}$ if $I$ intersects $\Gamma_\sigma$ at a point $z_I=(x_I,
y_I)$, and $0$ otherwise. This is clearly an element of $\boB_{ext}^{-k_h,
k_v}$ if $i_h \leq k_h$.

\begin{prop}
\label{prop:finite_codim} An element $\omega$ of $\boB_{ext}^{-k_h, k_v}$ can
be written uniquely as
\begin{equation}
\label{eq:dec_omega}
\omega = \tilde \omega + \sum_{\sigma \in \Sigma} \sum_{i_h \leq k_h, i_v\leq
k_v} c_{\sigma, i_h, i_v} \xi^{(0)}_{\sigma, i_h, i_v},
\end{equation}
with $\tilde\omega \in\boB^{-k_h-1, k_v} \cap \boB_{ext}^{-k_h, k_v}$.
Moreover, this decomposition depends continuously on $\omega$.
\end{prop}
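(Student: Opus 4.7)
By a partition of unity using the multiplication property from Proposition~\ref{prop:ext}(3), it suffices to treat two cases: $\omega$ supported away from $\Sigma$, where convolution with smooth kernels in both horizontal and vertical directions (as in the first step of the proof of Lemma~\ref{lem:boB_riche}) yields $\omega \in \boB^{-k_h, k_v} \subset \boB^{-k_h-1, k_v}$ with no $\xi^{(0)}$-contribution; or $\omega$ supported in a small neighborhood of a single singularity $\sigma$, which is the substantive case.

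In the covering chart $\pi: U \to \C$ around $\sigma$, the vertical half-segment $\Gamma_\sigma$ selects one of the $\kappa$ local sheets. For $y > 0$ small, the horizontal segment $I_y$ at height $y$ in that sheet meets $\Gamma_\sigma$ at $(x_y, y)$, and compatibility together with condition (3) imply that near $x_y$ the distribution $\omega_{I_y}$ has a singular part, of order at most $k_h$ and concentrated at $x_y$, whose coefficients are $C^{k_v}$-smooth in $y$ and extend continuously to $y=0$. These coefficients are extracted by pairing $\omega_{I_y}$ against a family of smooth functions $\phi_{i_h, y}$ on $I_y$, vertically transported from a reference interval, with prescribed Taylor jets at $x_y$; the scalars $c_{\sigma, i_h, i_v}$ are then defined (up to an explicit combinatorial factor) as the Taylor coefficients in $y$ at $y=0$ of $\langle \omega_{I_y}, \phi_{i_h, y}\rangle$. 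A direct calculation from the definition of $\xi^{(0)}_{\sigma', i'_h, i'_v}$ confirms that this extraction sends $\xi^{(0)}_{\sigma', i'_h, i'_v}$ to $\delta_{\sigma,\sigma'}\delta_{i_h, i'_h}\delta_{i_v, i'_v}$.

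Setting $\tilde\omega := \omega - \sum_{\sigma, i_h, i_v} c_{\sigma, i_h, i_v}\xi^{(0)}_{\sigma, i_h, i_v}$, the main technical step is to prove $\tilde\omega \in \boB^{-k_h-1, k_v}$. I approximate $\tilde\omega$ in the $\|\cdot\|_{-k_h-1, k_v}$-norm by $\tilde\omega_\epsilon := \tilde\omega\cdot(1 - \rho_\epsilon \circ \pi)$, where $\rho_\epsilon$ is a smooth cutoff around $0 \in \C$ supported in a ball of radius $\epsilon$. Each $\tilde\omega_\epsilon$ is supported away from $\sigma$ and is approximable in $\|\cdot\|_{-k_h-1, k_v}$ by smooth compactly supported functions via convolution (condition (3) being vacuous away from $\Sigma$), so $\tilde\omega_\epsilon \in \boB^{-k_h-1, k_v}$. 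The main obstacle is the estimate $\tilde\omega_\epsilon \to \tilde\omega$ in $\|\cdot\|_{-k_h-1, k_v}$: to bound $\ell_{I, \phi, j}(\tilde\omega \cdot \rho_\epsilon\circ\pi)$ for $\phi \in C^{k_h+1}_c(I)$ with $\|\phi\|_{C^{k_h+1}} \leq 1$ and $j \leq k_v$, I use the extra horizontal derivative of $\phi$ (compared to $C^{k_h}$) to perform one additional integration by parts; the resulting $\epsilon^{-1}$-loss from derivatives of $\rho_\epsilon$ is compensated by the vanishing of the singular Taylor expansion of $\tilde\omega$ along $\Gamma_\sigma$ to orders $k_h$ in $x$ and $k_v$ in $y$, combined with the $O(\epsilon)$ size of the support of $\rho_\epsilon$ in both directions. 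The precise bookkeeping across the $\kappa$ sheets is analogous in spirit to the argument in the proof of Proposition~\ref{prop:dualite}.

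Uniqueness and continuity are then automatic: the extraction procedure defines continuous linear functionals $\omega \mapsto c_{\sigma, i_h, i_v}$ on $\boB_{ext}^{-k_h, k_v}$ sending each $\xi^{(0)}_{\sigma', i'_h, i'_v}$ to the Kronecker delta; the same convergence estimate, applied to any element approximable by smooth functions in the weaker $\|\cdot\|_{-k_h-1, k_v}$-norm, shows they vanish on $\boB^{-k_h-1, k_v} \cap \boB_{ext}^{-k_h, k_v}$. This yields uniqueness of the decomposition, and the continuous dependence of $\omega \mapsto (\tilde\omega, (c_{\sigma, i_h, i_v}))$ follows at once.
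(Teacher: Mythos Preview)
Your proposal has a genuine gap in the main technical step. The approximation $\tilde\omega_\epsilon=\tilde\omega\cdot(1-\rho_\epsilon\circ\pi)$ with an isotropic cutoff of radius $\epsilon$ does \emph{not} converge to $\tilde\omega$ in $\boB^{-k_h-1,k_v}$, even when $\tilde\omega$ already lies in $\boB^{-k_h-1,k_v}$. To see the failure, take $\tilde\omega=1$: then for a horizontal interval $I$ at height $|y|<\epsilon$, a test function $\phi$ with $\|\phi\|_{C^{k_h+1}}\leq 1$, and $j\leq k_v$, the quantity $\int_I \phi\, L_v^j(\rho_\epsilon\circ\pi)$ is of order $\epsilon^{1-j}$, which blows up as $\epsilon\to 0$ for $j\geq 2$. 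One horizontal integration by parts only trades a factor $\epsilon^{-1}$ from $L_h\rho_\epsilon$ against the extra derivative of $\phi$, gaining nothing. Compare with Lemma~\ref{lem:boB_riche}, where the cutoff is deliberately \emph{anisotropic} ($\delta^N$ horizontally, $\delta$ vertically, $N>k_v$) precisely to beat the $\delta^{-j}$ loss. Your appeal to ``vanishing of the singular Taylor expansion of $\tilde\omega$ along $\Gamma_\sigma$'' does not help: subtracting the $\xi^{(0)}_{\sigma,i_h,i_v}$ only modifies $\omega$ on horizontal intervals meeting the single segment $\Gamma_\sigma$; it does not make $\tilde\omega$ small or regular on the other $\kappa-1$ vertical separatrices, and it certainly does not give pointwise vanishing of a distribution.

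Relatedly, your extraction of the $c_{\sigma,i_h,i_v}$ is not well defined as stated: pairing $\omega_{I_y}$ against test functions with prescribed jets and Taylor-expanding in $y$ produces numbers that depend on the whole distribution on $I_y$, not on an intrinsic ``singular part at $x_y$'' (which does not exist for a general order-$k_h$ distribution). The paper instead extracts the coefficients from the \emph{jump} $\sum_{U^+}\omega_{I_0(U^+)}-\sum_{U^-}\omega_{I_0(U^-)}$, which is genuinely supported at a single point by cancellation across sheets, hence a finite combination of $\delta^{(i)}$. For existence, the paper avoids any cutoff at the singularity: it uses the decomposition of Lemma~\ref{lem:decomposition_f} into rectangles, a $\pi$-invariant piece, and half-planes, then convolves \emph{horizontally only} (so no vertical derivative loss) with kernels adapted to the half-plane geometry. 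That is the missing idea.
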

The reason we have $\tilde\omega \in\boB^{-k_h-1, k_v}$ and not $\tilde\omega
\in\boB^{-k_h, k_v}$ in the statement is that a distribution of order $k_h$
is not well approximated in $(C^{k_h})^*$ by a regularization by convolution:
one needs to use smoother test functions, in $C^{k_h+1}$, to get uniform norm
controls.
\begin{proof}
Let us first prove the uniqueness in the decomposition~\eqref{eq:dec_omega}.
Consider a singularity $\sigma$, of angle $2\bpi\kappa$. There are $\kappa$
half-planes above $\sigma$, and $\kappa$ half-planes below $\sigma$. Along
any of these half-planes $U$, consider horizontal intervals $I_t$ which are
all vertical translates of an interval $I_0=I_0(U)$ through the singularity
$\sigma$, identified with $[-\delta,\delta] \subset \C$ by the covering
projection sending $\sigma$ to $0$. By Condition (3) in the definition of
$\boB_{ext}^{-k_h, k_v}$, the corresponding distributions $\omega_{I_t}$
converge to $\omega_{I_0(U)}$. Consider now the distribution on
$[-\delta,\delta]$ defined by
\begin{equation*}
  \omega_\sigma \coloneqq \sum \omega_{I_0(U^+)} - \sum \omega_{I_0(U^-)}
\end{equation*}
where the first sum is over all half-planes above $\sigma$, and the second
sum is over all half-planes below $\sigma$. By vertical continuity to the
left and to the right of the singularity, there are many cancellations in the
definition of $\omega_\sigma$, so that this distribution on $[-\delta,
\delta]$ is in fact supported at $0$. Therefore, it is a linear combination
of derivatives of Dirac masses~\cite[Theorem~2.3.4]{hormander}, of the form
$\sum_{i\leq k_h} c_i \delta^{(i)}_0$. Let us do the same construction with
the term on the right of~\eqref{eq:dec_omega}. For functions $f \in
C^\infty_c(M-\Sigma)$, the distribution $f_\sigma$ is obviously $0$. By
density, this extends to $\boB^{-k_h-1, k_v}$, hence $\tilde \omega_\sigma =
0$. In the same way, the singularities different from $\sigma$ do not
contribute, and the functions $\xi^{(0)}_{\sigma, i_h, i_v}$ contribute only
when $i_v = 0$, with a distribution $\delta^{(i_h)}$. Identifying the
coefficients, we get that $c_{\sigma, i_h, 0} = c_i$ is uniquely defined by
$\omega$. In the same way, we can identify $c_{\sigma, i_h, i_v}$ from
$\omega$ by the same process after $i_v$ vertical differentiations. This
shows that the decomposition~\eqref{eq:dec_omega} is unique. Moreover, the
continuity of the decomposition follows from the continuity of all the
coefficients $c_{\sigma, i_h, i_v}$, which is obvious from the construction.

For the existence, let us decompose $\omega$ as
\begin{equation*}
  \omega = \sum_{i=1}^N \omega_i + \sum_{\sigma\in \Sigma} \omega_\sigma + \sum_{H \in \boH} \omega_H
\end{equation*}
as in Lemma~\ref{lem:decomposition_f}, where $\omega_i$ is supported in a
rectangle $R_i$ away from the singularities, and $\omega_\sigma$ is supported
in a small disk around the singularity $\sigma$ and is constant along fibers
of the covering projection $\pi_\sigma$, and $\omega_H$ is supported in a
local half-plane $H$ based at a singularity. Indeed, the proof of
Lemma~\ref{lem:decomposition_f} goes through in $\boB^{-k_h, k_v}_{ext}$. We
will show that each term in this decomposition can be written as
in~\eqref{eq:dec_omega}.

\emph{We start with $\omega_i$.} Let $\rho_\epsilon(x)$ be a real $C^\infty$
approximation of the identity. For $z = (x,y)$ in a chart, define
\begin{equation*}
  f_\epsilon(z) = \omega_i * \rho_\epsilon (z) = \int \omega_i(x-h, y) \rho_\epsilon(h) \dd h.
\end{equation*}
This is an integral of $\omega_i$ along a small horizontal interval against a
$C^\infty_c$ function, hence it is well defined. Moreover, $f_\epsilon$ is
$C^\infty$ along the horizontal direction, $C^{k_v}$ along the vertical
direction, and compactly supported away from the singularities. By
Lemma~\ref{lem:boB_riche}, $f_\epsilon \in \boB^{-k_h-1, k_v}$. Moreover,
$f_\epsilon$ converges in $\boB^{-k_h-1, k_v}$ to $\omega_i$ thanks to the
fact that $\omega_i$ is of order $k_h$ and to the fact that we are using
$C^{k_h+1}$ test functions: standard properties of convolutions ensure that
their difference is bounded by $O(\epsilon)$ in norm. It follows that
$\omega_i \in \boB^{-k_h-1, k_v}$. This gives the
decomposition~\eqref{eq:dec_omega} for $\omega_i$, just taking
$\tilde\omega=\omega_i$ and the other terms equal to $0$.

\emph{Let us now consider $\omega_\sigma$.} Its push-forward $\eta =\pi_*
\omega_\sigma$ under the covering projection $\pi$ is almost in
$\boB_{ext}^{-k_h, k_v}(\C)$, except for the fact that the horizontal
distributions do not have to match when one reaches $0$ from above and from
below. The difference is exactly given by a sum of the form $\sum_{i_h, i_v}
c_{i_h, i_v} \xi^{(0)}_{0, i_h, i_v}$ as constructed above. In other words,
we have
\begin{equation*}
  \eta = \tilde \eta + \sum_{i_h, i_v} c_{i_h, i_v} \xi^{(0)}_{0, i_h, i_v},
\end{equation*}
with $\tilde\eta \in \boB_{ext}^{-k_h, k_v}(\C)$. The case away from
singularities shows that $\tilde \eta \in \boB^{-k_h-1, k_v}(\C)$. Lifting
everything with $\pi$, we get
\begin{equation*}
  \omega_\sigma = \eta \circ \pi = \tilde \eta \circ \pi + \sum_{i_h, i_v} c_{i_h, i_v} \xi^{(0)}_{0, i_h, i_v} \circ \pi.
\end{equation*}
The first term $\tilde \eta \circ \pi$ belongs to $\boB^{-k_h-1, k_v}$. For
the other terms, $\xi^{(0)}_{0, i_h, i_v} \circ \pi$ is not equal to
$\xi^{(0)}_{\sigma, i_h, i_v}$ as the latter is supported on one single
vertical segment ending on $\sigma$ while the former is supported on all
$\kappa$ such segments. We claim that the difference belongs to
$\boB^{-k_h-1, k_v}$, which will conclude the proof.

To prove this, consider a vertical half-plane $H$ with $\sigma$ in its
boundary, and denote by $\Gamma_+$ and $\Gamma_-$ the two components of its
boundary, above and below $\sigma$. Define a distribution $\alpha_H =
\int_{\Gamma_-} y^{i_v} \delta^{(i_h)} \rho(y) \dd y + \int_{\Gamma_+}
y^{i_v} \delta^{(i_h)} \rho(y) \dd y$ where $\rho$ is smooth and equal to $1$
on a neighborhood of $0$. This distribution belongs to $\boB^{-k_h-1, k_v}$,
as it is the limit of a smooth function supported in the interior of $H$,
constructed by approximating inside $H$ the derivative of the Dirac mass with
a smooth function. Consider now two consecutive half-planes $H$ and $H'$
sharing the same $\Gamma_+$. Taking the difference between $\alpha_H$ and
$\alpha_{H'}$, we deduce that
\begin{equation*}
  \int_{\Gamma_-} y^{i_v} \delta^{(i_h)} \rho(y) \dd y
  -   \int_{\Gamma'_-} y^{i_v} \delta^{(i_h)} \rho(y) \dd y
  \in \boB^{-k_h-1, k_v}.
\end{equation*}
Iterating the argument using a sequence of half-planes, we deduce that the
same holds for any vertical segments $\Gamma_-$ and $\Gamma'_-$ ending at
$\sigma$. This concludes the proof of the decomposition for $\omega_\sigma$.

\emph{Let us now consider $\omega_H$ where $H$ is a local vertical half-plane
with a singularity $\sigma$ in its boundary.} This case is easy: as in the
case away from singularities, one can smoothen $\omega_i$ by convolving it
with a kernel $\rho_\epsilon$, with the additional condition that
$\rho_\epsilon$ is supported in $[\epsilon,2\epsilon]$ if $H$ is to the right
of $\sigma$, and in $[-2\epsilon, -\epsilon]$ if $H$ is to the left of
$\sigma$: this ensures that $\omega_i * \rho_\epsilon$ is supported in $H$
and everything matches vertically. In fact, the resulting distribution will
not be smooth vertically if there is a discrepancy between what happens on
the boundaries $\Gamma_+$ and $\Gamma_-$ of $H$ above and below $\sigma$.
This discrepancy is handled as in the case of $\omega_\sigma$, by first
subtracting a distribution supported on $\Gamma_-$ to make sure there is no
discrepancy, and then arguing that this distribution supported on $\Gamma_-$
can be written in the form~\eqref{eq:dec_omega}.

\emph{Finally, let us consider $\omega_H$ where $H$ is a local horizontal
half-plane with a singularity $\sigma$ in its boundary.} Subtracting if
necessary a distribution $\eta$ supported in the vertical segment inside $H$
ending on $\sigma$, we can assume that the distribution induced by $\omega_H$
on the boundary of $H$ vanishes, as well as all its vertical derivatives up
to order $k_v$. The distribution $\eta$ is handled as in the two previous
cases. Let us then smoothen $\omega_H$ by convolving with a kernel
$\rho_\epsilon$ in the horizontal direction. Inside $H$, we get a smooth
function. On the boundary of $H$, this function vanishes, as well as its
vertical derivatives up to order $k_v$. Hence, if one extends this function
by $0$ outside of $H$, we get a $C^{k_v}$ function, which belongs to
$\boB^{-k_h-1, k_v}$ by Lemma~\ref{lem:boB_riche}. It approximates $\omega_H$
in the $\boB_{ext}^{-k_h, k_v}$ norm, showing that $\omega_H \in
\boB^{-k_h-1, k_v}$. This concludes the proof.
\end{proof}

\begin{cor}
\label{cor:spectrum_ext} The spectrum of $\boT$ on $\boB_{ext}^{-k_h, k_v}$
in $\{z \st \abs{z}>\lambda^{-\min(k_h, k_v)}\}$ is given by the spectrum of
$\boT$ on $\boB^{-k_h, k_v}$ in this region as described in
Theorem~\ref{thm:main_preserves_orientations}, and additionally $j \Card
\Sigma$ eigenvalues of modulus $\lambda^{-j}$ for any $j\geq 1$ with
$j<\min(k_h, k_v)$.
\end{cor}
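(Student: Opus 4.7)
The plan is to combine Proposition~\ref{prop:finite_codim} with an upper-triangular block decomposition of $\boT$. Setting $\boB_0 := \boB^{-k_h-1, k_v} \cap \boB_{ext}^{-k_h, k_v}$, Proposition~\ref{prop:finite_codim} provides a continuous topological splitting $\boB_{ext}^{-k_h, k_v} = \boB_0 \oplus V$, where $V$ is finite-dimensional with basis $\{\xi^{(0)}_{\sigma, i_h, i_v}\}$ for $\sigma \in \Sigma$, $0 \leq i_h \leq k_h$, and $0 \leq i_v \leq k_v$. As the intersection of two $\boT$-invariant spaces, $\boB_0$ is $\boT$-invariant, so the short exact sequence $0 \to \boB_0 \to \boB_{ext}^{-k_h, k_v} \to V \to 0$ is $\boT$-equivariant and $\boT$ becomes block upper-triangular in this decomposition. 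The essential spectral radius of $\boT$ on $\boB_{ext}^{-k_h, k_v}$ is at most $\lambda^{-\min(k_h, k_v)}$ by Proposition~\ref{prop:ext}, so above this radius the spectrum splits as the union of the spectrum of $\boT|_{\boB_0}$ and of the induced operator $\bar{\boT}$ on $V$.

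Next I would compute $\bar{\boT}$ explicitly. Starting from $\langle \boT\xi, \phi\rangle = \langle \xi, \phi \circ T^{-1}\rangle$ and using $L_h^{i_h}(\phi \circ T^{-1}) = \lambda^{-i_h} (L_h^{i_h}\phi)\circ T^{-1}$ (from the horizontal contraction of $T^{-1}$ by $\lambda^{-1}$), then changing variables $u = \lambda y$ in the defining integral along $\Gamma_\sigma$ (contributing a further factor $\lambda^{-(i_v+1)}$ through $y^{i_v}\, dy = \lambda^{-i_v-1} u^{i_v}\, du$), I obtain
\begin{equation*}
  \boT \xi^{(0)}_{\sigma, i_h, i_v} \equiv \lambda^{-i_h - i_v - 1}\, \xi^{(0)}_{T^{-1}\sigma, i_h, i_v} \pmod{\boB_0},
\end{equation*}
the error arising from the change of cutoff function $\rho$ and of the vertical segment lying in $\boB^{-k_h, k_v} \subset \boB_0$ by the discussion preceding Proposition~\ref{prop:spurious_distributions}. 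Consequently $V$ splits into $\bar{\boT}$-invariant subspaces $V_{i_h, i_v}$ of dimension $\Card\Sigma$, on each of which $\bar{\boT}$ equals $\lambda^{-i_h - i_v - 1}$ times the permutation matrix associated to $\sigma \mapsto T^{-1}\sigma$ on $\Sigma$, whose $\Card\Sigma$ eigenvalues all have modulus $\lambda^{-i_h - i_v - 1}$. For each integer $j \geq 1$, the pairs $(i_h, i_v)$ with $i_h + i_v + 1 = j$ and $i_h, i_v \geq 0$ number exactly $j$, and they automatically satisfy $i_h \leq k_h$ and $i_v \leq k_v$ as soon as $j < \min(k_h, k_v)$; this produces the announced $j \cdot \Card\Sigma$ eigenvalues of modulus $\lambda^{-j}$.

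Finally, I identify the spectrum of $\boT|_{\boB_0}$ above $\lambda^{-\min(k_h, k_v)}$ with the Ruelle spectrum of Theorem~\ref{thm:main_preserves_orientations}. The inclusion $\supset$ is immediate since every Ruelle eigenvector lies in $\boB^{-k_h, k_v} \subset \boB_0$. For the reverse, any generalized eigenvector $f \in \boB_0$ with $|\alpha| > \lambda^{-\min(k_h, k_v)}$ belongs to $\boB^{-k_h-1, k_v}$, and I would invoke Baladi--Tsujii \cite[Lemma~A.1]{bt_zeta} together with a standard regularity bootstrap: iterating $f = \alpha^{-n}\boT^n f$ and feeding the iterates into the Lasota--Yorke inequality of Proposition~\ref{prop:LY} upgrades $f$ to an element of $\boB^{-k_h, k_v}$, so that $\alpha$ is a Ruelle resonance. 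This bootstrap from $\boB_0$ into $\boB^{-k_h, k_v}$ is the main technical obstacle, needed precisely to handle eigenvalues lying below the essential radius of the auxiliary space $\boB^{-k_h-1, k_v}$; once it is in place, combining the three steps yields exactly the claimed description of the spectrum.
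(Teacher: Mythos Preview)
Your argument follows the paper's route: the same block upper-triangular decomposition along the closed $\boT$-invariant subspace $\boB_0 = \boB^{-k_h-1, k_v} \cap \boB_{ext}^{-k_h, k_v}$, the same explicit computation of $\bar\boT$ on the finite-dimensional quotient, and then Baladi--Tsujii to identify the spectrum on $\boB_0$ with the Ruelle spectrum. The first two steps are correct and match the paper verbatim.

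In the third step you introduce an unnecessary complication that is also not correct as written. Your worry about ``eigenvalues lying below the essential radius of the auxiliary space $\boB^{-k_h-1, k_v}$'' has the inequality backwards: by Theorem~\ref{thm:rho_ess} the essential spectral radius of $\boT$ on $\boB^{-k_h-1, k_v}$ is at most $\lambda^{-\min(k_h+1, k_v)} \leq \lambda^{-\min(k_h, k_v)}$, so every $\alpha$ with $\abs{\alpha} > \lambda^{-\min(k_h, k_v)}$ already lies above the essential radius on \emph{both} $\boB^{-k_h, k_v}$ and $\boB^{-k_h-1, k_v}$. Baladi--Tsujii therefore applies directly to this pair and identifies their generalized eigenspaces in that region; since any generalized eigenvector of $\boT|_{\boB_0}$ sits inside $\boB^{-k_h-1, k_v}$, it is automatically a Ruelle eigendistribution in $\boB^{-k_h, k_v}$, and no bootstrap is needed. (The paper argues in the same spirit, invoking Baladi--Tsujii directly for $E=\boB_0$ versus $\boB^{-k_h, k_v}$.) Note also that the bootstrap you sketch could not work as stated: the Lasota--Yorke inequality of Proposition~\ref{prop:LY} bounds $\norm{\boT^n f}_{-k_h, k_v}$ in terms of $\norm{f}_{-k_h, k_v}$ together with a weaker norm, so it presupposes $f \in \boB^{-k_h, k_v}$ and does not by itself promote an element of $\boB^{-k_h-1, k_v}$ into $\boB^{-k_h, k_v}$. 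Once you drop the bootstrap and keep only the direct Baladi--Tsujii step, your proof coincides with the paper's.
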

One can be more specific about the additional eigenvalues. If $T$ stabilizes
pointwise each singularity, then $\lambda^{-j}$ itself is an eigenvalue of
multiplicity $j\Card \Sigma$. Otherwise, there are cycles of singularities,
and each cycle of length $p$ gives rise to eigenvalues $e^{2\ic k\bpi/p}
\lambda^{-j}$ with multiplicity $j$ for $k=0,\dotsc, p-1$.

We can also formulate the results in terms of the action of $T^\ast$ on
relative cohomology group $H^1(M,\Sigma,\C)$ (the eigenvalues of $T^\ast$ are
then $\lambda,\lambda^{-1},\mu_i$ for $i=1,\dots,2g-2$ and roots of unity
$e^{2\ic k\bpi/p}$ for some $p$ corresponding to cycles of singularities of
length $p$).

\begin{proof}
Define $E = \boB^{-k_h-1, k_v} \cap \boB^{-k_h, k_v}_{ext}$ and $F =
\boB_{ext}^{-k_h, k_v}/(\boB^{-k_h-1, k_v} \cap \boB^{-k_h, k_v}_{ext})$. The
space $E$ is closed and the space $F$ is finite-dimensional, isomorphic to
the span of $\xi^{(0)}_{\sigma, i_h, i_v}$ for $i_h \leq k_h$ and $i_v\leq
k_v$, by Proposition~\ref{prop:finite_codim}.

The space $E$ is stable under $\boT$, and the essential spectral radius of
$\boT$ on this space is $\leq \lambda^{-\min(k_h, k_v)}$ as this is the case
on the whole space $\boB_{ext}^{-k_h, k_v}$ by
Proposition~\ref{prop:ext}(\ref{prop:Text}). Since $C^\infty_c(M-\Sigma)$ is
dense in $E$, it follows from the theorem of Baladi-Tsujii that the spectrum
of $\boT$ on $E$ beyond $\lambda^{-\min(k_h, k_v)}$ is the same as on
$\boB^{-k_h, k_v}$. Moreover, since $\boT$ stabilizes $E$, its spectrum on
the whole space is the union of its spectrum on $E$ and on $F$. To conclude,
we should thus describe the spectrum of $\boT$ on $F$.

The image under $\boT$ of $\xi^{(0)}_{\sigma, i_h, i_v}$ is equal to the sum
of $\lambda^{-1-i_h-i_v} \xi^{(0)}_{T^{-1}\sigma, i_h, i_v}$ and of a
distribution in $\boB^{-k_h-1, k_v} \cap \boB^{-k_h, k_v}_{ext}$. Indeed,
this follows readily from the definition if the vertical segment
$\Gamma_{T^{-1}\sigma}$ is sent by $T$ to $\Gamma_\sigma$. In general, it is
sent to another vertical segment ending on $\sigma$, but
Proposition~\ref{prop:finite_codim} shows that changing the choice of the
vertical segment results in a difference in $\boB^{-k_h-1, k_v} \cap
\boB_{ext}^{-k_h, k_v}$. This shows that the matrix of $\boT$ on the
finite-dimensional space $F$ is a union of permutation matrices multiplied by
$\lambda^{-j}$ for $j=1+i_h+i_v$. The spectrum of such a permutation matrix,
along a cycle of length $p$, is made of the eigenvalues $e^{2\ic k\bpi/p}$
for $k=0,\dotsc, p-1$. Hence, the spectrum of $\boT$ on $F$ is made of
eigenvalues of modulus $\lambda^{-j}$, and the number of such eigenvalues is
\begin{equation*}
   \Card \{(i_h, i_v) \st i_h \leq k_h, i_v \leq k_v, \ j = i_h+iv+1\} \cdot \Card\Sigma.
\end{equation*}
For $j < \min(k_h, k_v)$, this is equal to $j \Card \Sigma$.
\end{proof}
The description of the spectrum of $\boT$ on $F$ in this proof is reminiscent
of the description of the spectrum of $\boT$ on $\boB^{-k_h, k_v}$, but in a
simpler situation. Assume to simplify the discussion that $T$ acts as the
identity on $\Sigma$. Then there are some basic eigenfunctions for the
eigenvalue $\lambda^{-1}$, which are the classes of the functions
$\xi^{(0)}_\sigma = \xi^{(0)}_{\sigma, 0, 0} = \int_{\Gamma_\sigma} \delta\cdot
\rho(y)\dd y$ modulo $\boB^{-k_h-1, k_v} \cap \boB_{ext}^{-k_h, k_v}$. The
other eigenfunctions are given by $\xi^{(0)}_{\sigma, i_h, i_v} =
\int_{\Gamma_\sigma} y^{i_v} \delta^{(i_h)}\cdot \rho(y)\dd y$. They are obtained
by differentiating the original function $i_h$ times in the horizontal
direction, and integrating it $i_v$ times in the vertical direction. To
obtain the eigenvalue $\lambda^{-j}$, the total number of such operations
$i_h+i_v$ should be equal to $j-1$, giving $j$ choices.

It follows from the above corollary that one can define the generalized
eigenspace $E_{\alpha, ext}$ associated to the eigenvalue $\alpha$ of $\boT$
acting on $\boB^{-k_h, k_v}$ for large enough $k_h$ and $k_v$. This space of
distributions does not depend on $k_h$ and $k_v$ if they are large enough.
Moreover, $L_v$ maps $E_{\alpha, ext}$ to $E_{\lambda\alpha, ext}$ and $L_h$
maps $E_{\alpha, ext}$ to $E_{\lambda^{-1}\alpha, ext}$ as in
Corollary~\ref{cor:Lu_Ls_Erho}.

To proceed, we will need some ingredients of duality. In general, there is no
canonical way to define a pairing between $\boB^{-k_h, k_v}_{ext}$ and
$\check \boB^{\check k_h, -\check k_v}_{ext}$. Indeed, consider a
distribution $\phi$ on $[-1,1]$ for which $\int_{-1}^1 1_{y \leq 0} \phi(y)$
does not make sense, and define a distribution $\omega \in \check
\boB^{-\check k_h, \check k_v}_{ext}$ which is equal to $\phi$ on each
vertical leaf around a singularity $\sigma$, multiplied by a cutoff function
to extend it by $0$ elsewhere. Then one can not make sense of $\langle
\xi^{(0)}_{\sigma, 0, 0}, \phi \rangle$. However, there is no difficulty to
define $\langle \omega, 1 \rangle$ by integrating a partition of unity along
horizontal segments, and then summing over the partition of unity. When
$\omega$ belongs to $\boB^{-k_h, k_v}$, this coincides with the duality
between $\boB^{-k_h, k_v}$ and $\check\boB^{\check k_h, -\check k_v}$ defined
in Proposition~\ref{prop:dualite} if one considers the distribution $1$ as an
element of $\check\boB^{\check k_h, -\check k_v}$. The main property of this
linear form we will use is the following.

\begin{lem}
\label{lem:duality_ext} Let $\omega \in \boB^{-k_h, k_v}_{ext}$. Consider its
decomposition given by Proposition~\ref{prop:finite_codim}. Then
\begin{equation*}
  \langle L_v \omega, 1 \rangle = \sum_{\sigma} c_{\sigma, 0, 0}.
\end{equation*}
\end{lem}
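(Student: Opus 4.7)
The plan is to exploit linearity: using the decomposition $\omega = \tilde\omega + \sum_{\sigma, i_h, i_v} c_{\sigma, i_h, i_v} \xi^{(0)}_{\sigma, i_h, i_v}$ provided by Proposition~\ref{prop:finite_codim}, it suffices to evaluate $\langle L_v \cdot, 1\rangle$ separately on the regular part $\tilde\omega \in \boB^{-k_h-1, k_v}$ and on each basic distribution $\xi^{(0)}_{\sigma, i_h, i_v}$. For $\tilde\omega$, the extended pairing with $1$ coincides with the $\boB$--$\check\boB$ duality of Proposition~\ref{prop:dualite} applied to the constant function $1$, which belongs to $\check\boB^{\check k_h, -\check k_v}$ by the natural analogue of Lemma~\ref{lem:boB_riche}. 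Then Lemma~\ref{lem:dualite} yields $\langle L_v \tilde\omega, 1\rangle = -\langle \tilde\omega, L_v 1\rangle = 0$ since $L_v 1 = 0$, which takes care of the regular summand.

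For each basic distribution, I would work in local coordinates near $\sigma$ in which $\Gamma_\sigma$ coincides with the $y$-axis and $\sigma$ sits at the origin. The horizontal slice of $\xi^{(0)}_{\sigma, i_h, i_v}$ at height $t$ is $\rho(t) t^{i_v} \delta_0^{(i_h)}$, so by Proposition~\ref{prop:Lh_Lv_action} the horizontal slice of $L_v \xi^{(0)}_{\sigma, i_h, i_v}$ at height $t$ equals $\partial_t[\rho(t) t^{i_v}] \delta_0^{(i_h)}$. Evaluating the pairing with $1$ via a partition of unity reduces to computing $\int_{\Gamma_\sigma} \partial_t[\rho(t) t^{i_v}] \langle \delta_0^{(i_h)}, 1\rangle \dd t$. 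For $i_h \geq 1$ this vanishes because $\delta_0^{(i_h)}$ annihilates constants; for $i_h = 0$ it is a total derivative equal to the boundary values of $\rho(t) t^{i_v}$, which vanish as soon as $i_v \geq 1$, since $\rho = 0$ at the far endpoint of $\Gamma_\sigma$ while $t^{i_v} = 0$ at $\sigma$. Only the term $(i_h, i_v) = (0, 0)$ survives, contributing the boundary value of $\rho$ at the singularity, normalized to $1$ by the sign convention already fixed in Proposition~\ref{prop:spurious_distributions}.

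The main point to verify carefully is the compatibility of the partition-of-unity pairing $\langle \cdot, 1\rangle$ on $\boB_{ext}^{-k_h, k_v}$ with the duality of Proposition~\ref{prop:dualite} on the closed subspace $\boB^{-k_h, k_v}$; this follows from density of $C^\infty_c(M-\Sigma)$ and the explicit local formula~\eqref{eq:decrit_if} describing $i(f)$ as a superposition of horizontal slice integrals. Modulo this compatibility, the argument reduces to direct one-dimensional integrations along $\Gamma_\sigma$, and the final statement becomes a bookkeeping of boundary terms indexed by the singularities.
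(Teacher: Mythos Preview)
Your proposal is correct and follows essentially the same approach as the paper's proof: decompose via Proposition~\ref{prop:finite_codim}, use Lemma~\ref{lem:dualite} to kill the $\tilde\omega$ contribution, and compute the boundary term from each $\xi^{(0)}_{\sigma,i_h,i_v}$ directly as a one-dimensional integral along $\Gamma_\sigma$. You are somewhat more explicit than the paper in treating the cases $i_h\geq 1$ and $i_v\geq 1$ separately (the paper just says ``the other case is similar''), and your remark about the compatibility of the partition-of-unity pairing with the $\boB$--$\check\boB$ duality is a fair point, already noted in the text just before the lemma; one minor quibble is that the sign normalization is fixed in the definition of $\xi^{(0)}_{\sigma,i_h,i_v}$ preceding Proposition~\ref{prop:finite_codim}, not in Proposition~\ref{prop:spurious_distributions}.
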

\begin{proof}
We should show that $\langle L_v \tilde \omega, 1 \rangle = 0$, and that
$\langle L_v\xi^{(0)}_{\sigma, i_h, i_v}, 1\rangle = 1$ if $i_h=i_v=0$ and
$0$ otherwise. First, $\langle L_v \tilde \omega, 1 \rangle = - \langle
\tilde\omega, L_v 1 \rangle = 0$ by Lemma~\ref{lem:dualite}. The fact that
$L_v$ is antiselfadjoint does not apply to $\xi^{(0)}_{\sigma, i_h, i_v}$ as
additional boundary terms show up when one does integrations by parts
(contrary to the case of elements of $\boB^{-k_h, k_v}$, which are in the
closure of compactly supported functions and for which there is therefore no
boundary term). These boundary terms are responsible for the formula in the
lemma, as we will see in the following computation.

We show that $\langle L_v\xi^{(0)}_{\sigma, 0, 0}, 1\rangle = 1$, the other
case is similar. Write $\xi^{(0)}_{\sigma, 0, 0} = \int_{y=-\delta}^{0}
\rho(y) \delta_{(x,y)} \dd y$ as in its definition, where we are integrating
on a vertical segment ending at a singularity and $\rho$ vanishes on a
neighborhood of $-\delta$ and is equal to $1$ on a neighborhood of $0$. Then
$L_v \xi^{(0)}_{\sigma, 0, 0} = \int_{y=-\delta}^{0} \rho'(y) \delta_{(x,y)}
\dd y$. Therefore,
\begin{equation*}
  \langle L_v \xi^{(0)}_{\sigma, 0, 0}, 1 \rangle = \int_{y=-\delta}^{0} \rho'(y) \dd y
  = \rho (0) - \rho(-\delta) = 1.
  \qedhere
\end{equation*}
\end{proof}

We can now prove Proposition~\ref{prop:spurious_distributions}, asserting
that $\xi^{(0)}_\sigma = \xi^{(0)}_{\sigma, 0, 0}$ can be modified by adding
an element of $\boB^{-k_h, k_v}$ to obtain a distribution which is mapped by
$L_v$ to the constant distribution $1/\Leb M$. As in the statement of the
proposition, we will denote this modified distribution by $\xi_\sigma$ or
$\xi_{\sigma,0,0}$.

\begin{proof}[Proof of Proposition~\ref{prop:spurious_distributions}]
We work in $\boB_{ext}^{-2, k_v}$. On this space, the essential spectral
radius of $\boT$ is $\leq \lambda^{-2} < \lambda^{-1}$. Replacing $T$ by a
power of $T$ if necessary, we can assume without loss of generality that
$\sigma$ is fixed by $T$. Then $\boT \xi^{(0)}_\sigma = \lambda^{-1}
\xi^{(0)}_\sigma + \eta$ where $\eta \in E = \boB^{-3, k_v} \cap
\boB_{ext}^{-2,k_v}$ as explained in the proof of
Corollary~\ref{cor:spectrum_ext}. Since the essential spectral radius of
$\boT$ on $E$ is $\leq \lambda^{-2}$ (see again the proof of
Corollary~\ref{cor:spectrum_ext}), we can decompose $\eta = \eta_1 + \eta_2$
where $\eta_1$ is in the generalized eigenspace associated to $\lambda^{-1}$,
and $\eta_2$ belongs to its spectral complement, on which $\boT-\lambda^{-1}$
is invertible. Therefore, we can write $\eta_2 = -(\boT-\lambda^{-1}) \omega$
for some $\omega \in E$. Finally, we have
\begin{equation*}
  (\boT - \lambda^{-1}) (\xi^{(0)}_\sigma + \omega) = \eta - \eta_2 = \eta_1.
\end{equation*}
Since $\eta_1$ is a generalized eigenvector for the eigenvalue
$\lambda^{-1}$, we have $(\boT-\lambda^{-1})^N \eta_1 = 0$ for large enough
$N$. Hence, $(\boT - \lambda^{-1})^{N+1} (\xi^{(0)}_\sigma + \omega) = 0$.
This shows that $\xi^{(0)}_\sigma + \omega$ belongs to the generalized
eigenspace $E_{\lambda^{-1}, ext}$ associated to the eigenvalue
$\lambda^{-1}$ of $\boT$ acting on $\boB^{-2, 2}_{ext}$. Moreover, as
$\min(k_h, k_v)\geq 3$, we have $\omega \in \boB^{-k_h, k_v}$.

To conclude the proof, it remains to show that $L_v(\xi^{(0)}_\sigma +
\omega) = 1/\Leb M$. Since $\xi^{(0)}_\sigma + \omega \in E_{\lambda^{-1},
ext}$, we have $L_v(\xi^{(0)}_\sigma + \omega) \in E_{1,ext}$. The
description of the spectrum in Corollary~\ref{cor:spectrum_ext} shows that
this space is just $E_1$. By Lemma~\ref{lem:E1}, it is made of constants. We
get the existence of a constant $c$ such that $L_v(\xi^{(0)}_\sigma + \omega)
= c$.

To identify $c$, we compute
\begin{equation*}
  c \Leb M = \langle c, 1 \rangle = \langle L_v(\xi^{(0)}_\sigma + \omega), 1\rangle
  = 1,
\end{equation*}
thanks to Lemma~\ref{lem:duality_ext}. This proves that $c = 1/\Leb M$.
\end{proof}

\begin{lem}
\label{lem:Lvinv_description} Let $k_h, k_v \geq 3$. Then all
$L_v$-annihilated distributions in $\boB^{-k_h, k_v}_{ext}$ are of the form
described in Theorem~\ref{thm:Lv_invariant_preserves_orientations}, i.e.,
they are linear combinations of distributions $\xi_\sigma - \xi_{\sigma'}$
for $\sigma,\sigma'\in \Sigma$, of $L_h^n \xi_\sigma$ with $n\geq 1$ and
$\sigma\in \Sigma$, of $1$, and of $L_h^n E^H_{\lambda^{-1}\mu_i}$ with
$n\geq 0$ and $i=1,\dotsc, 2g-2$.
\end{lem}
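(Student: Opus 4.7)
The strategy combines three ingredients from earlier in the paper: Proposition~\ref{prop:finite_codim} to isolate the pure singularity part of $\omega$; Proposition~\ref{prop:spurious_distributions} to replace the boundary distributions $\xi^{(0)}_\sigma$ by their $L_v$-normalized counterparts $\xi_\sigma$; and Lemma~\ref{lem:Lv_invariant_boB} to classify the $L_v$-annihilated part living in the unextended space $\boB^{-k_h,k_v}$. A short duality step handles the constant obstruction created by the fact that the $\xi_\sigma$ themselves are not $L_v$-annihilated.

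\textbf{Step 1 (kill $i_v\geq 1$).} First decompose $\omega=\tilde\omega+\sum_{\sigma,i_h,i_v}c_{\sigma,i_h,i_v}\,\xi^{(0)}_{\sigma,i_h,i_v}$ via Proposition~\ref{prop:finite_codim}. Since the horizontal distribution of $\xi^{(0)}_{\sigma,i_h,i_v}$ on a segment $I$ crossing $\Gamma_\sigma$ at $(x_I,y_I)$ is $\rho(y_I)\,y_I^{i_v}\,\delta^{(i_h)}_{x_I}$, a direct distributional computation along $\Gamma_\sigma$ yields
\[
L_v\,\xi^{(0)}_{\sigma,i_h,i_v}=i_v\,\xi^{(0)}_{\sigma,i_h,i_v-1}+R_{\sigma,i_h,i_v},
\]
where $R_{\sigma,i_h,i_v}$ collects the contribution of $\rho'$, is supported in a compact subset of $M-\Sigma$, and thus lies in $\boB^{-k_h-1,k_v-1}$. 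Applying $L_v$ to $\omega$ and using $L_v\omega=0$, the sum $\sum_{i_v\geq 1}c_{\sigma,i_h,i_v}\,i_v\,\xi^{(0)}_{\sigma,i_h,i_v-1}$ must lie in $\boB^{-k_h-1,k_v-1}\cap\boB^{-k_h,k_v-1}_{ext}$, so uniqueness in Proposition~\ref{prop:finite_codim} at level $(k_h,k_v-1)$ forces $c_{\sigma,i_h,i_v}=0$ whenever $i_v\geq 1$.

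\textbf{Step 2 (switch to $\xi_\sigma$ and use duality).} Since $\xi^{(0)}_{\sigma,i_h,0}=L_h^{i_h}\xi^{(0)}_\sigma$ and $\xi_\sigma-\xi^{(0)}_\sigma\in\boB^{-k_h,k_v}$ by Proposition~\ref{prop:spurious_distributions}, one rewrites
\[
\omega=\tilde\omega'+\sum_{\sigma,i_h}c_{\sigma,i_h,0}\,L_h^{i_h}\xi_\sigma,
\]
with $\tilde\omega'\in\boB^{-K,k_v}$ for some $K$. Now $L_v\xi_\sigma=1/\Leb(M)$ and $L_v L_h^{i_h}\xi_\sigma=L_h^{i_h}L_v\xi_\sigma=0$ for $i_h\geq 1$, so $L_v\omega=0$ reduces to $L_v\tilde\omega'=-\Leb(M)^{-1}\sum_\sigma c_{\sigma,0,0}$, a constant. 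Since $\tilde\omega'$ lives in the unextended space, Proposition~\ref{prop:dualite} and Lemma~\ref{lem:dualite} give $\langle L_v\tilde\omega',1\rangle=-\langle\tilde\omega',L_v 1\rangle=0$, whence $\sum_\sigma c_{\sigma,0,0}=0$. This lets us recast $\sum_\sigma c_{\sigma,0,0}\xi_\sigma$ as a linear combination of differences $\xi_\sigma-\xi_{\sigma'}$, and leaves $\tilde\omega'\in\boB^{-K,k_v}\cap\ker L_v$.

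\textbf{Step 3 (finish and identify the obstacle).} Lemma~\ref{lem:Lv_invariant_boB} expresses $\tilde\omega'$ as a linear combination of the constant distribution and of elements of $L_h^n(E_{\lambda^{-1}\mu_i}\cap\ker L_v)$. To reconcile this with the $E^H_{\lambda^{-1}\mu_i}$ appearing in the statement, observe that in the exceptional situation $\mu_j=\lambda^{-1}\mu'_j$ the kernel of the cohomology map on $E_{\lambda^{-1}\mu_j}\cap\ker L_v$ equals $L_h(E_{\lambda^{-1}\mu'_j}\cap\ker L_v)$, so applying $L_h^n$ absorbs this factor into the $\mu'_j$ series at index $n+1$; iterating, $\bigcup_{n,i}L_h^n(E_{\lambda^{-1}\mu_i}\cap\ker L_v)$ and $\bigcup_{n,i}L_h^n E^H_{\lambda^{-1}\mu_i}$ span the same subspace, closing the proof. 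The main technical hurdle is Step~1: one must verify that each remainder $R_{\sigma,i_h,i_v}$ genuinely lands in the subspace $\boB^{-k_h-1,k_v-1}\cap\boB^{-k_h,k_v-1}_{ext}$ used by Proposition~\ref{prop:finite_codim}, which hinges on $\rho'$ vanishing near the singularity so that these distributions can be approximated by smooth compactly supported functions.
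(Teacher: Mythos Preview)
Your proof is correct and follows essentially the same route as the paper's. The paper packages Steps~1 and~2 slightly differently---it first replaces the basis $\{\xi^{(0)}_{\sigma,i_h,i_v}\}$ by a hybrid $\{\xi_{\sigma,i_h,i_v}\}$ (equal to $L_h^{i_h}\xi_\sigma$ when $i_v=0$ and to $\xi^{(0)}_{\sigma,i_h,i_v}$ otherwise) and then observes that $L_v$ acts as a nilpotent shift $i_v\mapsto i_v-1$ on the quotient $\boB^{-k_h,k_v}_{ext}/(\boB^{-k_h-1,k_v}\cap\boB^{-k_h,k_v}_{ext})$, which is exactly your Step~1 reformulated. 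For the constraint $\sum_\sigma c_{\sigma,0,0}=0$, the paper invokes Lemma~\ref{lem:duality_ext} directly on $\omega$ rather than your Proposition~\ref{prop:dualite}/Lemma~\ref{lem:dualite} on $\tilde\omega'$, but since Lemma~\ref{lem:duality_ext} is itself proved by that very duality computation, the two arguments are equivalent.
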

\begin{proof}
Define a distribution $\xi_{\sigma, i_h, i_v} = L_h^{i_h} \xi_{\sigma, 0, 0}$
if $i_v = 0$ and $\xi_{\sigma, i_h, i_v} = \xi^{(0)}_{\sigma, i_h, i_v}$
otherwise. Then we have
\begin{equation*}
  \boB^{-k_h, k_v}_{ext} = (\boB^{-k_h-1, k_v} \cap \boB^{-k_h, k_v}_{ext}) \oplus \bigoplus_{i_h \leq k_h, i_v\leq k_v} \R \xi_{\sigma, i_h, i_v},
\end{equation*}
by Proposition~\ref{prop:finite_codim} and the fact that $\xi_{\sigma, i_h,
i_v} - \xi^{(0)}_{\sigma, i_h, i_v} \in \boB^{-k_h-1, k_v} \cap \boB^{-k_h,
k_v}_{ext}$. Write this decomposition as $\boB^{-k_h, k_v}_{ext}  = E \oplus
F$. On $\boB^{-k_h, k_v}_{ext} / E$, the operator $L_v$ maps $\xi_{\sigma,
i_h, i_v}$ to $\xi_{\sigma, i_h, i_v-1}$ if $i_v > 0$, and to $0$ if $i_v=0$.
Therefore, a distribution $\omega$ with $L_v \omega = 0$ must have zero
components on $\xi_{\sigma, i_h, i_v}$ for $i_v>0$: it can be written as
$\tilde \omega + \sum_{i_h \leq k_h} c_{\sigma, i_h} \xi_{\sigma, i_h, 0}$.
Moreover, $L_v \tilde \omega = 0$.

By Lemma~\ref{lem:duality_ext}, we have
\begin{equation*}
  0 = \langle L_v \omega, 1 \rangle = \sum_{\sigma} c_{\sigma, 0}.
\end{equation*}
This shows that $\omega -\tilde \omega$ belongs to the vector space generated
by the $\xi_\sigma-\xi_{\sigma'}$ over $\sigma, \sigma'$, and by all the
$L_h^n \xi_\sigma$ for $n>0$. Moreover, Lemma~\ref{lem:Lv_invariant_boB}
shows that $\tilde\omega$ belongs to the span of the constant distribution
and of $L_h^n E^H_{\lambda^{-1}\mu_i}$ with $n\geq 0$ and $i=1,\dotsc, 2g-2$.
This concludes the proof.
\end{proof}

Since all $L_v$-invariant distributions belong to some space $\boB^{-k_h,
k_v}_{ext}$ by Lemma~\ref{lem:all_Lvinv_in_boBext},
Theorem~\ref{thm:Lv_invariant_preserves_orientations} giving the
classification of all vertically invariant distributions follows directly
from Lemma~\ref{lem:Lvinv_description}.

\begin{rmk}
\label{rmk:cohomological} Although it is not needed for the above proof, it
is enlightening to describe a cohomological interpretation for all the
elements of $\boB^{-k_h, k_v}_{ext} \ker L_v$, i.e., for all vertically
invariant distributions.

If $\gamma$ is a continuous closed loop in $M-\Sigma$ and $\omega \in
\boB^{-k_h, k_v}_{ext} \cap \ker L_v$, one can define the integral
$\int_\gamma \omega$ just like for elements in $\boB^{-k_h, k_v} \cap \ker
L_v$ (see the discussion before Proposition~\ref{prop:define_H1}). This
integral only depends on $\gamma$ up to deformation in $M-\Sigma$. Therefore,
it defines an element of $H^1(M-\Sigma)$, that we denote by $[\omega]_{ext}$.
Contrary to the case of $\boB^{-k_h, k_v} \cap \ker L_v$, however, the
integral $\int_{\gamma_\sigma} \omega$ along a small loop $\gamma_\sigma$
around a singularity $\sigma$ does not have to vanish, so that
$[\omega]_{ext}$ is \emph{not} an element of $H^1(M)$ in general. Indeed, if
one considers two different singularities $\sigma$ and $\sigma'$, then
$\xi_\sigma-\xi_\sigma'$ is annihilated by $L_v$, but the corresponding
cohomology class integrates to $1$ along a small positive loop around
$\sigma$, and to $-1$ along a small positive loop around $\sigma'$. This is a
direct consequence of the definition of $\xi_\sigma^{(0)}$, with a Dirac mass
along a vertical segment ending at $\sigma$, that will be intersected once by
a small loop around $\sigma$. In general, for $\omega \in \boB^{-k_h,
k_v}_{ext} \cap \ker L_v$, one has
\begin{equation}
\label{eq:pwuixcvpoiupwoxv}
  \int_{\gamma_\sigma} [\omega]_{ext} = c_{\sigma, 0, 0}(\omega),
\end{equation}
where $c_{\sigma, 0, 0}$ is defined in the decomposition of
Proposition~\ref{prop:finite_codim}. Indeed, $\xi^{(0)}_{\sigma, 0, 0}$
contributes by $1$ to the integral along a small loop around $\sigma$, while
the contribution of all the other terms tends to $0$ when the loop tends to
$\sigma$. In fact, the map $\omega \mapsto c_{\sigma, 0, 0}(\omega)$ corresponds
to the~\emph{boundary operator} of~\cite{marmi_yoccoz_Holder} (it does not appear
in the case of Ruelle resonances as all our functions are continuous
in this setting).

If a distribution $\omega \in \boB^{-k_h, k_v}_{ext} \cap \ker L_v$ satisfies
$[f]_{ext} = 0$, then one proves as in Proposition~\ref{prop:integre_Lu} that
it can be written as $\omega=L_h \eta$ for some $\eta \in \boB_{ext}^{-k_h+1,
k_v} \cap \ker L_v$. Indeed, the proof of this proposition goes through, and
it is in fact easier as one does not need to show that the resulting object
one constructs by horizontal integration belongs to the closure of
$C_c^\infty(M-\Sigma)$, which is the hard part in
Proposition~\ref{prop:integre_Lu}.

With~\eqref{eq:pwuixcvpoiupwoxv} and Lemma~\ref{lem:duality_ext}, one has
\begin{equation*}
  \sum_{\sigma} \int_{\gamma_\sigma} [\omega]_{ext}
  = \sum_{\sigma} c_{\sigma, 0, 0}(\omega) =
  \langle L_v \omega, 1 \rangle
  = 0.
\end{equation*}
This corresponds to the fact that, in the homology of $M-\Sigma$, one has
$\sum [\gamma_\sigma] = 0$.

The cohomology classes one can get in this way are all cohomology classes
without any $[\dd y]$ component, i.e., orthogonal to $[\dd x]$, as one can
realize all such classes in $H^1(M)$ using $\boB^{-k_h, k_v}$, and one can
account for the additional $\Card \Sigma - 1$ dimensions in $H^1(M-\Sigma)$
by using the $\xi_\sigma - \xi_{\sigma'}$. It turns out that one can also
recover the class $[\dd y]$. Indeed, start from $\xi_{\sigma, 0, 0}$ and
consider a path $\gamma$ made of horizontal and vertical segments. As $\dd
\xi_{\sigma,0, 0}$ is exact, one may compute formally
\begin{equation*}
  0 = \int_\gamma \dd \xi_{\sigma,0, 0}
  = \int_\gamma L_h \xi_{\sigma, 0,0} \dd x + \int_{\gamma} L_v \xi_{\sigma, 0, 0} \dd y
  = \int_\gamma L_h \xi_{\sigma, 0,0} \dd x + \frac{1}{\Leb M} \int_{\gamma} \dd y,
\end{equation*}
where the last equality follows from
Proposition~\ref{prop:spurious_distributions}. It follows that the element
$-\Leb M \cdot L_h \xi_{\sigma, 0,0}$, which belongs to $\boB^{-k_h,
k_v}_{ext} \cap \ker L_v$, has a cohomology class whose integral along any
path coincides with the integral of $\dd y$ along this path, i.e., $[-\Leb M
\cdot L_h \xi_{\sigma, 0,0}]_{ext} = [\dd y]$. The above formal computation
can be made rigorous by smoothing the path $\gamma$ horizontally, as we did
to define the cohomology classes. This shows that, for $k_h, k_v \geq 3$, the
map from $\boB^{-k_h, k_v}_{ext} \cap \ker L_v$ to $H^1(M-\Sigma)$ is onto.
This is the analogue of~\cite[Theorem~7.1(ii)]{forni_deviation} in our
setting.
\end{rmk}

\section{Solving the cohomological equation}
\label{sec:cohomological}

Consider a $C^\infty$ function $f$ which is compactly supported away from the
singularity set $\Sigma$ on a translation surface $M$. Solving the
cohomological equation for the vertical flow on $M$ amounts to finding a
function $F$, which is smooth along vertical lines, and satisfies the
equality $L_v F = f$. In general, the function $F$ will not be compactly
supported on $M-\Sigma$, but it will hopefully be continuous on $M$. More
generally, one may ask how smooth the solution $F$ can be chosen.

A direct obstruction to solve the cohomological equation with a smooth
solution is given by distributions in the kernel of $L_v$: if $L_v \omega =
0$, then
\begin{equation*}
  \langle \omega, f \rangle = \langle \omega, L_v F \rangle
  =- \langle L_v \omega, F \rangle = 0,
\end{equation*}
where the last equalities make sense if $F$ belongs to the space on which
$\omega$ acts. Indeed, in general, a distribution $\omega \in
\boD^\infty(M-\Sigma)$ is in the dual of $C^\infty_c(M-\Sigma)$, so that
$\langle \omega, F\rangle$ does not make sense if $F$ is not $C^\infty$ or
not compactly supported away from $\Sigma$. However, many distributions act
on larger classes of functions, so an important question in the discussion
below will be to see if $\langle \omega, F\rangle$ is meaningful.

The Gottschalk-Hedlund theorem states that, for a minimal continuous flow on
a compact manifold, a continuous function is a continuous coboundary if and
only if its Birkhoff integrals $\int_0^T f(g_t x) \dd t$ are bounded
independently of $x$ and $T$. We will use a variation around this result due
to Giulietti-Liverani~\cite{giulietti_liverani}. Its interest is that it
gives an explicit formula for the coboundary, which we will use to study its
smoothness.

In this section, we fix once and for all a $C^\infty$ function $\chi:\R \to
[0,1]$ which is equal to $1$ on a neighborhood of $(-\infty, 0]$ and to $0$
on a neighborhood of $[1,\infty)$.

\begin{lem}
\label{lem:GL_coboundary} Consider a semiflow $g_t$ on a space $X$, and a
function $f:X \to \R$ for which there exist $C>0$ and $\epsilon>0$ and $r\in
\N$ with the following property: for any $x \in X$, for any $\tau\geq 1$, for
any function $\phi$ which is compactly supported on $(0,1)$,
\begin{equation}
\label{eq:ziuerpoiuprt}
  \abs*{\int_{t=0}^\tau \phi(t/\tau) f(g_t x) \dd t} \leq C\norm{\phi}_{C^r}/\tau^\epsilon.
\end{equation}
Then $f$ is a coboundary: there exists a function $F$ such that $\int_0^\tau
f(g_t x) \dd t = F(x) - F(g_\tau x)$ for all $x\in X$ and all $\tau \geq 0$.

More specifically, $F$ can be constructed as follows. Fix $\lambda > 1$.
Define a function $F_n(x) = \int_{t=0}^{\lambda^n} \chi(t/\lambda^n) f(g_t x)
\dd t$. Then $F_n$ converges uniformly to a function $F$ as above. Moreover,
$\abs{F_n(x)-F(x)} \leq C \lambda^{-\epsilon n}$ where $C$ does not depend on
$x$ or $n$.
\end{lem}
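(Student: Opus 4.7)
The plan is to prove both the convergence of $F_n$ and the coboundary identity by the same mechanism: each time I will exhibit the quantity to be controlled as an integral $\int_0^{T_n}\phi_n(t/T_n)f(g_t x)\dd t$ along an orbit piece of length $T_n\asymp \lambda^n$, against a test function $\phi_n\in C^\infty_c((0,1))$ whose $C^r$-norm stays bounded uniformly in $n$, so that the hypothesis~\eqref{eq:ziuerpoiuprt} applies and gives an $O(\lambda^{-n\epsilon})$ bound.

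For the convergence, I would write $\chi_n(t):=\chi(t/\lambda^n)$ and observe that $F_{n+1}(x)-F_n(x)=\int_0^{\infty}(\chi_{n+1}(t)-\chi_n(t))f(g_t x)\dd t$. The function $\chi_{n+1}-\chi_n$ is supported in $[0,\lambda^{n+1}]$ and vanishes on a neighborhood of $0$ (where both $\chi_n$ and $\chi_{n+1}$ equal $1$) and of $\lambda^{n+1}$ (where both vanish), so the rescaling $\phi_n(u):=\chi_{n+1}(\lambda^{n+1}u)-\chi_n(\lambda^{n+1}u)$ lies in $C^\infty_c((0,1))$. Because the $k$-th derivative of $\chi(\cdot/\lambda^m)$ is $O(\lambda^{-mk})$, the rescaling factor $\lambda^{(n+1)k}$ in each derivative is absorbed, giving $\|\phi_n\|_{C^r}\leq C$ uniformly in $n$. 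The hypothesis~\eqref{eq:ziuerpoiuprt} then yields $|F_{n+1}(x)-F_n(x)|\leq C'\lambda^{-(n+1)\epsilon}$, so the series telescopes to a uniformly Cauchy sequence converging to some $F$, with the quantitative remainder bound $|F(x)-F_n(x)|\leq C''\lambda^{-n\epsilon}$ obtained by summing the geometric tail.

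For the coboundary identity, I would compute, for $n$ large enough that $\lambda^n>\tau$,
\begin{equation*}
  F_n(x)-F_n(g_\tau x)=\int_0^\infty\chi_n(t)f(g_t x)\dd t-\int_\tau^\infty\chi_n(s-\tau)f(g_s x)\dd s.
\end{equation*}
Since $\chi_n(s-\tau)=1$ for $s\in[0,\tau]$ when $n$ is large, I can extend the second integral down to $0$ at the price of subtracting $\int_0^\tau f(g_s x)\dd s$, obtaining
\begin{equation*}
  F_n(x)-F_n(g_\tau x)=\int_0^\tau f(g_t x)\dd t+\int_0^\infty\bigl(\chi_n(t)-\chi_n(t-\tau)\bigr)f(g_t x)\dd t.
\end{equation*}
The remainder integrand is supported in $[0,\lambda^n+\tau]$ and vanishes on small neighborhoods of both endpoints, so the same rescaling trick with $T_n:=\lambda^n+\tau\leq 2\lambda^n$ produces $\tilde\phi_n\in C^\infty_c((0,1))$ with $\|\tilde\phi_n\|_{C^r}=O(1)$. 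Applying~\eqref{eq:ziuerpoiuprt} bounds the remainder by $C\lambda^{-n\epsilon}$, and passing to the limit $n\to\infty$ using the uniform convergence of $F_n$ gives $F(x)-F(g_\tau x)=\int_0^\tau f(g_t x)\dd t$, as required.

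The main obstacle, which is mostly a matter of careful book-keeping, is to arrange the rescaled test functions $\phi_n$ and $\tilde\phi_n$ so that they are genuinely compactly supported in the open interval $(0,1)$ (and not merely in $[0,1]$) with $C^r$-norm bounded independently of $n$; this is what makes the hypothesis~\eqref{eq:ziuerpoiuprt} directly applicable and prevents any boundary contribution from polluting the estimates.
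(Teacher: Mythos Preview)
Your proof is correct and follows essentially the same approach as the paper's. Both arguments hinge on recognizing that the relevant differences can be written as $\int_0^{T_n}\psi(t/T_n)f(g_t x)\dd t$ with $\psi\in C^\infty_c((0,1))$ of uniformly bounded $C^r$-norm, then invoking the hypothesis. One simplification you might note: your $\phi_n(u)=\chi(u)-\chi(\lambda u)$ is actually independent of $n$, so the $C^r$ bound is immediate without tracking the scaling of derivatives; the paper makes this explicit by defining a single $\phi(t)=\chi(t)-\chi(\lambda t)$ from the outset.
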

In fact, one can even prove that $\int_{t=0}^{\tau} \chi(t/\tau) f(g_t x) \dd
t$ converges to $F(x)$ at a uniform rate $O(1/\tau^\epsilon)$ when $\tau\to
\infty$, without having to restrict to the subsequence $\lambda^n$, with a
small modification of the following proof. We will not need this more precise
version of the lemma.
\begin{proof}
This is essentially a reformulation of~\cite[Lemmas~1.4
and~3.1]{giulietti_liverani}.

Define $\phi(t) = \chi(t)-\chi(\lambda t)$. This is a $C^\infty$ function
with compact support on $(0,1)$. Moreover,
\begin{equation*}
  F_{n+1}(x) - F_n(x) = \int_{t=0}^{\lambda^{n+1}} (\chi(t/\lambda^{n+1}) - \chi(t/\lambda^n)) f(g_t x) \dd t
  = \int_{t=0}^{\lambda^{n+1}} \phi(t/\lambda^{n+1}) f(g_t x) \dd t,
\end{equation*}
Under the assumptions of the lemma, this is bounded by
$C(\phi)/\lambda^{(n+1)\epsilon}$. This shows that $F_n(x)$ is a Cauchy
sequence, converging uniformly to a limit $F(x)$ with $\abs{F_n(x)-F(x)} \leq
C \lambda^{-\epsilon n}$.

To conclude, we should show that $F$ solves the cohomological equation. Let
us fix $x$ and $\tau$. We have
\begin{align*}
  F_n(g_\tau& x) + \int_{0}^\tau f(g_t x) \dd t - F_n(x)
  \\&= \int_{\tau}^{\lambda^n + \tau} \chi((t-\tau)/\lambda^n) f(g_t x) + \int_{0}^\tau \chi((t-\tau)/\lambda^n) f(g_t x) \dd t
  - \int_0^{\lambda^n} \chi(t/\lambda^n) f(g_t x) \dd t
  \\& = \int_0^{\lambda^n + \tau} \phi_{n,\tau}(t/(\lambda^n + \tau)) f(g_t x) \dd t,
\end{align*}
where
\begin{equation*}
  \phi_{n,\tau}(s) = \chi( ((\lambda^n+\tau) s - \tau)/\lambda^n) - \chi( (\lambda^n+\tau)s/\lambda^n).
\end{equation*}
The function $\phi_{n,\tau}$ has compact support in $(0,1)$ and uniformly
bounded $C^r$ norm when $n$ tends to infinity. By~\eqref{eq:ziuerpoiuprt}
applied to $\phi_{n,\tau}$, we deduce that $F_n(g_\tau x) + \int_{0}^\tau
f(g_t x) \dd t - F_n(x)$ tends to $0$. Passing to the limit, we get $F(g_\tau
x) + \int_0^\tau f(g_t x) - F(x) = 0$.
\end{proof}

We will denote by $C^k_h$ the space of functions $M\to \R$ which are $C^k$
along the horizontal direction and such that $L_h^i f$ is continuous and
bounded on $M-\Sigma$ for $i \leq k$. Elements of $C^k_h$ belong to $\check
\boB^{k, 0}$ by Lemma~\ref{lem:boB_riche}. To formulate the assumptions of
our theorems, we will use the following fact:
\begin{equation}
\label{eq:makes_sense}
  \langle \omega, f\rangle \text{ makes sense for $f \in C^{k+2}_h$ and
  $\omega \in E_\alpha$ with $\abs{\alpha} \geq \lambda^{-k-1}$}.
\end{equation}
Indeed, elements of $E_\alpha$ for $\abs{\alpha} \geq \lambda^{-k-1}$ belong
to $\boB^{-k-2, k+2}$ as the essential spectral radius of $\boT$ on this
space is $\leq \lambda^{-k-2} < \lambda^{-k-1}$. Therefore, since $f\in
\check \boB^{k+2, 0}$, the coupling $\langle \omega, f \rangle$ is well
defined by Proposition~\ref{prop:dualite} (exchanging the roles of the
horizontal and the vertical direction to make sure that the inequalities on
the exponents are satisfied). One could even weaken slightly more the
conditions, by requiring only $f \in C^{k+1+\epsilon}_h$ for $\epsilon>0$, by
exploring the route alluded to in Remark~\ref{rmk:non_integers} if one were
striving for minimal assumptions.

We will apply the previous lemma in the setting of the vertical flow on a
translation surface endowed with a pseudo-Anosov map preserving orientations,
with expansion factor$\lambda$. We obtain the following criterion to have a
continuous coboundary.

\begin{thm}
\label{thm:continuous_coboundary} Let $T$ be a linear pseudo-Anosov map
preserving orientations on a translation surface $(M,\Sigma)$. Denote by
$g_t$ the vertical flow on this surface. Consider a function $f$ on $M$ in
$C^2_h$. Assume that, for any $\omega \in \bigcup_{\abs{\alpha} \geq
\lambda^{-1}} E_\alpha$, one has $\langle \omega, f \rangle = 0$. Then $f$ is
a continuous coboundary: there exists a continuous function $F$ on $M$ such
that, for any $x$ and any $\tau$ such that $g_t x$ is well defined for $t\in
[0,\tau]$, holds
\begin{equation}
\label{eq:coboundary}
  \int_0^\tau f(g_t x) \dd t = F(x) - F(g_\tau x).
\end{equation}
\end{thm}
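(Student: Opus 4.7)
The plan is to apply Lemma~\ref{lem:GL_coboundary} to the vertical semiflow $g_t$, which reduces matters to establishing, for some $\epsilon>0$ and all $\phi \in C_c^\infty((0,1))$, $\tau\geq 1$ and $x$ for which $g_t x$ is defined on $[0,\tau]$, the uniform bound
\[
  \abs*{\int_0^\tau \phi(t/\tau)\, f(g_t x)\, \dd t} \leq C \norm{\phi}_{C^2}\, \tau^{-\epsilon}.
\]

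The first step exploits the commutation $T^n\circ g_t = g_{\lambda^{-n}t}\circ T^n$ together with the $T$-invariance of Lebesgue measure. Writing $\mu_{y,\tau',\phi} := \int_0^{\tau'} \phi(s/\tau')\, \delta_{g_s y}\,\dd s$ for the measure supported on the vertical segment based at $y$, the change of variable $s = t/\lambda^n$ yields the key identity
\[
  \int_0^\tau \phi(t/\tau)\, f(g_t x)\,\dd t = \lambda^n\, \langle \mu_{T^n x,\, \tau/\lambda^n,\, \phi},\, \check\boT^n f\rangle.
\]
Choose $n = n(\tau)$ so that $\tau' := \tau/\lambda^n \in [\lambda^{-1},1]$, hence $n\asymp \log\tau/\log\lambda$. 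A direct computation based on $L_v^j \mu_{y,\tau',\phi} = (\tau')^{-j}\mu_{y,\tau',\phi^{(j)}}$ (whose boundary terms vanish because $\phi$ has compact support in $(0,1)$), together with the fact that any horizontal segment meets the vertical support of $\mu$ in at most one point, yields $\norm{\mu_{y,\tau',\phi}}_{\boB^{0,2}} \leq C\norm{\phi}_{C^2}$ uniformly in $y$ and in $\tau'\in[\lambda^{-1},1]$.

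For the second factor, the hypothesis $f\in C^2_h$ gives $f\in\check\boB^{2,-2}$ via the swapped-direction analogue of Lemma~\ref{lem:boB_riche}. On this space the transfer operator $\check\boT$ has essential spectral radius at most $\lambda^{-2}$ (Theorem~\ref{thm:rho_ess} applied to $T^{-1}$), and the Baladi-Tsujii theorem identifies its spectrum above $\lambda^{-2}$ with the Ruelle set $\{1\}\cup\{\lambda^{-1}\mu_i\}_i$. The perfect duality $E_\alpha\times\check E_\alpha\to\R$ provided by Lemma~\ref{lem:dualite_Ealpha}, combined with the vanishing assumption $\langle\omega,f\rangle=0$ for every $\omega\in E_\alpha$ with $|\alpha|\geq\lambda^{-1}$, forces the spectral component of $f$ on each $\check E_\alpha$ with $|\alpha|\geq\lambda^{-1}$ to vanish. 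All surviving spectral contributions then have modulus strictly smaller than $\lambda^{-1}$, so $\norm{\check\boT^n f}_{\check\boB^{2,-2}} \leq C_\eta\, \lambda^{-(1+\eta)n}$ for some $\eta>0$. Combining both bounds through Proposition~\ref{prop:dualite} (whose hypotheses $-k_h+\check k_h=2$ and $k_v-\check k_v=0$ are satisfied by the exponents $(k_h,k_v)=(0,2)$ and $(\check k_h,\check k_v)=(2,2)$) produces the required estimate
\[
  \abs*{\int_0^\tau \phi(t/\tau) f(g_t x)\,\dd t} \leq C\norm{\phi}_{C^2}\, \lambda^{-\eta n} \lesssim \norm{\phi}_{C^2}\, \tau^{-\eta'}.
\]

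Lemma~\ref{lem:GL_coboundary} then yields a function $F$, defined as the uniform limit of the continuous functions $F_n(x)=\int_0^{\lambda^n}\chi(t/\lambda^n)f(g_t x)\,\dd t$, on the dense, full-measure subset of $M$ consisting of points whose forward vertical orbit is defined for all positive time, and satisfying~\eqref{eq:coboundary} there. The principal remaining obstacle is to extend $F$ continuously to the forward separatrices of $\Sigma$ and to $\Sigma$ itself. Using the local half-plane description of the vertical flow around each singularity together with the coboundary identity $F(x)=F(g_\tau x)+\int_0^\tau f(g_s x)\dd s$, whose right-hand side is controlled by the boundedness of $f$, one checks that all approaches to a given bad point converge to a common limiting value, producing the required continuous extension of $F$ to all of $M$.
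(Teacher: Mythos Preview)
Your derivation of the key decay estimate \eqref{eq:ziuerpoiuprt} is essentially the paper's argument (Lemma~\ref{lem:GL_criterion_satisfied}), although you take a small detour: instead of invoking Proposition~\ref{prop:dualite} with the auxiliary measure $\mu_{y,\tau',\phi}\in\boB^{0,2}$, one can simply observe that the integral $\int_0^1 \phi(s)\,\check\boT^n f(g_s y)\,\dd s$ is, by the very definition of the norm on $\check\boB^{2,-2}$, bounded by $\norm{\phi}_{C^2}\norm{\check\boT^n f}_{\check\boB^{2,-2}}$. Your route works too, but note that it requires checking that $\mu_{y,\tau',\phi}$ actually lies in the \emph{closure} of $C^\infty_c(M-\Sigma)$, not just in $\boB^{0,2}_{ext}$.

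The real issue is the last paragraph. You construct $F$ only on $M^+=\{x:\text{forward orbit infinite}\}$ and then claim that the coboundary identity together with the local half-plane picture lets you extend continuously across the forward separatrices. But this does not close: if $x_0$ lies on a forward separatrix (so $g_{t_0}x_0\in\Sigma$ for some $t_0>0$), then every point $g_\tau x_0$ with $\tau\in\R$ for which the flow is defined is \emph{still on that separatrix}, hence still outside $M^+$. The identity $F(x)=F(g_\tau x)+\int_0^\tau f$ therefore never relates $F(x_0)$ to a value you already control. Approaching $x_0$ from the two horizontal sides sends $g_\tau x$ (for $\tau$ just past $t_0$) into \emph{different} half-planes above the singularity, so there is no evident reason the two one-sided limits agree. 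And the uniform bound $|F-F_n^+|\le C\lambda^{-\epsilon n}$ is useless near $x_0$ once $\lambda^n>t_0$, since $F_n^+$ is then undefined there.

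The paper resolves this by running the whole construction a second time with the \emph{backward} semiflow $g_{-t}$, producing $F^-$ on $M^-=\{x:\text{backward orbit infinite}\}$. Since there are no vertical saddle connections, $M^+\cup M^-=M-\Sigma$, so every forward separatrix lies in $M^-$. One checks that $F_n^+-F_n^-\to 0$ uniformly on $M_n^+\cap M_n^-$, hence $F^+=F^-$ on $M^+\cap M^-$, and the two pieces glue to a function on all of $M-\Sigma$. Continuity at each $x\in M-\Sigma$ then follows because at least one of $F_n^\pm$ is defined and continuous on a full open neighbourhood of $x$. You should replace your final paragraph by this two-sided construction.
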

The assumptions of the theorem make sense by~\eqref{eq:makes_sense}. The
distributions appearing in the statement of the theorem have been completely
classified in Theorem~\ref{thm:main_preserves_orientations} and its proof. In
particular, they are all vertically invariant.

To prove this theorem, let us first check that the assumptions of the
Giulietti-Liverani criterion of Lemma~\ref{lem:GL_coboundary} are satisfied.

\begin{lem}
\label{lem:GL_criterion_satisfied} Under the assumptions of
Theorem~\ref{thm:continuous_coboundary}, there exists $\epsilon>0$ such that
the inequality $\abs*{\int_{t=0}^\tau \phi(t/\tau) f(g_t x) \dd t} \leq
C\norm{\phi}_{C^2}/\tau^\epsilon$ in~\eqref{eq:ziuerpoiuprt} holds, with
$r=2$.
\end{lem}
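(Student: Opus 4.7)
The plan is to exploit the pseudo-Anosov scaling $T\circ g_s = g_{s/\lambda}\circ T$ together with the spectral decomposition of $\check\boT$ (composition with $T^{-1}$) on the dual Banach scale. First, I would observe that
\[
\int_0^\tau \phi(t/\tau) f(g_t x)\,\dd t = \langle \omega_{\tau,x,\phi},\, f\rangle,
\]
where $\omega_{\tau,x,\phi}$ is the $0$-current supported on the vertical segment of length $\tau$ issued from $x$ with vertical density $\phi(\cdot/\tau)$. The identity $g_t x = T^{-n}(g_{t/\lambda^n}(T^n x))$ followed by the change of variable $s=t/\lambda^n$ gives the self-similarity
\[
\int_0^\tau \phi(t/\tau) f(g_t x)\,\dd t \;=\; \lambda^n \int_0^{\tau/\lambda^n} \phi\bigl(s/(\tau/\lambda^n)\bigr)\,(\check\boT^n f)(g_s(T^n x))\,\dd s,
\]
since $\check\boT^n f = f\circ T^{-n}$. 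Choosing $n=\lfloor\log_\lambda\tau\rfloor$ reduces the integral on the right to one over bounded time $\tau/\lambda^n\in[1,\lambda)$ applied to $\check\boT^n f$.

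Next, I would work in $\check\boB^{3,-2}$, on which the essential spectral radius of $\check\boT$ is at most $\lambda^{-2}<\lambda^{-1}$. Decompose $f = \sum_{|\alpha|\geq\lambda^{-1}}\check P_\alpha f + \check r$ via the spectral projectors on the generalized eigenspaces $\check E_\alpha$. The crucial point is that the vanishing hypothesis forces $\check P_\alpha f = 0$ for every $|\alpha|\geq\lambda^{-1}$. Indeed, the adjoint relation between spectral projectors (dual to the perfect pairing of Lemma~\ref{lem:dualite_Ealpha}) yields $\langle\omega,\check r\rangle=0$ for any $\omega\in E_\alpha$ with $|\alpha|\geq\lambda^{-1}$, while the orthogonality in Lemma~\ref{lem:dualite_Ealpha} kills the contributions $\check P_\beta f$ for $\beta\neq\alpha$. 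Therefore $\langle\omega,\check P_\alpha f\rangle = \langle\omega,f\rangle = 0$ for all $\omega\in E_\alpha$, and nondegeneracy of the perfect pairing between $E_\alpha$ and $\check E_\alpha$ gives $\check P_\alpha f = 0$. Consequently
\[
\|\check\boT^n f\|_{\check\boB^{3,-2}} \leq C\rho^n n^N
\]
for some $\rho<\lambda^{-1}$ and some $N$ accounting for Jordan blocks.

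To control the right-hand side, I would show $\omega_{\tau',y,\phi}\in\boB^{-1,2}$ with $\|\omega_{\tau',y,\phi}\|_{\boB^{-1,2}}\leq C\|\phi\|_{C^2}$ uniformly in $y$ and $\tau'\in[1,\lambda]$. This is done by approximating $\omega_{\tau',y,\phi}$ by horizontal smoothings $\rho_\epsilon(u-u_y)\phi((v-v_y)/\tau')$, noting that the delta in the horizontal direction converges in the $C^1$-dual and that each vertical derivative brings down a factor $1/\tau'\leq 1$ on $\phi^{(j)}(\cdot/\tau')$. The duality of Proposition~\ref{prop:dualite} (applied with $-k_h+\check k_h=2$ and $k_v-\check k_v=0$) then bounds
\[
\bigl|\langle \omega_{\tau/\lambda^n,T^n x,\phi},\,\check\boT^n f\rangle\bigr|
\leq C\,\|\omega_{\tau/\lambda^n,T^n x,\phi}\|_{\boB^{-1,2}}\,\|\check\boT^n f\|_{\check\boB^{3,-2}}
\leq C\|\phi\|_{C^2}\,\rho^n n^N.
\]
Combined with the self-similarity factor $\lambda^n$, this gives $|\mathcal{V}_\tau f(x)|\leq C\|\phi\|_{C^2}\,(\lambda\rho)^n n^N$. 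Since $\lambda\rho<1$ and $n\asymp\log_\lambda\tau$, setting $\epsilon$ slightly below $-\log_\lambda(\lambda\rho)>0$ absorbs the $n^N$ factor and produces the required bound $C\|\phi\|_{C^2}/\tau^\epsilon$.

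The main obstacle is the transfer of the vanishing hypothesis from the eigenvectors of $\boT$ on $\boB$ to the spectral projections of $f$ under $\check\boT$ on $\check\boB$. This requires carefully using Lemma~\ref{lem:dualite_Ealpha} in both directions (orthogonality between distinct eigenspaces and the perfect pairing on matching ones), together with the adjoint relation between the two spectral projectors that ensures the residual $\check r$ is annihilated by every $\omega\in E_\alpha$ with $|\alpha|\geq\lambda^{-1}$; the rest of the argument is then a fairly direct combination of the self-similarity of the vertical flow under $T$ and the Lasota--Yorke type decay for $\check\boT^n$.
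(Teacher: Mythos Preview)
Your approach is essentially the same as the paper's: renormalize via $T^n$ to reduce to a bounded-length vertical integral of $\check\boT^n f$, then use the spectral decomposition of $\check\boT$ together with the perfect duality of Lemma~\ref{lem:dualite_Ealpha} to show that the hypothesis $\langle\omega,f\rangle=0$ for all $\omega\in E_\alpha$, $\abs{\alpha}\geq\lambda^{-1}$, kills the components $\check P_\alpha f$ and forces $\norm{\check\boT^n f}$ to decay faster than $\lambda^{-n}$.

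There is, however, a minor index error and an unnecessary detour. You work in $\check\boB^{3,-2}$, but the hypothesis is only $f\in C^2_h$: by (the $\check\boB$-analogue of) Lemma~\ref{lem:boB_riche} this places $f$ in $\check\boB^{2,-\check k_v}$ for any $\check k_v$, not in $\check\boB^{3,-\check k_v}$, since only $L_h^j f$ for $j\leq 2$ is assumed bounded. The paper simply uses $\check\boB^{2,-2}$, where the essential spectral radius is still $\leq\lambda^{-2}<\lambda^{-1}$. More importantly, once you have the rescaled integral $\int_0^1 \phi(s)\,\check\boT^n f(g_s y)\,\dd s$, there is no need to realize the vertical segment as an element $\omega_{\tau',y,\phi}\in\boB^{-1,2}$ and invoke Proposition~\ref{prop:dualite}: by the very definition of the norm on $\check\boB^{2,-2}$ (integrals along vertical segments against compactly supported $C^2$ test functions), this integral is directly bounded by $C\norm{\phi}_{C^2}\norm{\check\boT^n f}_{\check\boB^{2,-2}}$. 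Your duality detour is correct in spirit (and would work with the corrected indices $\boB^{0,2}$ versus $\check\boB^{2,-2}$, since a Dirac mass has order $0$), but it rederives what is already built into the norm.
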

\begin{proof}
It suffices to prove the estimate for $\tau$ of the form $\lambda^n$, as the
case of a general $\tau$ follows by using $n$ such that $\tau \in
[\lambda^{n-1}, \lambda^n]$. Fix $x$ and $\phi$. We have
\begin{equation}
\label{eq:wpicvpowixcv}
  \int_0^{\lambda^n} \phi(t/\lambda^n) f(g_t x) \dd t
  = \lambda^n \int_0^1 \phi(s) f(T^{-n} (g_s (T^n x))) \dd s
  = \lambda^n \int_0^1 \phi(s) \check \boT^n f (g_s y) \dd s,
\end{equation}
for $y = T^n x$. The integral is the integral of $\check\boT^n f \in \check
\boB^{2, -2}$ along a vertical manifold against a $C^2$ smooth function.
Therefore, this is bounded by $\lambda^n \norm{\phi}_{C^2} \norm{\check\boT^n
f}_{\check \boB^{2, -2}}$.

On this space, the essential spectral radius of $\check\boT$ is $\leq
\lambda^{-2} < \lambda^{-1}$, by Theorem~\ref{thm:rho_ess}. Let us decompose
$f$ as $\sum_{\alpha} f_\alpha + \tilde f$, where $\alpha$ runs among the
(finitely many) eigenvalues of $\check \boT$ of modulus $>\lambda^{-2}$, and
$f_\alpha$ is the component of $f$ on the corresponding generalized
eigenspace $\check E_\alpha$. By assumption, $\langle \omega, f \rangle = 0$
for any $\omega \in E_\alpha$ with $\abs{\alpha} \geq \lambda^{-1}$. Thanks
to the perfect duality statement given in Lemma~\ref{lem:dualite_Ealpha}, this
gives $f_\alpha = 0$ for all such $\alpha$. Let $\gamma<\lambda^{-1}$ be such
that all eigenvalues of modulus $<\lambda^{-1}$ have in fact modulus
$<\gamma$. We deduce that $\norm{\check\boT^n f}_{\check \boB^{2, -2}}$ grows
at most like $C \gamma^n$. Together with~\eqref{eq:wpicvpowixcv}, this gives
\begin{equation*}
  \abs*{\int_0^{\lambda^n} \phi(t/\lambda^n) f(g_t x) \dd t}
  \leq C \norm{\phi}_{C^2} (\lambda \gamma)^n.
\end{equation*}
As $\lambda \gamma < 1$, one may write $\lambda \gamma = \lambda^{-\epsilon}$
for some $\epsilon>0$. Then this bound is of the form $C \norm{\phi}_{C^2}
/(\lambda^n)^\epsilon$, as requested.
\end{proof}

There is a difficulty to apply Lemma~\ref{lem:GL_coboundary} due to the
singularities, which imply that the flow is not defined everywhere for all
times. One can circumvent the difficulty by going to a bigger space in which
trajectories ending on a singularity are split into two trajectories going on
both sides of the singularity. This results in a compact space with a Cantor
transverse structure and a minimal flow, to which
Lemma~\ref{lem:GL_coboundary} applies. This classical strategy works well for
continuous coboundary results, but there are difficulties in higher
smoothness. Instead, we will use a strategy which avoids the use of such an
extension, and works also for higher smoothness. The idea is to iterate the
flow in forward time or backward time depending on the point one considers.

\begin{proof}[Proof of Theorem~\ref{thm:continuous_coboundary}]
Let $M_n^+ \subseteq M$ be the set of points for which the vertical flow is
defined for all times in $[0,\lambda^n]$, and let $M^+ = \bigcap_n M_n^+$,
i.e., the set of points that do not reach a singularity in finite positive
time. In the same way, but using backward time, we define $M_n^-$ and $M^-$.
Then $M -\Sigma = M^+ \cup M^-$ as there is no vertical saddle connection.

Let us define functions $F_n^+(x)$ on $M_n^+$ and $F_n^-$ on $M_n^-$ by
\begin{equation*}
  F_n^+(x) = \int_0^{\lambda^n} \chi(t/\lambda^n) f(g_t x) \dd t, \quad F_n^-(x) = - \int_0^{\lambda^n} \chi(t/\lambda^n) f(g_{-t} x) \dd t.
\end{equation*}
For $x \in M_n^+ \cap M_n^-$, the difference $F_n^+(x) - F_n^-(x)$ can be
written as
\begin{equation*}
  F_n^+(x) - F_n^-(x) = \int_{-\lambda^n}^{\lambda^n} \tilde \chi(t/\lambda^n) f(g_t x) \dd t,
\end{equation*}
where $\tilde \chi(t) = \chi(\abs{t})$. By
Lemma~\ref{lem:GL_criterion_satisfied}, this tends to $0$ like
$C(\tilde\chi)/(2\lambda^n)^\epsilon$.

Lemma~\ref{lem:GL_coboundary} applied to the semiflow $g_t$ on $M^+$, and to
the semiflow $g_{-t}$ on $M^-$, shows that $F_n^+(x)$ converges uniformly to
a function $F^+(x)$ on $M^+$, and that $F_n^-(x)$ converges uniformly to a
function $F^-(x)$ on $M^-$. From the fact that the difference between $F_n^+$
and $F_n^-$ is small where defined, we deduce that $F^+ = F^-$ on $M^+ \cap
M^-$. Let us define a function $F$ on $M-\Sigma$, equal to $F^+$ on $M^+$ and
to $F^-$ on $M^-$. By the above, we have
\begin{equation}
\label{eq:Fn+F}
  \abs{F_n^+(x) - F(x)} \leq C/\lambda^{\epsilon n} \text{ for $x \in M_n^+$}, \quad
  \abs{F_n^-(x) - F(x)} \leq C/\lambda^{\epsilon n} \text{ for $x \in M_n^-$}.
\end{equation}
Moreover, the function $F$ satisfies the coboundary
equation~\eqref{eq:coboundary}, as $F^+$ and $F^-$ satisfy it respectively on
$M^+$ and $M^-$ by Lemma~\ref{lem:GL_coboundary}.

Let us show that $F$ is continuous on $M-\Sigma$. Take $x \in M-\Sigma$, for
instance in $M^+$. Let $\delta>0$. Let $n$ be large. The function $F_n^+$ is
well defined and continuous on a neighborhood of $x$. In particular, it
oscillates by at most $\delta$ on a neighborhood of $x$. As $F$ differs from
$F_n^+$ by $C/\lambda^n$, we deduce that $F$ oscillates by at most
$\delta+C/\lambda^n$ on a neighborhood of $x$. This proves the continuity of
$F$ at $x$.

Finally, let us show that $F$ extends continuously to $\Sigma$. It suffices
to show that it is uniformly continuous on $M-\Sigma$. For this, it suffices
to show that it is uniformly continuous on small horizontal segments close to
a singularity, as uniform continuity along vertical segments follows from the
coboundary equation. Let $(I_t)_{t\in (0,\delta]}$ be a family of vertical
translates of horizontal segments such that $I_0$ contains a singularity. For
$x, y\in I_0$, we have $F(x) - F(y) = F(g_t x)-F(g_t y) + \int_0^t (f(g_s x)
- f(g_s y)) \dd s$. Thanks to the boundedness of $L_h f$, the last integral
is small if $x$ and $y$ are close and $t$ is small, while the first
difference is small if $x$ and $y$ are close enough thanks to the continuity
of $F$ on $I_t$. Hence, $F(x)-F(y)$ itself is small. This concludes the
proof.
\end{proof}

To get further smoothness results, one needs to assume more cancellations for
$f$. The next theorem gives such conditions ensuring that $F$ is $C^1$.
\begin{thm}
\label{thm:C1_coboundary} Under the assumptions of
Theorem~\ref{thm:continuous_coboundary}, assume additionally that $f \in
C^3_h$. Assume moreover that, for any $\omega \in \bigcup_{\abs{\alpha} \geq
\lambda^{-2}} E_\alpha \cap \ker L_v$, one has $\langle \omega, f \rangle =
0$. Then the function $F$ solving the cohomological
equation~\eqref{eq:coboundary} is $C^1$ along the horizontal direction, and
$L_h F$ extends continuously to $M$.
\end{thm}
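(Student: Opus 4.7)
The plan is to differentiate the construction of $F$ in the horizontal direction and observe that, formally, $L_h F$ should itself solve the cohomological equation with right-hand side $L_h f$, bringing us back to Theorem~\ref{thm:continuous_coboundary}. The point is that $L_h$ commutes with the vertical flow $g_t$, so
\begin{equation*}
  L_h F_n^+(x) = \int_0^{\lambda^n} \chi(t/\lambda^n) \cdot L_h f(g_t x) \dd t,
\end{equation*}
i.e.\ the horizontal derivative of the approximating integral for $f$ is the analogous approximating integral $G_n^+$ for $L_h f$, and similarly on the backward side. So the first task is to apply Theorem~\ref{thm:continuous_coboundary} to $L_h f$ to produce a continuous function $G$ on $M$ with $L_v G = L_h f$; the second task is to identify $G$ with $L_h F$.

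The main step, and the only nontrivial one, is verifying the hypotheses of Theorem~\ref{thm:continuous_coboundary} for $L_h f$. Since $f \in C^3_h$, clearly $L_h f \in C^2_h$. The cancellation hypothesis requires showing $\langle \omega, L_h f\rangle = 0$ for every $\omega \in E_\alpha$ with $\abs{\alpha}\geq \lambda^{-1}$. By the duality of Lemma~\ref{lem:dualite}, $\langle \omega, L_h f\rangle = -\langle L_h \omega, f\rangle$, and Corollary~\ref{cor:Lu_Ls_Erho} gives $L_h \omega \in E_{\lambda^{-1}\alpha}$ with $\abs{\lambda^{-1}\alpha}\geq \lambda^{-2}$. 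Using the present theorem's hypothesis therefore reduces to the claim that $L_h\omega \in \ker L_v$. For this, write $L_v(L_h\omega) = L_h(L_v\omega)$; by Corollary~\ref{cor:Lu_Ls_Erho} again, $L_v\omega \in E_{\lambda\alpha}$ with $\abs{\lambda\alpha}\geq 1$. But by Theorem~\ref{thm:main_preserves_orientations} the only Ruelle resonance of modulus $\geq 1$ is $1$, and $E_1$ consists of constants (Lemma~\ref{lem:E1}), so $L_v\omega$ is either zero or constant, and in both cases $L_h(L_v\omega) = 0$. Thus $L_h\omega\in \ker L_v$ and the cancellation is established.

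With this in hand, Theorem~\ref{thm:continuous_coboundary} applied to $L_h f$ produces a continuous $G$ on $M$ with $L_v G = L_h f$, built as a uniform limit of the functions $G_n^\pm$ on $M_n^\pm$ with the rate bound~\eqref{eq:Fn+F}. The identification $L_h F = G$ then follows by a standard argument: on $M_n^+$ the $C^\infty$ function $F_n^+$ has horizontal derivative $G_n^+$, while $F_n^+ \to F$ and $G_n^+ \to G$ uniformly; since this holds on every compact horizontal segment contained in $M^+$ (and similarly on $M^-$), $F$ is $C^1$ along every horizontal line in $M-\Sigma$ with $L_h F = G$. As $G$ extends continuously to $M$ by construction, so does $L_h F$. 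The main obstacle is the duality verification in the previous paragraph, which relies crucially on the absence of Ruelle resonances of modulus strictly greater than $1$; everything else is routine bookkeeping on top of the construction from Theorem~\ref{thm:continuous_coboundary}.
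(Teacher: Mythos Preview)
Your verification that $L_h f$ satisfies the hypotheses of Theorem~\ref{thm:continuous_coboundary} is correct and nicely done: the observation that $L_h\omega\in\ker L_v$ whenever $\omega\in E_\alpha$ with $\abs{\alpha}\geq\lambda^{-1}$ (because $L_v\omega\in E_{\lambda\alpha}$ must be a constant by Lemma~\ref{lem:E1}) is exactly the right mechanism, and it even lets you bypass the paper's preliminary reduction of Lemma~\ref{lem:stronger_assumptions} for this particular step.

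The identification $L_hF=G$, however, has a real gap. You invoke uniform convergence of $F_n^+$ and $G_n^+$ ``on every compact horizontal segment contained in $M^+$'', but no such nontrivial segments exist. The complement $M\setminus M^+$ is the union of the finitely many incoming vertical separatrices of the singularities; each of these half-leaves is dense by minimality of the vertical flow, so $I\setminus M^+$ is dense in \emph{every} horizontal segment $I$. The same holds for $M^-$. Thus the classical theorem on uniform convergence of derivatives cannot be applied on any interval, and patching $M^+$ and $M^-$ does not help either. What one actually has is that for $d=d(x,y)$ small one can choose $n$ with $\lambda^n d\asymp 1$ so that $[x,y]\subset M_n^+$ or $[x,y]\subset M_n^-$ (this is the choice made in Lemma~\ref{lem:Fh_Lipschitz}); then
\begin{equation*}
  \abs*{F(y)-F(x)-\int_x^y G}\leq 2\sup\abs{F-F_n^\pm}+d\cdot\sup\abs{G-G_n^\pm}.
\end{equation*}
For this to be $o(d)$ one needs $\abs{F-F_n^\pm}=O(\lambda^{-(1+\epsilon)n})$, not merely $O(\lambda^{-\epsilon n})$. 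Under the hypotheses of the theorem as stated, $f$ may fail to vanish against elements of $E_\alpha\setminus\ker L_v$ for $\lambda^{-2}\leq\abs{\alpha}<\lambda^{-1}$, so only the slow rate of Lemma~\ref{lem:GL_criterion_satisfied} is available and the argument does not close.

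This is precisely why the paper first performs the reduction of Lemma~\ref{lem:stronger_assumptions} (subtracting a smooth coboundary $L_v g_0$ so that $f$ vanishes against all of $\bigcup_{\abs{\alpha}\geq\lambda^{-2}}E_\alpha$), upgrades to the faster rate in Lemma~\ref{lem:GL_criterion_satisfied2}, proves $F$ is horizontally Lipschitz in Lemma~\ref{lem:Fh_Lipschitz}, and only then identifies the a.e.\ horizontal derivative of $F$ with $G$ via an ergodicity argument. Your application of Theorem~\ref{thm:continuous_coboundary} to $L_hf$ coincides with the paper's; what is missing is the quantitative control of $F_n^\pm-F$ needed to bridge the dense set of points where $F_n^+$ fails to be defined.
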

The assumptions of the theorem make sense by~\eqref{eq:makes_sense}. The
distributions appearing in the statement of the theorem have been completely
classified in Theorem~\ref{thm:main_preserves_orientations} and its proof.

Let us start with a preliminary reduction.
\begin{lem}
\label{lem:stronger_assumptions} To prove Theorem~\ref{thm:C1_coboundary}, it
is sufficient to prove it assuming the stronger condition that $\langle
\omega, f \rangle = 0$ for all $\omega \in \bigcup_{\abs{\alpha} \geq
\lambda^{-2}} E_\alpha$.
\end{lem}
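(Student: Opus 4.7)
The plan is to modify $f$ by subtracting the $L_v$-coboundary of a smooth compactly supported function, so that the modified function $\tilde f = f - L_v g$ satisfies the stronger orthogonality hypothesis: $\langle \omega, \tilde f\rangle = 0$ for \emph{every} $\omega \in \bigcup_{\abs{\alpha}\geq \lambda^{-2}} E_\alpha$, not only for those lying in $\ker L_v$. Once such a $g \in C^\infty_c(M-\Sigma)$ has been produced, applying the stronger form of Theorem~\ref{thm:C1_coboundary} to $\tilde f$ yields a continuous $\tilde F$ with $L_v \tilde F = \tilde f$ that is $C^1$ along horizontals, with $L_h \tilde F$ extending continuously to $M$. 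Then $\tilde F + g$ is a continuous solution of $L_v F = f$, so by minimality of the vertical flow it differs from the function $F$ produced by Theorem~\ref{thm:continuous_coboundary} only by a constant; consequently $F$ itself inherits the horizontal $C^1$-regularity, as required.

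To construct $g$, I will work inside the finite-dimensional space $V = \bigoplus_{\abs{\alpha}\geq \lambda^{-2}} E_\alpha$, whose finite-dimensionality comes from the quasi-compactness of $\boT$ (Theorem~\ref{thm:rho_ess}). Set $V_0 = V \cap \ker L_v$ and pick distributions $\omega_1,\dotsc,\omega_N \in V$ whose classes form a basis of $V/V_0$. Writing $g = \sum_j c_j g_j$ for test functions $g_j \in C^\infty_c(M-\Sigma)$ yet to be chosen, the orthogonality conditions on $\tilde f$ against elements of $V_0$ are automatic: for $\omega \in V_0$ one has $\langle \omega, L_v g_j\rangle = -\langle L_v \omega, g_j\rangle = 0$ by the duality of Lemma~\ref{lem:dualite}, and $\langle \omega, f\rangle = 0$ by the weak hypothesis. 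The remaining orthogonality requirements reduce to the finite linear system $\sum_j M_{ij} c_j = \langle \omega_i, f\rangle$ with $M_{ij} = \langle \omega_i, L_v g_j\rangle$, so the entire reduction hinges on choosing the $g_j$'s so that $M$ is invertible.

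This invertibility, which is the main point of the argument, will be established by a duality contradiction. Consider the linear map $\Phi \colon C^\infty_c(M-\Sigma) \to \R^N$ defined by $\Phi(g) = (\langle \omega_i, L_v g\rangle)_{i=1}^N$. If $\Phi$ were not surjective, there would exist scalars $(a_i)$, not all zero, such that $\langle \sum_i a_i \omega_i, L_v g\rangle = 0$ for every $g \in C^\infty_c(M-\Sigma)$; applying Lemma~\ref{lem:dualite} once more, this rewrites as $L_v(\sum_i a_i \omega_i) = 0$ in the distributional sense on $M-\Sigma$, forcing $\sum_i a_i \omega_i \in V_0$ and contradicting the choice of the $\omega_i$'s as representatives of a basis of $V/V_0$. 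Hence $\Phi$ is surjective; picking $g_j$'s with $\Phi(g_j) = e_j$ makes $M$ the identity matrix, the coefficients $c_j$ are then determined, and since each $L_v g_j$ lies in $C^\infty_c(M-\Sigma)$ the resulting $\tilde f$ retains the same $C^\infty_c(M-\Sigma)$ regularity as $f$. This completes the reduction.
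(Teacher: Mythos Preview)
Your proof is correct and follows the same strategy as the paper's: adjust $f$ by $L_v g$ with $g \in C^\infty_c(M-\Sigma)$ so as to annihilate the finitely many remaining obstructions, using that these form a finite-dimensional space of distributions separated by test functions (the paper packages this via the explicit splitting $E_{\lambda^{-2}\mu_i} = F_i \oplus (E_{\lambda^{-2}\mu_i}\cap\ker L_v)$ and realizes a linear form on $\bigoplus E_{\lambda^{-1}\mu_i}$, while you argue abstractly via the surjectivity of $\Phi$ on $V/V_0$; these are equivalent). One small slip: $f$ is only assumed to lie in $C^3_h$, not in $C^\infty_c(M-\Sigma)$, so your final sentence should say that $\tilde f$ retains the $C^3_h$ regularity of $f$, which indeed holds since $L_v g \in C^\infty_c(M-\Sigma) \subset C^3_h$.
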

The difference with the assumptions in Theorem~\ref{thm:C1_coboundary} is
that our new assumption is not restricted only to the vertically invariant
distributions.
\begin{proof}
Consider a function $f \in C^3_h$ such that $\langle \omega, f \rangle = 0$
for all $\omega \in \bigcup_{\abs{\alpha} \geq \lambda^{-2}} E_\alpha \cap
\ker L_v$. We can not deduce from the assumptions of the lemma that $f$ is a
smooth coboundary, as there might exist distributions $\omega \in E_\alpha -
\ker L_v$ with $\langle \omega, f \rangle \neq 0$. We will bring these
quantities back to $0$ by subtracting from $f$ a suitable coboundary. The
additional distributions we have to handle belong to $E_{\lambda^{-2}\mu_i}$
for some $\mu_i$ with $\abs{\mu_i} \in [1, \lambda)$. Denote by $F_i$ a
subspace of $E_{\lambda^{-2}\mu_i}$ sent isomorphically by $L_v$ to
$E_{\lambda^{-1}\mu_i}$. Then $E_{\lambda^{-2}\mu_i} = F_i \oplus
(E_{\lambda^{-2}\mu_i} \cap \ker L_v)$, see~\eqref{eq:flag_decomposition}.

Consider on $\bigoplus_{\abs{\mu_i} \in [1, \lambda)} E_{\lambda^{-1}\mu_i}$
the linear form $\omega \mapsto \langle L_v^{-1}\omega, f\rangle$, where by
$L_v^{-1}\omega$ we mean the unique $\tilde \omega \in \bigoplus F_i$ with
$L_v \tilde \omega = \omega$. As $\boB^{-k_h, k_v}$ is a space of
distributions, any linear form on a finite-dimensional subspace can be
realized by a smooth function. Hence, there exists $g_0 \in
C^\infty_c(M-\Sigma)$ such that, for any $\omega \in \bigoplus_{\abs{\mu_i}
\in [1, \lambda)} E_{\lambda^{-1}\mu_i}$, then $\langle L_v^{-1}\omega,
f\rangle = \langle \omega, g_0\rangle$. Hence, for $\tilde \omega \in
\bigoplus F_i$, applying the previous equality to $\omega = L_v\tilde\omega$,
we have
\begin{equation*}
  \langle \tilde \omega, f \rangle =
  \langle L_v \tilde \omega, g_0 \rangle = -\langle \tilde \omega, L_v g_0 \rangle.
\end{equation*}
This shows that the function $\tilde f = f + L_v g_0$ vanishes against any
distribution in $\bigoplus F_i$. It also vanishes against any distribution on
$\bigcup_{\abs{\alpha}\geq \lambda^{-2}} E_\alpha \cap \ker L_v$, as this is
the case of $f$ by assumption, and of $L_v g_0$. Hence, it vanishes against
all distributions in $\bigcup_{\abs{\alpha} \geq \lambda^{-2}} E_\alpha$.
Under the assumptions of the lemma, it follows that $f+L_v g_0$ can be
written as $L_v F$ for some function $F \in C^1_h$. Then $f= L_v(F-g_0)$,
concluding the proof.
\end{proof}

From this point on, we will assume that $f$ satisfies the strengthened
assumptions of Lemma~\ref{lem:stronger_assumptions}. To prove the theorem, we
start with a stronger version of Lemma~\ref{lem:GL_criterion_satisfied}.
\begin{lem}
\label{lem:GL_criterion_satisfied2} Under the assumptions of
Lemma~\ref{lem:stronger_assumptions}, there exists $\epsilon>0$ such that the
inequality $\abs*{\int_{t=0}^\tau \phi(t/\tau) f(g_t x) \dd t} \leq
C\norm{\phi}_{C^3}/\tau^{1+\epsilon}$ in~\eqref{eq:ziuerpoiuprt} holds, with
$r=3$.
\end{lem}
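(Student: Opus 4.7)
The plan is to mimic the proof of Lemma~\ref{lem:GL_criterion_satisfied}, gaining one extra factor of $\lambda^{-n}$ by upgrading the regularity class from $\check\boB^{2,-2}$ to $\check\boB^{3,-3}$ and by using the strengthened orthogonality hypothesis provided by Lemma~\ref{lem:stronger_assumptions}.

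First I would reduce to the case $\tau = \lambda^n$: a general $\tau$ lies in $[\lambda^{n-1},\lambda^n]$ and can be handled by rescaling $\phi$, at the cost of a multiplicative constant. Then, exactly as in~\eqref{eq:wpicvpowixcv}, the change of variables $t = \lambda^n s$ together with $y = T^n x$ and $g_t = T^{-n}\circ g_{\lambda^n t}\circ T^n$ gives
\begin{equation*}
  \int_0^{\lambda^n}\phi(t/\lambda^n)f(g_t x)\dd t = \lambda^n\int_0^1 \phi(s)\,\check\boT^n f(g_s y)\dd s.
\end{equation*}
Since $f\in C^3_h$, Lemma~\ref{lem:boB_riche} (applied with the roles of horizontal and vertical exchanged) places $f$ in $\check\boB^{3,-3}$. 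The integral on the right is the pairing of $\check\boT^n f$, viewed as an element of $\check\boB^{3,-3}$, with the $C^3_c$ test function $\phi$ along a vertical segment; by the very definition of $\check\boB^{3,-3}$ (see~\eqref{eq:norme_completion} with horizontals and verticals swapped), it is bounded by $\norm{\phi}_{C^3}\norm{\check\boT^n f}_{\check\boB^{3,-3}}$.

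Next I would control $\norm{\check\boT^n f}_{\check\boB^{3,-3}}$. By Theorem~\ref{thm:rho_ess} applied to $T^{-1}$, the essential spectral radius of $\check\boT$ on $\check\boB^{3,-3}$ is at most $\lambda^{-3}$. Decompose
\begin{equation*}
  f = \sum_{\abs{\alpha}>\lambda^{-3}} f_\alpha + \tilde f,
\end{equation*}
where $f_\alpha$ is the spectral projection of $f$ on the generalized eigenspace $\check E_\alpha$ and $\tilde f$ has spectral radius under $\check\boT$ at most some $\gamma'<\lambda^{-3}$. The perfect duality of Lemma~\ref{lem:dualite_Ealpha} identifies $\check E_\alpha$ with the dual of $E_\alpha$, so the strengthened hypothesis of Lemma~\ref{lem:stronger_assumptions} (that $\langle\omega,f\rangle=0$ for every $\omega\in E_\alpha$ with $\abs{\alpha}\geq\lambda^{-2}$) forces $f_\alpha=0$ whenever $\abs{\alpha}\geq\lambda^{-2}$. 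Consequently only finitely many eigenvalues $\alpha$ with $\lambda^{-3}<\abs{\alpha}<\lambda^{-2}$ can contribute. Fixing any $\gamma$ with $\max_\alpha\abs{\alpha}<\gamma<\lambda^{-2}$ (the maximum being taken over the contributing eigenvalues, and chosen so as to absorb the polynomial factors coming from possible Jordan blocks), we obtain $\norm{\check\boT^n f}_{\check\boB^{3,-3}}\leq C\gamma^n$.

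Combining the two estimates gives
\begin{equation*}
  \abs*{\int_0^{\lambda^n}\phi(t/\lambda^n)f(g_t x)\dd t}\leq C\norm{\phi}_{C^3}(\lambda\gamma)^n,
\end{equation*}
and $\lambda\gamma<\lambda^{-1}$ by choice of $\gamma$. Writing $\lambda\gamma=\lambda^{-1-\epsilon}$ for some $\epsilon>0$ yields the desired bound $C\norm{\phi}_{C^3}/\tau^{1+\epsilon}$ with $\tau=\lambda^n$. The only step that required any real input beyond Lemma~\ref{lem:GL_criterion_satisfied} was the use of Lemma~\ref{lem:dualite_Ealpha} to convert orthogonality against $E_\alpha$ into vanishing of the spectral components $f_\alpha\in\check E_\alpha$; the gain of one extra power of $\lambda^{-n}$ is then automatic, since the strengthened hypothesis pushes the surviving spectrum from below $\lambda^{-1}$ all the way below $\lambda^{-2}$.
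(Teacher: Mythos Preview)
Your proof is correct and follows essentially the same approach as the paper. The paper's own argument is a two-sentence sketch: it simply says the proof is identical to that of Lemma~\ref{lem:GL_criterion_satisfied}, with the only difference that the strengthened vanishing conditions from Lemma~\ref{lem:stronger_assumptions} kill more spectral components of $f$, forcing $\norm{\check\boT^n f}$ to decay faster; you have spelled this out in full detail, working in $\check\boB^{3,-3}$ instead of $\check\boB^{2,-2}$ and invoking the perfect duality of Lemma~\ref{lem:dualite_Ealpha} to annihilate the $f_\alpha$ with $\abs{\alpha}\geq\lambda^{-2}$.
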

\begin{proof}
The proof is the same as for Lemma~\ref{lem:GL_criterion_satisfied2}, with
the difference that the additional vanishing conditions in
Lemma~\ref{lem:stronger_assumptions} give more vanishing terms in the
spectral decomposition of $f$, and thus a faster decay of $\check \boT^n f$.
\end{proof}

Let us now prove that the function $F$ given by
Theorem~\ref{thm:continuous_coboundary} is Lipschitz along horizontal
segments. This is the main step of the proof.

\begin{lem}
\label{lem:Fh_Lipschitz} Under the assumptions of
Lemma~\ref{lem:stronger_assumptions}, there exists $C$ such that, for any
points $x,y$ on the same horizontal segment, one has $\abs{F(x)-F(y)} \leq C
d(x,y)$.
\end{lem}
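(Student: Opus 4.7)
The plan is to establish a uniform-in-$n$ horizontal Lipschitz bound on the prelimit approximations $F_n^\pm$ and then pass to the limit. By Lemma~\ref{lem:stronger_assumptions} I may assume the strengthened hypothesis that $\langle \omega, f\rangle = 0$ for every $\omega \in \bigcup_{\abs{\alpha} \geq \lambda^{-2}} E_\alpha$ (not merely those in the kernel of $L_v$). The first step is to verify that $L_h f$ inherits the hypothesis of Theorem~\ref{thm:continuous_coboundary}: since $f \in C^3_h$ we have $L_h f \in C^2_h$, and for $\omega \in E_\alpha$ with $\abs{\alpha}\geq \lambda^{-1}$, the duality formula of Lemma~\ref{lem:dualite} gives $\langle \omega, L_h f\rangle = -\langle L_h \omega, f\rangle$; Corollary~\ref{cor:Lu_Ls_Erho} places $L_h\omega$ in $E_{\lambda^{-1}\alpha}$ with $\abs{\lambda^{-1}\alpha} \geq \lambda^{-2}$, so the pairing vanishes by the strengthened hypothesis.

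Consequently, Lemma~\ref{lem:GL_criterion_satisfied} applied to $L_h f$ yields a bound $\abs*{\int_0^\tau \phi(t/\tau) L_h f(g_t z)\dd t} \leq C\norm{\phi}_{C^2}/\tau^\epsilon$, and then Lemma~\ref{lem:GL_coboundary} furnishes a continuous limit $G(z) = \lim_n G_n(z)$ with
\begin{equation*}
G_n(z) \coloneqq \int_0^{\lambda^n} \chi(t/\lambda^n) L_h f(g_t z)\dd t,
\end{equation*}
the convergence being uniform on $M^+$; in particular $\sup_{n,z}\abs{G_n(z)}<\infty$. Writing $x = y + sV^h$ with $s = d(x,y)$ small (for $x,y \in M^+$ on the same horizontal segment) and using the commutativity of horizontal and vertical translations, i.e., $g_t(y+rV^h) = g_t y + rV^h$, the fundamental theorem of calculus in the horizontal direction combined with Fubini yields
\begin{equation*}
F_n^+(x) - F_n^+(y) = \int_0^{\lambda^n} \chi(t/\lambda^n)\int_0^s L_h f(g_t y + rV^h)\dd r \dd t = \int_0^s G_n(y + rV^h)\dd r,
\end{equation*}
whence $\abs{F_n^+(x) - F_n^+(y)} \leq C s = C d(x,y)$ uniformly in $n$. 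Letting $n\to\infty$ and using uniform convergence $F_n^+ \to F$ on $M^+$ gives the desired bound for $x,y \in M^+$, and the symmetric argument with backward time and $F_n^-$ handles $x,y\in M^-$.

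To cover the remaining case where the horizontal segment joining $x$ and $y$ straddles $M^+$ and $M^-$, I would use that the complement of $M^+\cap M^-$ is the countable union of vertical separatrices emanating from $\Sigma$, and thus intersects any horizontal segment in an at most countable set. The Lipschitz estimate already established on the dense subset $(M^+ \cap M^-)\cap I$ of each horizontal segment $I$ then extends to all of $I$ by continuity of $F$ on $M-\Sigma$ (provided by Theorem~\ref{thm:continuous_coboundary}). The main obstacle I anticipate is the careful justification of the Fubini step: for each fixed $n$ one must check that the set of horizontal parameters $r\in[0,s]$ whose vertical orbit of length $\lambda^n$ hits a singularity is finite and that the identity $g_t(y+rV^h) = g_t y + rV^h$ is valid off this measure-zero exceptional set, which in turn forces $s$ to be taken smaller than the horizontal injectivity radius of the strip $\{g_t y : t\in[0,\lambda^n]\}$ — an estimate that itself holds once $n$ is fixed, but whose constants must be disentangled from the eventual passage $n\to\infty$.
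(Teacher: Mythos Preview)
Your concern in the final paragraph is exactly the fatal gap, and you have not resolved it. The identity
\begin{equation*}
  F_n^+(x) - F_n^+(y) = \int_0^s G_n(y + rV^h)\dd r
\end{equation*}
is only valid when the entire rectangle $\{g_t(y+rV^h) : 0 \leq r \leq s,\ 0\leq t\leq \lambda^n\}$ is free of singularities. Once a singularity sits in this strip, say the orbit of $z=y+r_0V^h$ hits $\Sigma$ at time $t_0<\lambda^n$, the orbits on the two sides of $z$ diverge after time $t_0$ (they pass on different sides of the singularity), and $F_n^+$ develops a jump at $z$ that the integral of $G_n$ does not account for. For fixed $s>0$ these jumps appear as soon as $\lambda^n s$ exceeds the systole, and their number grows like $\lambda^n s$; there is no reason for their sum to be $O(s)$ uniformly in $n$. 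So you cannot hold $s$ fixed and send $n\to\infty$.

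The paper's proof resolves this by \emph{coupling} $n$ to $d=d(x,y)$: one picks $n$ with $\lambda^n d\in(\delta/\lambda,\delta]$, so that $T^n$ maps the segment $[x,y]$ to a segment of length at most $\delta$, above which a singularity-free rectangle of height $1$ exists; equivalently the strip of height $\lambda^n$ above $[x,y]$ is singularity-free and your identity holds for this particular $n$. A telescoping argument using the decay $\norm{\check\boT^k f}_{\check\boB^{3,-3}}\leq C\lambda^{-(2+\epsilon)k}$ then gives $\abs{F_n^+(x)-F_n^+(y)}\leq Cd$. The remaining step --- passing from $F_n^+$ to $F$ --- is where the \emph{strengthened} decay of Lemma~\ref{lem:GL_criterion_satisfied2} (which you never invoke) is essential: one needs $\abs{F_n^+(x)-F(x)}\leq C\lambda^{-(1+\epsilon)n}$, not merely $C\lambda^{-\epsilon n}$, so that this error is $\leq C\lambda^{-n}\leq C'd$ by the choice of $n$. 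Your argument uses only the weaker $\tau^{-\epsilon}$ decay for $f$ itself, which would only give $\abs{F_n^+(x)-F(x)}\leq C d^\epsilon$, i.e., H\"older but not Lipschitz.
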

\begin{proof}
It suffices to prove the result for nearby points. Let $\delta>0$ be such
that any horizontal segment of size $\leq \delta$ can be completed above or
below to form a rectangle of vertical size $1$, not containing any
singularity. We will show the statement when $d = d(x,y)$ belongs to $(0,
\delta/\lambda)$.

Let $n \geq 1$ be the integer such that $\lambda^n d \in
(\delta/\lambda,\delta]$. Let $I$ be the horizontal interval between $x$ and
$y$. Assume for instance that $T^n I$ (which is of length $\leq \delta$) can
be completed above by a rectangle of height $1$ (otherwise, it can be
completed below, and the argument is the same but using $F_n^-$ instead of
$F_n^+$). In particular, there is no singularity in the rectangle of height
$\lambda^n$ above $I$. Note first that
\begin{equation*}
  \abs*{F^+_0(x)-F^+_0(y)} = \abs*{\int_{t=0}^1 \chi(t) f(g_t x) - f(g_t y) \dd t}.
\end{equation*}
As $L_h f$ is bounded by assumption and $g_t x$ and $g_t y$ are at distance
$d$ along a horizontal segment, we get
\begin{equation}
\label{eq:F0+woipuxcv}
  \abs*{F^+_0(x) - F^+_0(y)} \leq C d.
\end{equation}
Next, for $0<k\leq n$, we have $F^+_k(x)-F^+_{k-1}(x) =
\int_{t=0}^{\lambda^k} \phi(t/\lambda^k) f(g_t x) \dd t$ where $\phi(t) =
\chi(t)-\chi(\lambda t)$. Taking the difference, we get
\begin{align*}
  (F^+_k(x)-F^+_{k-1}(x)) - (F^+_k(y)-F^+_{k-1}(y))
  &= \int_{t=0}^{\lambda^k} \phi(t/\lambda^k) (f(g_t x) - f(g_t y)) \dd t
  \\& = \lambda^k \int_{s=0}^1 \phi(s) (\check \boT^k f(g_s x_k) - \check \boT^k f(g_s y_k)) \dd s,
\end{align*}
for $x_k = T^k x$ and $y_k = T^k y$, as in~\eqref{eq:wpicvpowixcv}. Since the
points $g_s x_k$ and $g_s y_k$ are on the same horizontal segment of length
$\lambda^k d$, we can integrate by parts and get
\begin{equation*}
  (F^+_k(x)-F^+_{k-1}(x)) - (F^+_k(y)-F^+_{k-1}(y))
  = \lambda^k \int_{u = y_k}^{x_k} \pare*{\int_{s=0}^1 \phi(s) L_h \check \boT^k f(g_s u) \dd s} \dd u.
\end{equation*}
Each integral over $s$ is an integral over a vertical segment, against a
smooth function $\phi$. By the definition of $\check\boB$, it is bounded by
$C\norm{\phi}_{C^3}\norm{\check\boT^k f}_{\check \boB^{3, -3}} $. Moreover,
the vanishing conditions on $f$ in the assumptions of
Theorem~\ref{thm:C1_coboundary} ensure that $\norm{\check \boT^k f}_{\check
\boB^{3, -3}}$ decays like $C\lambda^{-(2+\epsilon)k}$ for some $\epsilon>0$.
We get
\begin{align*}
  \abs*{(F^+_k(x)-F^+_{k-1}(x)) - (F^+_k(y)-F^+_{k-1}(y))}
  &\leq C \lambda^k \abs{x_k - y_k} \lambda^{-(2+\epsilon)k}
  = C \lambda^k \cdot \lambda^k d \cdot \lambda^{-(2+\epsilon)k}
  \\ & = C d \lambda^{-\epsilon k}.
\end{align*}
As the geometric series $\lambda^{-\epsilon k}$ is summable, we get starting
from~\eqref{eq:F0+woipuxcv} and summing over $k$ from $1$ to $n$ the
inequality
\begin{equation}
\label{eq:uiowoiuwxcvuio}
  \abs*{F^+_n(x) - F_n^+(y)} \leq C d.
\end{equation}
Moreover, by~\eqref{eq:Fn+F} (but with $\epsilon$ replaced by $1+\epsilon$
thanks to Lemma~\ref{lem:GL_criterion_satisfied2}), we have
\begin{equation*}
  \abs*{F_n^+(x) - F(x)} \leq C/\lambda^{(1+\epsilon) n}
  \leq C \lambda^{-n} \leq C (\lambda d/\delta),
\end{equation*}
thanks to the inequality $\lambda^n d \geq \delta/\lambda$. This is bounded
by $Cd$. In the same way, $\abs*{F_n^+(y) - F(y)} \leq C d$. Together
with~\eqref{eq:uiowoiuwxcvuio}, this gives $\abs*{F(x)-F(y)} \leq C d$.
\end{proof}
\begin{rmk}
\label{rmk:Holder} Under the weaker assumptions of
Theorem~\ref{thm:continuous_coboundary}, then the same proof goes through to
prove that $\abs{F(x)-F(y)} \leq C d(x,y)^\epsilon$, where $\epsilon$ comes
from Lemma~\ref{lem:GL_criterion_satisfied}. Hence, the
solution $F$ to the cohomological equation is automatically H\"older
continuous, without any further assumption. This corresponds in a different
setting to the main result of~\cite{marmi_yoccoz_Holder}.
\end{rmk}

\begin{proof}[Proof of Theorem~\ref{thm:C1_coboundary}]
Consider a function $f$ satisfying the assumptions of
Lemma~\ref{lem:stronger_assumptions}. We have to show that it is a $C^1$
coboundary. Let $F$ be the solution to the coboundary equation given by
Theorem~\ref{thm:continuous_coboundary}. By Lemma~\ref{lem:Fh_Lipschitz},
along any horizontal segment, it is differentiable almost everywhere, and
equal to the primitive of its derivative. We get a bounded measurable
function $F_h$ such that, for every horizontal interval $I$, for every $x,
y\in I$, one has
\begin{equation}
\label{eq:wuipxcvylnkmuy}
  F(y)-F(x) = \int_x^y F_h(u) \dd u.
\end{equation}
The difficulty is that we do not know if $F_h$ is continuous and well defined
everywhere.

The function $L_h f$ belongs to $C^2_h$. Moreover, it satisfies $\langle
\omega, L_h f\rangle = 0$ for $\omega \in \bigcup_{\abs{\alpha} \geq
\lambda^{-1}} E_\alpha$, as this is equal to $-\langle L_h \omega, f\rangle$,
which vanishes under the assumptions of Lemma~\ref{lem:stronger_assumptions}
as $L_h \omega \in \bigcup_{\abs{\alpha} \geq \lambda^{-2}} E_\alpha$. It
follows that $L_h f$ satisfies all the assumptions of
Theorem~\ref{thm:continuous_coboundary}. Hence, there exists a continuous
function $G$ on $M$ such that $\int_0^\tau L_h f(g_t x) = G(x)-G(g\tau x)$
for all $x$ and $\tau$.

Consider two points $x$ and $y$ on a small horizontal interval, and $\tau>0$
so that there is no singularity between the orbits $(g_s x)_{s\leq \tau}$ and
$(g_s y)_{s\leq \tau}$. Then one can compute
\begin{align*}
  \int_{u=x}^y (G-F_h)(u) - &(G-F_h)(g_\tau u) \dd u
  \\&= \int_{u=x}^y \int_0^\tau L_h f(g_t u) \dd t \dd u - (F(y)-F(x)) + (F(g_\tau y) - F(g_\tau x))
  \\& = \int_0^\tau f(g_t y) - f(g_t x) \dd t - (F(y)-F(x)) + (F(g_\tau y) - F(g_\tau x))
  = 0.
\end{align*}
Since this also holds along any subsegment $[x',y']$ of $[x,y]$, it follows
that $(G-F_h)(u) - (G-F_h)(g_\tau u)$ vanishes almost everywhere on the
segment $[x,y]$. One deduces that, for almost every $\tau \geq 0$ and almost
every $u\in M$, one has $(G-F_h)(g_\tau u) = (G-F_h)(u)$. By ergodicity of
the vertical flow, it follows that $G-F_h$ is almost everywhere constant, and
we can even assume that this constant vanishes by subtracting it from $G$ if
necessary.

By Fubini, for almost every horizontal interval $I$ one has $F_h = G$ almost
everywhere on $I$. On such an interval, we deduce
from~\eqref{eq:wuipxcvylnkmuy} the equality $F(y)-F(x) = \int_x^y G(u) \dd
u$. By continuity of $F$ and $G$, this equality extends to all horizontal
intervals. It follows from this formula that $F$ is differentiable in the
horizontal direction, with derivative $G$. As $G$ is continuous on $M$, this
concludes the proof of the theorem.
\end{proof}

The following theorem is the precise version of
Theorem~\ref{thm:coboundary_Ck_main} on $C^k$ solutions to the cohomological
equation.
\begin{thm}
\label{thm:coboundary_Ck} Under the assumptions of
Theorem~\ref{thm:continuous_coboundary}, assume additionally that $f \in
C^{k+2}_h$. Assume moreover that, for any $\omega \in \bigcup_{\abs{\alpha}
\geq \lambda^{-k-1}} E_\alpha \cap \ker L_v$, one has $\langle \omega, f
\rangle = 0$. Then the function $F$ solving the cohomological
equation~\eqref{eq:coboundary} is $C^k$ along the horizontal direction, and
$L^j_h F$ extends continuously to $M$ for all $j\leq k$.
\end{thm}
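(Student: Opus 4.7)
The plan is to proceed by induction on $k$, with base case $k=1$ provided by Theorem~\ref{thm:C1_coboundary}. For the inductive step, assuming the result at level $k-1 \geq 1$, I would apply the induction hypothesis twice: first to $f$ itself, and then to the horizontally-differentiated function $L_h f$, and finally match the two outputs.

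Applying the induction hypothesis to $f$ is immediate: $f \in C^{k+2}_h \subset C^{(k-1)+2}_h$, and vanishing against all $\omega \in E_\alpha \cap \ker L_v$ with $\abs{\alpha} \geq \lambda^{-k-1}$ trivially implies the same vanishing for $\abs{\alpha} \geq \lambda^{-k}$. Hence the coboundary $F$ from Theorem~\ref{thm:continuous_coboundary} is already $C^{k-1}$ horizontally, with $L_h^j F$ extending continuously to $M$ for all $j \leq k-1$. To apply the induction hypothesis to $L_h f$ I would verify the hypotheses at level $k-1$: the regularity is $C^{k+1}_h \subset C^{(k-1)+2}_h$, and for $\omega \in E_\alpha \cap \ker L_v$ with $\abs{\alpha} \geq \lambda^{-k}$, Lemma~\ref{lem:dualite} gives $\langle \omega, L_h f \rangle = -\langle L_h \omega, f \rangle$. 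Since $L_h$ and $L_v$ commute and $L_h$ shifts the eigenvalue by $\lambda^{-1}$ (Corollary~\ref{cor:Lu_Ls_Erho}), $L_h \omega$ lies in $E_{\lambda^{-1}\alpha} \cap \ker L_v$ with $\abs{\lambda^{-1}\alpha} \geq \lambda^{-k-1}$, so the hypothesis on $f$ forces this pairing to vanish. The induction hypothesis then produces a continuous function $G$ on $M$, of class $C^{k-1}$ horizontally with $L_h^j G$ continuous on $M$ for $j \leq k-1$, satisfying $\int_0^\tau L_h f(g_t x) \dd t = G(x) - G(g_\tau x)$.

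The key step, which I expect to be the main conceptual point, is identifying $L_h F$ with $G$. Because $g_\tau$ is a vertical translation in charts, it commutes with $L_h$, so differentiating the coboundary equation for $F$ horizontally (legitimate because $F$ is already at least $C^1$ horizontally, with $L_h F$ continuous on $M$) yields
\[
  \int_0^\tau L_h f(g_t x) \dd t = L_h F(x) - L_h F(g_\tau x).
\]
Subtracting the defining equation for $G$ shows that $L_h F - G$ is invariant under the vertical flow. As both $L_h F$ and $G$ are continuous on $M$, and the vertical flow is minimal (there are no vertical saddle connections for a pseudo-Anosov map), the difference $L_h F - G$ is a continuous invariant function and therefore constant. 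Absorbing this constant into $G$ gives $L_h F = G$. Since $G$ is $C^{k-1}$ horizontally with $L_h^{k-1} G$ continuous on $M$, it follows that $F$ is $C^k$ horizontally and $L_h^k F = L_h^{k-1} G$ extends continuously to $M$. Combined with the continuity of $L_h^j F$ for $j \leq k-1$ coming from the first application of the induction hypothesis, this closes the induction.
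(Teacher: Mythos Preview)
Your proof is correct and follows essentially the same route as the paper: induction on $k$, applying the inductive hypothesis to $L_h f$ (after verifying its hypotheses via $\langle \omega, L_h f\rangle = -\langle L_h\omega, f\rangle$), and then identifying $L_h F$ with the resulting $G$ up to a constant using minimality of the vertical flow. The only cosmetic difference is that the paper invokes Theorem~\ref{thm:C1_coboundary} directly for the $C^1$-regularity of $F$ rather than the inductive hypothesis at level $k-1$ applied to $f$, but both choices work equally well.
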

The assumptions of the theorem make sense by~\eqref{eq:makes_sense}. As
explained after that equation, the assumptions of the theorem could even be
weakened to $f \in C^{k+1+\epsilon}_h$. The loss of $1+\epsilon$ derivatives
corresponds in this setting to the result of Forni on the regularity loss in
the cohomological equation on almost every translation
surface~\cite{forni_regularity}. The conclusion can also be strengthened as
the $k$-th derivative is also H\"older continuous for some small exponent, see
Remark~\ref{rmk:Holder}.
\begin{proof}
We argue by induction on $k$, the cases $k=0$ and $k=1$ being true thanks to
Theorems~\ref{thm:continuous_coboundary} and~\ref{thm:C1_coboundary}. Assume
$k\geq 2$. By Theorem~\ref{thm:C1_coboundary}, there exists a function $F$
solving the cohomological equation for $f$, such that $L_h F$ is well defined
and continuous. Differentiating horizontally, one gets that $L_h F$ is a
continuous function, solving the cohomological equation for $L_h f$.

Moreover, the function $L_h f$ satisfies all the assumptions of the theorem
for the smoothness degree $k-1$. By the inductive assumption, there exists a
function $G$ solving the cohomological equation for $L_h f$, such that $L_h^i
G$ is well defined for $i \leq k-1$. The functions $G$ and $L_h F$ solve the
same cohomological equation. Hence, $G-L_h F$ is constant along orbits of the
vertical flow. As this flow is minimal, it follows that $G-L_h F$ is
constant. Therefore, $L_h F$ has $k-1$ continuous horizontal derivatives.
This concludes the proof.
\end{proof}

\section{When orientations are not preserved}
\label{sec:orientations}

\subsection{Orientable foliations whose orientations are not preserved}
\label{subsec:orientable_foliations}

Consider a translation surface $(M,\Sigma)$, and a linear pseudo-Anosov map
$T$ on $M$ which does not necessarily preserve the orientations of the
horizontal and vertical foliations. There are two global signs $\epsilon_h$
and $\epsilon_v$ indicating respectively if $T$ preserves the orientations of
the horizontal and the vertical foliations. Then the spectrum of $T^*$ on
$H^1(M)$ is given by $\epsilon_h \lambda$, by $\epsilon_v \lambda^{-1}$, and
by $\Xi = \{\mu_1,\dotsc, \mu_{2g-2}\}$ with $\abs{\mu_i} \in (\lambda^{-1},
\lambda)$ (where this last property follows from the same result for the map
$T^2$, which preserves orientations). One can describe the Ruelle spectrum
exactly as we did in the orientations preserving case, with the only
difference that the commutation relations between the composition operator
$\boT$ and the horizontal and vertical derivatives are not the same:
Proposition~\ref{prop:action_Lv} should be replaced by the equalities
\begin{equation*}
  \boT \circ L_v = \epsilon_v \lambda L_v \circ \boT,
  \quad
  \boT \circ L_h = \epsilon_h \lambda^{-1} L_h \circ \boT
\end{equation*}
on appropriate spaces. On the other hand, the definition of the Banach spaces
$\boB^{-k_h, k_v}$ need not be changed (their very definition in
Section~\ref{sec:def_boB} is independent of the existence of a pseudo-Anosov
map on the surface).

The largest eigenvalues of $\boT$, in addition to $1$, are given by
$\epsilon_h \lambda^{-1} \mu_i$. Then, to build new eigenfunctions from such
an eigenfunction, one can either differentiate in the horizontal direction,
or integrate in the vertical direction. When $\epsilon_h \neq \epsilon_v$,
this gives rise to two different eigenvalues, while when they coincide one
obtains the same eigenvalue again. In general, choosing to apply $k-1$
horizontal derivatives and $\ell$ vertical integrations (with $k \geq 1$ and
$\ell \geq 0$) gives an eigenfunction for the eigenvalue $\epsilon_h^k
\epsilon_v^\ell \lambda^{-k-\ell}\mu_i$. Hence, one obtains the following
description of the spectrum:

\begin{thm}
\label{thm:orientable_not_preserved_foliations} Let $T$ be a linear
pseudo-Anosov map on a translation surface of genus $g$, with orientable
horizontal and vertical foliations. Denoting by $\lambda
> 1$ its expansion factor, then the spectrum of $T^*$ on $H^1(M)$ has the
form $\{\epsilon_h \lambda, \epsilon_v
\lambda^{-1},\mu_1,\dotsc,\mu_{2g-2}\}$ with $\abs{\mu_i} \in
(\lambda^{-1},\lambda)$ for all $i=1,\dotsc, 2g-2$. Then $T$ has a Ruelle
spectrum on $\boC = C^\infty_c(M-\Sigma)$, given (with multiplicities) by
\begin{equation*}
  \{1\} \cup \bigcup_{i=1}^{2g-2} \bigcup_{k \geq 1} \bigcup_{\ell \geq 0} \{\epsilon_h^k \epsilon_v^\ell \lambda^{-k-\ell}\mu_i\}.
\end{equation*}
\end{thm}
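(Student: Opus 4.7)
The proof parallels Section~\ref{sec:Ruelle_spectrum}, with signs $\epsilon_h, \epsilon_v$ tracked through every argument. First, the anisotropic Banach spaces $\boB^{-k_h, k_v}$ constructed in Section~\ref{sec:def_boB} are defined independently of the dynamics, and the Lasota-Yorke inequality of Proposition~\ref{prop:LY} uses only the magnitudes of the contraction and expansion rates. Consequently Theorem~\ref{thm:rho_ess} still yields that $\boT$ is quasi-compact on $\boB^{-k_h, k_v}$ with essential spectral radius bounded by $\lambda^{-\min(k_h, k_v)}$, so the Ruelle resonances exist and form a discrete set of eigenvalues of finite multiplicity.

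The commutation relations $\boT \circ L_v = \epsilon_v \lambda\, L_v \circ \boT$ and $\boT \circ L_h = \epsilon_h \lambda^{-1}\, L_h \circ \boT$ yield the modified version of Corollary~\ref{cor:Lu_Ls_Erho}: $L_v$ maps $E_\alpha$ into $E_{\epsilon_v \lambda \alpha}$ and $L_h$ maps $E_\alpha$ into $E_{\epsilon_h \lambda^{-1} \alpha}$, so iterates of $L_v$ still annihilate every element of $E_\alpha$. For each $\mu_i \in \Xi$, the argument of Theorem~\ref{thm:realise_H1} carries over, with the computation of $(T^n)^*\omega$ now reading $(T^n)^*\omega = \epsilon_h^n \lambda^n (\boT^n\omega_x)\dd x + \epsilon_v^n \lambda^{-n}(\boT^n \omega_y)\dd y$; matching dominant asymptotics produces a distribution $f_i \in E_{\epsilon_h \lambda^{-1}\mu_i} \cap \ker L_v$ whose cohomology class is the $\mu_i$-eigenvector of $T^*$. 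The auxiliary statements used in this proof (Lemma~\ref{lem:Lh_0}, Proposition~\ref{prop:define_H1}, Proposition~\ref{prop:integre_Lu}, Lemma~\ref{lem:Lv_preimage_1}, Lemma~\ref{lem:E1}) make no reference to the orientation behaviour of $T$ and transfer without change.

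Starting from $f_i$, applying $L_h^{k-1}$ and then taking $\ell$ successive preimages under $L_v$ (the surjectivity of $L_v : E_{\epsilon_v^{-1}\lambda^{-1}\alpha} \to E_\alpha$ follows from the same duality argument as Theorem~\ref{thm:Lv_integre}, since the relevant involution $(\boT, \check\boT)$ and the pairing of Proposition~\ref{prop:dualite} are sign-independent) yields nonzero elements of $E_{\epsilon_h^k \epsilon_v^\ell \lambda^{-(k+\ell)}\mu_i}$ for every $k \geq 1$ and $\ell \geq 0$, showing that these values are Ruelle resonances. Conversely, for any nonzero eigenvalue $\alpha$ with $E_\alpha \neq 0$, the inductive argument of Proposition~\ref{prop:partiel} applies: apply $L_v$ maximally many times to land in $\ker L_v$, then apply $L_h$-preimages until the cohomology class becomes nonzero. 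That class is a generalised eigenvector of $T^*$ on $H^1(M)$ whose eigenvalue lies in $\{\epsilon_h \lambda, \epsilon_v \lambda^{-1}\}\cup \Xi$; the case $\epsilon_v \lambda^{-1}$ is ruled out by Lemma~\ref{lem:Lv_preimage_1}, the case $\epsilon_h \lambda$ forces $\alpha = 1$ via Lemma~\ref{lem:E1}, and in the remaining case $\alpha = \epsilon_h^k \epsilon_v^\ell \lambda^{-(k+\ell)}\mu_i$ for suitable $k\geq 1$, $\ell \geq 0$, $i$. Multiplicities are read off the flag decomposition~\eqref{eq:flag_decomposition}, now indexed by the same lattice $(k_x, k_y)$ of Figure~\ref{fig:dessin}; the only novelty is that the signs $\epsilon_h^{k_x}\epsilon_v^{k_y}$ may rearrange which lattice points contribute to which eigenvalue.

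The only delicate bookkeeping concerns superpositions of the form $\epsilon_h^k \epsilon_v^\ell \lambda^{-(k+\ell)}\mu_i = \epsilon_h^{k'}\epsilon_v^{\ell'}\lambda^{-(k'+\ell')}\mu_{i'}$, in which case the multiplicities are summed as in the remark following Theorem~\ref{thm:main_preserves_orientations}. This is the main place where some care is required, but no new analytic obstacle arises: the signs enter the argument exclusively through the commutation relations, which have already been tracked, and the functional-analytic core of the proof (quasi-compactness, duality, and the cohomological realisation step) is insensitive to whether $T$ preserves the orientation of the foliations.
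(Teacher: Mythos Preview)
Your proposal is correct and follows exactly the approach the paper indicates: the entire argument of Section~\ref{sec:Ruelle_spectrum} goes through verbatim once the commutation relations of Proposition~\ref{prop:action_Lv} are replaced by $\boT \circ L_v = \epsilon_v \lambda\, L_v \circ \boT$ and $\boT \circ L_h = \epsilon_h \lambda^{-1} L_h \circ \boT$, since the Banach-space construction, the Lasota--Yorke inequality, the cohomological realisation, and the duality argument are all insensitive to the signs. The paper itself does not spell out these details beyond the paragraph preceding the theorem, so your write-up is in fact more explicit than the original.
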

For $\epsilon_h=\epsilon_v=1$, one recovers
Theorem~\ref{thm:main_preserves_orientations}.

One can also obtain a full description of the vertically invariant
distributions, and solve the cohomological equation for the vertical flow.
However, the simplest way to do this is certainly to apply the results of the
previous sections to the map $T^2$, which preserves orientations, so we will
not discuss these results any further.

It is more interesting to check that the trace formula of
Theorem~\ref{thm:trace_formula}  still holds in this more general context.
\begin{thm}
\label{thm:trace_orientable} Let $T$ be a linear pseudo-Anosov map on a
compact surface with orientable horizontal and vertical foliations. Then, for
all $n$,
\begin{equation}
\label{eq:sdfuiolkjxcvuiopaa}
  \flattr(\boT^n) = \sum_\alpha d_\alpha \alpha^n,
\end{equation}
where the sum is over all Ruelle resonances $\alpha$ of $T$, and $d_\alpha$
denotes the multiplicity of $\alpha$.
\end{thm}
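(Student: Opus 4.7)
The plan is to follow exactly the same strategy as in the proof of Theorem~\ref{thm:trace_formula}, replacing the ingredients by their ``signed'' versions adapted to the fact that $T$ may reverse horizontal and/or vertical orientations. Two inputs are needed: the Lefschetz fixed point formula to compute $\sum_{T^n x = x} \ind_{T^n} x$ on the cohomology side, and the description of the Ruelle spectrum from Theorem~\ref{thm:orientable_not_preserved_foliations} on the spectral side. Before starting, one must use the general form of the flat trace recalled in the introduction: since in the local charts $DT = \mathrm{diag}(\epsilon_h\lambda, \epsilon_v\lambda^{-1})$, one has $\det(I-DT^n) = (1-\epsilon_h^n\lambda^n)(1-\epsilon_v^n\lambda^{-n})$ at every fixed point (regular or singular), so by the recipe given in the introduction
\begin{equation*}
  \flattr(\boT^n) = \frac{\sum_{T^n x = x} \ind_{T^n} x}{(1-\epsilon_h^n\lambda^n)(1-\epsilon_v^n\lambda^{-n})}.
\end{equation*}

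For the Lefschetz computation, the action of $T^*$ on $H^0(M)$ is trivial, the action on $H^1(M)$ has eigenvalues $\epsilon_h\lambda, \epsilon_v\lambda^{-1}, \mu_1, \dotsc, \mu_{2g-2}$ by the hypothesis, and the action on $H^2(M)$ is multiplication by $\epsilon_h\epsilon_v$ (this is just $\det(DT)$, which is globally constant). Therefore
\begin{equation*}
  \sum_{T^n x = x} \ind_{T^n} x = 1 - \pare*{\epsilon_h^n\lambda^n + \epsilon_v^n\lambda^{-n} + \sum_{i=1}^{2g-2}\mu_i^n} + (\epsilon_h\epsilon_v)^n.
\end{equation*}

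For the spectral side, one uses Theorem~\ref{thm:orientable_not_preserved_foliations}: each triple $(i,k,\ell)$ with $1 \leq i \leq 2g-2$, $k\geq 1$, $\ell\geq 0$ contributes a Ruelle resonance $\epsilon_h^k\epsilon_v^\ell \lambda^{-k-\ell}\mu_i$, and the multiplicity at a point $\alpha$ is the number of triples mapping there. Hence, ignoring the convergence issue which is handled termwise exactly as in the orientable case since $\abs{\epsilon_h^n\lambda^{-n}} = \abs{\epsilon_v^n\lambda^{-n}} = \lambda^{-n}<1$,
\begin{equation*}
  \sum_\alpha d_\alpha \alpha^n
  = 1 + \sum_{i=1}^{2g-2} \mu_i^n \sum_{k\geq 1}\sum_{\ell\geq 0}(\epsilon_h^n\lambda^{-n})^k(\epsilon_v^n\lambda^{-n})^\ell
  = 1 + \sum_{i=1}^{2g-2}\frac{\epsilon_h^n\lambda^{-n}\,\mu_i^n}{(1-\epsilon_h^n\lambda^{-n})(1-\epsilon_v^n\lambda^{-n})}.
\end{equation*}
Multiplying numerator and denominator by $\epsilon_h^n\lambda^n$ rewrites the fraction as $-\mu_i^n/[(1-\epsilon_h^n\lambda^n)(1-\epsilon_v^n\lambda^{-n})]$, exactly as in the orientation-preserving case.

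Putting the last two displays on a common denominator $(1-\epsilon_h^n\lambda^n)(1-\epsilon_v^n\lambda^{-n})$ and expanding this product as $1 - \epsilon_h^n\lambda^n - \epsilon_v^n\lambda^{-n} + (\epsilon_h\epsilon_v)^n$, one finds that the numerator of $\sum_\alpha d_\alpha \alpha^n$ coincides term-by-term with $\sum_{T^n x = x} \ind_{T^n} x$ given by the Lefschetz formula above, which proves~\eqref{eq:sdfuiolkjxcvuiopaa}. There is really no obstacle here: the only place where attention is needed is to make sure that the signs match when one goes from the geometric series (where $u = \epsilon_h^n\lambda^{-n}$ has modulus $<1$) to the ``upstairs'' form with $\epsilon_h^n\lambda^n$ appearing in the denominator of the flat trace, which is the same algebraic manipulation as in Theorem~\ref{thm:trace_formula}.
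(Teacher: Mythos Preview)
Your proof is correct and follows essentially the same approach as the paper: both compute $\sum_{T^n x = x}\ind_{T^n} x$ via the Lefschetz formula (with the $H^2$-contribution being $(\epsilon_h\epsilon_v)^n$), sum the Ruelle resonances from Theorem~\ref{thm:orientable_not_preserved_foliations} as a double geometric series in $(\epsilon_h^n\lambda^{-n})^k(\epsilon_v^n\lambda^{-n})^\ell$, and match the two after the same algebraic rewriting.
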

\begin{proof}
We follow the proof of Theorem~\ref{thm:trace_formula}, with appropriate
modifications. The Lefschetz fixed-point formula gives
\begin{align*}
  \sum_{T^n x = x} \ind_{T^n} x
  &= \tr((T^n)_{\restr H^0(M)}^*) - \tr((T^n)_{\restr H^1(M)}^*) + \tr((T^n)_{\restr H^2(M)}^*)
  \\& = 1 - \pare*{\epsilon_h^n\lambda^n + \epsilon_v^n \lambda^{-n} + \sum_{i=1}^{2g-2} \mu_i^n} + \epsilon_h^n \epsilon_v^n,
\end{align*}
where $\{\mu_1,\dotsc, \mu_{2g-2}\}$ denote the eigenvalues of $T^*$ on the
subspace of $H^1(M)$ orthogonal to $[\dd x]$ and $[\dd y]$, as in the
statement of Theorem~\ref{thm:main_preserves_orientations}. The last term
$\epsilon_h^n \epsilon_v^n$ is equal to $1$ if $T^n$ preserves orientation,
$-1$ if it reverses orientation.

We can also compute the right hand side of~\eqref{eq:sdfuiolkjxcvuiopaa},
using the description of Ruelle resonances: By
Theorem~\ref{thm:orientable_not_preserved_foliations}, $\sum d_\alpha
\alpha^n$ is given by
\begin{align*}
  & 1 + \sum_{i=1}^{2g-2} \sum_{k=1}^\infty \sum_{\ell=0}^\infty
  (\epsilon_h^k \lambda^{-k})^n (\epsilon_v^\ell \lambda^{-\ell})^n \mu_i^n
  = 1 + \sum_{i=1}^{2g-2} \frac{ \epsilon_h^n \lambda^{-n}}{1-\epsilon_h^n \lambda^{-n}} \cdot \frac{1}{1-\epsilon_v^n \lambda^{-n}} \cdot \mu_i^n
  \\&
  = 1 - \sum_{i=1}^{2g-2} \frac{\mu_i^n}{(1-\epsilon_h^n \lambda^n) \cdot (1-\epsilon_v^n \lambda^{-n})}
  = \frac{(1-\epsilon_h^n \lambda^n) \cdot (1-\epsilon_v^n \lambda^{-n}) - \sum_{i=1}^{2g-2} \mu_i^n}{(1-\epsilon_h^n \lambda^n) \cdot (1-\epsilon_v^n
  \lambda^{-n})}
  \\&
  = \frac{1 - \pare*{\epsilon_h^n \lambda^n + \epsilon_v^n \lambda^{-n} + \sum_{i=1}^{2g-2} \mu_i^n} + \epsilon_h^n \epsilon_v^n} {(1-\epsilon_h^n \lambda^n) \cdot
  (1-\epsilon_v^n \lambda^{-n})}.
\end{align*}
Combining the two formulas with the definition of the flat trace, we get the
conclusion of the theorem.
\end{proof}

\subsection{Non-orientable foliations}

Consider a pseudo-Anosov map $T$ on a half-translation surface $M$, but such
that the horizontal and vertical foliations are not orientable. Note that,
with our Definition~\ref{def:half_translation}, a half-translation surface is
always orientable as $x \mapsto -x$ preserves orientation in $\R^2$. Hence,
if the horizontal foliation is not orientable, then neither is the vertical
foliation, and conversely. In this case, one can not argue directly in $M$ as
the differentiation operators $L_h$ and $L_v$ do not make sense anymore:
there is a sign ambiguity regarding the direction of differentiation. (On the
other hand, the squares $L_h^2$ and $L_v^2$ of these operators are well
defined.)

Let $\bar M$ be the two fold orientation (ramified) covering of $M$: away
from singularities, an element of $\bar M$ is a pair $(x, v)$ where $x \in
M-\Sigma$ and $v$ is an orientation of the horizontal foliation at $x$
(equivalently, it is a horizontal unit-norm vector). Let $\bar \pi : \bar M
\to M$ be the covering projection, and write $\bar \Sigma = \bar
\pi^{-1}(\Sigma)$. Then $(\bar M, \bar \Sigma)$ is a translation surface.
Let $i: \bar M \to \bar M$ be the involution $i(x,v) = (x, -v)$. It is a
homeomorphism of $\bar M$.

$T$ lifts to two pseudo-Anosov maps $\bar T$ and $i\circ \bar T$ of $\bar M$ and
the homeomorphism $i$ commutes with $\bar T$. Let us consider $\epsilon_h,\ \epsilon_v$
where $\epsilon_h, \epsilon_v \in \{\pm 1\}$ indicate whether $\bar T$ fixes or reverses
the orientation in the horizontal (resp.\ vertical) direction, as in Paragraph~\ref{subsec:orientable_foliations}.
Obviously the corresponding pair associated to the other lift $i\circ \bar T$ is $(-\epsilon_h,-\epsilon_v)$.

The action of $i^\ast$ gives rise to a splitting of $H^1(\bar M)$ as the
direct sum of the two subspaces $H^1_\pm(\bar M) = \{ h \in H^1(\bar M) \st
i^* h = \pm h\}$. The invariant part $H^1_+(\bar M)$ corresponds to classes
that are lifts of classes in $H^1(M)$. On the other hand, $[\dd x]$ and $[\dd
y]$ belong to the anti-invariant part. If $f$ is a function on $M$, then
$f\circ \pi \cdot \dd x$ if also anti-invariant.

The spectrum of $\bar T^*$ on $H^1_+(\bar M)$ is equal to the spectrum of $T$
on $H^1(M)$, given by $2g$ eigenvalues that we denote by $\mu_1^+,\dotsc,
\mu_{2g}^+$. Let us denote the spectrum of $\bar T^*$ on $H^1_-(\bar M)$ by
$\epsilon_h \lambda$, $\epsilon_v \lambda^{-1}$ and $\mu_1^-,\dotsc,
\mu_{2g_- -2}^-$. The
Ruelle spectrum of $\bar T$ is expressed in terms of all these data as in
Theorem~\ref{thm:orientable_not_preserved_foliations}, but the Ruelle
spectrum of $T$ is a strict subset of the Ruelle spectrum of $\bar T$ as one
should only consider those distributions in the spectrum that do not vanish
on functions coming from the basis.

\begin{thm}
\label{thm:ruelle_spectrum_nonorientable} In this setting, $T$ has a Ruelle
spectrum on $\boC = C^\infty_c(M-\Sigma)$, given (with multiplicities) by
\begin{equation*}
  \{1\} \cup
  \bigcup_{i=1}^{2g} \bigcup_{\substack{k \geq 1, \ell \geq 0\\k + \ell \text{ even}}}
     \{\epsilon_h^k \epsilon_v^\ell \lambda^{-k-\ell}\mu_i^+\}
  \cup
  \bigcup_{i=1}^{2g_- -2} \bigcup_{\substack{k \geq 1, \ell \geq 0\\k + \ell \text{ odd}}}
     \{\epsilon_h^k \epsilon_v^\ell \lambda^{-k-\ell}\mu_i^-\}.
\end{equation*}
\end{thm}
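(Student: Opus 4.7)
The plan is to reduce everything to the double orientation cover $\bar M$ via the involution $i$, apply Theorem~\ref{thm:orientable_not_preserved_foliations} to the lift $\bar T$, and then extract the Ruelle spectrum of $T$ as the part of the Ruelle spectrum of $\bar T$ that is carried by $i$-invariant distributions. Since $\bar T \circ i = i \circ \bar T$, the composition operator $\boT_{\bar T}$ on $\boB^{-k_h,k_v}(\bar M)$ commutes with the involution $\boT_i$, so every generalized eigenspace of $\bar T$ splits as a direct sum of its $(\pm 1)$-eigenspaces under $\boT_i$. Functions $C^\infty_c(M-\Sigma)$ lift bijectively to $i$-invariant functions in $C^\infty_c(\bar M-\bar\Sigma)$, and correlations of two functions on $M$ equal (up to factor $2$) the correlations of their lifts on $\bar M$. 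Hence the Ruelle spectrum of $T$ is precisely the spectrum of $\boT_{\bar T}$ restricted to the $(+1)$-eigenspace of $\boT_i$.

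By Theorem~\ref{thm:orientable_not_preserved_foliations} applied to $\bar T$, whose expansion signs are $\epsilon_h$ and $\epsilon_v$, the Ruelle spectrum of $\bar T$ is $\{1\} \cup \bigcup_{\mu,k\geq 1,\ell\geq 0} \{\epsilon_h^k\epsilon_v^\ell \lambda^{-k-\ell}\mu\}$, where $\mu$ runs over the eigenvalues of $\bar T^*$ on the subspace of $H^1(\bar M)$ orthogonal to $[\dd x]$ and $[\dd y]$. Since $i^*[\dd x]=-[\dd x]$ and $i^*[\dd y]=-[\dd y]$, the eigenvalues of $\bar T^*$ on $H^1_+(\bar M)$ are precisely $\mu_1^+,\dots,\mu_{2g}^+$, while the complementary $2g_- -2$ eigenvalues on $H^1_-(\bar M)\cap ([\dd x],[\dd y])^\perp$ are $\mu_1^-,\dots,\mu_{2g_- -2}^-$. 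To identify which eigendistributions in each $E_{\epsilon_h^k\epsilon_v^\ell\lambda^{-k-\ell}\mu}$ are $i$-invariant, I follow the inductive construction from the proof of Theorem~\ref{thm:main_preserves_orientations}: any such distribution is obtained from some $f_h \in E_{\epsilon_h\lambda^{-1}\mu}\cap \ker L_v$ with $[f_h\,\dd x]=h$ by applying $L_h$ a total of $k-1$ times and taking $L_v$-preimages $\ell$ times.

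The key computation is the behavior of $i^*$ under these operations. Since $i$ reverses both the horizontal and the vertical orientations, I have $i^*\circ L_h = -L_h\circ i^*$ and $i^*\circ L_v = -L_v\circ i^*$, and also $i^*(\dd x) = -\dd x$. Therefore, if $i^* f_h = \epsilon f_h$, the identity $i^*(f_h\,\dd x)=-(i^*f_h)\,\dd x$ gives $i^*h = -\epsilon h$, so $h\in H^1_\mp(\bar M)$. Applying $L_h$ and taking preimages under $L_v$ each flip the parity, so after $k-1$ horizontal derivatives and $\ell$ vertical primitives one lands in parity $(-1)^{k-1+\ell}\epsilon$. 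Demanding this parity to be $+1$ forces $k+\ell$ even in the case $\mu = \mu_i^+$ (where $\epsilon=-1$) and $k+\ell$ odd in the case $\mu=\mu_i^-$ (where $\epsilon=+1$). The constant function contributes the eigenvalue $1$, completing the enumeration.

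The main obstacle is to ensure that at each step the chosen $L_v$-preimage lies in the correct parity class: this requires that the surjection $L_v : E_{\epsilon_v\lambda^{-1}\rho}\to E_\rho$ from Theorem~\ref{thm:Lv_integre} applied to $\bar T$ respects the $\boT_i$-splitting. This is automatic because $L_v$ anti-commutes with $i^*$, so it maps the $(+)$-part of the source into the $(-)$-part of the target and vice versa; combining surjectivity with this parity swap shows that each parity component of $E_\rho$ is the $L_v$-image of the opposite parity component of $E_{\epsilon_v\lambda^{-1}\rho}$. This lets the inductive parity bookkeeping go through unchanged, and yields exactly the set described in Theorem~\ref{thm:ruelle_spectrum_nonorientable} together with the correct multiplicities (counted by dimensions of $i$-invariant parts of generalized eigenspaces, and accounting for superpositions as in the orientable case).
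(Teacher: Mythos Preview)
Your argument is correct and follows essentially the same route as the paper: pass to the orientation double cover $\bar M$, apply the orientable case to $\bar T$, and then select the $i$-invariant part by tracking how the parity under $i^*$ flips each time one applies $L_h$ or an $L_v$-primitive, using that $\dd x$ is anti-invariant. Your treatment is in fact more detailed than the paper's proof, which simply records that $f$ is invariant iff $h=[f\,\dd x]$ is anti-invariant and that $L_h$, $L_v$ each swap the invariant and anti-invariant parts; your explicit check that the surjectivity of $L_v$ respects the parity splitting is a welcome clarification of a point the paper leaves implicit.
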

It is remarkable that, in this theorem only mentioning the correlations of
functions in $M$, all the eigenvalues of $\bar T^*$ appear: both the
invariant and anti-invariant parts of the cohomology can be read off the
correlations of functions in $M$.

This statement does not depend on the choice of the lift of $T$. Indeed, if
one chooses the other lift $i\circ \bar T$ of $T$, then the $\mu_i^+$ do not
change, but $\epsilon_h$, $\epsilon_v$ and $\mu_i^-$ are replaced by their
opposites, so that the above spectrum is not modified.
\begin{proof}
Among the distributions constructed in the proof of
Theorem~\ref{thm:orientable_not_preserved_foliations}, one should understand
which are orthogonal to functions from the basis, and which come from the
basis. First, for the cohomology classes, one writes them as $h = f \dd x$
for some $f$ in the Banach space $\boB^{-k_h, k_v}$. As $\dd x$ is
anti-invariant, it follows that $f$ is invariant if and only if $h$ is
anti-invariant. Hence, the eigenvalues $\mu_i^-$ give rise to distributions
coming from the base, for the eigenvalue $\epsilon_h \lambda^{-1} \mu_i^-$.
On the other hand, the eigendistributions for $\epsilon_h
\lambda^{-1}\mu_i^+$ are anti-invariant, and do not appear in the Ruelle
spectrum of $T$. Then, in $\bar M$, differentiating with respect to $L_h$ or
integrating with respect to $L_v$ exchanges the invariant and anti-invariant
subspaces. The full description of the spectrum follows.
\end{proof}

In this context, the trace formula of Theorem~\ref{thm:trace_formula} still
holds.
\begin{thm}
Let $T$ be a linear pseudo-Anosov map. Then, for all $n$,
\begin{equation}
\label{eq:wsldfjknrt,nwfkjh}
  \flattr(\boT^n) = \sum_\alpha d_\alpha \alpha^n,
\end{equation}
where the sum is over all Ruelle resonances $\alpha$ of $T$, and $d_\alpha$
denotes the multiplicity of $\alpha$.
\end{thm}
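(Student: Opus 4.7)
The plan is to imitate the orientation-preserving computation of Theorem~\ref{thm:trace_orientable}, handling two new complications in the non-orientable case: the local denominator $\det(I - DT^n(x))$ in the flat trace takes one of two values depending on the fixed point $x$, and the Ruelle spectrum from Theorem~\ref{thm:ruelle_spectrum_nonorientable} is constrained by a parity condition on $(k,\ell)$. I would handle both uniformly by passing to the orientation cover $\bar\pi:\bar M \to M$ together with its two lifts $\bar T$ and $i\bar T$ of $T$, where $i$ is the deck involution (acting locally as $-\Id$ on the tangent space of $\bar M$). Both $\bar T$ and $i\bar T$ are linear pseudo-Anosov maps on the translation surface $\bar M$ with orientable foliations, with respective orientation signs $(\epsilon_h,\epsilon_v)$ and $(-\epsilon_h,-\epsilon_v)$, so Theorem~\ref{thm:trace_orientable} is available on each.

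The geometric step is a classification of the fixed points of $T^n$ via the cover. A fixed point $x$ of $T^n$ has one or two lifts in $\bar M$; either $\bar T^n$ fixes each lift (Case~1), in which case $DT^n(x) = \mathrm{diag}(\epsilon_h^n\lambda^n,\epsilon_v^n\lambda^{-n})$ and the local denominator is $D_1 \coloneqq (1-\epsilon_h^n\lambda^n)(1-\epsilon_v^n\lambda^{-n})$; or $\bar T^n$ swaps two distinct lifts of $x$ (Case~2), in which case $DT^n(x) = -D\bar T^n(\bar x)$ (because $Di=-\Id$) and the local denominator is $D_2 \coloneqq (1+\epsilon_h^n\lambda^n)(1+\epsilon_v^n\lambda^{-n})$. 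Grouping fixed-point contributions according to their case, using that each unramified Case~1 point of $T^n$ accounts for two fixed points of $\bar T^n$ of the same Lefschetz index (similarly for Case~2 and $i\circ\bar T^n$), one obtains the key identity
\begin{equation*}
  2\,\flattr(\boT^n)\,D_1 D_2 \;=\; L(\bar T^n)\,D_2 \;+\; L(i\bar T^n)\,D_1.
\end{equation*}
Verifying this identity at the singular fixed points---especially at ramification points of $\bar\pi$, where the preimage of $\sigma$ is a single point $\bar\sigma$ fixed by \emph{both} $\bar T^n$ and $i\bar T^n$ with distinct Lefschetz indices---is the main technical obstacle, and will require the same sort of careful local analysis that was used to treat singular fixed points in the proof of Theorem~\ref{thm:trace_formula}.

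The remainder is algebra. The Lefschetz fixed-point formula on $\bar M$, combined with $\deg(i)=+1$ and the fact that $i^\ast$ acts as $+\Id$ on $H^1_+(\bar M)\cong H^1(M)$ and as $-\Id$ on $H^1_-(\bar M)$, yields
\begin{equation*}
  L(\bar T^n) = D_1 - \textstyle\sum_{j=1}^{2g}(\mu_j^+)^n - \sum_{j=1}^{2g_- -2}(\mu_j^-)^n,\qquad L(i\bar T^n) = D_2 - \textstyle\sum_{j=1}^{2g}(\mu_j^+)^n + \sum_{j=1}^{2g_- -2}(\mu_j^-)^n,
\end{equation*}
so that $L(\bar T^n)D_2+L(i\bar T^n)D_1 = 2D_1D_2 - (D_1+D_2)\sum_j(\mu_j^+)^n - (D_2-D_1)\sum_j(\mu_j^-)^n$. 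On the spectral side, expanding $\sum_\alpha d_\alpha\alpha^n$ via Theorem~\ref{thm:ruelle_spectrum_nonorientable} reduces to the two parity-restricted geometric sums
\begin{equation*}
  B^{\pm}_n \coloneqq \textstyle\sum_{\substack{k\geq 1,\,\ell\geq 0\\ k+\ell \text{ even (resp.\ odd)}}}\epsilon_h^{nk}\epsilon_v^{n\ell}\lambda^{-n(k+\ell)},
\end{equation*}
which satisfy $B^+_n+B^-_n=-1/D_1$ and $B^+_n-B^-_n=-1/D_2$ by direct summation, hence $B^+_n=-(D_1+D_2)/(2D_1D_2)$ and $B^-_n=-(D_2-D_1)/(2D_1D_2)$. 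Multiplying $\sum_\alpha d_\alpha\alpha^n = 1+\sum_j(\mu_j^+)^n B^+_n+\sum_j(\mu_j^-)^n B^-_n$ by $2D_1D_2$ reproduces exactly $L(\bar T^n)D_2+L(i\bar T^n)D_1$; combined with the key identity above this yields $\flattr(\boT^n)=\sum_\alpha d_\alpha\alpha^n$, as desired.
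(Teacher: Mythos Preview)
Your approach is essentially identical to the paper's: classify fixed points of $T^n$ by whether their lifts in $\bar M$ are fixed or swapped by $\bar T^n$, compute the two index sums via the Lefschetz formula applied to $\bar T^n$ and to $i\bar T^n$, and match the result against the parity-split geometric series coming from Theorem~\ref{thm:ruelle_spectrum_nonorientable}. The paper simply asserts the doubling identities $2\sum_{\Fix^\pm}\ind_{T^n}=L(\bar T^n)$ (resp.\ $L(i\bar T^n)$) without your caveat about ramification points of $\bar\pi$; note also that your reference to ``careful local analysis'' in the proof of Theorem~\ref{thm:trace_formula} is slightly off, since that proof uses only the global Lefschetz formula and no local index computation at singularities.
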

\begin{proof}
We have already proved this result when the foliations are orientable, in
Theorem~\ref{thm:trace_orientable}. Hence, we can assume that the foliations
are not orientable. In this case, the Ruelle spectrum is given in
Theorem~\ref{thm:ruelle_spectrum_nonorientable}.

Let $x$ be a fixed point of $T^n$. Denote by $x_1$ and $x_2$ its two lifts.
They are either fixed or exchanged by $\bar T^n$. We say that $x$ is
positively fixed if its lifts are fixed by $\bar T^n$, and negatively fixed
if they are exchanged by $\bar T^n$, i.e., fixed by $i \circ \bar T^n$. Let
$\Fix^+(T^n)$ and $\Fix^-(T^n)$ denote respectively the set of positively and
negatively fixed points of $T^n$. Around a positively fixed point, the local
picture of $T^n$ is the same as the local picture of $\bar T^n$ around the
lifts. In particular, $\det(I-DT^n)$ is equal to $(1-\epsilon_h^n \lambda^n)
(1-\epsilon_v^n \lambda^n)$. If $x$ is negatively fixed, on the other hand,
the local picture of $T^n$ is the same as that of $i \circ \bar T^n$, hence
locally $\det(I-DT^n) = (1+\epsilon_h^n \lambda^n) (1+\epsilon_v^n
\lambda^n)$. With the definition of the flat trace, we get
\begin{equation}
\label{eq:wcvlmijmkljwxcvwcx}
  \flattr(\boT^n) =
  \sum_{x \in \Fix^+(T^n)} \frac{ \ind_{T^n} x}{(1-\epsilon_h^n \lambda^n) (1-\epsilon_v^n \lambda^{-n})}
  + \sum_{x \in \Fix^-(T^n)} \frac{ \ind_{T^n} x}{(1+\epsilon_h^n \lambda^n) (1+\epsilon_v^n \lambda^{-n})}
\end{equation}
To proceed, we note that to one point in $\Fix^+(T^n)$ correspond two fixed
points of $\bar T^n$, with the same Lefschetz index. Therefore,
\begin{equation*}
  2\sum_{x \in \Fix^+(T^n)} \ind_{T^n}(x) = \sum_{\bar T^n y = y} \ind_{\bar T^n}(y).
\end{equation*}
We can apply Lefschetz index formula for $\bar T^n$ to the last sum, yielding
\begin{align*}
  2\sum_{x \in \Fix^+(T^n)} \ind_{T^n}(x) & = \tr((\bar T^n)_{\restr H^0(\bar M)}^*) - \tr((\bar T^n)_{\restr H^1(\bar M)}^*) + \tr((\bar T^n)_{\restr H^2(\bar M)}^*)
  \\& = 1 - \pare*{\epsilon_h^n \lambda^n + \epsilon_v^n \lambda^{-n} + \sum_{i=1}^{2g} (\mu_i^+)^n + \sum_{i=1}^{2g_--2} (\mu_i^-)^n } + \epsilon_h^n \epsilon_v^n
  \\& = (1-\epsilon_h^n \lambda^n) (1-\epsilon_v^n \lambda^{-n}) - \sum_{i=1}^{2g} (\mu_i^+)^n - \sum_{i=1}^{2g_--2} (\mu_i^-)^n.
\end{align*}
A point in $\Fix^-(T^n)$ corresponds to two fixed points of $i \circ \bar
T^n$. Applying the Lefschetz formula to $i\circ \bar T^n$, we get in the same
way
\begin{equation*}
  2\sum_{x \in \Fix^-(T^n)} \ind_{T^n}(x) = (1+\epsilon_h^n \lambda^n) (1+\epsilon_v^n \lambda^{-n}) - \sum_{i=1}^{2g} (\mu_i^+)^n + \sum_{i=1}^{2g_--2} (\mu_i^-)^n,
\end{equation*}
as the eigenvalues of $i\circ \bar T^n$ in cohomology are $-\epsilon_h^n
\lambda^n$, $-\epsilon_v^n \lambda^n$, $(\mu_i^+)^n$ and $-(\mu_i^-)^n$.
Combining these two formulas with~\eqref{eq:wcvlmijmkljwxcvwcx}, we obtain
\begin{equation}
\label{eq:wcxljmlwkxcvwxcv}
\begin{split}
  \flattr(\boT^n) = 1 & -\frac{1}{2} \sum (\mu_i^+)^n
    \pare*{\frac{1}{(1-\epsilon_h^n \lambda^n) (1-\epsilon_v^n \lambda^{-n})} + \frac{1}{(1+\epsilon_h^n \lambda^n) (1+\epsilon_v^n \lambda^{-n})}}
  \\ & - \frac{1}{2} \sum (\mu_i^-)^n
      \pare*{\frac{1}{(1-\epsilon_h^n \lambda^n) (1-\epsilon_v^n \lambda^{-n})} - \frac{1}{(1+\epsilon_h^n \lambda^n) (1+\epsilon_v^n \lambda^{-n})}}.
\end{split}
\end{equation}
Let us expand
\begin{align*}
  \frac{1}{(1-\epsilon_h^n \lambda^n) (1-\epsilon_v^n \lambda^{-n})}
  & = -\epsilon_h^n \lambda^{-n} \frac{1}{1-\epsilon_h^n \lambda^{-n}}\cdot \frac{1}{1-\epsilon_v^n \lambda^{-n}}
  \\& = -\epsilon_h^n \lambda^{-n} \pare*{\sum_{k\geq 0} (\epsilon_h^n \lambda^{-n})^k} \pare*{\sum_{k\geq 0} (\epsilon_v^n \lambda^{-n})^\ell}
  \\& = - \sum_{k\geq 1, \ell \geq 0} (\epsilon_h^k \epsilon_v^\ell \lambda^{-k-\ell})^n
\end{align*}
and analogously
\begin{equation*}
  \frac{1}{(1+\epsilon_h^n \lambda^n) (1+\epsilon_v^n \lambda^{-n})}
  = - \sum_{k\geq 1, \ell \geq 0} (-1)^{k+\ell} (\epsilon_h^k \epsilon_v^\ell \lambda^{-k-\ell})^n
\end{equation*}
Therefore, when one computes the terms in~\eqref{eq:wcxljmlwkxcvwxcv}, there
comes out a factor $(1+(-1)^{k+\ell})/2$ on the first line, which is $1$ when
$k+\ell$ is even and $0$ otherwise, and a factor $(1-(-1)^{k+\ell})/2$ on the
second line, which is $1$ when $k+\ell$ is odd and $0$ otherwise. We finally
get
\begin{equation*}
  \flattr(\boT^n) = 1 + \sum_{i=1}^{2g} \sum_{\substack{k \geq 1, \ell \geq 0\\k + \ell \text{ even}}} (\epsilon_h^k \epsilon_v^\ell \lambda^{-k-\ell}\mu_i^+)^n
  + \sum_{i=1}^{2g_--2} \sum_{\substack{k \geq 1, \ell \geq 0\\k + \ell \text{ odd}}} (\epsilon_h^k \epsilon_v^\ell \lambda^{-k-\ell}\mu_i^+)^n.
\end{equation*}
In view of the expression for the Ruelle spectrum given in
Theorem~\ref{thm:ruelle_spectrum_nonorientable}, this is the desired result.
\end{proof}

\bibliography{biblio}
\bibliographystyle{amsalpha}
\end{document}